\pdfoutput=1
\documentclass[11pt,twoside]{article}

\usepackage[margin=1in]{geometry} 
\usepackage{amsmath,amsthm,amssymb}
\usepackage{color} 
\usepackage{mathtools,mathptmx}
\usepackage{indentfirst}
\usepackage{mathptmx}
\usepackage{cancel}
\usepackage{amsbsy}  
\usepackage{amsfonts}
\usepackage{graphicx}
\usepackage{tikz,tkz-euclide}

\usepackage{array}
\usepackage{geometry}
\usepackage{cases}

\usepackage{hyperref}
\hypersetup{colorlinks=true, allcolors=blue}

\usepackage[square, numbers, comma, sort&compress]{natbib}  
\usepackage[all]{xy}
\usepackage{blkarray}
\usepackage{datetime}
\usepackage{enumerate} 
\usepackage{enumitem}    
\usepackage{relsize}
\usepackage[super]{nth}
\usepackage{imakeidx}


\setlength{\parindent}{0mm} \setlength{\parskip}{4mm}


\newtheorem{theorem}{Theorem}[section]
\newtheorem{proposition}[theorem]{Proposition}
\newtheorem{corollary}[theorem]{Corollary}
\newtheorem{lemma}[theorem]{Lemma}

\numberwithin{equation}{section}
\numberwithin{theorem}{section}

\newcommand{\dis}{\displaystyle}
\newcommand{\R}{\mathbb{R}}
\newcommand{\C}{\mathbb{C}}
\newcommand{\Z}{\mathbb{Z}}
\newcommand{\N}{\mathbb{N}}
\newcommand{\Q}{\mathbb{Q}}

\newcommand{\Real}{\operatorname{Re}}

\newcommand{\dd}{\mathrm{d}}
\newcommand{\dx}[1][x]{\mathrm{d}#1}
\newcommand{\DiffOp}[1][x]{\frac{\mathrm{d}}{\dx[#1]}}
\newcommand{\DiffOpHigherOrder}[2][x]{\frac{\mathrm{d}^{#2}}{\dx[#1]^{#2}}}

\newcommand{\pochhammer}[2][n]{\left(#2\right)_{#1}}

\newcommand{\Hypergeometric}[5][x]{{}_{#2} F_{#3} \left( \left.\begin{matrix} #4 \\ #5 \end{matrix} \, \right| \, #1\right)}
\newcommand{\HypergeometricOneLine}[5][x]{{}_{#2} F_{#3} \left(\left.#4;#5\,\right|#1\right)}

\newcommand{\MeijerG}[5][x]{G_{\,#3}^{\,#2}\left(\left.\begin{matrix} #4 \\ #5 \end{matrix} \, \right| \, #1\right)}

\newcommand{\ceil}[1]{\left\lceil #1 \right\rceil}
\newcommand{\seq}[2][n\in\N]{\left(#2\right)_{#1}}

\newcommand{\n}{\vec{n}}

\newcommand{\Functional}[2]{#1\left[#2\right]}

\newcommand{\mStieltjesRogersPoly}[3][m]{S_{#2}^{(#1)}\left(#3\right)}
\newcommand{\modifiedStieltjesRogersPoly}[4][m]{S_{#2}^{(#1;#3)}\left(#4\right)}
\newcommand{\generalisedStieltjesRogersPoly}[4][m]{S_{#2,#3}^{(#1)}\left(#4\right)}
\newcommand{\generalisedStieltjesRogersPolyTypeJ}[5][m]{S_{#2,#3}^{(#1;#4)}\left(#5\right)}

\setlist[itemize]{noitemsep, topsep=0pt}

\begin{document}

\renewcommand{\baselinestretch}{1.1}
\title{Multiple orthogonal polynomials associated with branched continued fractions for ratios of hypergeometric series}
\author{H\'elder Lima\thanks{Email address: helder.lima@kuleuven.be}\vspace*{0,25 cm}\\
\small{Department of Mathematics, KU Leuven, Celestijnenlaan 200B box 2400, 3001 Leuven, Belgium}}
\date{To appear in \textit{Advances in Applied Mathematics}}
\maketitle
\vspace*{-0.5 cm}
\thispagestyle{empty}
	
\begin{abstract}
The main objects of the investigation presented in this paper are branched-continued-fraction representations of ratios of contiguous hypergeometric series and type II multiple orthogonal polynomials on the step-line with respect to linear functionals or measures whose moments are ratios of products of Pochhammer symbols.
This is an interesting case study of the recently found connection between multiple orthogonal polynomials and branched continued fractions that gives a clear example of how this connection leads to considerable advances on both topics.

We start by obtaining new results about generating polynomials of lattice paths and total positivity of matrices and giving new contributions to the general theory of the connection between multiple orthogonal polynomials and branched continued fractions with emphasis on its application to the analysis of multiple orthogonal polynomials.
Then, we construct new branched continued fractions for ratios of contiguous hypergeometric series. 
We give conditions for positivity of the coefficients of these branched continued fractions and we show that the ratios of products of Pochhammer symbols are generating polynomials of lattice paths for a special case of the branched continued fractions under study.
Next, we introduce a family of type II multiple orthogonal polynomials on the step-line associated with those branched continued fractions.
We present a formula as terminating hypergeometric series for these polynomials, we study their differential properties, and we find an explicit recurrence relation satisfied by them.
Finally, we focus the analysis of the multiple orthogonal polynomials to the cases where the corresponding branched-continued-fraction coefficients are all positive.
In those cases, the orthogonality conditions can be written using measures on the positive real line involving Meijer G-functions and we obtain results about the location of the zeros and the asymptotic behaviour of the polynomials.
%
\end{abstract}

\noindent\textbf{Keywords:} 
\textit{Multiple orthogonal polynomials, branched continued fractions, hypergeometric series, Pochhammer symbols, production matrices, lower-Hessenberg matrices.}

\noindent\textbf{Mathematics Subject Classification 2000:} Primary: 11J70, 33C45, 42C05; Secondary: 05A10, 15B99, 30B70, 30E05, 33C20.

\renewcommand{\baselinestretch}{1.0}\normalsize
\tableofcontents
\renewcommand{\baselinestretch}{1.1}\normalsize

\section{Introduction and motivation}
This paper gives a detailed investigation of a case study of the connection between two different corners of Mathematics: multiple orthogonal polynomials, studied by the special-functions community, and branched continued fractions, introduced by the enumerative-combinatorics community to solve total-positivity problems.
This connection was introduced and analysed in the recent paper \cite{AlanSokalMOPd-opProdMatBCF}.

The main objects of the investigation presented here are branched-continued-fraction representations for ratios of contiguous hypergeometric series ${}_{r+1}F_s$ and multiple orthogonal polynomials with respect to $m=\max(r,s)$ linear functionals or measures whose moments are ratios of Pochhammer symbols
\begin{equation}
\label{moment sequence ratio of Pochhammers}
\seq{\frac{\pochhammer{a_1}\cdots\pochhammer{a_r}}{\pochhammer{b_1}\cdots\pochhammer{b_s}}} 
\quad\text{with }(r,s)\in\N^2\backslash\{(0,0)\}.
\end{equation}
The Pochhammer symbol $\pochhammer{z}$, also known as the rising factorial, is defined by 
\begin{equation}
\pochhammer[0]{z}=1 
\quad\text{and}\quad
\pochhammer[n]{z}:=z(z+1)\cdots(z+n-1)
\quad\text{for  }n\geq 1.
\end{equation}

These branched continued fractions and multiple orthogonal polynomials are linked because the ordinary generating function of the moment sequence in \eqref{moment sequence ratio of Pochhammers} is
\begin{equation}
\label{generating function of a generic ratio of Pochhammer symbols}
\mathlarger{\sum}_{n=0}^{\infty}\frac{\pochhammer{a_1}\cdots\pochhammer{a_r}}{\pochhammer{b_1}\cdots\pochhammer{b_s}}\,t^n
=\Hypergeometric[t]{r+1}{s}{a_1,\cdots,a_r,1}{b_1,\cdots,b_s}.
\end{equation}
The generating function \eqref{generating function of a generic ratio of Pochhammer symbols} is the special case $a_{r+1}=1$ of ratios of contiguous hypergeometric series for which branched-continued-fraction representations were introduced in \cite{AlanSokalM.PetroelleB.Zhu-LPandBCF1} and are generalised here.

There are several already known particular cases of multiple orthogonal polynomials with respect to measures whose moment sequences are of the form \eqref{moment sequence ratio of Pochhammers}.
The cases where $m=1$ are $(r,s)$ equal to $(1,0)$, $(1,1)$, and $(0,1)$, and they correspond to the well-known Laguerre, Jacobi, and Bessel classical orthogonal polynomials, respectively.
When $s\leq r=2$, we obtain multiple orthogonal polynomials with respect to Nikishin systems of $2$ measures, supported on the whole positive real line if $s<r$ (i.e., $s\in\{0,1\}$) or on the interval $(0,1)$ if $s=r$ (i.e., $s=2$).
The multiple orthogonal polynomials corresponding to the cases $(r,s)$ equal to $(2,0)$, $(2,1)$, and $(2,2)$ were investigated in \cite{SemyonWalter,BenCheikhDouak,WalterCoussementMacdonaldFunctions}, \cite{PaperTricomiWeights}, and \cite{PaperHypergeometricWeights}, respectively;
their orthogonality measures involve modified Bessel functions of the second kind, confluent hypergeometric functions of the second kind, and Gauss' hypergeometric function, respectively.
Finally, when $s=0$ and $r$ is an arbitrary positive integer, the moments in \eqref{moment sequence ratio of Pochhammers} reduce to products of Pochhammer symbols, which are associated with multiple orthogonal polynomials with respect to $r$ measures on the positive real line involving Meijer G-functions $G_{r,0}^{\,0,r}$ introduced in \cite{KuijlaarsZhang14} to investigate singular values of products of Ginibre random matrices.
The special cases $r=1$ and $r=2$ of the latter polynomials are, respectively, the classical Laguerre polynomials and the multiple orthogonal polynomials investigated in \cite{SemyonWalter,BenCheikhDouak,WalterCoussementMacdonaldFunctions}. 

When $m=1$, the branched continued fractions appearing in this paper reduce to classical continued fractions.
If $(r,s)=(1,1)$, we recover Gauss' continued fraction for ratios of contiguous ${}_2F_1$; 
if $(r,s)$ is equal to $(1,0)$ or $(0,1)$, we obtain known continued-fraction representations for ratios of contiguous ${}_2F_0$ and ${}_1F_1$, respectively, which can be obtained as limiting cases of Gauss' continued fraction.
These continued fractions are connected to the Jacobi, Laguerre, and Bessel polynomials, respectively.
These are particular instances of the well-known relation between continued fractions and orthogonal polynomials (see \cite[Ch.~III.4]{ChiharaBook} and \cite{JZengCombinatoricsOfOPs}).

We end this introductory section with an outline of the paper.

In Section \ref{Background}, we give some background on the main topics involved.
In particular, we give a brief introduction to multiple orthogonal polynomials and to branched continued fractions.

In Section \ref{Results about generalised and modified m-S.-R. poly}, we present new results about generalised and modified $m$-Stieltjes-Rogers polynomials (which are generating polynomials of lattice paths, more precisely partial $m$-Dyck paths) and total positivity. 
These results were motivated by their applications to the study of multiple orthogonal polynomials, which are made clear in Section \ref{Connection of MOP and BCF}, but they are worthy of interest on their own.

In Section \ref{Connection of MOP and BCF}, we revisit the connection between multiple orthogonal polynomials and branched continued fractions introduced in \cite{AlanSokalMOPd-opProdMatBCF} and give new contributions to the study of this connection, with emphasis on its application to the analysis of multiple orthogonal polynomials.

In Section \ref{BCF for ratios of hypergeometric series}, we construct new branched continued fractions for ratios of contiguous hypergeometric series, which generalise the branched continued fractions introduced in \cite[\S 14]{AlanSokalM.PetroelleB.Zhu-LPandBCF1}. 
In addition, we give conditions for non-negativity of the coefficients of our branched continued fractions and we show that the modified $m$-Stieltjes-Rogers polynomials linked to the special case $a_{r+1}=1$ of our branched continued fractions are ratios of products of Pochhammer symbols of the form in \eqref{moment sequence ratio of Pochhammers} multiplied by a binomial coefficient.

In Section \ref{Type II MOP w.r.t. linear functionals}, we analyse the type II multiple orthogonal polynomials on the step-line with respect to linear functionals whose moments are the ratios of products of Pochhammer symbols shown to be modified $m$-Stieltjes-Rogers polynomials in Section \ref{BCF for ratios of hypergeometric series}.
We obtain explicit expressions for these multiple orthogonal polynomials as terminating hypergeometric series, we derive differential properties satisfied by them, and we use their connection with the branched continued fractions constructed in Section \ref{BCF for ratios of hypergeometric series} to obtain expressions for their recurrence coefficients as combinations of branched-continued-fraction coefficients.

In Section \ref{Generalised m-S.-R. poly}, we present an explicit formula for the generalised $m$-Stieltjes-Rogers polynomials corresponding to the special case $a_{r+1}=1$ of the branched continued fractions constructed in Section \ref{BCF for ratios of hypergeometric series}. 

In Section \ref{Type II MOP w.r.t. Meijer G-functions}, we focus the analysis of the multiple orthogonal polynomials introduced in Section \ref{Type II MOP w.r.t. linear functionals} to the cases where the corresponding branched-continued-fraction coefficients are all positive.
In those cases, the linear functionals of orthogonality are induced by measures on the positive real line whose densities are Meijer G-functions, we have positivity of the recurrence coefficients, we obtain results about the location of the zeros and the asymptotic behaviour of the multiple orthogonal polynomials, and we show that special cases of these polynomials include polynomial sequences with constant recurrence coefficients and particular instances of the Jacobi-Pi\~neiro polynomials.

We finish the paper with some final remarks about applications and future directions of investigation related with the work presented here.

In summary, this paper brings to light new contributions to the general theory of lattice paths and branched continued fractions (Section \ref{Results about generalised and modified m-S.-R. poly}) and to the general theory of multiple orthogonal polynomials via their connection with branched continued fractions (Section \ref{Connection of MOP and BCF}) as well as new results about its main objects of investigation: branched continued fractions for ratios of hypergeometric series (Sections \ref{BCF for ratios of hypergeometric series} and \ref{Generalised m-S.-R. poly}) and a new class of hypergeometric multiple orthogonal polynomials (Sections \ref{Type II MOP w.r.t. linear functionals} and \ref{Type II MOP w.r.t. Meijer G-functions}).

\section{Background}
\label{Background}
In this section we present some definitions and basic results about hypergeometric series, multiple orthogonal polynomials, lattice paths and branched continued fractions,  production matrices, and total positivity. 

\subsection{Hypergeometric series}
For $p,q\in\N$, the (generalised) hypergeometric series (see \cite{AndrewsAskeyRoySpecialFunctions,LukeSpecialFunctionsVolI,DLMF}) is defined by
\begin{equation}
\label{generalised hypergeometric series}
\Hypergeometric[z]{p}{q}{a_1,\cdots,a_p}{b_1,\cdots,b_q}
=\sum_{n=0}^{\infty}\frac{\pochhammer{a_1}\cdots\pochhammer{a_p}}{\pochhammer{b_1}\cdots\pochhammer{b_q}}\,\frac{z^n}{n!}.
\end{equation}
We treat this expression as a formal power series. 
Convergence of non-terminating hypergeometric series plays no role here, except in Proposition \ref{Mehler-Heine formula r-OPS prop.}.

The hypergeometric series \eqref{generalised hypergeometric series} is a solution of the differential equation
\begin{equation}
\label{generalised hypergeometric differential equation}
\left[\prod_{j=1}^{q}\left(z\,\DiffOp[z]+b_j\right)\,\DiffOp[z]-\prod_{i=1}^{p}\left(z\,\DiffOp[z]+a_i\right)\right]F(z)=0.
\end{equation}	

The derivative of a hypergeometric series is equal to a shift in the parameters up to multiplication by a constant: 
\begin{equation}
\label{derivative of a generalised hypergeometric series}
\DiffOp[z]\,\Hypergeometric[z]{p}{q}{a_1,\cdots,a_p}{b_1,\cdots,b_q}
=\frac{a_1\cdots a_p}{b_1\cdots b_q}\,\Hypergeometric[z]{p}{q}{a_1+1,\cdots,a_p+1}{b_1+1,\cdots,b_q+1}.
\end{equation}

The hypergeometric series satisfies the confluent relation
\begin{equation}
\label{confluent relations for generalised hypergeometric series}
\lim_{|\alpha|\to\infty}\Hypergeometric[\frac{z}{\alpha}]{p+1}{q}{a_1,\cdots,a_p,\alpha}{b_1,\cdots,b_q}
=\Hypergeometric[z]{p}{q}{a_1,\cdots,a_p}{b_1,\cdots,b_q}.
\end{equation}

\subsection{Multiple orthogonal polynomials}
\label{Background MOP}
Multiple orthogonal polynomials (see \cite[Ch.~23]{IsmailBook}, \cite[Ch.~4]{NikishinSorokinBook}, and \cite[\S 3]{WalterSurveyAIMS2018}) are a generalization of conventional orthogonal polynomials \cite{ChiharaBook,IsmailBook,SzegoBook} in which the polynomials satisfy orthogonality relations with respect to several measures rather than just one.
We revisit and algebraise the theory of multiple orthogonal polynomials, defining the orthogonality conditions via linear functionals in the ring $R[x]$ of polynomials in one variable with coefficients in a commutative ring $R$.

A linear functional $u$ defined on $R[x]$ is a linear map $u:R[x]\to R$.
The dual space of $R[x]$ is the vector space consisting of all linear functionals on $R[x]$.
The action of a linear functional $u$ on a polynomial $f\in R[x]$ is denoted by $\Functional{u}{f}$.
The moment of order $n\in\N$ of a linear functional $u$ is equal to $\Functional{u}{x^n}$.
By linearity, every linear functional $u$ is uniquely determined by its moments or, alternatively, by the values of $u$ at the elements of any basis of $R[x]$.

There are two types of multiple orthogonal polynomials: type I and type II. 
Both type I and type II polynomials satisfy orthogonality conditions with respect to a vector of $d$ linear functionals for a positive integer $d$ and depend on a multi-index $\n=\left(n_0,\cdots,n_{d-1}\right)\in\N^d$ of norm $|\n|=n_0+\cdots+n_{d-1}$, and both reduce to standard orthogonal polynomials when the number of orthogonality functionals, $d$, is $1$.

The \textit{type I multiple orthogonal polynomials} for $\n=\left(n_0,\cdots,n_{d-1}\right)\in\N^d$ with respect to a vector of $d$ linear functionals $\left(u_0,\cdots,u_{d-1}\right)$ are given by a vector of $r$ polynomials $\left(A_{\n,0},\cdots,A_{\n,d-1}\right)$, with $\deg A_{\n,j}\leq n_j-1$ for each $j\in\{0,\cdots,d-1\}$, satisfying the orthogonality and normalisation conditions
\begin{equation}
\label{type I orthogonality conditions}
\sum_{j=0}^{d-1}\Functional{u_j}{x^kA_{\n,j}}=
\begin{cases}
0 & \text{ if } 0\leq k\leq|\n|-2,\\
1 & \text{ if } k=|\n|-1.
\end{cases}
\end{equation}

The \textit{type II multiple orthogonal polynomial} with respect to a vector of $d$ linear functionals $\left(u_0,\cdots,u_{d-1}\right)$ for $\n=\left(n_0,\cdots,n_{d-1}\right)\in\N^d$ consists of a monic polynomial $P_{\n}$ of degree $|\n|=n_0+\cdots+n_{d-1}$, the norm of $\n$, which satisfies, for each $j\in\{0,\cdots,d-1\}$, the orthogonality conditions
\begin{equation}
\label{type II orthogonality conditions}
\Functional{u_j}{x^kP_{\n}}=0 \quad\text{ if }k\in\{0,\cdots,n_j-1\}.
\end{equation}

Here we focus only on type II multiple orthogonal polynomials.

The orthogonality conditions for multiple (and standard) orthogonal polynomials are usually defined with respect to vectors of measures on the real line or the complex plane instead of linear functionals.
To obtain the type II orthogonality conditions with respect to vectors of measures instead of linear functionals on a commutative ring $R$, consider $R\in\{\R,\C\}$ and linear functionals defined by the integrals with respect to the orthogonality measures, that is, consider linear functionals defined via a measure $\mu$ by
\begin{equation}
\Functional{u}{f}=\int f(x)\mathrm{d}\mu(x)
\quad\text{for any }f\in R[x].
\end{equation}

We are interested in the type II multiple orthogonal polynomials for multi-indices on the so-called \textit{step-line}. 
A multi-index $\left(n_0,\cdots,n_{d-1}\right)\in\N^r$ is on the \textit{step-line} if $n_0\geq n_1\geq\cdots\geq n_{d-1}\geq n_0-1$.
For a fixed $d\in\Z^+$, there is a unique multi-index of norm $n$ on the step-line of $\N^d$, for each $n\in\N$.
Therefore, the type II multiple orthogonal polynomials on the step-line form a sequence with exactly one polynomial of degree $n$, for each $n\in\N$, equal to the length of the corresponding multi-index, and we can replace the multi-index by its length without any ambiguity.
The type II multiple orthogonal polynomials on the step-line of $\N^d$ are often referred to as \textit{$d$-orthogonal polynomials}, as introduced in \cite{MaroniOrthogonalite}, where $d$ is the number of orthogonality functionals.
This means that a polynomial sequence $\seq{P_n(x)}$ is \textit{$d$-orthogonal} with respect to $\left(u_0,\cdots,u_{d-1}\right)$ if $P_n(x)$ is the type II multiple orthogonal polynomial for the multi-index on the step-line  of $\N^d$ with norm $n$.
Applying \eqref{type II orthogonality conditions} to the multi-indices on the step-line, $\seq{P_n(x)}$ satisfies
\begin{equation}
\label{d-orthogonality conditions}
\int x^kP_n(x)\mathrm{d}\mu_j(x)
=\begin{cases}
N_n\neq 0 &\text{ if } n=dk+j \\
		0 &\text{ if } n\geq dk+j+1
\end{cases}
\quad\text{ for }j\in\{0,\cdots,d-1\}.
\end{equation}

According to \cite[Th.~2.1]{MaroniOrthogonalite}, a polynomial sequence $\seq{P_n(x)}$ is $d$-orthogonal if and only if it satisfies a $(d+1)$-order recurrence relation of the form
\begin{equation}
\label{recurrence relation for a d-OPS}
P_{n+1}(x)=x\,P_n(x)-\sum_{k=0}^{d}\gamma_{n-k}^{\,[k]}\,P_{n-k}(x).
\end{equation}
The recurrence coefficients in \eqref{recurrence relation for a d-OPS} are collected in the infinite $(d+2)$-banded unit-lower-Hessenberg matrix 
\begin{equation}
\label{Hessenberg matrix d-OP}
\mathrm{H}=\left(\mathrm{h}_{i,j}\right)_{i,j\in\N}=
\begin{bmatrix} 
\gamma^{[0]}_0 & 1  \\
\gamma^{[1]}_0 & \gamma^{[0]}_1 & 1  \\
\vdots & \ddots & \ddots & \ddots \\ 
\gamma^{[d]}_0 & \cdots  & \gamma^{[1]}_{d-1} & \gamma^{[0]}_d & 1  \\
& \gamma^{[d]}_1 & \cdots  & \gamma^{[1]}_d & \gamma^{[0]}_{d+1} & 1  \\
& & \ddots & \ddots & \ddots & \ddots
\end{bmatrix}.
\end{equation}
The $d$-orthogonal polynomials $\seq{P_n(x)}$ are the characteristic polynomials of the truncated finite matrices formed by the first $n$ rows and columns of $\mathrm{H}$ (see \cite[\S 2.2]{WalterVAJ.CoussementGaussianQuadMOP}).
We revisit this property in Proposition \ref{PolySeqAndHessMatrixProp}.

\subsection{Lattice paths and branched continued fractions}
The branched continued fractions appearing in this paper (other types of branched continued fractions exist in the literature) were introduced in \cite{AlanSokalM.PetroelleB.Zhu-LPandBCF1} and were further explored in \cite{AlanSokalM.Petroelle-LPandBCF2LahPolyFunct}. 
We follow their definitions.

For a positive integer $m$, a \textit{$m$-Dyck path} is a path in the lattice $\N\times\N$, starting and ending on the horizontal axis, using steps $(1,1)$, called \textit{rises}, and $(1,-m)$, called \textit{$m$-falls}
(see Fig.\ref{2-Dyck path fig.} for an example of a $2$-Dyck path).
When $m=1$, the $1$-Dyck paths are simply known as Dyck paths.

More generally, we consider \textit{$m$-Dyck paths at level $k$}, which are paths in $\N\times\N_{\geq k}$ using steps $(1,1)$ and $(1,-m)$ and starting and ending at height $k$, and \textit{partial $m$-Dyck paths}, which are paths in $\N\times\N$ using steps $(1,1)$ and $(1,-m)$ and allowed to start and end anywhere in $\N\times\N$.
\begin{figure}[ht]
\centering
\begin{tikzpicture}[scale=0.5]
\draw[->,color=black] (0,0) -- (0,5);
\draw[->,color=black] (0,0) -- (13,0);
\filldraw[black] (0,0) circle (4pt);
\draw[-,color=black] (0,0) -- (1,1);
\filldraw[black] (1,1) circle (4pt);
\draw[-,color=black] (1,1) -- (2,2);
\filldraw[black] (2,2) circle (4pt);
\draw[-,color=black] (2,2) -- (3,3);
\filldraw[black] (3,3) circle (4pt);
\draw[-,color=black] (3,3) -- (4,4);
\filldraw[black] (4,4) circle (4pt);
\draw[-,color=black] (4,4) -- (5,2);
\filldraw[black] (5,2) circle (4pt);
\draw[-,color=black] (5,2) -- (6,3);
\filldraw[black] (6,3) circle (4pt);
\draw[-,color=black] (6,3) -- (7,1);
\filldraw[black] (7,1) circle (4pt);
\draw[-,color=black] (7,1) -- (8,2);
\filldraw[black] (8,2) circle (4pt);
\draw[-,color=black] (8,2) -- (9,0);
\filldraw[black] (9,0) circle (4pt);
\draw[-,color=black] (9,0) -- (10,1);
\filldraw[black] (10,1) circle (4pt);
\draw[-,color=black] (10,1) -- (11,2);
\filldraw[black] (11,2) circle (4pt);
\draw[-,color=black] (11,2) -- (12,0);
\filldraw[black] (12,0) circle (4pt);
\end{tikzpicture}
\caption{A $2$-Dyck path of length $12$.}
\label{2-Dyck path fig.}
\end{figure}
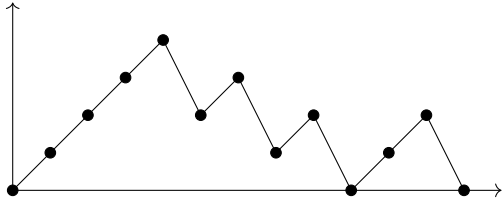

Observe that the length of a $m$-Dyck path, as well as a $m$-Dyck path at level $k$, is always a multiple of $m+1$. 
For an infinite sequence of indeterminates $\dis\mathbf{\alpha}=\seq[i\in\N]{\alpha_{i+m}}$, the \textit{$m$-Stieltjes-Rogers polynomial} $\mStieltjesRogersPoly{n}{\mathbf{\alpha}}$, with $n\in\N$, is the generating polynomial for $m$-Dyck paths of length $(m+1)n$, with each rise having weight $1$ and each $m$-fall from height $i$ having weight $\alpha_i$. 
They are an extension of the Stieltjes-Rogers polynomials introduced by Flajolet in \cite{FlajoletContinuedFractions}, which correspond to the case $m=1$.

Let $f_0(t)$ be the generating function for $m$-Dyck paths with the weights specified above, considered as a formal power series in $t$, that is,
\begin{equation}
\label{generating function for m-Dyck paths}
f_0(t)=\sum_{n=0}^{\infty}S_n^{(m)}\left(\mathbf{\alpha}\right)t^n.
\end{equation}
More generally, let $f_k(t)$ be the generating function for $m$-Dyck paths at level $k$ with the same weights.
Observe that $f_k(t)$ is $f_0(t)$ with each $\alpha_i$ replaced by $\alpha_{i+k}$. 
The sequence $\seq[k\in\N]{f_k(t)}$ satisfies the functional equations (see \cite[Eqs.~2.26-2.27]{AlanSokalM.PetroelleB.Zhu-LPandBCF1})
\begin{equation}
\label{functional equation for the generating function of m-Dyck paths}
f_k(t)=1+\alpha_{k+m}\,t\prod_{j=0}^{m}f_{k+j}(t)
\quad\text{and}\quad 
f_k(t)=\frac{1}{\dis 1-\alpha_{k+m}\,t\prod_{j=1}^{m}f_{k+j}(t)}.
\end{equation}
Successively iterating the former we find that
\begin{equation}
\begin{aligned}
\label{m-branched continued fraction}
f_k(t)&
=\cfrac{1}{\dis 1-\alpha_{k+m}\,t\prod_{i_1=1}^{m}\cfrac{1}{\dis 1-\alpha_{k+m+i_1}\,t\prod_{i_2=1}^{m}\cfrac{1}{\dis 1-\alpha_{k+m+i_1+i_2}\,t\prod_{i_3=1}^{m}\cfrac{1}{\dis 1-\cdots}}}}
\\&=
\resizebox{.8\hsize}{!}{$\cfrac{1}{\dis 1-\cfrac{\alpha_{k+m}\,t\vspace*{-0,1 cm}}
{\left(1-\cfrac{\alpha_{k+m+1}\,t} {\left(1-\cfrac{\alpha_{k+m+2}\,t}{\left(\cdots\right)\cdots\left(\cdots\right)}\right)\cdots \left(1-\cfrac{\alpha_{k+2m+1}\,t}{\left(\cdots\right)\cdots\left(\cdots\right)}\right)}\right)
\cdots
\left(1-\cfrac{\alpha_{k+2m}\,t} {\left(1-\cfrac{\alpha_{k+2m+1}\,t}{\left(\cdots\right)\cdots\left(\cdots\right)}\right)\cdots \left(1-\cfrac{\alpha_{k+3m}\,t}{\left(\cdots\right)\cdots\left(\cdots\right)}\right)}\right)}}$}\,.
\end{aligned}
\end{equation}

In particular, a representation for $f_0(t)$ is obtained by taking $k=0$ in \eqref{m-branched continued fraction}.
We call the right-hand side of \eqref{m-branched continued fraction} a \textit{Stieltjes-type $m$-branched continued fraction}, or a \textit{$m$-branched S-fraction} for short.

When $m=1$, \eqref{m-branched continued fraction} reduces to a classical Stieltjes continued fraction or S-fraction
\begin{equation}
\label{S-fraction}
f_k(t)=\cfrac{1}{\dis 1-\cfrac{\alpha_{k+1}\,t}{\dis 1-\cfrac{\alpha_{k+2}\,t}{\dis 1-\cdots}}}.
\end{equation}
The representation for the generating function of the classical Dyck paths with each rise having weight $1$ and each $m$-fall from height $i$ having weight $\alpha_i$ as the continued fraction \eqref{S-fraction} was shown in \cite{FlajoletContinuedFractions}.

We are interested in generalisations of the $m$-Stieltjes-Rogers polynomials.
Observe that every vertex $(x,y)$ of a partial $m$-Dyck path starting at $(0,0)$, and in particular its final vertex, satisfies $x\equiv y \hspace*{-0,1 cm}\mod (m+1)$. 
For an infinite sequence of indeterminates $\dis\mathbf{\alpha}=(\alpha_i)_{i\geq m}$ and $n,k,j\in\N$, the \textit{generalised $m$-Stieltjes-Rogers polynomials of type $j$}, $\generalisedStieltjesRogersPolyTypeJ{n}{k}{j}{\mathbf{\alpha}}$, are defined in \cite{AlanSokalBishalDebM.PetroelleB.ZhuBCF3Laguerre} as the generating polynomials for partial $m$-Dyck paths from $(0,0)$ to $((m+1)n+j,(m+1)k+j)$ in which each rise gets weight $1$ and each $m$-fall from height $i$ gets weight $\alpha_i$.
The fundamental generalised $m$-Stieltjes-Rogers polynomials are the ones of type $0\leq j\leq m$, because we can reduce the generalised $m$-Stieltjes-Rogers polynomials of higher type $j$ to the fundamental types via the trivial relation $\generalisedStieltjesRogersPolyTypeJ{n}{k}{j+\ell(m+1)}{\mathbf{\alpha}}=\generalisedStieltjesRogersPolyTypeJ{n+\ell}{k+\ell}{j}{\mathbf{\alpha}}$.

Here we are interested in the cases where either $j=0$ or $k=0$.
When $j=0$, we have the \textit{generalised $m$-Stieltjes-Rogers polynomials}, $\generalisedStieltjesRogersPoly{n}{k}{\mathbf{\alpha}}$, generating partial $m$-Dyck paths from $(0,0)$ to $((m+1)n,(m+1)k)$; when $k=0$, we have the \textit{modified $m$-Stieltjes-Rogers polynomials of type $j$}, $\modifiedStieltjesRogersPoly{n}{j}{\mathbf{\alpha}}$,  counting partial $m$-Dyck paths from $(0,0)$ to $((m+1)n+k,k)$.
Based on \cite[Prop.~2.3]{AlanSokalM.PetroelleB.Zhu-LPandBCF1}, we know that the generating function of the modified $m$-Stieltjes-Rogers polynomials of type $j$, for any $j\in\N$, is
\begin{equation}
\label{generating function of modified m-Stieltjes-Rogers polynomials of type k}
f_0(t)\cdots f_j(t)=\sum_{n=0}^{\infty}\modifiedStieltjesRogersPoly{n}{j}{\mathbf{\alpha}}\,t^n.
\end{equation}
We consider the matrices $S^{(m;j)}=\left(\generalisedStieltjesRogersPolyTypeJ{n}{k}{j}{\mathbf{\alpha}}\right)_{n,k\in\N}$ of the generalised $m$-Stieltjes-Rogers polynomials of type $j$, for $j\in\N$.
When $j=0$, we denote this matrix by $\mathrm{S}^{(m)}$, that is, $\mathrm{S}^{(m)}=\left(\generalisedStieltjesRogersPoly{n}{k}{\mathbf{\alpha}}\right)_{n,k\in\N}$ is the matrix of the generalised $m$-Stieltjes-Rogers polynomials.
Moreover, we introduce the matrix of modified $m$-Stieltjes-Rogers polynomials $\hat{\mathrm{S}}^{(m)}=\seq[n,j\in\N]{\modifiedStieltjesRogersPoly{n}{j}{\mathbf{\alpha}}}$.
Note that the $j^{\mathrm{th}}$-column of $\hat{\mathrm{S}}^{(m)}$ is the $0^{\mathrm{th}}$-column of $S^{(m;j)}$.
The classical case $m=1$ of this observation corresponds to \cite[Prop.~9.2]{AlanSokalAlgorithmContFrac}.

It is clear that $\generalisedStieltjesRogersPolyTypeJ{n}{0}{0}{\mathbf{\alpha}}=S_n^{(m)}\left(\mathbf{\alpha}\right)$ for all $n\in\N$, so the $0^{\mathrm{th}}$-columns of the matrices $\mathrm{S}^{(m)}$ and $\hat{\mathrm{S}}^{(m)}$ display the ordinary $m$-Stieltjes-Rogers polynomials.
Furthermore, for any $j\in\N$, $\dis\generalisedStieltjesRogersPolyTypeJ{n}{n}{j}{\mathbf{\alpha}}=1$ for all $n\in\N$ and $\generalisedStieltjesRogersPolyTypeJ{n}{k}{j}{\mathbf{\alpha}}=0$ whenever $k>n$, so the generalised $m$-Stieltjes-Rogers polynomials of type $j$ form a unit-lower-triangular matrix.
In particular, $\mathrm{S}^{(m)}$ is a unit-lower-triangular matrix.
The same is not true for $\hat{\mathrm{S}}^{(m)}$.
For instance, $\modifiedStieltjesRogersPoly{0}{k}{\mathbf{\alpha}}=1$ for all $k\in\N$, because the only partial $m$-Dyck path from $(0,0)$ to $(k,k)$ is formed by $k$ consecutive rises, so all entries in the $0^{\mathrm{th}}$-row of $\hat{\mathrm{S}}^{(m)}$ are equal to $1$.

\subsection{Production matrices}
Let $\Pi=\left(\pi_{i,j}\right)_{i,j\in\N}$ be an infinite matrix with entries in a commutative ring $R$.
If all the powers of $\Pi$ are well-defined, we can define an infinite matrix $A=\left(a_{n,k}\right)_{n,k\in\N}$ by $a_{n,k}=\left(\Pi^n\right)_{0,k}$.
In particular, $a_{0,0}=1$ and $a_{0,k}=0$, if $k\geq 1$.
We call $\Pi$ the \textit{production matrix} and $A$ the \textit{output matrix}.
The method of production matrices was introduced in \cite{ProductionMatrices2005,ProductionMatricesAndRiordanArrays2009}.

Production matrices play a fundamental role in the connection between multiple orthogonal polynomials and branched continued fractions, which we explore in detail in Section \ref{Connection of MOP and BCF}. 
This is a consequence of both the production matrix of the generalised $m$-Stieltjes-Rogers polynomials $\left(S_{n,k}^{(m)}\left(\mathbf{\alpha}\right)\right)_{n,k\in\N}$ (see explicit formulas for this production matrix in \eqref{production matrix for m-Stieltjes-Rogers polynomials}-\eqref{formula for the entries of a production matrix}) and the matrix encoding the recurrence relation of a $m$-orthogonal polynomial sequence being $(m+2)$-banded unit-lower-Hessenberg matrices.

Therefore, we are interested in production matrices that are unit-lower-Hessenberg.
A unit-lower-Hessenberg matrix $\Pi$ is always row-finite (i.e., $\Pi$ has only finitely many nonzero entries in each row), so all the powers of $\Pi$ are well defined and we can construct the output matrix of $\Pi$, which is a unit-lower-triangular matrix.
Conversely, the production matrix of a unit-lower-triangular matrix, which always exists and is unique, is a unit-lower-Hessenberg matrix.

\subsection{Total positivity and oscillation matrices}
We say that a matrix with real entries is \textit{totally positive} and \textit{strictly totally positive} if all its minors are, respectively, nonnegative and positive.
This terminology is the same used in \cite{AlanSokalM.PetroelleB.Zhu-LPandBCF1} and \cite{PinkusTotallyPositiveMatrices}. 
However, we warn the reader that other references on total positivity, including \cite{GantmacherKreinOscillationMatrices} and \cite{FallatJohnsonTotallyNonnegativeMatrices}, use the terms totally nonnegative and totally positive matrices for what we define here as totally positive and strictly totally positive matrices, respectively.

Oscillation matrices are a class of matrices intermediary between totally positive and strictly totally positive matrices. \index{oscillation matrix}
A $(n\times n)$-matrix $A$ with real entries is an \textit{oscillation matrix} if $A$ is totally positive and some power of $A$ is strictly totally positive.
We are interested in oscillation matrices because they share the nice spectral properties of strictly totally positive matrices: according to the Gantmacher-Krein theorem (see \cite[Ths.~II-6,~II-14]{GantmacherKreinOscillationMatrices}), the eigenvalues of an oscillation matrix are all simple, real, and positive, and interlace with the eigenvalues of the submatrices obtained by removing either its first or last column and row.

If we consider matrices with entries in a partially ordered commutative ring $R$, we can still define a matrix to be totally positive if all its minors are nonnegative.
The definition of a partially ordered commutative ring $R$ here is the same as in \cite[\S 9]{AlanSokalM.PetroelleB.Zhu-LPandBCF1}: the nonnegative elements form a subset $P\subset R$ such that $0,1\in P$, $a,b\in P$ implies $a+b,ab\in P$, and $P\cup(-P)=\{0\}$; for $a,b\in R$ we write $a\leq b$ if $b-a\in P$.
Moreover, we say that a matrix with entries in a ring of polynomials $R[\mathbf{x}]$, where $R$ is again a partially ordered commutative ring and $\mathbf{x}$ is a (finite or infinite) set of indeterminates, is \textit{coefficient-wise totally positive} if that matrix is totally positive in $R[\mathbf{x}]$ equipped with the coefficient-wise partial order: a polynomial in $R[\mathbf{x}]$ is nonnegative if all its coefficients are nonnegative.

The theory of production matrices is connected to the study of total positivity because, if $\Pi$ is a totally positive matrix that is either row-finite or column-finite, with entries in a partially ordered commutative ring, then its output matrix is also a totally positive matrix (see \cite[Th.~9.4]{AlanSokalM.PetroelleB.Zhu-LPandBCF1}).

\section{Results about generalised and modified $m$-Stieltjes-Rogers polynomials}
\label{Results about generalised and modified m-S.-R. poly}
In this section, we find a relation between the matrices of generalised and modified $m$-Stieltjes-Rogers polynomials (Proposition \ref{relation between generalised and modified m-Stieltjes-Rogers polynomials prop.}) and show that the matrix relating them is coefficient-wise totally positive (Proposition \ref{total positivity of the matrix Lambda}). 
As a result, we conclude that the matrix of modified $m$-Stieltjes-Rogers polynomials is also coefficient-wise totally positive (Theorem \ref{total positivity of the matrix of modified m-S.R. poly}). 
Furthermore, we give explicit expressions for the entries of the production matrix of the generalised $m$-Stieltjes-Rogers polynomials (Proposition \ref{formula for the entries of a production matrix prop.}).

We start by proving the following two propositions:
\begin{proposition}
\label{relation between generalised and modified m-Stieltjes-Rogers polynomials prop.}
For $m\in\Z^+$, let $\mathrm{S}^{(m)}=\left(\generalisedStieltjesRogersPoly{n}{k}{\mathbf{\alpha}}\right)_{n,k\in\N}$ and $\hat{\mathrm{S}}^{(m)}=\seq[n,j\in\N]{\modifiedStieltjesRogersPoly{n}{j}{\mathbf{\alpha}}}$ be the matrices of generalised and modified $m$-Stieltjes-Rogers polynomials, respectively, with weights $\dis\mathbf{\alpha}=\seq[k\in\N]{\alpha_{k+m}}$.
Then, 
\begin{equation}
\label{relation between generalised and modified m-Stieltjes-Rogers polynomials matrix form}
\hat{\mathrm{S}}^{(m)}=\mathrm{S}^{(m)}\,\Lambda^{(m)}, 
\end{equation}
where $\Lambda^{(m)}=\left(\lambda^{(m)}_{i,j}\right)_{i,j\in\N}$ is the upper-triangular matrix such that $\lambda^{(m)}_{i,j}$ is the generating polynomial of the partial $m$-Dyck paths from $(0,(m+1)i)$ to $(j,j)$. 
Therefore, $\lambda^{(m)}_{0,j}=1$ for any $j\in\N$ and
\begin{equation}
\label{coefficients in the relation between generalised and modified m-Stieltjes-Rogers polynomials}
\lambda^{(m)}_{i,j}
=\sum_{j\geq\ell_1\geq\cdots\geq\ell_i\geq i}\,\prod_{k=1}^{i}\alpha_{km+\ell_k}
=\sum_{\ell_1=i}^{j}\alpha_{m+\ell_1}\sum_{\ell_2=i}^{\ell_1}\alpha_{2m+\ell_2}\cdots\sum_{\ell_i=i}^{\ell_{i-1}}\alpha_{im+\ell_i}
\quad\text{when }1\leq i\leq j.
\end{equation}
\end{proposition}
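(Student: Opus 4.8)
The plan is to deduce both the matrix identity \eqref{relation between generalised and modified m-Stieltjes-Rogers polynomials matrix form} and the explicit formula \eqref{coefficients in the relation between generalised and modified m-Stieltjes-Rogers polynomials} from suitable decompositions of the partial $m$-Dyck paths enumerated by the various $m$-Stieltjes-Rogers polynomials, using repeatedly the elementary observation that every vertex $(x,y)$ of a partial $m$-Dyck path issued from $(0,0)$ satisfies $x\equiv y\pmod{m+1}$ (both step types leave $x-y$ invariant modulo $m+1$).

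First I would prove \eqref{relation between generalised and modified m-Stieltjes-Rogers polynomials matrix form} by cutting a path at a fixed abscissa. Because the weight of an $m$-fall depends only on its height and not on its horizontal position, a horizontal translation preserves the weight of a partial $m$-Dyck path; hence, for any $n\in\N$, the polynomial $\lambda^{(m)}_{i,j}$ also enumerates the partial $m$-Dyck paths from $((m+1)n,(m+1)i)$ to $((m+1)n+j,j)$. Now take any path $P$ counted by $\modifiedStieltjesRogersPoly{n}{j}{\mathbf{\alpha}}$, that is, a partial $m$-Dyck path from $(0,0)$ to $((m+1)n+j,j)$, and cut it at abscissa $(m+1)n$. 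By the congruence above, the height of $P$ at that abscissa is a multiple of $m+1$, say $(m+1)i$, and is uniquely determined by $P$; cutting at that vertex sets up a weight-preserving bijection between the paths $P$ and the pairs formed by a partial $m$-Dyck path from $(0,0)$ to $((m+1)n,(m+1)i)$ and a partial $m$-Dyck path from $((m+1)n,(m+1)i)$ to $((m+1)n+j,j)$, with $i$ ranging over $\N$. Passing to generating polynomials yields $\modifiedStieltjesRogersPoly{n}{j}{\mathbf{\alpha}}=\sum_{i\in\N}\generalisedStieltjesRogersPoly{n}{i}{\mathbf{\alpha}}\,\lambda^{(m)}_{i,j}$, which is precisely \eqref{relation between generalised and modified m-Stieltjes-Rogers polynomials matrix form}; the sum is finite because $\mathrm{S}^{(m)}$ is unit-lower-triangular.

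Next I would record the shape of $\Lambda^{(m)}$ by a step count: a partial $m$-Dyck path from $(0,(m+1)i)$ to $(j,j)$ made of $\rho$ rises and $\varphi$ $m$-falls satisfies $\rho+\varphi=j$ (horizontal displacement) and $\rho-m\varphi=j-(m+1)i$ (vertical displacement), forcing $\varphi=i$ and $\rho=j-i$. Hence no such path exists when $i>j$, so $\Lambda^{(m)}$ is upper-triangular, and when $i=0$ the unique path consists of $j$ rises and has weight $1$, so $\lambda^{(m)}_{0,j}=1$.

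Finally, for $1\leq i\leq j$ I would compute $\lambda^{(m)}_{i,j}$ by parametrising the underlying paths by the vector $(c_1,\dots,c_i)$, where $c_r$ is the number of rises occurring strictly before the $r$-th $m$-fall. At every vertex the height equals $(m+1)i+\rho'-m\varphi'$, where $\rho'$ and $\varphi'$ count the rises and falls taken so far, and since $\varphi'\leq i$ this is at least $(m+1)i-mi=i>0$; so there is no positivity obstruction and the assignment $(c_1,\dots,c_i)\mapsto\text{(path)}$ is a bijection onto $\{(c_1,\dots,c_i):0\leq c_1\leq\cdots\leq c_i\leq j-i\}$. The height just before the $r$-th fall equals $(m+1)i-(r-1)m+c_r=m(i-r+1)+i+c_r$, so the weight of the path is $\prod_{r=1}^{i}\alpha_{m(i-r+1)+i+c_r}$. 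Re-indexing by $k=i-r+1$ and putting $\ell_k:=i+c_{i+1-k}$ turns the constraints $0\leq c_1\leq\cdots\leq c_i\leq j-i$ into $j\geq\ell_1\geq\cdots\geq\ell_i\geq i$ and the weight into $\prod_{k=1}^{i}\alpha_{km+\ell_k}$, which is the first expression in \eqref{coefficients in the relation between generalised and modified m-Stieltjes-Rogers polynomials}; the second expression is the same sum with the chain of inequalities unrolled as nested summation bounds. The only delicate point is this order-reversing re-indexing between the falls (read from left to right) and the product index $k$; everything else is routine bookkeeping, and I do not anticipate a genuine obstacle.
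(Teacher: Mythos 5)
Your proposal is correct and follows essentially the same route as the paper: split each path counted by $\modifiedStieltjesRogersPoly{n}{j}{\mathbf{\alpha}}$ at the unique vertex of abscissa $(m+1)n$ (whose height is forced to be a multiple of $m+1$), use translation invariance of the weights to identify the second factor with $\lambda^{(m)}_{i,j}$, and then enumerate the paths from $(0,(m+1)i)$ to $(j,j)$ by recording the positions of the rises relative to the $m$-falls. The only difference is that you index the falls from the start and count rises before each fall, while the paper indexes them from the end and counts rises after each fall; the two parametrisations are related by the order-reversal you describe, and your explicit check that all intermediate heights stay at least $i$ is a welcome (if minor) addition.
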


\begin{proposition}
\label{total positivity of the matrix Lambda}
For any $m\in\Z^+$, the upper-triangular matrix $\Lambda^{(m)}$ defined in Proposition \ref{relation between generalised and modified m-Stieltjes-Rogers polynomials prop.} is totally positive in the polynomial ring $\Z[\alpha]$ equipped with the coefficient-wise partial order.
\end{proposition}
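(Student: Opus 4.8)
The plan is to exhibit $\Lambda^{(m)}$ as the output matrix of a suitable production matrix and then invoke the total-positivity theory already quoted in the excerpt (\cite[Th.~9.4]{AlanSokalM.PetroelleB.Zhu-LPandBCF1}), which says that the output matrix of a row-finite (or column-finite) totally positive production matrix is itself totally positive. So the real work is twofold: first, identify the right production matrix $\Pi^{(m)}$; second, verify that $\Pi^{(m)}$ is coefficient-wise totally positive over $\Z[\alpha]$.

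First I would set up the combinatorial recursion for the entries $\lambda^{(m)}_{i,j}$, which count partial $m$-Dyck paths from $(0,(m+1)i)$ to $(j,j)$. Conditioning on the first step of such a path gives a recurrence: either the path starts with a rise (contributing $\lambda^{(m)}_{i,j}$ with one less unit of horizontal room, i.e. a shift relating $j$ to $j-1$), or — when possible — it starts with an $m$-fall from height $(m+1)i$, which has weight $\alpha_{(m+1)i}$ and drops to height $(m+1)i-m = (m+1)(i-1)+1$, after which the remaining path must climb back. Because the target lies on the diagonal $x=y$, the bookkeeping is cleanest if I think of $\Lambda^{(m)}$ column by column, or equivalently transpose the picture: reading the defining sum in \eqref{coefficients in the relation between generalised and modified m-Stieltjes-Rogers polynomials} as nested summations $\sum_{\ell_1=i}^{j}\alpha_{m+\ell_1}\cdots$ shows that $\Lambda^{(m)}$ factors through iterated "summation" (partial-sum) operators composed with diagonal multiplications by the $\alpha$'s. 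Concretely, I expect $\Lambda^{(m)} = \prod$ (in the appropriate order) of bidiagonal-type factors, each of which is manifestly totally positive: a matrix that is the sum of the identity and a single off-diagonal band with nonnegative (indeed monomial) entries, or a triangular matrix of $0/1$ partial-sum type. The product of totally positive matrices is totally positive (Cauchy–Binet), so this factorisation route avoids production matrices entirely if it works cleanly.

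If the explicit factorisation proves awkward to write down uniformly in $m$, the fallback — and probably the cleaner writeup — is the production-matrix route: compute $\Pi^{(m)} = (\Lambda^{(m)})^{-1}\,\big(\text{shift of }\Lambda^{(m)}\big)$ from the first-step recursion, observe that it is lower-Hessenberg (hence row-finite) with entries that are nonnegative polynomials in the $\alpha$'s, and then check the minors. For lower-Hessenberg matrices, total positivity reduces to checking that the "staircase" minors are nonnegative, and here the structure should make each relevant minor either a single monomial in the $\alpha$'s or a manifestly nonnegative sum; I would phrase this via the known criterion that a lower-Hessenberg matrix whose nonzero entries are arranged so that each contiguous minor has a determinant that telescopes to a subproduct of entries is totally positive. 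Either way, once $\Pi^{(m)}$ (or each factor) is shown totally positive over $\Z[\alpha]$ with the coefficient-wise order, \cite[Th.~9.4]{AlanSokalM.PetroelleB.Zhu-LPandBCF1} immediately yields that $\Lambda^{(m)}$ is totally positive, which is the claim.

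The main obstacle I anticipate is the combinatorial identification of the correct production matrix (or the correct factorisation) as a function of $m$: the $m$-fall step of length $m$ couples rows that are $m$ apart while the target constraint $x=y$ couples them by $1$, so the indexing in \eqref{coefficients in the relation between generalised and modified m-Stieltjes-Rogers polynomials} mixes scales $m$ and $1$, and getting the shifts exactly right (and checking the boundary cases $i=0$ and $i=j$, where $\lambda^{(m)}_{0,j}=1$ and $\lambda^{(m)}_{j,j}$ is a single monomial $\prod_{k=1}^{j}\alpha_{(m+1)k}$-type term) takes care. Once the right matrix is in hand, the total-positivity verification itself should be routine, since every building block is a nonnegative-monomial-weighted incidence or summation matrix and the heavy lifting is delegated to the cited theorem.
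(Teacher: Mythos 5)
Your proposal is a plan with two alternative routes, but neither is actually carried out, and each has a concrete obstruction that you gesture at without resolving. The production-matrix route does not fit the framework you invoke: an output matrix in the sense of \cite[Th.~9.4]{AlanSokalM.PetroelleB.Zhu-LPandBCF1} is unit-lower-triangular with first row $(1,0,0,\cdots)$, whereas $\Lambda^{(m)}$ is upper-triangular with all entries of its $0^{\mathrm{th}}$ row equal to $1$ and with non-unit diagonal entries $\lambda^{(m)}_{i,i}=\prod_{k=1}^{i}\alpha_{km+i}$; even after transposing, the associated production matrix would be obtained by conjugating a shift by $(\Lambda^{(m)})^{-T}$, whose entries involve division by the $\alpha$'s, so there is no reason its entries lie in $\Z[\alpha]$, let alone are coefficient-wise nonnegative, and "checking the staircase minors" is not a valid substitute for checking all minors of a Hessenberg matrix over a partially ordered ring. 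The factorisation route is closer to being viable, but the first-step (or last-step) recursion you describe does not close on the entries of $\Lambda^{(m)}$: after an $m$-fall from $(0,(m+1)i)$ the path sits at height $(m+1)(i-1)+1$, which is not a source point of the form $(0,(m+1)i')$, so the recursion forces you to introduce generating polynomials of paths between arbitrary lattice points — and you never exhibit the bidiagonal factors, only "expect" them. As you yourself note, the mixing of the scales $m$ and $1$ in \eqref{coefficients in the relation between generalised and modified m-Stieltjes-Rogers polynomials} makes the naive recursive factorisation fail by an off-by-one in the lower summation limits.

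The missing idea is to stop trying to make $\Lambda^{(m)}$ itself recursive and instead embed it as a submatrix of the walk matrix of the planar acyclic digraph $G_m$ whose vertices are the points $(x,y)$ with $y\geq x$ and $x\equiv y\bmod (m+1)$ and whose edges are the rises and weighted $m$-falls. Then $\Lambda^{(m)}=B_{I,J}$ for the source set $I=\{(0,(m+1)n)\}$ and sink set $J=\{(n,n)\}$; since these lie on the boundary of the planar graph in the order "$I$ reversed, then $J$", the pair is fully nonpermutable, and Lemma \ref{Fully nonpermutable pairs and total positivity} (the Lindstr\"om--Gessel--Viennot corollary) gives coefficient-wise total positivity in one stroke. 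This is the route the paper takes; your factorisation idea is the algebraic shadow of that planar-network argument, but without the LGV lemma (or an explicitly verified bidiagonal decomposition) the proof is not complete as written.
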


Observe that \eqref{relation between generalised and modified m-Stieltjes-Rogers polynomials matrix form} gives the unique $\mathrm{LU}$-factorisation of $\hat{\mathrm{S}}^{(m)}$ 
where $\mathrm{L}=\mathrm{S}^{(m)}$ is a unit-lower-triangular matrix.

Moreover, according to \cite[Th.~9.12]{AlanSokalM.PetroelleB.Zhu-LPandBCF1}, the $j^{\mathrm{th}}$-column of $\hat{\mathrm{S}}^{(m)}$, $\seq{\modifiedStieltjesRogersPoly{n}{j}{\mathbf{\alpha}}}$, is a coefficient-wise Hankel-totally positive sequence (or, equivalently, a Stieltjes moment sequence).
Therefore, Proposition \ref{relation between generalised and modified m-Stieltjes-Rogers polynomials prop.} gives an answer to the remark in \cite[\S 4]{AlanSokalMOPd-opProdMatBCF}.

Note that, when $j=i$, \eqref{coefficients in the relation between generalised and modified m-Stieltjes-Rogers polynomials} reduces to $\dis\lambda^{(m)}_{i,i}=\prod_{k=1}^{i}\alpha_{km+i}$ for any $i\in\N$. 
As a result, if the sequence $\seq{\alpha_{k+m}}$ does not have any zeroes or divisors of zero, all the diagonal entries of $\Lambda^{(m)}$ are different from zero.

The upper-triangular matrix defined in Proposition \ref{relation between generalised and modified m-Stieltjes-Rogers polynomials prop.} is
\begin{equation}
	\Lambda^{(m)}=
	\begin{bmatrix} 
		1 & 1 & 1 & 1 & \cdots \\
		& \alpha_{m+1} & \alpha_{m+1}+\alpha_{m+2} & \alpha_{m+1}+\alpha_{m+2}+\alpha_{m+3} & \cdots \\
		&  & \alpha_{m+2}\alpha_{2m+2} & \alpha_{m+2}\alpha_{2m+2}+\alpha_{m+3}\left(\alpha_{2m+2}+\alpha_{2m+3}\right) & \cdots \\
		&  &  & \alpha_{m+3}\alpha_{2m+3}\alpha_{3m+3}  & \cdots \\
		&  &  &  &  \ddots
	\end{bmatrix}.
\end{equation}
In particular, for the classical case $m=1$,
\begin{equation}
\Lambda^{(1)}=
\begin{bmatrix} 
1 & 1 & 1 & 1 & 1 & \cdots \\
   & \alpha_2 & \alpha_2+\alpha_3 & \alpha_2+\alpha_3+\alpha_4 & \alpha_2+\alpha_3+\alpha_4+\alpha_5 & \cdots \\
   &  & \alpha_3\alpha_4 & \alpha_4\left(\alpha_3+\alpha_4+\alpha_5\right) & \alpha_4\left(\alpha_3+\alpha_4+\alpha_5\right)+\alpha_5\left(\alpha_4+\alpha_5+\alpha_6\right) & \cdots \\
   &  &  & \alpha_4\alpha_5\alpha_6  & \alpha_5\alpha_6\left(\alpha_4+\alpha_5+\alpha_6+\alpha_7\right) & \cdots \\
   &  &  &  & \alpha_5\alpha_6\alpha_7\alpha_8 & \cdots \\
   &  &  &  &  & \ddots
\end{bmatrix}.
\end{equation}

The matrix of generalised $m$-Stieltjes-Rogers polynomials $\mathrm{S}^{(m)}=\left(\generalisedStieltjesRogersPoly{n}{k}{\mathbf{\alpha}}\right)_{n,k\in\N}$ is totally positive in the polynomial ring $\Z[\alpha]$ equipped with the coefficient-wise partial order \cite[Th.~9.8]{AlanSokalM.PetroelleB.Zhu-LPandBCF1}.
Moreover, the product of totally positive matrices (when well-defined) is also totally positive, due to the Cauchy-Binet formula (see, for instance, \cite[\S 1.1]{PinkusTotallyPositiveMatrices}).
Therefore, combining Propositions \ref{relation between generalised and modified m-Stieltjes-Rogers polynomials prop.} and \ref{total positivity of the matrix Lambda}, we obtain the following result.
\begin{theorem}
\label{total positivity of the matrix of modified m-S.R. poly}
For any $m\in\Z^+$, the matrix of modified $m$-Stieltjes-Rogers polynomials $\hat{\mathrm{S}}^{(m)}=\seq[n,j\in\N]{\modifiedStieltjesRogersPoly{n}{j}{\mathbf{\alpha}}}$ is totally positive in the polynomial ring $\Z[\alpha]$ equipped with the coefficient-wise partial order.
\end{theorem}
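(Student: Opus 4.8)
The plan is to read the result directly off the factorisation $\hat{\mathrm{S}}^{(m)}=\mathrm{S}^{(m)}\,\Lambda^{(m)}$ established in Proposition~\ref{relation between generalised and modified m-Stieltjes-Rogers polynomials prop.}, combined with the total positivity of the two factors. First I would invoke \cite[Th.~9.8]{AlanSokalM.PetroelleB.Zhu-LPandBCF1}, which gives that the unit-lower-triangular matrix $\mathrm{S}^{(m)}$ is totally positive in $\Z[\alpha]$ equipped with the coefficient-wise partial order. Second, Proposition~\ref{total positivity of the matrix Lambda} supplies the coefficient-wise total positivity of the upper-triangular matrix $\Lambda^{(m)}$.

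Then I would combine the two factors via the Cauchy--Binet formula, which says that a product of totally positive matrices is totally positive, provided the product and the relevant minor expansions are well defined. That proviso holds here: since $\mathrm{S}^{(m)}$ is unit-lower-triangular, it is row-finite, so for any finite row-set $I$ and column-set $J$ one has $\det\bigl((\hat{\mathrm{S}}^{(m)})_{I,J}\bigr)=\sum_{K}\det\bigl((\mathrm{S}^{(m)})_{I,K}\bigr)\,\det\bigl((\Lambda^{(m)})_{K,J}\bigr)$, a \emph{finite} sum over index sets $K$ with $\max K\le\max I$. Each summand is a product of two polynomials with nonnegative coefficients, hence nonnegative in the coefficient-wise order, and nonnegativity is preserved under finite sums; thus every minor of $\hat{\mathrm{S}}^{(m)}$ is coefficient-wise nonnegative, which is the claim.

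The step I expect to need the most care is the bookkeeping for the infinite-matrix Cauchy--Binet argument in the partially ordered ring $\Z[\alpha]$ --- i.e. making explicit that $\mathrm{S}^{(m)}\Lambda^{(m)}$ and all of its minors are genuinely finite sums, so that the classical argument (as in \cite[\S 1.1]{PinkusTotallyPositiveMatrices} or \cite[\S 9]{AlanSokalM.PetroelleB.Zhu-LPandBCF1}) transfers verbatim to the coefficient-wise setting. Because $\mathrm{S}^{(m)}$ is unit-lower-triangular this is routine but worth stating; everything else is an immediate consequence of the two propositions preceding the theorem.
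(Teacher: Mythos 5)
Your proposal is correct and is essentially the paper's own argument: the paper derives Theorem \ref{total positivity of the matrix of modified m-S.R. poly} by combining Propositions \ref{relation between generalised and modified m-Stieltjes-Rogers polynomials prop.} and \ref{total positivity of the matrix Lambda} with the coefficient-wise total positivity of $\mathrm{S}^{(m)}$ from \cite[Th.~9.8]{AlanSokalM.PetroelleB.Zhu-LPandBCF1} and the Cauchy--Binet formula. Your extra remark that lower-triangularity of $\mathrm{S}^{(m)}$ makes every Cauchy--Binet expansion a finite sum is exactly the ``when well-defined'' caveat the paper leaves implicit, so nothing is missing.
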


\begin{proof}[Proof of Proposition \ref{relation between generalised and modified m-Stieltjes-Rogers polynomials prop.}]
Firstly, we prove that \eqref{relation between generalised and modified m-Stieltjes-Rogers polynomials matrix form} holds.
Observe that, when $i>j$, there are no partial $m$-Dyck paths from $(0,(m+1)i)$ to $(j,j)$, because the lowest point you can go to from $(0,(m+1)i)$ in $j$ steps is $(j,(m+1)i-mj)$ and $i>j$ implies that $(m+1)i-mj>j$. 
Therefore, $\lambda^{(m)}_{i,j}\left(\mathbf{\alpha}\right)=0$ for $i>j$, $\Lambda^{(m)}$ is indeed an upper-triangular matrix, and \eqref{relation between generalised and modified m-Stieltjes-Rogers polynomials matrix form} is equivalent to
\begin{equation}
\label{relation between generalised and modified m-Stieltjes-Rogers polynomials}
\modifiedStieltjesRogersPoly{n}{j}{\mathbf{\alpha}}
=\sum_{i=0}^{j}\generalisedStieltjesRogersPoly{n}{i}{\mathbf{\alpha}}\,\lambda^{(m)}_{i,j}\left(\mathbf{\alpha}\right)
\quad\text{for all }n,j\in\N.
\end{equation}
Let $n,j\in\N$.
Each partial $m$-Dyck path from $(0,0)$ to $((m+1)n+j,j)$ contains a unique point $((m+1)n,(m+1)i)$, with $0\leq i\leq j$, and it can be split in two at that point.
Hence, we obtain \eqref{relation between generalised and modified m-Stieltjes-Rogers polynomials}, with $\lambda^{(m)}_{i,j}\left(\mathbf{\alpha}\right)$ equal to the generating polynomial of the partial $m$-Dyck paths from $((m+1)n,(m+1)i)$ to $((m+1)n+j,j)$. 
Furthermore, the weights of the $m$-falls in any partial $m$-Dyck path are invariable with horizontal translations, because they only depend on the height from where the $m$-fall occurs. 
Hence, $\lambda^{(m)}_{i,j}\left(\mathbf{\alpha}\right)$ does not depend on $n$ and it is equal to the generating polynomial of the partial $m$-Dyck paths from $(0,(m+1)i)$ to $(j,j)$.

Now we find the explicit expressions for $\lambda^{(m)}_{i,j}\left(\mathbf{\alpha}\right)$, with $i\leq j$.
Observe that it takes $i$ $m$-falls and $j-i$ rises to go from $(0,(m+1)i)$ to $(j,j)$, with the $k^{th}$ $m$-fall counting from the end occuring from height $km+j-\sigma_k$, where $\sigma_k$ is the number of rises happening after this $m$-fall, so $0\leq\sigma_1\leq\cdots\leq\sigma_i\leq j-i$.
Therefore, it is clear that if $i=0$ then $\lambda^{(m)}_{0,j}\left(\mathbf{\alpha}\right)=1$, while for $i\geq 1$, we define $\ell_k=j-\sigma_k$ for $1\leq k\leq i$ to obtain \eqref{coefficients in the relation between generalised and modified m-Stieltjes-Rogers polynomials}.
\end{proof}

To prove the total positivity of $\Lambda^{(m)}$, we use a similar argument to the total-positivity combinatorial proofs using the Lindst\"orm-Gessel-Viennot lemma in \cite[\S 9.4]{AlanSokalM.PetroelleB.Zhu-LPandBCF1}.
For that purpose, we need to first recall some relevant definitions and results, connecting total positivity with walks on graphs. 

Let $G=(V,\vec{E})$ be a directed graph with vertex set $V$ and edge set $\vec{E}$, and let $\mathbf{w}=\seq[(i,j)\in\vec{E}]{w_{i,j}}$ be a set of commuting indeterminates associated to the edges of $G$, to which we refer to as the \textit{edge weights}.
For $i,j\in V$, a walk from $i$ to $j$ (of length $n\in\N$) is a sequence $\gamma=\left(\gamma_0,\cdots,\gamma_n\right)\in V^{n+1}$ such that $\gamma_0=i$, $\gamma_n=j$, and $\left(\gamma_k,\gamma_{k+1}\right)\in\vec{E}$ for all $0\leq k\leq n-1$.
The \textit{weight of a walk} $\gamma=\left(\gamma_0,\cdots,\gamma_n\right)$, which we denote by $W(\gamma)$, is the product of its edge weights, that is, $W(\gamma)=w_{\gamma_0,\gamma_1}\cdots w_{\gamma_{n-1},\gamma_n}$, with $W(\gamma)=1$ if $\gamma$ is a walk of length $0$.
We define the \textit{walk matrix} $B=\seq[i,j\in V]{b_{i,j}}$ with entries $b_{i,j}\in\Z[[\mathbf{w}]]$ equal to the sum of the weights of all the walks from $i$ to $j$.

For $r\in\Z^+$, let $\mathbf{i}=\left(i_1,\cdots,i_r\right)$, $\mathbf{j}=\left(j_1,\cdots,j_r\right)\in V^r$ be two ordered $r$-tuples of distinct vertices of $G$.
We say that the pair $\mathbf{(i,j)}$ is \textit{nonpermutable} if the set of vertex-distinct walk systems $\mathbf{\gamma}=\left(\gamma_1,\cdots,\gamma_r\right)$ satisfying $\gamma_k: i_k\to j_{\sigma(k)}$ is empty whenever $\sigma$ is not the identity permutation.
Now let $I$ and $J$ be (not necessarily finite) subsets of $V$, equipped with total orders $<_{\, I}$ and $<_{\, J}$, respectively.
We say that the pair $\left(\left(I,<_{\,I}\right),\left(J,<_{\,J}\right)\right)$ is \textit{fully nonpermutable} if each pair $\mathbf{(i,j)}$ of increasing $r$-tuples $\mathbf{i}=\left(i_1,\cdots,i_r\right)$ in $\left(I,<_{\,I}\right)$ and $\mathbf{j}=\left(j_1,\cdots,j_r\right)$ in $\left(J,<_{\,J}\right)$ is nonpermutable for any $r\geq 1$.
Here we use a fully nonpermutable pair to prove the total positivity of $\Lambda^{(m)}$ via the following lemma.
\begin{lemma}(cf. \cite[Cor.~9.17]{AlanSokalM.PetroelleB.Zhu-LPandBCF1})
\label{Fully nonpermutable pairs and total positivity}
For an acyclic graph $G=\left(V,\vec{E}\right)$, let $\left(I,<_{\, I}\right)$ and $\left(J,<_{\, J}\right)$ be totally ordered subsets of $V$ such that the pair $\left(\left(I,<_{\,I}\right),\left(J,<_{\,J}\right)\right)$ is fully nonpermutable.
Then, the submatrix $B_{I,J}$, with rows and columns ordered accordingly with $<_{\, I}$ and $<_{\, J}$, of the walk matrix of $G$ is totally positive with respect to the coefficient-wise order on $\Z[[\mathbf{w}]]$.
\end{lemma}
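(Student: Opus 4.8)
The plan is to deduce Lemma~\ref{Fully nonpermutable pairs and total positivity} from the Lindström--Gessel--Viennot (LGV) lemma together with the hypothesis of full nonpermutability, following the strategy of \cite[\S 9.4]{AlanSokalM.PetroelleB.Zhu-LPandBCF1}. First I would fix $r\geq 1$ and choose increasing $r$-tuples $\mathbf{i}=(i_1,\dots,i_r)$ in $(I,<_{\,I})$ and $\mathbf{j}=(j_1,\dots,j_r)$ in $(J,<_{\,J})$; the corresponding $r\times r$ minor of $B_{I,J}$ is $\det\bigl(b_{i_k,j_\ell}\bigr)_{1\le k,\ell\le r}$, where each $b_{i_k,j_\ell}$ is the generating series of all walks $i_k\to j_\ell$ in $G$. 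Expanding the determinant as a signed sum over permutations $\sigma\in S_r$ gives $\sum_{\sigma}\operatorname{sgn}(\sigma)\prod_k b_{i_k,j_{\sigma(k)}}$, and each product $\prod_k b_{i_k,j_{\sigma(k)}}$ is the generating series of the set of (not necessarily vertex-disjoint) walk systems $\gamma_k\colon i_k\to j_{\sigma(k)}$.

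The heart of the argument is the LGV sign-reversing involution. Since $G$ is acyclic, on the set of walk systems that are \emph{not} vertex-disjoint one builds the standard involution: take the first pair of walks that share a vertex, swap their tails at the first shared vertex; this changes $\sigma$ by a transposition, preserves the total weight, and is an involution whose fixed points are exactly the vertex-disjoint walk systems. Hence all non-vertex-disjoint contributions cancel in pairs, and the minor equals $\sum_{\sigma}\operatorname{sgn}(\sigma)\,(\text{generating series of vertex-disjoint walk systems }\gamma_k\colon i_k\to j_{\sigma(k)})$. Now the full-nonpermutability hypothesis enters: for a fully nonpermutable pair, the set of vertex-disjoint walk systems is empty for every $\sigma\neq\operatorname{id}$, so the only surviving term is $\sigma=\operatorname{id}$, which carries sign $+1$. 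Therefore the minor equals the generating series of vertex-disjoint walk systems with $\gamma_k\colon i_k\to j_k$, which is a series in $\Z[[\mathbf{w}]]$ with all coefficients nonnegative (indeed in $\N$), i.e. the minor is $\geq 0$ in the coefficient-wise order.

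The one point that needs care — and the main obstacle — is making the LGV involution rigorous in the formal-power-series setting with \emph{infinite} vertex sets $I$, $J$ and possibly infinitely many walks between two vertices. One must ensure each $b_{i,j}$ is a well-defined element of $\Z[[\mathbf{w}]]$ (which holds because, the edge weights being distinct indeterminates, there are only finitely many walks of any given total degree between two fixed vertices) and that the involution is weight-preserving coefficient by coefficient; both are standard once one works degree-by-degree in $\mathbf{w}$. Finally, total positivity of $B_{I,J}$ means nonnegativity of \emph{all} minors; since $\mathbf{i}$ and $\mathbf{j}$ were arbitrary increasing tuples of any size $r$, and a minor indexed by non-increasing tuples differs from one indexed by the sorted tuples only by a sign from reordering rows/columns — but here the row and column index sets carry the orders $<_{\,I}$ and $<_{\,J}$, so every minor of the ordered submatrix is of the form treated above — the conclusion follows. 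I would close by noting this is exactly \cite[Cor.~9.17]{AlanSokalM.PetroelleB.Zhu-LPandBCF1} transcribed to the present notation, so the proof may simply cite that corollary after setting up the dictionary.
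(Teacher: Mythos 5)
The paper does not prove this lemma itself — it imports it verbatim from \cite[Cor.~9.17]{AlanSokalM.PetroelleB.Zhu-LPandBCF1} — and your reconstruction via the standard Lindstr\"om--Gessel--Viennot sign-reversing involution (acyclicity makes the tail-swap well defined, full nonpermutability empties the set of vertex-disjoint systems for every $\sigma\neq\mathrm{id}$, and the surviving identity term is a series with nonnegative coefficients) is exactly the argument underlying that corollary. Your degree-by-degree treatment of the formal-power-series and infinite-index-set issues is the right way to make it rigorous, so the proof is correct and takes essentially the same route as the cited source.
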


\begin{proof}[Proof of Proposition \ref{total positivity of the matrix Lambda}]
Recall that $\lambda^{(m)}_{i,j}$ is the generating polynomial of the partial $m$-Dyck paths from $(0,(m+1)i)$ to $(j,j)$. 
Observe that every vertex $(x,y)$ of a partial $m$-Dyck path starting from $(0,(m+1)i)$ and ending at $(j,j)$ satisfies $y\geq x$ and $x\equiv y\hspace*{-0,1 cm} \mod (m+1)$. 
Therefore, we can define $\lambda^{(m)}_{i,j}\left(\mathbf{\alpha}\right)$ as the generating polynomial of the paths from $(0,(m+1)i)$ to $(j,j)$ in the directed graph $G_m=\left(V_m,\vec{E}_m\right)$ with vertex set
\begin{equation}
V_m=\left\{(x,y)\in\N\times\N: y\geq x \text{ and } x\equiv y \mod(m+1)\right\}
\end{equation}
and edge set
\begin{equation}
\vec{E}_m=\left\{\big(\left(x_1,y_1\right),\left(x_2,y_2\right)\big)\in V_m\times V_m: x_2-x_1=1 \text{ and } y_2-y_1\in\{1,-m\}\right\}.
\end{equation}

This means that $\Lambda^{(m)}$ is the submatrix of the walk matrix of $G_m$ corresponding to paths with source vertices $\mathrm{I}=\left\{i_n=(0,(m+1)n):n\in\N\right\}$ and sink vertices $\mathrm{J}=\left\{j_n=(n,n):n\in\N\right\}$, totally ordered by $(0,(m+1)n)<(0,(m+1)n')$ and $(n,n)<(n',n')$ if $n<n'$ (see Fig.\ref{Graph Gm}).
Moreover, observe that $G_m$ is a graph embedded in the plane and the vertices of $I\cup J$ lie on the boundary of $G_m$ in the order ``first $I$ in reverse order, then $J$ in order", so the pair $(\mathrm{I},\mathrm{J})$ is clearly fully nonpermutable.
Therefore, because $G_m$ is acyclic, $\Lambda^{(m)}$ is totally positive as a consequence of Lemma \ref{Fully nonpermutable pairs and total positivity}.
\end{proof}

\begin{figure}[ht]
\centering
\begin{tikzpicture}
\filldraw[black] (0,0) circle (3pt) node[anchor=north]{$i_0=(0,0)=j_0$};
\filldraw[black] (0,1) circle (2pt);
\filldraw[black] (0,2) circle (2pt) node[anchor=east]{$i_1=(0,2)$};
\filldraw[black] (0,3) circle (2pt);
\filldraw[black] (0,4) circle (2pt) node[anchor=east]{$i_2=(0,4)$};
\filldraw[black] (0,5) circle (2pt);
\filldraw[black] (0,6) circle (2pt) node[anchor=east]{$i_3=(0,6)$};
\filldraw[black] (1,1) circle (2pt) node[anchor=west]{$j_1=(1,1)$};
\filldraw[black] (2,2) circle (2pt) node[anchor=west]{$j_2=(2,2)$};
\filldraw[black] (3,3) circle (2pt) node[anchor=west]{$j_3=(3,3)$};
\filldraw[black] (1,2) circle (1pt);
\filldraw[black] (1,3) circle (1pt);
\filldraw[black] (1,4) circle (1pt);
\filldraw[black] (1,5) circle (1pt);
\filldraw[black] (2,3) circle (1pt);
\filldraw[black] (2,4) circle (1pt);
\draw[-,color=black] (0,0)  --  (3,3);
\draw[-,color=black, line width=2pt] (0,2) -- (1,1);
\draw[-,color=black, line width=2pt] (0,4) -- (2,2);
\draw[-,color=black, line width=2pt] (0,6) -- (3,3);
\draw[-,color=black] (0,2) -- (2,4);
\draw[-,color=black] (0,4) -- (1,5);
\draw[-,color=black] (0,3) -- (1,2);
\draw[-,color=black] (0,5) -- (2,3);
\draw[-,color=black] (0,1) -- (2.5,3.5);
\draw[-,color=black] (0,3) -- (1.5,4.5);
\draw[-,color=black] (0,5) -- (0.5,5.5);
\draw[dashed,color=black] (3,3)  --  (3.5,3.5) node[anchor=south]{J}; 
\draw[dashed,color=black] (2.5,3.5)  --  (3,4);
\draw[dashed,color=black] (2,4)  --  (2.5,4.5);
\draw[dashed,color=black] (1.5,4.5)  --  (2,5);
\draw[dashed,color=black] (1,5)  --  (1.5,5.5);
\draw[dashed,color=black] (0.5,5.5)  --  (1,6);
\draw[dashed,color=black] (0,6.25)  --  (0,6.75) node[anchor=south]{I};
\end{tikzpicture}
\caption{Graph $G_m$ and sets of source $\mathrm{I}$ and sink $\mathrm{J}$ vertices for $m=1$}
\label{Graph Gm}
\end{figure}
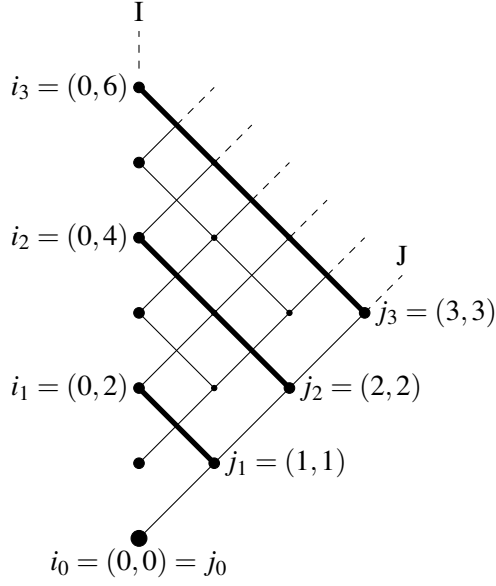

\newpage
Based on \cite[Prop.~8.2]{AlanSokalM.PetroelleB.Zhu-LPandBCF1}, the production matrix of the generalised $m$-Stieltjes-Rogers polynomials $\left(S_{n,k}^{(m)}\left(\mathbf{\alpha}\right)\right)_{n,k\in\N}$ is a ${(m+2)}$-banded unit-lower-Hessenberg matrix admitting a decomposition in $m+1$ bidiagonal matrices:
\begin{equation}
\label{production matrix for m-Stieltjes-Rogers polynomials}
\mathrm{H}^{(m)}=\prod_{i=0}^{m-1}\mathrm{L}\left(\left(\alpha_{k(m+1)+i}\right)_{k\in\Z^+}\right)\cdot\mathrm{U}\left(\left(\alpha_{k(m+1)+m}\right)_{k\in\N}\right),
\end{equation}
where 
\begin{itemize}[leftmargin=*]
	\item
	$\mathrm{L}\left(\left(l_k\right)_{k\in\Z^+}\right)$ is the lower-bidiagonal infinite matrix with entries 
	$\mathrm{L}_{k,k}=1$ and $\mathrm{L}_{k+1,k}=l_{k+1}$ for all $k\in\N$, 
	
	\item
	$\mathrm{U}\left(\left(u_k\right)_{k\in\N}\right)$ is the upper-bidiagonal infinite matrix with entries 
	$\mathrm{U}_{k,k+1}=1$ and $\mathrm{U}_{k,k}=u_k$ for all $k\in\N$.
\end{itemize}
In the following result, we give explicit formulas for the entries of this production matrix.
\begin{proposition}
\label{formula for the entries of a production matrix prop.}
For $m\in\Z^+$ and a sequence of indeterminates $\mathbf{\alpha}=\seq[k\in\N]{\alpha_{k+m}}$, the production matrix of the generalised $m$-Stieltjes-Rogers polynomials $\seq[n_0,n_1\in\N]{\generalisedStieltjesRogersPoly{n_0}{n_1}{\mathbf{\alpha}}}$ is the $(m+2)$-banded unit-lower-Hessenberg matrix $\mathrm{H}^{(m)}=\dis\seq[i,n\in\N]{h^{(m)}_{i,n}\left(\mathbf{\alpha}\right)}$ whose entries are $h^{(m)}_{i,n}\left(\mathbf{\alpha}\right)=0$ if $i\leq n-2$ or $i\geq n+m+1$, $h^{(m)}_{n-1,n}\left(\mathbf{\alpha}\right)=1$ for all $n\geq 1$, and, setting $\alpha_j=0$ for $0\leq j\leq m-1$, 
\begin{equation}
\label{formula for the entries of a production matrix}
h^{(m)}_{n+k,n}\left(\mathbf{\alpha}\right)
=\sum_{m\geq\ell_0>\cdots>\ell_k\geq 0}\,\prod_{j=0}^{k}\alpha_{(m+1)(n+j)+\ell_j}
\quad\text{for all }n\in\N\text{ and }0\leq k\leq m.
\end{equation}
\end{proposition}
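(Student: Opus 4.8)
The plan is to exploit the bidiagonal factorisation \eqref{production matrix for m-Stieltjes-Rogers polynomials} directly, computing the entries of the product of the $m+1$ bidiagonal matrices. Write $\mathrm{H}^{(m)}=\mathrm{L}^{(0)}\cdots\mathrm{L}^{(m-1)}\mathrm{U}$, where $\mathrm{L}^{(i)}=\mathrm{L}\bigl((\alpha_{k(m+1)+i})_{k\in\Z^+}\bigr)$ has ones on the diagonal and $\alpha_{(k+1)(m+1)+i}$ in position $(k+1,k)$, while $\mathrm{U}=\mathrm{U}\bigl((\alpha_{k(m+1)+m})_{k\in\N}\bigr)$ has ones on the superdiagonal and $\alpha_{k(m+1)+m}$ in position $(k,k)$. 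The key observation is that the $(n+k,n)$-entry of the product is a sum over sequences of ``descents''\,: to move from column $n$ to row $n+k$ through the factors $\mathrm{L}^{(0)},\dots,\mathrm{L}^{(m-1)},\mathrm{U}$ in order, at each factor one either stays at the current index (picking up a diagonal entry) or drops by one row (picking up a sub- or super-diagonal entry). Since we start at column index $n$, pass through $m+1$ factors, and must end at row $n+k$, exactly $k$ of the $m+1$ factors must contribute a ``drop'', and the remaining $m+1-k$ contribute a diagonal (or, for $\mathrm{U}$, a superdiagonal) factor. Recording which factor produces the $j$-th drop via an index $\ell_j\in\{0,\dots,m\}$ — where $\ell_j<m$ means the drop comes from $\mathrm{L}^{(\ell_j)}$ and $\ell_j=m$ means it comes from $\mathrm{U}$ — and tracking the running row index through the computation, one finds that the drops must occur at strictly decreasing $\ell$-positions $m\geq\ell_0>\cdots>\ell_k\geq 0$ and that the $j$-th drop carries the weight $\alpha_{(m+1)(n+j)+\ell_j}$; the intervening diagonal factors contribute weight $1$ each. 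This is precisely \eqref{formula for the entries of a production matrix}.

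The steps in order: (1) spell out the entries of $\mathrm{L}^{(i)}$ and $\mathrm{U}$ explicitly as above; (2) by induction on the number of factors, compute the entries of the partial product $\mathrm{L}^{(0)}\cdots\mathrm{L}^{(p-1)}$ for $p\le m$, showing its $(n+k,n)$-entry is $\sum_{p-1\ge \ell_0>\cdots>\ell_{k-1}\ge 0}\prod_{j=0}^{k-1}\alpha_{(m+1)(n+j)+\ell_j}$; the inductive step multiplies by one more lower-bidiagonal matrix $\mathrm{L}^{(p-1)}$, and the only subtlety is checking that the new index $\ell=p-1$ is larger than all previously used indices and that the row argument of the fresh $\alpha$-factor lands correctly because a drop at stage $p-1$ happens from the row reached after $j$ earlier drops, which is $n+j$; (3) finally multiply by $\mathrm{U}$ on the right, which either fixes the column (diagonal factor $\alpha_{n(m+1)+m}$, corresponding to $\ell_k=m$ as the first drop, i.e. the largest index) or shifts it by one (superdiagonal factor, no weight). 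One also has to handle the convention $\alpha_j=0$ for $0\le j\le m-1$, which automatically kills terms where a drop-index $\ell_j$ would force the first subscript below $m$; here the constraint $\ell_0>\cdots>\ell_k$ together with $(m+1)(n+j)\geq m+1>m$ for $n+j\geq 1$ means the convention only matters at $n=0$, exactly where it should truncate the range, and one checks consistency there. The bandedness claims ($h^{(m)}_{i,n}=0$ for $i\leq n-2$ or $i\geq n+m+1$) and $h^{(m)}_{n-1,n}=1$ follow immediately: you cannot drop more than $m+1$ total rows nor rise by more than one across $m+1$ factors, and the unique way to land at row $n-1$ is to take the superdiagonal of $\mathrm{U}$ and all diagonals of the $\mathrm{L}^{(i)}$.

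The main obstacle will be bookkeeping the indices cleanly — specifically, proving in the inductive step that the first subscript of the $j$-th $\alpha$-factor is $(m+1)(n+j)+\ell_j$ and not, say, $(m+1)n+\ell_j$ or $(m+1)(n+j)+(\text{something involving the other }\ell$'s$)$. The point is that the bidiagonal matrices act on the \emph{running} index: after $j$ drops the current row is $n+j$, so the next drop (occurring in factor $\mathrm{L}^{(\ell_j)}$) reads off entry $\mathrm{L}^{(\ell_j)}_{n+j+1,\,n+j}=\alpha_{(n+j+1)(m+1)+\ell_j}$ — wait, this needs care, since $\mathrm{L}^{(i)}_{k+1,k}=\alpha_{(k+1)(m+1)+i}$ gives subscript $(n+j+1)(m+1)+\ell_j$, which must be reconciled with the claimed $(m+1)(n+j)+\ell_j$ by a suitable reindexing of which drop is counted as the ``$j$-th''. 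Sorting out this off-by-one in the labelling of drops versus the order of the factors, so that the final formula matches \eqref{formula for the entries of a production matrix} exactly, is the one genuinely fiddly part; everything else is a routine Lindström-type path count through a layered graph whose layers are the factors of the factorisation. Alternatively, one can bypass the factorisation and argue combinatorially from the definition of $\generalisedStieltjesRogersPoly{n}{k}{\mathbf{\alpha}}$ and the general production-matrix identity $\mathrm{S}^{(m)}=\mathrm{S}^{(m)}\mathrm{H}^{(m)}$ shifted by one row, reading off $h^{(m)}_{i,n}$ from the combinatorics of partial $m$-Dyck paths of ``height drop'' across one unit of horizontal length; but the factorisation route is shorter given that \eqref{production matrix for m-Stieltjes-Rogers polynomials} is already available.
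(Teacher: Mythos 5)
Your approach is sound and rests on the same foundation as the paper's proof, namely a direct evaluation of the bidiagonal product \eqref{production matrix for m-Stieltjes-Rogers polynomials}; the difference is only in how the computation is organised. The paper inducts on $m$ itself: after verifying $m=1$ by hand, it peels off the \emph{leftmost} factor, writes $\mathrm{H}^{(m+1)}=\mathrm{L}\bigl((\alpha_{k(m+2)})_{k\in\Z^+}\bigr)\,\hat{\mathrm{H}}^{(m)}$, and applies the induction hypothesis to $\hat{\mathrm{H}}^{(m)}$, which is an instance of the $m$-case for a relabelled sequence of indeterminates (this is why the statement must be proved for arbitrary $\mathbf{\alpha}$). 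You instead keep $m$ fixed and do one layered path count through the $m+1$ factors (equivalently, an induction on the length of the partial product), which avoids relabelling the whole sequence and makes the origin of the constraint $m\geq\ell_0>\cdots>\ell_k\geq 0$ more transparent: the running row index is non-increasing while the factors are traversed in increasing order, so the drop from the higher row must occur at the strictly smaller factor index, with the diagonal of $\mathrm{U}$ (the last factor, reachable only at row $n$) playing the role of the largest index $m$.

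The one thing you must actually repair is the off-by-one you flag yourself: your intermediate formula for $\mathrm{L}^{(0)}\cdots\mathrm{L}^{(p-1)}$ is wrong as written (already for $p=1$, $k=1$ it gives $\alpha_{(m+1)n}$, whereas the true entry is $\mathrm{L}^{(0)}_{n+1,n}=\alpha_{(m+1)(n+1)}$). The clean fix is to index each $\alpha$-factor by the row \emph{from which} the drop occurs rather than by the order of the drop among the factors: a drop from row $n+j$ to row $n+j-1$ inside $\mathrm{L}^{(i)}$ contributes $\mathrm{L}^{(i)}_{n+j,n+j-1}=\alpha_{(m+1)(n+j)+i}$, so
\begin{equation*}
\bigl(\mathrm{L}^{(0)}\cdots\mathrm{L}^{(p-1)}\bigr)_{n+k,n}
=\sum_{p-1\geq\ell_1>\cdots>\ell_k\geq 0}\,\prod_{j=1}^{k}\alpha_{(m+1)(n+j)+\ell_j},
\end{equation*}
with the product running over $j=1,\dots,k$ (rows $n+1,\dots,n+k$), not $j=0,\dots,k-1$. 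Writing $P=\mathrm{L}^{(0)}\cdots\mathrm{L}^{(m-1)}$, the final multiplication gives $h^{(m)}_{n+k,n}=\alpha_{(m+1)n+m}\,P_{n+k,n}+P_{n+k,n-1}$; the first term supplies exactly the summands of \eqref{formula for the entries of a production matrix} with $\ell_0=m$ (it is $\ell_0$, not $\ell_k$, that equals $m$ in the decreasing labelling), and the second, after the shift $n\to n-1$, $k\to k+1$ in the display above, supplies those with $\ell_0\leq m-1$. With that bookkeeping in place the rest of your argument — the bandedness, $h^{(m)}_{n-1,n}=1$, and the fact that the convention $\alpha_j=0$ for $0\leq j\leq m-1$ kills precisely the $\ell_0\leq m-1$ terms at $n=0$, where the column $n-1$ does not exist — goes through exactly as you describe.
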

%
The cases $m=1$ and $m=2$ of Equations \eqref{production matrix for m-Stieltjes-Rogers polynomials}-\eqref{formula for the entries of a production matrix} are explicitly written in \cite[Eqs.~7.7-7.8]{AlanSokalM.PetroelleB.Zhu-LPandBCF1}.

\begin{proof}
We prove this result by induction on $m\in\Z^+$. 
	
When $m=1$, \eqref{production matrix for m-Stieltjes-Rogers polynomials} reduces to $\mathrm{H}^{(1)}=\mathrm{L}\left(\left(\alpha_{2k}\right)_{k\in\Z^+}\right)\,\mathrm{U}\left(\left(\alpha_{2k+1}\right)_{k\in\N}\right)$.
As a result,
\begin{equation}
h_{n,n+1}^{(1)}\left(\mathbf{\alpha}\right)=1,
\quad
h_{n,n}^{(1)}\left(\mathbf{\alpha}\right)=\alpha_{2n}+\alpha_{2n+1},
\quad\text{and}\quad
h_{n+1,n}^{(1)}\left(\mathbf{\alpha}\right)=\alpha_{2n+1}\alpha_{2n+2}
\quad\text{for all }n\in\N,
\end{equation}
and all other entries of $\mathrm{H}^{(1)}$ are equal to zero. 
Therefore, Proposition \ref{formula for the entries of a production matrix prop.} holds for $m=1$.
	
Now we suppose that Proposition \ref{formula for the entries of a production matrix prop.} holds for some $m\in\Z^+$ and we show that it also holds for $m+1$.
	
Recalling again \eqref{production matrix for m-Stieltjes-Rogers polynomials},
\begin{equation}
\label{decomposition in bidiagonal matrices m+1}
\mathrm{H}^{(m+1)}
=\prod_{i=0}^{m}\mathrm{L}\left(\left(\alpha_{k(m+2)+i}\right)_{k\in\Z^+}\right)\cdot\mathrm{U}\left(\left(\alpha_{k(m+2)+(m+1)}\right)_{k\in\N}\right)
=\mathrm{L}\left(\left(\alpha_{k(m+2)}\right)_{k\in\Z^+}\right)\,\hat{\mathrm{H}}^{(m)},
\end{equation}
where 
\begin{equation}
\hat{\mathrm{H}}^{(m)}=\prod_{i=0}^{m}\mathrm{L}\left(\left(\alpha_{k(m+2)+i}\right)_{k\in\Z^+}\right)\cdot\mathrm{U}\left(\left(\alpha_{k(m+2)+(m+1)}\right)_{k\in\N}\right).
\end{equation}
Writing $\hat{\mathrm{H}}^{(m)}=\seq[i,n\in\N]{\hat{h}_{i,n}^{(m)}}$, we have
\begin{equation}
\label{entries of H as combination of entries of Hhat}
h_{n+k,n}^{(m+1)}\left(\mathbf{\alpha}\right)
=\hat{h}_{n+k,n}^{(m)}\left(\mathbf{\alpha}\right)+\alpha_{(m+2)(n+k)}\hat{h}_{n+k-1,n}^{(m)}\left(\mathbf{\alpha}\right).
\end{equation}
Using the induction hypothesis, $\hat{\mathrm{H}}^{(m)}$ is a $(m+2)$-banded unit-lower-Hessenberg matrix such that
\begin{equation}
\hat{h}_{n+k,n}^{(m)}\left(\mathbf{\alpha}\right)
=\sum_{m\geq\ell_0>\cdots>\ell_k\geq 0}\,\prod_{j=0}^{k}\alpha_{(m+2)(n+j)+(\ell_j+1)}
\quad\text{for }n\in\N\text{ and }0\leq k\leq m.
\end{equation}
Hence, \eqref{entries of H as combination of entries of Hhat} implies that $h_{i,n}^{(m+1)}\left(\mathbf{\alpha}\right)=0$ if $i\leq n-2$ or $i\geq n+m+2$, $h_{n-1,n}^{(m+1)}\left(\mathbf{\alpha}\right)=1$ for any $n\geq 1$, and
\begin{equation}
h_{n+k,n}^{(m+1)}\left(\mathbf{\alpha}\right)
=\sum_{m\geq\ell_0>\cdots>\ell_k\geq 0}\,\prod_{j=0}^{k}\alpha_{(m+2)(n+j)+(\ell_j+1)}
+\alpha_{(m+2)(n+k)}\sum_{m\geq\ell_0>\cdots>\ell_{k-1}\geq 0}\,\prod_{j=0}^{k-1}\alpha_{(m+2)(n+j)+(\ell_j+1)},
\end{equation}
for any $n\in\N$ and $0\leq k\leq m+1$, with the first sum being an empty sum (thus, equal to $0$) when $k=m+1$.	

Setting $\lambda_j=\ell_j+1$ in both sums and $\lambda_k=0$ in the second sum, we obtain
\begin{equation}
h_{n+k,n}^{(m+1)}\left(\mathbf{\alpha}\right)
=\sum_{m+1\geq\lambda_0>\cdots>\lambda_k\geq 0}\,\prod_{j=0}^{k}\alpha_{(m+2)(n+j)+\lambda_j}.
\end{equation}
Therefore, Proposition \ref{formula for the entries of a production matrix prop.} holds for $m+1$, which concludes our proof.
\end{proof}

\section{Multiple orthogonal polynomials and branched continued fractions}
\label{Connection of MOP and BCF}
The connection between multiple orthogonal polynomials and branched continued fractions was recently introduced and analysed in \cite{AlanSokalMOPd-opProdMatBCF}.
Here we revisit and further explore this connection and its applications in the study of multiple orthogonal polynomials.

Firstly, we define a $m$-orthogonal polynomial sequence such that the moments of its dual sequence are generalised $m$-Stieltjes-Rogers polynomials (Theorem \ref{MOP for gen. m-S.R. poly}), we show that this polynomial sequence is $m$-orthogonal with respect to functionals whose moments are modified $m$-Stieltjes-Rogers polynomials (Proposition \ref{MOP for generalised and modified m-S.R. poly general result}), and we give explicit formulas for the recurrence coefficients of this $m$-orthogonal polynomial sequence (Theorem \ref{recurrence relation for MOP associated with a BCF}).
Then, we assume the positivity of the branched-continued-fraction coefficients and, under that assumption, we show that the orthogonality conditions can be written via positive measures on the positive real line (Corollary \ref{modified m-S.R. poly as moments of positive measures}) and that the zeros of the $m$-orthogonal polynomials are all simple, real, and positive and the zeros of consecutive polynomials interlace (Theorem \ref{location and interlacing of the zeros, general case with positive BCF coeff}).
Finally, we give an upper bound for the zeros of $m$-orthogonal polynomials using the asymptotic behaviour of their recurrence coefficients (Theorem \ref{asymptotic behaviour of the largest zero - general theorem for d-OPS}).

\subsection{Connection via production matrices}
\label{Connection via production matrices}
Here it is useful to introduce matrices representing sequences of linear functionals and polynomial sequences.
Precisely, we call the \textit{moment matrix} of a sequence of linear functionals $\seq[k\in\N]{u_k}$ to the matrix $A=\seq[n,k\in\N]{a_{n,k}}$ such that $a_{n,k}=\Functional{u_k}{x^n}$ and the \textit{coefficient matrix} of a polynomial sequence $\seq{P_n(x)}$ to the matrix $B=\seq[n,k\in\N]{b_{n,k}}$ such that $P_n(x)=\sum\limits_{k=0}^{n}b_{n,k}\,x^k$.
A sequence of linear functionals $\seq[k\in\N]{u_k}$ is the \textit{dual sequence} of a polynomial sequence $\seq{P_n(x)}$ if $\Functional{u_k}{P_n}=\delta_{k,n}$. 
Using their representing matrices, this is equivalent to say that $\seq[k\in\N]{u_k}$ is the dual sequence of $\seq{P_n(x)}$ if and only if the moment matrix of $\seq[k\in\N]{u_k}$ is the inverse of the coefficient matrix of $\seq{P_n(x)}$.


The theory of production matrices is instrumental to the study of the connection between multiple orthogonal polynomials and branched continued fractions. 
The key result linking polynomial sequences with production matrices is the following.
\begin{proposition}
\label{PolySeqAndHessMatrixProp}
\cite[Prop.~3.2~\&~3.4]{AlanSokalMOPd-opProdMatBCF}
Given a monic polynomial sequence $\seq{P_n(x)}$, there exists an unique unit-lower-Hessenberg matrix $\mathrm{H}=\seq[n,k\in\N]{h_{n,k}}$ such that
\begin{equation}
\label{recurrence relation generic PS}
P_{n+1}(x)=x\,P_n(x)-\sum_{k=0}^{n}h_{n,k}\,P_k(x).
\end{equation}
Conversely, given any unit-lower-Hessenberg matrix $\mathrm{H}=\seq[n,k\in\N]{h_{n,k}}$, the recurrence relation \eqref{recurrence relation generic PS} with the initial condition $P_0(x)=1$ determines an unique polynomial sequence $\seq{P_n(x)}$.
Moreover, for $n\geq 1$, $P_n(x)$ is the characteristic polynomial of the $(n\times n)$-matrix $\mathrm{H}_n$ formed by the first $n$ rows and columns of $\mathrm{H}$, which means that, if we denote the $(n\times n)$-identity matrix by $\mathrm{I}_n$, then 
\begin{equation}
\label{sequence of characteristic polynomials}
P_n(x)=\det\left(x\,\mathrm{I}_n-\mathrm{H}_n\right)
\quad\text{for any }n\geq 1.
\end{equation}
Furthermore, the coefficient matrix of the sequence $\seq{P_n(x)}$ is the inverse of the output matrix of $\mathrm{H}$. 
Therefore, $\mathrm{H}$ is the production matrix of the moment matrix of the dual sequence of $\seq{P_n(x)}$.
\end{proposition}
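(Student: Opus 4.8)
The plan is to establish the three assertions of Proposition \ref{PolySeqAndHessMatrixProp} in sequence, since each builds on the previous one. First I would prove the existence-and-uniqueness part. Given a monic sequence $\seq{P_n(x)}$, observe that $\{P_0,\dots,P_n\}$ is a basis for the polynomials of degree $\leq n$; since $x\,P_n(x)$ is monic of degree $n+1$, the difference $x\,P_n(x)-P_{n+1}(x)$ has degree $\leq n$ and hence expands uniquely as $\sum_{k=0}^n h_{n,k}P_k(x)$. Collecting the coefficients $h_{n,k}$ (and setting $h_{n,n+1}=1$, $h_{n,k}=0$ for $k>n+1$) yields a unit-lower-Hessenberg matrix, and uniqueness is immediate from the uniqueness of the expansion in the basis. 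The converse direction is symmetric: given $\mathrm{H}$ unit-lower-Hessenberg, the recurrence \eqref{recurrence relation generic PS} with $P_0=1$ defines $P_{n+1}$ from $P_0,\dots,P_n$ recursively, and one checks by induction that $P_{n+1}$ is monic of degree $n+1$ (the $x\,P_n$ term supplies the leading term, the subtracted sum has lower degree).

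Next I would prove the characteristic-polynomial identity \eqref{sequence of characteristic polynomials}. The clean way is induction on $n$, expanding $\det(x\,\mathrm{I}_n-\mathrm{H}_n)$ along the last column. The last column of $\mathrm{H}_n$ is $(0,\dots,0,h_{n-2,n-1},h_{n-1,n-1})^{\mathsf T}$ — only the bottom two entries are nonzero since $\mathrm{H}$ is lower-Hessenberg — and $h_{n-2,n-1}=1$. Laplace expansion along this column produces two terms: one giving $(x-h_{n-1,n-1})\det(x\,\mathrm{I}_{n-1}-\mathrm{H}_{n-1})$ and one involving the minor obtained by deleting row $n-1$ and column $n-1$; iterating the expansion on the latter minor (whose successive last columns again have the Hessenberg structure) recovers exactly the terms $h_{n-1,k}\,\det(x\,\mathrm{I}_k-\mathrm{H}_k)$ for $k<n-1$. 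Matching this against the recurrence \eqref{recurrence relation generic PS} evaluated at the polynomials $P_k=\det(x\,\mathrm{I}_k-\mathrm{H}_k)$ supplied by the induction hypothesis shows the determinants satisfy the same recurrence and initial condition as $\seq{P_n(x)}$, hence coincide. (Alternatively, one can cite the standard fact that the characteristic polynomials of the leading principal submatrices of a unit-lower-Hessenberg matrix satisfy precisely this recurrence; this is the Hessenberg analogue of the three-term recurrence for tridiagonal matrices.)

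Finally I would handle the statement about matrices. Let $B=\seq[n,k\in\N]{b_{n,k}}$ be the coefficient matrix, so $P_n(x)=\sum_k b_{n,k}x^k$ with $b_{n,n}=1$ and $b_{n,k}=0$ for $k>n$; thus $B$ is unit-lower-triangular and invertible. Let $A=B^{-1}$. I claim $A$ is the output matrix of $\mathrm{H}$, i.e. $a_{n,k}=(\mathrm{H}^n)_{0,k}$. Translating the recurrence \eqref{recurrence relation generic PS} into the coefficient matrices: multiplying $P_n$ by $x$ shifts coefficients, so in matrix form the recurrence reads $B\,\mathrm{H}^{\mathsf T}$-type relation — more precisely, writing $\seq{P_n}$ as the vector $B\cdot(1,x,x^2,\dots)^{\mathsf T}$, the recurrence says $\mathrm{H}\,B = B\,\Delta$ where $\Delta$ is the shift matrix sending $x^k\mapsto x^{k+1}$ (i.e. $\Delta_{k,k-1}=1$), because $x\,P_n = \sum_k b_{n,k}x^{k+1}$ corresponds to the row $n$ of $B\Delta^{\mathsf T}$... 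I will set up this bookkeeping carefully, but the upshot is $\mathrm{H} = B\,\Delta^{\mathsf T}\,B^{-1}$ for the appropriate shift $\Delta$, whence $\mathrm{H}^n = B\,(\Delta^{\mathsf T})^n\,B^{-1}$ and reading off row $0$ gives $(\mathrm{H}^n)_{0,k}=(B^{-1})_{n,k}=a_{n,k}$ because row $0$ of $B$ is $(1,0,0,\dots)$. Then, since the moment matrix of the dual sequence is by definition the inverse of the coefficient matrix $B$ (as recalled just before the proposition), the moment matrix equals $A$, the output matrix of $\mathrm{H}$; hence $\mathrm{H}$ is the production matrix of that moment matrix, as claimed.

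The main obstacle I anticipate is the index bookkeeping in the last paragraph: getting the conjugation $\mathrm{H}=B\,\Delta^{\mathsf T}\,B^{-1}$ exactly right (which transpose, which side the shift acts on) requires care because the recurrence mixes the action of $x$ on polynomials (a shift on the $x$-grading, i.e. columns of $B$) with the expansion in the $P_k$ basis (the rows of $B$). Once the matrix identity $\mathrm{H}B=B\Delta^{\mathsf T}$ is correctly pinned down, everything else is formal. The determinant computation is routine but the repeated Laplace expansion needs to be organized cleanly so that the lower-order terms are correctly identified; structuring it as a single induction with a clearly stated induction hypothesis avoids a messy nested expansion.
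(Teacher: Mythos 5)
The paper does not prove this proposition: it is imported verbatim from \cite[Prop.~3.2~\&~3.4]{AlanSokalMOPd-opProdMatBCF}, with only a pointer to references for the characteristic-polynomial identity \eqref{sequence of characteristic polynomials}. So there is no in-paper argument to compare against; judged on its own, your plan is correct and is the standard proof. The basis argument for existence/uniqueness of $\mathrm{H}$ and the converse induction are fine (and work over any commutative ring, which matters here). Your matrix bookkeeping in the last part does pin down correctly: with $\mathbf{P}=B\mathbf{x}$ for $\mathbf{x}=(1,x,x^2,\dots)^{\mathsf T}$, the recurrence is exactly $\mathrm{H}\mathbf{P}=x\mathbf{P}=B\Delta\mathbf{x}$ with $\Delta_{k,k+1}=1$, hence $\mathrm{H}B=B\Delta$, $\mathrm{H}^n=B\Delta^nB^{-1}$, and row $0$ of $\mathrm{H}^n$ is row $n$ of $B^{-1}$ since $B_{0,\cdot}=(1,0,\dots)$; combined with the paper's definition of the dual sequence (moment matrix $=$ inverse of coefficient matrix) this gives the final claim. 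One small slip in the determinant step: in the Laplace expansion of $\det(x\,\mathrm{I}_n-\mathrm{H}_n)$ along the last column, the second term comes from the entry $-h_{n-2,n-1}=-1$ in row $n-2$, so its minor is obtained by deleting row $n-2$ (not row $n-1$) and column $n-1$; the cofactor sign $(-1)^{(n-2)+(n-1)}$ combines with the $-1$ entry and the chain of superdiagonal $-1$'s in the iterated expansion to reproduce exactly $-\sum_{k=0}^{n-2}h_{n-1,k}\,p_k(x)$, so the determinants satisfy \eqref{recurrence relation generic PS} as you intend. With that index corrected, the argument is complete.
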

See the remarks after \cite[Prop.~3.4]{AlanSokalMOPd-opProdMatBCF} to find some references for \eqref{sequence of characteristic polynomials}.
In the case where $\mathrm{H}$ is a $(d+2)$-banded unit-lower-Hessenberg matrix, and consequently $\seq{P_n(x)}$ is a $d$-orthogonal polynomial sequence, \eqref{sequence of characteristic polynomials} was mentioned in the introduction, at the end of Subsection \ref{Background MOP}.

If we consider the previous proposition with the unit-lower-Hessenberg matrix $\mathrm{H}$ being the production matrix of the generalised $m$-Stieltjes-Rogers polynomials $\left(S_{n,k}^{(m)}\left(\boldsymbol{\alpha}\right)\right)_{n,k\in\N}$, we obtain the following result connecting lattice paths and branched continued fractions with multiple orthogonal polynomials.
\begin{theorem}
\label{MOP for gen. m-S.R. poly}
For $m\in\Z^+$ and a sequence $\dis\mathbf{\alpha}=\seq[k\in\N]{\alpha_{k+m}}$ in a commutative ring $R$, let $\mathrm{H}=\seq[n,k\in\N]{h_{n,k}}$ be the $(m+2)$-banded unit-lower-Hessenberg production matrix of the generalised $m$-Stieltjes-Rogers polynomials $\left(S_{n,k}^{(m)}\left(\boldsymbol{\alpha}\right)\right)_{n,k\in\N}$ and $\seq{P_n(x)}$ be the polynomial sequence satisfying the recurrence relation
\begin{equation}
\label{recurrence relation of order m+1 Hessenberg matrix}
P_{n+1}(x)=x\,P_n(x)-\sum_{j=0}^{\min(n,m)}h_{n,n-j}\,P_{n-j}(x),
\end{equation}
with the initial condition $P_0(x)=1$.
Then, the dual sequence of $\seq{P_n(x)}$ is $\seq[k\in\N]{u_k}$ defined by 
\begin{equation}
\label{generalised m-S.R. poly as moments}
\Functional{u_k}{x^n}=S_{n,k}^{(m)}\left(\boldsymbol{\alpha}\right)
\quad\text{for all }k,n\in\N,
\end{equation}
and,  if $h_{n+m,n}\neq 0$ for all $n\in\N$, $\seq{P_n(x)}$ is $m$-orthogonal with respect to $\left(u_0,\cdots,u_{m-1}\right)$.
\end{theorem}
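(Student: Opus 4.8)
The plan is to exploit Proposition \ref{PolySeqAndHessMatrixProp} applied to the specific production matrix $\mathrm{H} = \mathrm{H}^{(m)}$ of the generalised $m$-Stieltjes-Rogers polynomials. First I would note that the recurrence \eqref{recurrence relation of order m+1 Hessenberg matrix} is exactly the instance of \eqref{recurrence relation generic PS} in which $\mathrm{H}$ is $(m+2)$-banded: the sum $\sum_{k=0}^{n} h_{n,k} P_k$ collapses to $\sum_{j=0}^{\min(n,m)} h_{n,n-j} P_{n-j}$ because $h_{n,k}=0$ for $k<n-m$ (and the upper band gives $h_{n,n+1}=1$, accounting for the monic leading term). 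So the polynomial sequence in the statement is precisely the one attached to $\mathrm{H}^{(m)}$ by Proposition \ref{PolySeqAndHessMatrixProp}.

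Next, the claim about the dual sequence follows directly from the last sentence of Proposition \ref{PolySeqAndHessMatrixProp}: since $\mathrm{H}^{(m)}$ is the production matrix of the generalised $m$-Stieltjes-Rogers polynomials (this is the defining property stated before Proposition \ref{formula for the entries of a production matrix prop.}, i.e. $S^{(m)}_{n,k}(\boldsymbol{\alpha}) = \bigl((\mathrm{H}^{(m)})^n\bigr)_{0,k}$), that very matrix $S^{(m)} = \bigl(S^{(m)}_{n,k}(\boldsymbol{\alpha})\bigr)_{n,k\in\N}$ is the output matrix of $\mathrm{H}^{(m)}$, hence it is the moment matrix of the dual sequence of $\seq{P_n(x)}$. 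Reading off rows versus columns, this says exactly $\Functional{u_k}{x^n} = S^{(m)}_{n,k}(\boldsymbol{\alpha})$, which is \eqref{generalised m-S.R. poly as moments}. One should be a little careful here that $\mathrm{H}^{(m)}$ is unit-lower-Hessenberg and row-finite so that all powers are well defined and the output/production correspondence is the honest bijection described in the Production matrices subsection; since $\mathrm{H}^{(m)}$ is $(m+2)$-banded this is immediate.

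It remains to deduce the $m$-orthogonality with respect to $(u_0,\dots,u_{m-1})$ under the hypothesis $h_{n+m,n}\neq 0$ for all $n$. Here I would invoke Maroni's characterisation \eqref{recurrence relation for a d-OPS}: a monic polynomial sequence is $m$-orthogonal precisely when it satisfies an $(m+1)$-order recurrence of that shape with the leading coefficient $\gamma^{[m]}_{n}$ (i.e. the bottom-band entry, which is $h_{n+m,n}$ in our indexing) nonzero for all $n$. The recurrence \eqref{recurrence relation of order m+1 Hessenberg matrix} is exactly of Maroni's form with $\gamma^{[j]}_{n-j} = h_{n,n-j}$, and the $(m+2)$-banded structure of $\mathrm{H}^{(m)}$ (Proposition \ref{formula for the entries of a production matrix prop.}, with the bottom band $h_{n+m,n}(\boldsymbol{\alpha}) = \prod_{j=0}^{m}\alpha_{(m+1)(n+j)+(m-j)}$, up to reindexing) shows the recurrence has the correct order. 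So the nonvanishing hypothesis $h_{n+m,n}\neq 0$ is exactly Maroni's condition, and $m$-orthogonality follows. I would then identify which $m$ functionals the orthogonality is against: from \eqref{generalised m-S.R. poly as moments} the functionals $u_0,\dots,u_{m-1}$ have moments given by the first $m$ columns of $S^{(m)}$, and a short unravelling of the step-line orthogonality conditions \eqref{type II orthogonality conditions}–\eqref{d-orthogonality conditions} against the dual-sequence relation $\Functional{u_k}{P_n} = \delta_{k,n}$ pins down that $\seq{P_n}$ is type II multiple orthogonal on the step-line of $\N^m$ for $(u_0,\dots,u_{m-1})$.

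The main obstacle I anticipate is not any single computation but the bookkeeping of conventions: reconciling the "production matrix of $S^{(m)}$" as stated abstractly with the bidiagonal decomposition \eqref{production matrix for m-Stieltjes-Rogers polynomials}, and checking that Maroni's $m$-orthogonality criterion applies over a general commutative ring $R$ rather than only a field (the references \cite{MaroniOrthogonalite,WalterVAJ.CoussementGaussianQuadMOP} are stated over $\R$ or $\C$), so one may instead want to argue directly that the orthogonality conditions $\Functional{u_j}{x^k P_n}=0$ for $k < \lceil (n-j)/m\rceil$ hold by expanding $x^k P_n$ in the basis $\seq{P_\ell}$ and using $\Functional{u_j}{P_\ell}=\delta_{j,\ell}$ together with the band structure, with the nonvanishing of $h_{n+m,n}$ guaranteeing the "normality" (nontriviality) part of the conditions. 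Everything else is a direct translation through Proposition \ref{PolySeqAndHessMatrixProp}.
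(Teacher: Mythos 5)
Your proposal is correct and follows essentially the same route as the paper's proof: read off the dual sequence from Proposition \ref{PolySeqAndHessMatrixProp} (the output matrix of $\mathrm{H}$ being $\mathrm{S}^{(m)}$), and obtain the $m$-orthogonality from the $(m+2)$-banded recurrence via Maroni's characterisation with $h_{n+m,n}\neq 0$. Your extra care about the commutative-ring setting and the explicit unravelling of the step-line conditions is a reasonable elaboration, but the paper's (much terser) argument is the same in substance.
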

\begin{proof}
Because $\mathrm{H}$ is a $(m+2)$-banded unit-lower-Hessenberg matrix, \eqref{recurrence relation generic PS} reduces to \eqref{recurrence relation of order m+1 Hessenberg matrix}.
As a result, the polynomial sequence $\seq{P_n(x)}$ is $m$-orthogonal with respect to the first $m$ elements of its dual sequence.
Furthermore, due to Proposition \ref{PolySeqAndHessMatrixProp}, we know that, because $\mathrm{H}$ is the production matrix of $\left(S_{n,k}^{(m)}\left(\boldsymbol{\alpha}\right)\right)_{n,k\in\N}$, the dual sequence $\seq[k\in\N]{u_k}$ of $\seq{P_n(x)}$ satisfies \eqref{generalised m-S.R. poly as moments}.
\end{proof}

We know from the latter theorem that the production matrix of the generalised $m$-Stieltjes-Rogers polynomials $\left(S_{n,k}^{(m)}\left(\boldsymbol{\alpha}\right)\right)_{n,k\in\N}$ determines a $m$-orthogonal polynomial sequence with respect to linear functionals whose moments are the same generalised $m$-Stieltjes-Rogers polynomials. 
The following result gives a relation between the sequences of linear functionals whose moment matrices are formed by the generalised and modified $m$-Stieltjes-Rogers polynomials as a corollary of Proposition \ref{relation between generalised and modified m-Stieltjes-Rogers polynomials prop.}. 
\begin{lemma}
\label{relation between the linear functionals whose moments are generalised and modified m-S.-R. poly}
	
For $m\in\Z^+$ and a sequence $\dis\mathbf{\alpha}=\seq[k\in\N]{\alpha_{k+m}}$ in a commutative ring $R$, let $\Lambda=\left(\lambda_{i,j}\left(\mathbf{\alpha}\right)\right)_{i,j\in\N}$ be the upper-triangular matrix defined in Proposition \ref{relation between generalised and modified m-Stieltjes-Rogers polynomials prop.} and let $\seq[k\in\N]{u_k}$ and $\seq[k\in\N]{v_k}$ be the sequences of linear functionals defined by 
\begin{equation}
\label{functionals with moments equal to m-S.R. poly}
\Functional{u_k}{x^n}=\generalisedStieltjesRogersPoly{n}{k}{\mathbf{\alpha}}
\quad\text{and}\quad
\Functional{v_k}{x^n}=\modifiedStieltjesRogersPoly{n}{k}{\mathbf{\alpha}}
\quad\text{for all } n,k\in\N.
\end{equation}
Then, the sequences $\seq[k\in\N]{u_k}$ and $\seq[k\in\N]{v_k}$ are related by
\begin{equation}
v_k=\sum_{i=0}^{k}\lambda_{i,k}\left(\mathbf{\alpha}\right)\,u_i
\quad\text{for all }k\in\N.
\end{equation}
\end{lemma}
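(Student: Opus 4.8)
The plan is to translate the matrix identity of Proposition~\ref{relation between generalised and modified m-Stieltjes-Rogers polynomials prop.} into the language of linear functionals and their moment matrices. Recall that $\Functional{u_k}{x^n}=\generalisedStieltjesRogersPoly{n}{k}{\mathbf{\alpha}}$ says precisely that the moment matrix of $\seq[k\in\N]{u_k}$ is $\mathrm{S}^{(m)}=\left(\generalisedStieltjesRogersPoly{n}{k}{\mathbf{\alpha}}\right)_{n,k\in\N}$, and likewise $\Functional{v_k}{x^n}=\modifiedStieltjesRogersPoly{n}{k}{\mathbf{\alpha}}$ says the moment matrix of $\seq[k\in\N]{v_k}$ is $\hat{\mathrm{S}}^{(m)}=\seq[n,j\in\N]{\modifiedStieltjesRogersPoly{n}{j}{\mathbf{\alpha}}}$. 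Proposition~\ref{relation between generalised and modified m-Stieltjes-Rogers polynomials prop.} gives $\hat{\mathrm{S}}^{(m)}=\mathrm{S}^{(m)}\,\Lambda^{(m)}$, which written entrywise is exactly \eqref{relation between generalised and modified m-Stieltjes-Rogers polynomials}, namely $\modifiedStieltjesRogersPoly{n}{k}{\mathbf{\alpha}}=\sum_{i=0}^{k}\generalisedStieltjesRogersPoly{n}{i}{\mathbf{\alpha}}\,\lambda^{(m)}_{i,k}\left(\mathbf{\alpha}\right)$ for all $n,k\in\N$.

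First I would fix $k\in\N$ and rewrite the right-hand side of the claimed identity $v_k=\sum_{i=0}^{k}\lambda_{i,k}\left(\mathbf{\alpha}\right)\,u_i$ as a linear functional, using linearity of the dual space: the functional $w_k:=\sum_{i=0}^{k}\lambda_{i,k}\left(\mathbf{\alpha}\right)\,u_i$ is well-defined (a finite combination) and its moment of order $n$ is $\Functional{w_k}{x^n}=\sum_{i=0}^{k}\lambda_{i,k}\left(\mathbf{\alpha}\right)\,\Functional{u_i}{x^n}=\sum_{i=0}^{k}\lambda_{i,k}\left(\mathbf{\alpha}\right)\,\generalisedStieltjesRogersPoly{n}{i}{\mathbf{\alpha}}$. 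By \eqref{relation between generalised and modified m-Stieltjes-Rogers polynomials} this equals $\modifiedStieltjesRogersPoly{n}{k}{\mathbf{\alpha}}=\Functional{v_k}{x^n}$ for every $n\in\N$. Since a linear functional on $R[x]$ is uniquely determined by its moments (as recalled in Subsection~\ref{Background MOP}), it follows that $w_k=v_k$, which is the desired relation.

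There is essentially no obstacle here: the whole content is the combinatorial decomposition of partial $m$-Dyck paths already carried out in Proposition~\ref{relation between generalised and modified m-Stieltjes-Rogers polynomials prop.}, and Lemma~\ref{relation between the linear functionals whose moments are generalised and modified m-S.-R. poly} is the routine reformulation in terms of functionals. The one point worth stating carefully is why the identification of moments suffices to conclude equality of functionals, and why the sum is legitimately finite (because $\Lambda^{(m)}$ is upper-triangular, the sum over $i$ only runs up to $k$), so that no convergence issue arises in the commutative ring $R$. I would phrase the proof as: restate the entrywise form of $\hat{\mathrm{S}}^{(m)}=\mathrm{S}^{(m)}\Lambda^{(m)}$, invoke that the two moment matrices coincide with those of $\seq{v_k}$ and $\seq{u_k}$, and then read off the column relation as a relation among functionals, concluding by uniqueness of a functional given its moments.
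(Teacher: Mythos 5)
Your proof is correct and is essentially the argument the paper intends: the lemma is presented there as an immediate corollary of Proposition \ref{relation between generalised and modified m-Stieltjes-Rogers polynomials prop.} with no separate proof, and your write-up simply makes explicit the routine translation from the entrywise identity $\modifiedStieltjesRogersPoly{n}{k}{\mathbf{\alpha}}=\sum_{i=0}^{k}\generalisedStieltjesRogersPoly{n}{i}{\mathbf{\alpha}}\,\lambda^{(m)}_{i,k}\left(\mathbf{\alpha}\right)$ to the equality of functionals via uniqueness of a functional given its moments. The points you flag (finiteness of the sum from upper-triangularity of $\Lambda^{(m)}$, and determination of a functional by its moments) are exactly the right ones to note.
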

Recall that if the sequence $\dis\mathbf{\alpha}=\seq[k\in\N]{\alpha_{k+m}}$ has no zeroes or divisors of zero, then $\lambda_{i,i}\left(\mathbf{\alpha}\right)\neq 0$ for all $i\in\N$.
Therefore, the type II multiple orthogonal polynomials on the step-line with respect to the first elements of the sequences of linear functionals $\seq[k\in\N]{u_k}$ and $\seq[k\in\N]{v_k}$ coincide, as explained in the following result.
\begin{proposition}
\label{MOP for generalised and modified m-S.R. poly general result}
For $m\in\Z^+$ and a sequence $\dis\mathbf{\alpha}=\seq[k\in\N]{\alpha_{k+m}}$ without any zeroes or divisors of zero in a commutative ring $R$, let $\seq[k\in\N]{u_k}$ and $\seq[k\in\N]{v_k}$ be the sequences of linear functionals defined by \eqref{functionals with moments equal to m-S.R. poly}.
Then, for any $d\in\Z^+$, a polynomial sequence is $d$-orthogonal with respect to $\left(v_0,\cdots,v_{d-1}\right)$ if and only if it is $d$-orthogonal with respect to $\left(u_0,\cdots,u_{d-1}\right)$.
In particular, the $m$-orthogonal polynomial sequence $\seq{P_n(x)}$ with respect to $\left(v_0,\cdots,v_{m-1}\right)$ exists and it satisfies the recurrence relation \eqref{recurrence relation of order m+1 Hessenberg matrix}, where $\mathrm{H}=\seq[n,k\in\N]{h_{n,k}}$ is the production matrix of the generalised $m$-Stieltjes-Rogers polynomials $\left(S_{n,k}^{(m)}\left(\boldsymbol{\alpha}\right)\right)_{n,k\in\N}$.
\end{proposition}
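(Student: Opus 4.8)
The plan is to derive the whole statement from Lemma~\ref{relation between the linear functionals whose moments are generalised and modified m-S.-R. poly}, which writes $v_k=\sum_{i=0}^{k}\lambda_{i,k}(\mathbf{\alpha})\,u_i$ with $\lambda_{0,k}(\mathbf{\alpha})=1$ and diagonal entries $\lambda_{k,k}(\mathbf{\alpha})=\prod_{i=1}^{k}\alpha_{im+k}$. Since the entries of $\mathbf{\alpha}$ are by hypothesis neither zero nor divisors of zero, each $\lambda_{k,k}(\mathbf{\alpha})$ is a product of non-zero-divisors and hence itself a non-zero-divisor. The entire argument then rests on the elementary fact that a triangular linear system over a commutative ring whose diagonal entries are non-zero-divisors can be solved one equation at a time and has at most one solution; note that $\Lambda^{(m)}$ need not be invertible over $R$, so one cannot simply assert that the $u$'s and the $v$'s span the same submodule of the dual and the non-zero-divisor hypothesis must be used at each cancellation.

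First I would prove the equivalence of $d$-orthogonality for a fixed $d\in\mathbb{Z}^+$ and an arbitrary monic polynomial sequence $\seq{P_n(x)}$. For $j\in\{0,\dots,d-1\}$ and $l\in\mathbb{N}$ the lemma gives $\Functional{v_j}{x^lP_n}=\sum_{s=0}^{j}\lambda_{s,j}(\mathbf{\alpha})\,\Functional{u_s}{x^lP_n}$, so if I order the shifted functionals $x^lu_j$ (resp.\ $x^lv_j$) by the value $dl+j$, the passage from the $u$'s to the $v$'s is, for each fixed $l$, upper triangular with diagonal entries $\lambda_{j,j}(\mathbf{\alpha})$. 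If $\seq{P_n(x)}$ is $d$-orthogonal with respect to $(u_0,\dots,u_{d-1})$, then $\Functional{u_s}{x^lP_n}=0$ whenever $dl+s\le n-1$, so every term in the sum with $dl+j\le n-1$ vanishes and the orthogonality conditions \eqref{type II orthogonality conditions}/\eqref{d-orthogonality conditions} hold for the $v$'s, while for $dl+j=n$ the sum collapses to $\lambda_{j,j}(\mathbf{\alpha})\,\Functional{u_j}{x^lP_n}$, a non-zero-divisor times the nonzero normalisation constant for the $u$'s, hence nonzero. Conversely, assuming $d$-orthogonality with respect to $(v_0,\dots,v_{d-1})$, I would prove $\Functional{u_j}{x^lP_n}=0$ for $dl+j\le n-1$ by induction on $dl+j$: from $\lambda_{j,j}(\mathbf{\alpha})\,\Functional{u_j}{x^lP_n}=\Functional{v_j}{x^lP_n}-\sum_{s=0}^{j-1}\lambda_{s,j}(\mathbf{\alpha})\,\Functional{u_s}{x^lP_n}$ the first term on the right vanishes (since $dl+j\le n-1$) and the remaining sum vanishes by the inductive hypothesis (its indices equal $dl+s<dl+j$), so cancelling the non-zero-divisor $\lambda_{j,j}(\mathbf{\alpha})$ gives the claim; the regularity condition at $dl+j=n$ is obtained in exactly the same way. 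This settles the first assertion.

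For the ``in particular'' part I would apply Theorem~\ref{MOP for gen. m-S.R. poly} with $\mathrm{H}$ the production matrix of $\left(S_{n,k}^{(m)}(\mathbf{\alpha})\right)_{n,k\in\mathbb{N}}$. Its hypothesis $h_{n+m,n}\neq0$ holds because in \eqref{formula for the entries of a production matrix} the case $k=m$ forces $\ell_j=m-j$, so $h_{n+m,n}=\prod_{j=0}^{m}\alpha_{(m+1)(n+j)+m-j}$ is a product of entries of $\mathbf{\alpha}$, hence a non-zero-divisor and in particular nonzero. Thus Theorem~\ref{MOP for gen. m-S.R. poly} produces a monic sequence $\seq{P_n(x)}$ satisfying \eqref{recurrence relation of order m+1 Hessenberg matrix}, with dual sequence $\seq[k\in\mathbb{N}]{u_k}$, which is $m$-orthogonal with respect to $(u_0,\dots,u_{m-1})$; the equivalence above with $d=m$ then makes it $m$-orthogonal with respect to $(v_0,\dots,v_{m-1})$ as well, giving existence. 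Uniqueness is a standard triangular argument: if $\seq{\tilde P_n(x)}$ were another such sequence it would also be $m$-orthogonal with respect to $(u_0,\dots,u_{m-1})$, and expanding $\tilde P_n-P_n$ in the monic basis $\{P_k\}$ yields a lower-triangular linear system for the expansion coefficients whose diagonal entries are the $u$-normalisation constants, equal to the entries $(\mathrm{H}^l)_{ml+j,\,j}$, which are products of the non-zero-divisors $h_{i+m,i}$ and therefore non-zero-divisors, forcing all coefficients to vanish.

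I expect the main difficulty to be precisely this ring-theoretic bookkeeping: because $\Lambda^{(m)}$ is only triangular with non-zero-divisor (not invertible) diagonal, the ``same span'' shortcut is unavailable and the non-zero-divisor hypothesis on $\mathbf{\alpha}$ must be invoked at every cancellation, both in the two implications and in checking that the normalisation constants, hence the multiple orthogonal polynomials themselves, are well behaved. Everything else is light and rests only on the explicit triangular relation of Lemma~\ref{relation between the linear functionals whose moments are generalised and modified m-S.-R. poly} together with the already-proved formula \eqref{formula for the entries of a production matrix}.
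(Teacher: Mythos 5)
Your proposal is correct and follows essentially the same route as the paper, which derives the proposition directly from Lemma \ref{relation between the linear functionals whose moments are generalised and modified m-S.-R. poly} together with the observation that the diagonal entries $\lambda_{k,k}(\mathbf{\alpha})=\prod_{i=1}^{k}\alpha_{im+k}$ are non-zero-divisors, and from Theorem \ref{MOP for gen. m-S.R. poly} with $h_{n+m,n}=\prod_{j=0}^{m}\alpha_{(m+1)n+m(j+1)}\neq 0$ for the existence claim. You simply make explicit the triangular cancellation argument (and a uniqueness check) that the paper leaves implicit.
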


We know from Theorem \ref{MOP for gen. m-S.R. poly} that the coefficients of the recurrence relation satisfied by the $m$-orthogonal polynomial sequence $\seq{P_n(x)}$ are the nontrivial entries of the production matrix of the corresponding generalised $m$-Stieltjes-Rogers polynomials.
In the following result we give explicit expressions for the recurrence coefficients as a consequence of Proposition \ref{formula for the entries of a production matrix prop.}.

\begin{theorem}
\label{recurrence relation for MOP associated with a BCF}
For $m\in\Z^+$ and a sequence $\dis\mathbf{\alpha}=\seq[k\in\N]{\alpha_{k+m}}$ without any zeroes or divisors of zero in a commutative ring $R$, let $\seq{P_n(x)}$ be the monic $m$-orthogonal polynomial sequence with respect to the vectors of linear functionals $\left(u_0,\cdots,u_{m-1}\right)$ and $\left(v_0,\cdots,v_{m-1}\right)$ such that
\begin{equation}
\label{moments as generalised and modified m-S.-R. poly}
\Functional{u_k}{x^n}=\generalisedStieltjesRogersPoly{n}{k}{\mathbf{\alpha}}
\quad\text{and}\quad
\Functional{v_k}{x^n}=\modifiedStieltjesRogersPoly{n}{k}{\mathbf{\alpha}}
\quad\text{for all } n\in\N \text{  and  }0\leq k\leq m-1.
\end{equation}
Then, $\seq{P_n(x)}$ satisfies the recurrence relation
\begin{equation}
\label{recurrence relation m-OP}
P_{n+1}(x)=x\,P_n(x)-\sum_{k=0}^{\min(m,n)}\gamma_{n-k}^{\,[k]}\,P_{n-k}(x),
\end{equation}
with initial condition $P_0(x)=1$ and coefficients
\begin{equation}
\label{recurrence coefficients as a combination of BCF coefficients}
\gamma_n^{\,[k]}=\sum_{m\geq\ell_0>\cdots>\ell_k\geq 0}\,\prod_{j=0}^{k}\alpha_{(m+1)(n+j)+\ell_j}
\quad\text{for any }n\in\N\text{ and }0\leq k\leq m,
\end{equation}
where $\alpha_j=0$ for $0\leq j\leq m-1$.
\end{theorem}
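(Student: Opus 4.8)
The plan is to assemble this statement from the pieces already established. First I would invoke Proposition \ref{MOP for generalised and modified m-S.R. poly general result}: since the sequence $\mathbf{\alpha}$ has no zeroes or divisors of zero, the diagonal entries $\lambda_{i,i}(\mathbf{\alpha})=\prod_{k=1}^{i}\alpha_{km+i}$ are all nonzero, so the matrix $\Lambda^{(m)}$ is invertible and the change of basis in Lemma \ref{relation between the linear functionals whose moments are generalised and modified m-S.-R. poly} is reversible. Hence the $m$-orthogonal polynomial sequences with respect to $(u_0,\dots,u_{m-1})$ and with respect to $(v_0,\dots,v_{m-1})$ coincide, and by Theorem \ref{MOP for gen. m-S.R. poly} this common sequence $\seq{P_n(x)}$ exists and satisfies the recurrence relation \eqref{recurrence relation of order m+1 Hessenberg matrix}, where $\mathrm{H}=\seq[n,k\in\N]{h_{n,k}}$ is the $(m+2)$-banded unit-lower-Hessenberg production matrix of the generalised $m$-Stieltjes-Rogers polynomials. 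One should also check that the nonvanishing condition ``$h_{n+m,n}\neq 0$ for all $n$'' needed in Theorem \ref{MOP for gen. m-S.R. poly} is met; by \eqref{formula for the entries of a production matrix} with $k=m$ the only admissible tuple is $(\ell_0,\dots,\ell_m)=(m,m-1,\dots,0)$, so $h_{n+m,n}(\mathbf{\alpha})=\prod_{j=0}^{m}\alpha_{(m+1)(n+j)+(m-j)}$, which is a product of genuine $\alpha$'s (all indices are $\geq m$) and hence nonzero under our hypothesis.

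Next I would simply identify the recurrence coefficients. The recurrence \eqref{recurrence relation of order m+1 Hessenberg matrix} reads $P_{n+1}(x)=xP_n(x)-\sum_{j=0}^{\min(n,m)}h_{n,n-j}P_{n-j}(x)$, so setting $\gamma_{n-k}^{\,[k]}:=h_{n,n-k}$, equivalently $\gamma_n^{\,[k]}=h_{n+k,n}$, puts the recurrence in the form \eqref{recurrence relation m-OP}. Now Proposition \ref{formula for the entries of a production matrix prop.} gives precisely
\[
\gamma_n^{\,[k]}=h^{(m)}_{n+k,n}(\mathbf{\alpha})=\sum_{m\geq\ell_0>\cdots>\ell_k\geq 0}\,\prod_{j=0}^{k}\alpha_{(m+1)(n+j)+\ell_j},
\]
with the convention $\alpha_j=0$ for $0\leq j\leq m-1$, which is exactly \eqref{recurrence coefficients as a combination of BCF coefficients} after renaming $\ell_j$ to $\lambda_j$. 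This finishes the proof.

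So this theorem is essentially a corollary: the work has been front-loaded into Proposition \ref{formula for the entries of a production matrix prop.} (the combinatorial identification of the production-matrix entries, proved by induction on $m$) and into the chain Theorem \ref{MOP for gen. m-S.R. poly} $\to$ Lemma \ref{relation between the linear functionals whose moments are generalised and modified m-S.-R. poly} $\to$ Proposition \ref{MOP for generalised and modified m-S.R. poly general result}. The only genuinely substantive point to be careful about in writing it up is the bookkeeping of the index shift $\gamma_n^{\,[k]}=h_{n+k,n}$ together with the truncation $\min(m,n)$ in \eqref{recurrence relation m-OP} versus $\min(n,m)$ in \eqref{recurrence relation of order m+1 Hessenberg matrix} — these agree, and for $n<m$ the extra would-be terms $\gamma_n^{\,[k]}$ with $k>n$ simply never appear in the recurrence, so no consistency issue arises even though the formula for $\gamma_n^{\,[k]}$ makes sense for all $0\leq k\leq m$. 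I would also remark, as the paper does for the $m=1,2$ cases, that \eqref{recurrence coefficients as a combination of BCF coefficients} specialises correctly, e.g. $\gamma_n^{\,[0]}=\sum_{\ell=0}^{m}\alpha_{(m+1)n+\ell}$ and $\gamma_n^{\,[m]}=\prod_{j=0}^{m}\alpha_{(m+1)(n+j)+(m-j)}$, the latter being the nonvanishing leading coefficient noted above. There is no real obstacle here; the statement is a clean repackaging of earlier results.
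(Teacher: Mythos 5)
Your proposal is correct and follows exactly the route the paper intends: the theorem is stated as a direct consequence of Proposition \ref{MOP for generalised and modified m-S.R. poly general result} (existence and coincidence of the two $m$-orthogonal sequences), Theorem \ref{MOP for gen. m-S.R. poly} (the recurrence coefficients are the production-matrix entries), and Proposition \ref{formula for the entries of a production matrix prop.} (the explicit formula for those entries), with the same index identification $\gamma_n^{\,[k]}=h_{n+k,n}$. Your verification that $h_{n+m,n}=\prod_{j=0}^{m}\alpha_{(m+1)(n+j)+(m-j)}=\prod_{j=0}^{m}\alpha_{(m+1)n+m(j+1)}\neq 0$ matches the paper's own remark following the theorem.
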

Note that the number of summands in \eqref{recurrence coefficients as a combination of BCF coefficients} is equal to $\binom{m+1}{k+1}$, because each summand corresponds to one way of choosing $k+1$ elements from $\{0,\cdots,m\}$ to be $\ell_0,\cdots,\ell_k$. 

Because the sequence $\seq{\alpha_{k+m}}$ does not have any zeroes or divisors of zero,
\begin{equation}
\gamma_n^{\,[m]}=\prod_{j=0}^{m}\alpha_{(m+1)n+m(j+1)}\neq 0
\quad\text{for all }n\in\N.
\end{equation}
For the classical case $m=1$, Theorem \ref{recurrence relation for MOP associated with a BCF} states that the orthogonal polynomials $\seq{P_n(x)}$ with respect to the linear functional $u$ defined by $\Functional{u}{x^n}=\mStieltjesRogersPoly{n}{\mathbf{\alpha}}$ for all $n\in\N$ satisfy the second-order recurrence relation
\begin{equation}
P_{n+1}(x)=\left(x-\alpha_{2n}-\alpha_{2n+1}\right)P_n(x)-\alpha_{2n+1}\alpha_{2n+2}\,P_{n-1}(x).
\end{equation}
Analogously, for any $m\geq 2$, Theorem \ref{recurrence relation for MOP associated with a BCF} gives an explicit $(m+1)$-order recurrence relation satisfied by the $m$-orthogonal polynomials $\seq{P_n(x)}$ with respect to the functionals whose moments are given in \eqref{moments as generalised and modified m-S.-R. poly}.

\subsection{Positive branched-continued-fraction coefficients, orthogonality measures, and zeros}
\label{Positive BCF coefficients and orthogonality measures}
In the study of multiple orthogonal polynomials, we are usually interested in considering orthogonality measures instead of linear functionals. 
Here we are particularly interested in positive orthogonality measures over $\R^+$.
However, we know that $\seq{\generalisedStieltjesRogersPoly{n}{k}{\alpha}}$ cannot be a moment sequence of a positive measure over $\R^+$ for any $k\geq 1$, because $\generalisedStieltjesRogersPoly{n}{k}{\alpha}=0$ for $n<k$, and, in particular, the moment of order zero is $0$.
Combining Proposition \ref{MOP for generalised and modified m-S.R. poly general result} with \cite[Th.~9.12]{AlanSokalM.PetroelleB.Zhu-LPandBCF1}, we find that the modified $m$-Stieltjes-Rogers polynomials solve this problem when the coefficients $\alpha_i$ are all positive.
\begin{corollary}
\label{modified m-S.R. poly as moments of positive measures}
For $m\in\Z^+$ and a sequence $\dis\mathbf{\alpha}=\seq[k\in\N]{\alpha_{k+m}}$ of positive real numbers, $\seq{\modifiedStieltjesRogersPoly{n}{k}{\mathbf{\alpha}}}$ is the moment sequence of a positive measure $\mu_k$ on $\R^+$ for any $0\leq k\leq m$ (cf. \cite[Th.~9.12]{AlanSokalM.PetroelleB.Zhu-LPandBCF1}).
Furthermore, as a consequence of Proposition \ref{MOP for generalised and modified m-S.R. poly general result}, the $m$-orthogonal polynomials with respect to the vector of measures $\left(\mu_0,\cdots,\mu_{m-1}\right)$ exist and satisfy the recurrence relation \eqref{recurrence relation of order m+1 Hessenberg matrix}.
\end{corollary}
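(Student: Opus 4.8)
The plan is to assemble Corollary~\ref{modified m-S.R. poly as moments of positive measures} from two ingredients that are already available: the Hankel-total-positivity of the columns of $\hat{\mathrm{S}}^{(m)}$ and Proposition~\ref{MOP for generalised and modified m-S.R. poly general result}. First I would recall that, by \cite[Th.~9.12]{AlanSokalM.PetroelleB.Zhu-LPandBCF1}, for each fixed $j\in\N$ the sequence $\seq{\modifiedStieltjesRogersPoly{n}{j}{\mathbf{\alpha}}}$ is coefficient-wise Hankel-totally positive in $\Z[\alpha]$; specialising each indeterminate $\alpha_{k+m}$ to a positive real number preserves total positivity of the Hankel matrix (a nonnegative polynomial evaluated at nonnegative arguments is nonnegative), so the Hankel matrix $\left(\modifiedStieltjesRogersPoly{p+q}{j}{\mathbf{\alpha}}\right)_{p,q\in\N}$ is totally positive over $\R$. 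By the classical Hamburger/Stieltjes moment-problem criterion (total positivity of the Hankel matrix built from the sequence, together with total positivity of the once-shifted Hankel matrix, characterises Stieltjes moment sequences), this yields a positive measure $\mu_j$ on $\R^+$ with $\int x^n\,\dd\mu_j(x)=\modifiedStieltjesRogersPoly{n}{j}{\mathbf{\alpha}}$ for all $n\in\N$, for each $0\leq j\leq m$. Note the normalisation is consistent: $\modifiedStieltjesRogersPoly{0}{j}{\mathbf{\alpha}}=1$, so each $\mu_j$ is a probability measure.

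Next I would invoke Proposition~\ref{MOP for generalised and modified m-S.R. poly general result}. Since the $\alpha_{k+m}$ are positive reals, the sequence $\seq{\alpha_{k+m}}$ has no zeros or zero-divisors in $R=\R$, so the hypotheses of that proposition are met with $d=m$. Hence the $m$-orthogonal polynomial sequence $\seq{P_n(x)}$ with respect to $\left(v_0,\cdots,v_{m-1}\right)$ exists, where $v_k$ is the linear functional with moments $\modifiedStieltjesRogersPoly{n}{k}{\mathbf{\alpha}}$, and it satisfies the recurrence relation \eqref{recurrence relation of order m+1 Hessenberg matrix} with $\mathrm{H}$ the production matrix of the generalised $m$-Stieltjes-Rogers polynomials. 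The final step is simply to observe that, for $0\leq k\leq m-1$, the functional $v_k$ is precisely the one induced by the measure $\mu_k$ constructed above, i.e.\ $\Functional{v_k}{f}=\int f(x)\,\dd\mu_k(x)$ for all $f\in\R[x]$, because the two functionals agree on all moments $x^n$ and a linear functional on $\R[x]$ is determined by its moments. Therefore ``$m$-orthogonal with respect to $\left(v_0,\cdots,v_{m-1}\right)$'' is the same as ``$m$-orthogonal with respect to the vector of measures $\left(\mu_0,\cdots,\mu_{m-1}\right)$'', and the recurrence relation carries over verbatim.

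I do not expect a serious obstacle here: the corollary is essentially a bookkeeping synthesis. The only point requiring a little care is the passage from the coefficient-wise statement in $\Z[\alpha]$ to the statement over $\R$: one must check that evaluating a coefficient-wise-totally-positive matrix at positive real values of the indeterminates gives an ordinarily totally positive real matrix, which holds because every minor is a polynomial with nonnegative coefficients in the $\alpha_{k+m}$ and hence evaluates to a nonnegative real. A secondary point is to make sure the moment problem is actually determinate enough to produce a genuine measure on $\R^+$ (as opposed to only a quasi-measure); this is exactly what \cite[Th.~9.12]{AlanSokalM.PetroelleB.Zhu-LPandBCF1} already guarantees, so I would just cite it rather than reprove the Stieltjes moment criterion. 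Everything else---existence of $\seq{P_n(x)}$ and the form of its recurrence---is inherited directly from Proposition~\ref{MOP for generalised and modified m-S.R. poly general result}.
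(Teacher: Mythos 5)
Your proposal is correct and follows exactly the route the paper intends: the first assertion is a direct specialisation of \cite[Th.~9.12]{AlanSokalM.PetroelleB.Zhu-LPandBCF1} to positive real weights, and the second follows by applying Proposition \ref{MOP for generalised and modified m-S.R. poly general result} (valid since a sequence of positive reals has no zeros or zero-divisors) and identifying each functional $v_k$ with integration against $\mu_k$ via their common moments. The paper gives no separate proof beyond this combination, so your bookkeeping matches it.
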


When the branched-continued-fraction coefficients $\alpha_{k+m}$ are all positive, it is clear from \eqref{recurrence coefficients as a combination of BCF coefficients} that the recurrence coefficients of the $m$-orthogonal polynomial sequence $\seq{P_n(x)}$ are also all positive.
Moreover, the positivity of the branched-continued-fraction coefficients also leads to nice properties about the location of the zeros of $P_n(x)$, as detailed in Theorem \ref{location and interlacing of the zeros, general case with positive BCF coeff}, which is a consequence of the following lemma.
\begin{lemma}
\label{location and interlacing of the zeros, general case with Hessenberg oscillation matrix}
For $m\in\Z^+$, let $\seq{P_n(x)}$ be a $m$-orthogonal polynomial sequence and $\mathrm{H}=\seq[n,k\in\N]{h_{n,k}}$ be the $(m+2)$-banded unit-lower-Hessenberg such that $\seq{P_n(x)}$ satisfies the recurrence relation \eqref{recurrence relation of order m+1 Hessenberg matrix}.
If the matrices $\mathrm{H}_n$ formed by the first $n$ rows and columns of $\mathrm{H}$ are oscillation matrices for all $n\in\Z^+$, then the zeros of $P_n(x)$ are all simple, real, and positive, and the zeros of consecutive polynomials interlace.
\end{lemma}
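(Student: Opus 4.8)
The plan is to reduce the statement about the zeros of $P_n$ to a statement about the eigenvalues of the truncated matrices $\mathrm{H}_n$, and then to invoke the Gantmacher--Krein spectral theory of oscillation matrices recalled above (see \cite[Ths.~II-6,~II-14]{GantmacherKreinOscillationMatrices}). First I would observe that the recurrence relation \eqref{recurrence relation of order m+1 Hessenberg matrix} is exactly the specialisation of \eqref{recurrence relation generic PS} to the case where $\mathrm{H}$ is $(m+2)$-banded, so Proposition~\ref{PolySeqAndHessMatrixProp} applies: with the initial condition $P_0(x)=1$, it gives $P_n(x)=\det\left(x\,\mathrm{I}_n-\mathrm{H}_n\right)$ for all $n\geq 1$. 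Hence the zeros of $P_n$, counted with multiplicity, are precisely the eigenvalues of $\mathrm{H}_n$.

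Next I would apply the Gantmacher--Krein theorem directly. By hypothesis each $\mathrm{H}_n$ with $n\in\Z^+$ is an oscillation matrix, so its eigenvalues are all simple, real, and positive; translating back through the previous paragraph, the zeros of each $P_n$ are simple, real, and positive. For the interlacing, fix $n\geq 1$ and note that $\mathrm{H}_n$, being formed by the first $n$ rows and columns of $\mathrm{H}$, is exactly the principal submatrix of $\mathrm{H}_{n+1}$ obtained by deleting its last row and column. Since $\mathrm{H}_{n+1}$ is an oscillation matrix, the interlacing part of the Gantmacher--Krein theorem --- the eigenvalues of an oscillation matrix strictly interlace those of the submatrix obtained by removing its first or last row and column --- yields that the eigenvalues of $\mathrm{H}_n$ strictly interlace those of $\mathrm{H}_{n+1}$. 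Equivalently, the zeros of $P_n$ strictly interlace the zeros of $P_{n+1}$, which is the asserted interlacing of zeros of consecutive polynomials.

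There is little here that is genuinely hard; the points to handle with a bit of care are (i) checking that Proposition~\ref{PolySeqAndHessMatrixProp} applies verbatim, i.e. that the $d$-orthogonality hypothesis is used only to ensure that $\mathrm{H}$ is $(m+2)$-banded and that \eqref{recurrence relation of order m+1 Hessenberg matrix} is the corresponding truncation of \eqref{recurrence relation generic PS}, and (ii) making sure the cited version of the Gantmacher--Krein interlacing theorem covers deletion of the \emph{last} row and column (it covers both the first and the last, so this is immediate). It is perhaps worth remarking that the band structure itself plays no further role in the argument --- once $P_n$ is realised as the characteristic polynomial of the oscillation matrix $\mathrm{H}_n$, everything follows from the general spectral properties of oscillation matrices.
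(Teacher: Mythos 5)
Your proposal is correct and follows exactly the paper's own argument: identify $P_n$ as the characteristic polynomial of $\mathrm{H}_n$ via Proposition \ref{PolySeqAndHessMatrixProp}, apply the Gantmacher--Krein theorem to get simplicity, reality, and positivity of the eigenvalues, and use the fact that deleting the last row and column of $\mathrm{H}_{n+1}$ yields $\mathrm{H}_n$ to obtain the interlacing. Nothing further is needed.
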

\begin{proof}
Recalling \eqref{sequence of characteristic polynomials}, $P_n(x)$ is the characteristic polynomial of $\mathrm{H}_n$ for any $n\geq 1$, so the zeros of $P_n(x)$ are the eigenvalues of $\mathrm{H}_n$.
Moreover, due to the Gantmacher-Krein theorem (see \cite[Ths.~II-6~\&~II-14]{GantmacherKreinOscillationMatrices}), the eigenvalues of an oscillation matrix are all simple, real, and positive, and interlace with the eigenvalues of the submatrices obtained by removing either its first or last column and row.
In addition, observe that if we remove the last column and row of $\mathrm{H}_n$, with $n\geq 2$, we obtain $\mathrm{H}_{n-1}$.
Therefore, the lemma holds.
\end{proof}

A bidiagonal matrix is totally positive if and only if all its entries are nonnegative (see \cite[Lemma~9.1]{AlanSokalM.PetroelleB.Zhu-LPandBCF1}). 
Moreover, the product of matrices preserves total positivity.
Therefore, if $\alpha_{k+m}\geq 0$ for all $k\in\N$, the production matrix of the generalised $m$-Stieltjes-Rogers polynomials $\left(S_{n,k}^{(m)}\left(\boldsymbol{\alpha}\right)\right)_{n,k\in\N}$ is totally positive.
Furthermore, based on \cite[Th.~5.2]{PinkusTotallyPositiveMatrices}, a $(n\times n)$-matrix of real numbers is an oscillation matrix if and only if it is totally positive, nonsingular, and all the entries lying in its subdiagonal and its supradiagonal are positive.
Therefore, if $\alpha_{k+m}>0$ for any $k\in\N$, the $(n\times n)$-matrices $\mathrm{H}_n$ formed by its first $n$ rows and columns are oscillation matrices for all $n\geq 1$.
Hence, combining Proposition \ref{MOP for generalised and modified m-S.R. poly general result} and Lemma \ref{location and interlacing of the zeros, general case with Hessenberg oscillation matrix}, we obtain the following result.
\begin{theorem}
\label{location and interlacing of the zeros, general case with positive BCF coeff}
For $m\in\Z^+$ and a sequence of positive real constants $\mathbf{\alpha}=\seq[k\in\N]{\alpha_{k+m}}$, 
let $\seq{P_n(x)}$ be the $m$-orthogonal polynomial sequence with respect to the vector of linear functionals $\left(u_0,\cdots,u_{m-1}\right)$ and to the vector of measures $\left(\mu_0,\cdots,\mu_{m-1}\right)$ supported on a subset of $\R^+$ such that
\begin{equation}
\Functional{u_k}{x^n}=\generalisedStieltjesRogersPoly{n}{k}{\mathbf{\alpha}}
\quad\text{and}\quad
\int x^n\mathrm{d}\mu_k(x)=\modifiedStieltjesRogersPoly{n}{k}{\alpha}
\quad\text{for all } n\in\N \text{  and  }0\leq k\leq m-1.
\end{equation}
Then, the zeros of $P_n(x)$ are all simple, real, and positive, and the zeros of consecutive polynomials interlace.
Furthermore, the coefficients \eqref{recurrence coefficients as a combination of BCF coefficients} of the recurrence relation \eqref{recurrence relation m-OP} are all positive.
\end{theorem}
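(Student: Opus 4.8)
The plan is to recognise the statement as the specialisation to positive real parameters of the machinery already assembled, and to supply the one remaining verification: that the leading principal submatrices of the relevant production matrix are oscillation matrices. Concretely, since positive real numbers have neither zeroes nor divisors of zero, Proposition~\ref{MOP for generalised and modified m-S.R. poly general result} guarantees that the $m$-orthogonal polynomial sequence $\seq{P_n(x)}$ with respect to $\left(u_0,\dots,u_{m-1}\right)$ exists and satisfies the recurrence relation \eqref{recurrence relation of order m+1 Hessenberg matrix} with $\mathrm{H}=\mathrm{H}^{(m)}$ the production matrix of the generalised $m$-Stieltjes-Rogers polynomials, while Corollary~\ref{modified m-S.R. poly as moments of positive measures} provides the measures $\left(\mu_0,\dots,\mu_{m-1}\right)$ on $\R^+$ with respect to which the same sequence is $m$-orthogonal (the two descriptions agree because a $d$-orthogonal polynomial sequence is determined by the moments of its orthogonality functionals). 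By \eqref{sequence of characteristic polynomials}, $P_n(x)=\det\left(x\,\mathrm{I}_n-\mathrm{H}_n\right)$, so the zeros of $P_n$ are precisely the eigenvalues of the $(n\times n)$-matrix $\mathrm{H}_n$, and Lemma~\ref{location and interlacing of the zeros, general case with Hessenberg oscillation matrix} reduces everything to showing that each $\mathrm{H}_n$ is an oscillation matrix.

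To verify this I would apply the criterion \cite[Th.~5.2]{PinkusTotallyPositiveMatrices}: a square real matrix is an oscillation matrix if and only if it is totally positive, nonsingular, and has positive sub- and supradiagonal entries. Total positivity of $\mathrm{H}_n$ is inherited from that of $\mathrm{H}^{(m)}$, since every minor of $\mathrm{H}_n$ is a minor of $\mathrm{H}^{(m)}$; and $\mathrm{H}^{(m)}$ is totally positive over $\R$ once the indeterminates are set to positive reals, because of the bidiagonal factorisation \eqref{production matrix for m-Stieltjes-Rogers polynomials}, the fact that a bidiagonal matrix with nonnegative entries is totally positive (\cite[Lemma~9.1]{AlanSokalM.PetroelleB.Zhu-LPandBCF1}), and the Cauchy--Binet formula. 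Nonsingularity follows from the same factorisation: the lower-triangular factor $\prod_{i=0}^{m-1}\mathrm{L}(\cdots)$ being lower triangular, passing to the leading $n\times n$ block commutes with the matrix product, so $\det\mathrm{H}_n=\prod_{k=0}^{n-1}\alpha_{(m+1)k+m}>0$. Finally, by \eqref{formula for the entries of a production matrix} the supradiagonal entries of $\mathrm{H}_n$ all equal $h^{(m)}_{n-1,n}=1$, and each subdiagonal entry $h^{(m)}_{i+1,i}$ contains among its summands a product of two genuine (hence positive) $\alpha$'s, so all sub- and supradiagonal entries are positive. Therefore $\mathrm{H}_n$ is an oscillation matrix, and Lemma~\ref{location and interlacing of the zeros, general case with Hessenberg oscillation matrix} yields that the zeros of $P_n(x)$ are simple, real, and positive and interlace with those of $P_{n-1}(x)$.

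For the last assertion, Theorem~\ref{recurrence relation for MOP associated with a BCF} expresses $\gamma_n^{[k]}$ through formula \eqref{recurrence coefficients as a combination of BCF coefficients}, a finite sum of products of the $\alpha$'s; each such product is nonnegative (it vanishes exactly when one of its indices falls in $\{0,\dots,m-1\}$, where $\alpha$ was set to $0$), and the summand indexed by $\lambda_j=m-j$ is always a product of genuine positive $\alpha$'s, so $\gamma_n^{[k]}>0$ for all $n\in\N$ and $0\le k\le m$. I expect the only real friction to be bookkeeping rather than conceptual: stating cleanly that coefficient-wise total positivity of $\mathrm{H}^{(m)}$ over $\Z[\alpha]$ descends to numerical total positivity over $\R$ under evaluation at positive reals, and checking that the truncation of the bidiagonal product equals the product of the truncations so that the determinant computation giving nonsingularity is legitimate.
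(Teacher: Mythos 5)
Your proof is correct and follows essentially the same route as the paper: reduce to showing that the truncations $\mathrm{H}_n$ are oscillation matrices via the Pinkus criterion, using the bidiagonal factorisation \eqref{production matrix for m-Stieltjes-Rogers polynomials} for total positivity (and, as you make explicit, nonsingularity), then invoke Lemma \ref{location and interlacing of the zeros, general case with Hessenberg oscillation matrix} together with Proposition \ref{MOP for generalised and modified m-S.R. poly general result}, while the positivity of the recurrence coefficients is read off from \eqref{recurrence coefficients as a combination of BCF coefficients} exactly as in the paper. The only difference is that you spell out details the paper leaves implicit, namely the determinant computation $\det\mathrm{H}_n=\prod_{k=0}^{n-1}\alpha_{(m+1)k+m}>0$ and the exhibition of an all-positive summand (e.g.\ $\lambda_j=m-j$) in each $\gamma_n^{[k]}$.
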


When investigating the location of the zeros of multiple orthogonal polynomials, we are usually interested in finding an upper bound for them.
We finish this section by deriving an upper bound for the zeros of a $m$-orthogonal polynomial sequence from the asymptotic behaviour of its recurrence coefficients.
\begin{theorem}
\label{asymptotic behaviour of the largest zero - general theorem for d-OPS}
For $m\in\Z^+$, let $\seq{P_n(x)}$ be a $m$-orthogonal polynomial sequence satisfying a recurrence relation of the form \eqref{recurrence relation m-OP} such that $\gamma_n^{{[k]}}\in\R$ for any $n\in\N$ and $0\leq k\leq m$, with $\gamma_n^{[m]}>0$, and suppose there exist real constants $\gamma^{[m]}>0$ and $\gamma^{{[k]}}\geq 0$ for $0\leq k\leq m-1$ and a non-decreasing unbounded positive sequence $\seq{f_n}$ such that
\begin{equation}
\label{asymptotic behaviour recurrence coefficients r-ops}
\left|\gamma_n^{{[k]}}\right|\leq\gamma^{{[k]}}f_n^{\,k+1}+o\left(f_n^{\,k+1}\right)
\quad\text{as }  n\to+\infty.
\end{equation}
For any $n\in\Z^+$, we denote by $x_n^{(n)}$ the largest zero in absolute value of $P_n(x)$. Then,
\begin{equation}
\label{asymptotic behaviour of the largest zero of a r-OP}
\left|x_n^{(n)}\right|\leq\min_{t\in\R^+}\left(t+\sum_{k=0}^{r}\frac{\gamma^{{[k]}}}{t^k}\right)f_n+o\left(f_n\right)
\quad\text{as } n\to+\infty.
\end{equation}
\end{theorem}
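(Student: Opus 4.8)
The plan is to realise the zeros of $P_n(x)$ as the eigenvalues of the truncated Hessenberg matrix $\mathrm{H}_n$ and to control its spectral radius by an operator norm after a diagonal rescaling depending on $f_n$. Let $\mathrm{H}=(h_{i,j})_{i,j\in\N}$ be the $(m+2)$-banded unit-lower-Hessenberg matrix encoding \eqref{recurrence relation m-OP}, so that $h_{i,i+1}=1$, $h_{i,i-k}=\gamma_{i-k}^{[k]}$ for $0\le k\le m$, and all remaining entries vanish. By Proposition \ref{PolySeqAndHessMatrixProp} (equation \eqref{sequence of characteristic polynomials}), $P_n(x)$ is the characteristic polynomial of the $(n\times n)$ top-left submatrix $\mathrm{H}_n$, so $\left|x_n^{(n)}\right|$ equals the spectral radius $\rho(\mathrm{H}_n)$. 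For a fixed $t\in\R^+$, I would introduce the invertible diagonal matrix $D_n=\diag\!\left(1,tf_n,(tf_n)^2,\dots,(tf_n)^{n-1}\right)$ and use $\rho(\mathrm{H}_n)=\rho\!\left(D_n^{-1}\mathrm{H}_n D_n\right)\le\left\|D_n^{-1}\mathrm{H}_n D_n\right\|_\infty$, the maximum absolute row sum, which holds with no reality or sign assumptions on the coefficients.

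The second step is the row-sum estimate. Conjugation by $D_n$ replaces $h_{i,i+1}=1$ by $tf_n$, leaves $h_{i,i}=\gamma_i^{[0]}$ unchanged, and replaces $h_{i,i-k}$ by $\gamma_{i-k}^{[k]}(tf_n)^{-k}$; hence the $i$-th absolute row sum of $D_n^{-1}\mathrm{H}_n D_n$ is at most $tf_n+\left|\gamma_i^{[0]}\right|+\sum_{k=1}^{\min(i,m)}\left|\gamma_{i-k}^{[k]}\right|(tf_n)^{-k}$ (the last row $i=n-1$ even lacks the $tf_n$ term). Fix $\varepsilon>0$; by \eqref{asymptotic behaviour recurrence coefficients r-ops} there is an index $N$, not depending on $n$, with $\left|\gamma_j^{[k]}\right|\le\left(\gamma^{[k]}+\varepsilon\right)f_j^{\,k+1}$ for all $j\ge N$ and $0\le k\le m$. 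Since $\seq{f_n}$ is non-decreasing, for $N\le j\le n$ this yields $\left|\gamma_j^{[k]}\right|\le\left(\gamma^{[k]}+\varepsilon\right)f_n^{\,k+1}$, while the finitely many coefficients with index below $N$ are bounded by a constant $C$, and $C(tf_n)^{-k}\to 0$ because $f_n\to\infty$. Substituting these bounds, using $f_n^{\,k+1}(tf_n)^{-k}=f_n\,t^{-k}$, and noting that the contributions of the sub-$N$ indices are $o(f_n)$ uniformly in $i$, every row sum is at most $g(t)\,f_n+\varepsilon\big(\sum_{k=0}^{m}t^{-k}\big)f_n+o(f_n)$, where $g(t):=t+\sum_{k=0}^{m}\gamma^{[k]}t^{-k}$. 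Absorbing the $\varepsilon$-term, I conclude: for each $t\in\R^+$ and each $\varepsilon>0$ there is $N_0$ with $\left|x_n^{(n)}\right|=\rho(\mathrm{H}_n)\le\left(g(t)+\varepsilon\right)f_n$ for all $n\ge N_0$.

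The final step is the optimisation over $t$. Letting $\varepsilon\to0$ gives $\limsup_{n\to\infty}\left|x_n^{(n)}\right|/f_n\le g(t)$ for every $t\in\R^+$, hence $\limsup_{n\to\infty}\left|x_n^{(n)}\right|/f_n\le\inf_{t\in\R^+}g(t)$. Since $\gamma^{[m]}>0$, the function $g$ is continuous on $(0,\infty)$ with $g(t)\to+\infty$ as $t\to0^+$ and as $t\to+\infty$, so the infimum is attained and equals $\min_{t\in\R^+}g(t)$; then, setting $\varepsilon_n:=\max\!\left\{0,\ \left|x_n^{(n)}\right|/f_n-\min_{t\in\R^+}g(t)\right\}$, one has $\varepsilon_n\to0$ and $\left|x_n^{(n)}\right|\le\left(\min_{t\in\R^+}g(t)\right)f_n+\varepsilon_n f_n$, which is precisely the claimed bound \eqref{asymptotic behaviour of the largest zero of a r-OP}.

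The main obstacle, and essentially the only delicate point, is the uniformity of the estimates in the row index $i$: the hypothesis \eqref{asymptotic behaviour recurrence coefficients r-ops} controls $\gamma_j^{[k]}$ only as $j\to\infty$, whereas $\mathrm{H}_n$ contains $\gamma_j^{[k]}$ for every $j<n$. This is resolved by the split used above — rows with all relevant indices $\ge N$ are handled by the monotonicity of $\seq{f_n}$, and the finitely many rows involving an index below $N$ contribute only $o(f_n)$ after the rescaling because $f_n\to\infty$ — together with the fact that $N$ is chosen independently of $n$.
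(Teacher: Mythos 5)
Your proof is correct and follows essentially the same route as the paper: both identify $\left|x_n^{(n)}\right|$ with the spectral radius of the truncated Hessenberg matrix $\mathrm{H}_n$ and bound it by a weighted maximum row sum with a free parameter $t$ that is then optimised, the only difference being the choice of weights ($t_j=(tf_n)^j$ via explicit diagonal conjugation, versus the paper's $t_j=t^j\prod_{l=1}^{j}f_l$ fed into \cite[Cor.~6.1.8]{MatrixAnalysis}). Your explicit split of the row indices at a threshold $N$ to get uniformity of the $o(f_n)$ term is in fact more careful than the paper's treatment of that point, and correctly resolves the only delicate step.
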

The particular case $r=2$ of the latter theorem corresponds to \cite[Th.~3.5]{PaperTricomiWeights} and the following proof is a generalisation of the proof therein.
\begin{proof}
Let $n\in\Z^+$ and $\mathrm{H}_n$ be the $(m+2)$-banded lower Hessenberg matrix such that $P_n(x)=\det\left(x\mathrm{I}_n-\mathrm{H}_n\right)$. 
Then, each zero of $P_n$ is an eigenvalue of the matrix $\mathrm{H}_n$ and $\dis\left|x_n^{(n)}\right|$ is equal to the spectral radius of $\mathrm{H}_n$, the maximum of the absolute values of the eigenvalues of $\mathrm{H}_n$.
	
Therefore, based on \cite[Cor.~6.1.8]{MatrixAnalysis}, we have
\begin{equation}
\left|x_n^{(n)}\right|\leq\min_{t_0,\cdots,t_{n-1}\in\R^+}\,\max_{i\in\{0,\cdots,n-1\}}\left\{\sum_{j=0}^{n-1}\frac{t_j}{t_i}\left|(\mathrm{H}_n)_{i,j}\right|\right\}.
\end{equation}
Recalling the values of the entries of $\mathrm{H}_n$ from \eqref{Hessenberg matrix d-OP} with $d=m$, the latter implies that
\begin{equation}
\left|x_n^{(n)}\right|\leq\min_{t_0,\cdots,t_{n-1}\in\R^+}\,\max_{i\in\{0,\cdots,n-1\}}\left\{\frac{t_{i+1}}{t_i}+\sum_{k=0}^{m}\left|\gamma_{i-k}^{[k]}\right|\frac{t_{i-k}}{t_i}\right\},
\end{equation}
with $t_j=0$ if $j=n$ or $j<0$.

In particular, we can set $t_j=t^j\prod\limits_{l=1}^{j}f_l>0$ for $0\leq j\leq n-1$ and $t\in\R^+$, to find that
\begin{equation}
\left|x_n^{(n)}\right|\leq\min_{t\in\R^+}\,\max_{i\in\{0,\cdots,n-1\}}\left\{t\,f_{i+1}+\sum_{k=0}^{m}\left(\left|\gamma_{i-k}^{[k]}\right|t^{-k}\prod_{l=0}^{k-1}f_{i-l}^{-1}\right)\right\}.
\end{equation}
Furthermore, recalling \eqref{asymptotic behaviour recurrence coefficients r-ops}, we derive that
\begin{equation}
\left|x_n^{(n)}\right|\leq\min_{t\in\R^+}\,\max_{i\in\{0,\cdots,n-1\}} \left\{t\,f_{i+1}+\sum_{k=0}^{m}\frac{\gamma^{[k]}f_{i-k}^{k+1}}{t^k\prod_{l=0}^{k-1}f_{i-l}}+o\left(f_{i+1}\right)\right\}.
\end{equation}
Therefore, due to the sequence $\seq{f_n}$ being non-decreasing,
\begin{equation}
\left|x_n^{(n)}\right|\leq\min_{t\in\R^+}\,\max_{i\in\{0,\cdots,n-1\}}\left\{\left(t+\sum_{k=0}^{m}\frac{\gamma^{{[k]}}}{t^k}\right)f_{i+1}+o\left(f_{i+1}\right)\right\},
\end{equation}
and, because $\seq{f_n}$ is unbounded, we can conclude that \eqref{asymptotic behaviour of the largest zero of a r-OP} holds.
\end{proof}

\section{Branched continued fractions for ratios of hypergeometric series}
\label{BCF for ratios of hypergeometric series}

Branched-continued-fraction representations for three types of ratios of contiguous hypergeometric series,
\begin{equation}
\label{ratios of hypergeometric series in PSZ}
\frac{\Hypergeometric[t]{r+1}{s}{a_1,\cdots,a_{r+1}}{b_1,\cdots,b_s}}{\Hypergeometric[t]{r+1}{s}{a_1,\cdots,a_r,a_{r+1}-1}{b_1,\cdots,b_{s-1},b_s-1}},
\;\;
\frac{\Hypergeometric[t]{r+1}{s}{a_1,\cdots,a_{r+1}}{b_1,\cdots,b_s}}{\Hypergeometric[t]{r+1}{s}{a_1,\cdots,a_r,a_{r+1}-1}{b_1,\cdots,b_s}},
\;\;\text{and}\;\;
\frac{\Hypergeometric[t]{r}{s}{a_1,\cdots,a_r}{b_1,\cdots,b_s}}{\Hypergeometric[t]{r}{s}{a_1,\cdots,a_r}{b_1,\cdots,b_{s-1},b_s-1}},
\end{equation} 
were introduced in \cite[\S 14]{AlanSokalM.PetroelleB.Zhu-LPandBCF1}, where they are referred to as, respectively, the first, second, and third ratios of contiguous hypergeometric series.

The main result of this section is Theorem \ref{BCF for ratios of r+1Fs - theorem}, where we present new branched continued fractions that include the first and second ratios of contiguous hypergeometric series in \cite{AlanSokalM.PetroelleB.Zhu-LPandBCF1} as particular cases.
In Proposition \ref{conditions for non-negativity of the coefficients of the BCF for a ratio of r+1Fs, r>=s}, we find necessary and sufficient conditions for non-negativity and positivity of the coefficients of the branched continued fractions in Theorem \ref{BCF for ratios of r+1Fs - theorem} when $r\geq s$ and all the indeterminates are real and positive.
In Corollary \ref{BCF for ratios of r+1Fs - corollary a_(r+1)=1}, we give explicit expressions for the modified $m$-Stieltjes-Rogers polynomials corresponding to the branched continued fractions in Theorem \ref{BCF for ratios of r+1Fs - theorem} when $a_{r+1}=1$; they are ratios of products of Pochhammer symbols up to multiplication by a binomial coefficient.
At the end of the section, we explain how an extension of the third ratio in \eqref{ratios of hypergeometric series in PSZ} can be obtained as a limiting case of the branched continued fractions introduced in Theorem \ref{BCF for ratios of r+1Fs - theorem}.

Throughout this section, we work on the commutative ring $R=\Q\left(b_1,\cdots,b_s\right)\left[a_1,\cdots,a_{r+1}\right]$ of polynomials in the indeterminates $a_1,\cdots,a_{r+1}$ whose coefficients are rational functions in the indeterminates $b_1,\cdots,b_s$.
To simplify the notation, we denote by $[k]_n$, with $k,n\in\Z$ and $n\geq 1$, the unique element of $\{1,\cdots,n\}$ congruent with $k$ modulo $n$, that is, $[k]_n=[(k-1)\hspace*{-0,2 cm}\mod n]+1$.

\subsection{Construction of the branched continued fractions}
To construct branched continued fractions, we use the Euler-Gauss recurrence method for $m$-S-fractions introduced in \cite{AlanSokalM.PetroelleB.Zhu-LPandBCF1}, which is a generalisation of the Euler-Gauss method for classical S-fractions (see \cite{AlanSokalAlgorithmContFrac}).
\begin{lemma}
(cf. \cite[Prop.~2.3]{AlanSokalM.PetroelleB.Zhu-LPandBCF1})
\label{Euler-Gauss method for m-S-fractions}
For $m\in\Z^+$, let $\seq[i\in\N]{\alpha_{i+m}}$ be a sequence in a commutative ring $R$ and let $\seq[k\in\N]{f_k(t)}$ and $\seq[k \geq -1]{g_k(t)}$ be two sequences of formal power series related by $f_k(t)={g_k(t)}\slash{g_{k-1}(t)}$ for all $k\in\N$.
Then, $\seq[k\in\N]{f_k}$ satisfies the functional relations in \eqref{functional equation for the generating function of m-Dyck paths} if and only if $\seq[k \geq -1]{g_k}$ satisfies the recurrence relation
\begin{equation}
\label{recurrence relation for the g_k general case}
g_k(t)-g_{k-1}(t)=\alpha_{k+m}\,t\,g_{k+m}(t).
\end{equation}
Therefore, if we find $\seq[k\geq -1]{g_k(t)}$ satisfying \eqref{recurrence relation for the g_k general case}, then $f_k(t)={g_k(t)}\slash{g_{k-1}(t)}$ is the generating function for $m$-Dyck paths at level $k$ with weights $\seq[i\in\N]{\alpha_{i+m}}$ for any $k\in\N$, and $f_k(t)$ admits the $m$-branched-continued-fraction representation \eqref{m-branched continued fraction}.
Furthermore, the generating function of the modified $m$-Stieltjes-Rogers polynomials of type $k$, $\modifiedStieltjesRogersPoly{n}{k}{\mathbf{\alpha}}$, is $g_k(t)\slash g_{-1}(t)=f_0(t)\cdots f_k(t)$.
\end{lemma}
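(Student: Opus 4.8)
The plan is to obtain the equivalence by a short manipulation of formal power series resting on a single telescoping identity, and then to read off the remaining assertions from the uniqueness of the solution of the functional equation \eqref{functional equation for the generating function of m-Dyck paths}. The starting remark is a normalisation one: writing $f_k=g_k/g_{k-1}$ presupposes that each $g_{k-1}$ is invertible in $R[[t]]$, hence has a unit constant term, whereas the right-hand side of \eqref{recurrence relation for the g_k general case} is divisible by $t$, so all the $g_k$ share that constant term and every $f_k$ has constant term $1$. The core of the equivalence is the telescoping of the product occurring in \eqref{functional equation for the generating function of m-Dyck paths},
\begin{equation*}
\prod_{j=0}^{m}f_{k+j}(t)=\prod_{j=0}^{m}\frac{g_{k+j}(t)}{g_{k+j-1}(t)}=\frac{g_{k+m}(t)}{g_{k-1}(t)}.
\end{equation*}
Multiplying \eqref{recurrence relation for the g_k general case} through by the unit $g_{k-1}(t)^{-1}$ and inserting this identity converts it into $f_k(t)-1=\alpha_{k+m}\,t\prod_{j=0}^{m}f_{k+j}(t)$, i.e.\ the first relation in \eqref{functional equation for the generating function of m-Dyck paths}; since each step is an equivalence of power series, reading the chain backwards gives the converse. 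I would also note that, once every $f_k$ has constant term $1$, the two relations in \eqref{functional equation for the generating function of m-Dyck paths} are equivalent to each other --- clearing the denominator in the second and using $f_k\prod_{j=1}^{m}f_{k+j}=\prod_{j=0}^{m}f_{k+j}$ recovers the first, and this is reversible because the series involved are units --- so proving the equivalence with one of them suffices.

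For the ``therefore'' part I would start from a sequence $\seq[k \geq -1]{g_k}$ satisfying \eqref{recurrence relation for the g_k general case}, set $f_k=g_k/g_{k-1}$, and use the equivalence just proved to conclude that $\seq[k\in\N]{f_k}$ solves \eqref{functional equation for the generating function of m-Dyck paths}. The key move here is a uniqueness argument: extracting the coefficient of $t^n$ from the first relation in \eqref{functional equation for the generating function of m-Dyck paths} expresses $[t^n]f_k$ through coefficients $[t^{n'}]f_{k'}$ with $n'<n$, so that relation together with the normalisation $[t^0]f_k=1$ determines the $f_k$ uniquely; the generating functions for $m$-Dyck paths at level $k$ with weights $\seq[i\in\N]{\alpha_{i+m}}$ satisfy the same system, so the $f_k$ coincide with them. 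The $m$-branched-continued-fraction representation of $f_k$ then follows by iterating the first relation of \eqref{functional equation for the generating function of m-Dyck paths} exactly as in the derivation of \eqref{m-branched continued fraction}, and a last telescoping, $g_k(t)/g_{-1}(t)=\prod_{\ell=0}^{k}g_\ell(t)/g_{\ell-1}(t)=f_0(t)\cdots f_k(t)$, combined with \eqref{generating function of modified m-Stieltjes-Rogers polynomials of type k}, identifies $g_k(t)/g_{-1}(t)$ with the generating function of the modified $m$-Stieltjes-Rogers polynomials of type $k$.

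I do not anticipate a real obstacle: the whole argument is bookkeeping with telescoping products. The one point that wants care is the passage from ``$f_k$ is \emph{a} formal solution of \eqref{functional equation for the generating function of m-Dyck paths}'' to ``$f_k$ \emph{is} the generating function'' --- this is precisely the content of the uniqueness argument above, and it depends on the normalisation $[t^0]f_k=1$, so I would make sure the constant-term bookkeeping (invertibility of $g_{k-1}$, equality of all the constant terms) is laid out cleanly at the outset.
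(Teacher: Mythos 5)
Your proof is correct and follows essentially the same route as the paper, which simply defers to \cite[Prop.~2.3]{AlanSokalM.PetroelleB.Zhu-LPandBCF1}: the telescoping identity $\prod_{j=0}^{m}f_{k+j}=g_{k+m}/g_{k-1}$ turns \eqref{recurrence relation for the g_k general case} into the first relation of \eqref{functional equation for the generating function of m-Dyck paths} and back, and the coefficient-by-coefficient uniqueness of solutions of that relation with constant term $1$ identifies $f_k$ with the level-$k$ generating function, whence the branched-continued-fraction form and $g_k/g_{-1}=f_0\cdots f_k$. Your care with the normalisation (invertibility of the $g_k$, equality of their constant terms) is exactly the right bookkeeping and is all that needed checking.
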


To find sequences $\seq[k\geq -1]{g_k(t)}$ of hypergeometric series satisfying relations of the form \eqref{recurrence relation for the g_k general case}, we use the following relations involving contiguous hypergeometric series.
\begin{lemma}
\cite[Lemma~14.1]{AlanSokalM.PetroelleB.Zhu-LPandBCF1}
\label{contiguous relations for hypergeometric functions}
The hypergeometric series ${}_pF_q$ satisfies the following three-term contiguous relations:
\begin{equation}
\label{contiguous relation 1}
\begin{aligned}
&\Hypergeometric[t]{p}{q}{a_1,\cdots,a_p}{b_1,\cdots,b_q}-\Hypergeometric[t]{p}{q}{a_1,\cdots,a_{i-1},a_i-1,a_{i+1},\cdots,a_p}{b_1,\cdots,b_q} 
\\
=&\,\dfrac{\prod\limits_{k=1,\,k\neq i}^{p}a_k}{\prod\limits_{l=1}^{q}b_l} \, t \, \Hypergeometric[t]{p}{q}{a_1+1,\cdots,a_{i-1}+1,a_i,a_{i+1}+1,\cdots,a_p+1}{b_1+1,\cdots,b_q+1},
\end{aligned}
\end{equation}
	
\begin{equation}
\label{contiguous relation 2}
\begin{aligned}
&\Hypergeometric[t]{p}{q}{a_1,\cdots,a_p}{b_1,\cdots,b_q}-\Hypergeometric[t]{p}{q}{a_1,\cdots,a_{i-1},a_i-1,a_{i+1},\cdots,a_p}{b_1,\cdots,b_{j-1},b_j-1,b_{j+1},\cdots,b_q} 
\\
=&\,\dfrac{\left(b_j-a_i\right)\prod\limits_{k=1,\,k\neq i}^{p}a_k}{\left(b_j-1\right)\prod\limits_{l=1}^{q}b_l}\, t \, \Hypergeometric[t]{p}{q}{a_1+1,\cdots,a_{i-1}+1,a_i,a_{i+1}+1,\cdots,a_p+1}{b_1+1,\cdots,b_q+1},
\end{aligned}
\end{equation}

\begin{equation}
\label{contiguous relation 3}
\begin{aligned}
&\Hypergeometric[t]{p}{q}{a_1,\cdots,a_p}{b_1,\cdots,b_q}-\Hypergeometric[t]{p}{q}{a_1,\cdots,a_p}{b_1,\cdots,b_{j-1},b_j-1,b_{j+1},\cdots,b_q} 
\\
=&\,-\dfrac{\prod\limits_{k=1}^{p}a_k}{\left(b_j-1\right)\prod\limits_{l=1}^{q}b_l} \, t \, \Hypergeometric[t]{p}{q}{a_1+1,\cdots,a_p+1}{b_1+1,\cdots,b_q+1}.
\end{aligned}
\end{equation}
	
\end{lemma}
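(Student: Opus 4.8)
Since the lemma is an identity of formal power series in $t$, the plan is to verify each of \eqref{contiguous relation 1}, \eqref{contiguous relation 2}, \eqref{contiguous relation 3} by comparing the coefficient of $t^n$ on both sides, using nothing more than elementary manipulations of Pochhammer symbols. The two identities that carry the whole argument are, for $n\geq 1$,
\[
\pochhammer[n]{a}-\pochhammer[n]{a-1}=n\,\pochhammer[n-1]{a}
\qquad\text{and}\qquad
\frac{1}{\pochhammer[n]{b}}-\frac{1}{\pochhammer[n]{b-1}}=\frac{-n}{(b-1)\,\pochhammer[n]{b}},
\]
together with the shift relations $a\,\pochhammer[n-1]{a+1}=\pochhammer[n]{a}$ and $b\,\pochhammer[n-1]{b+1}=\pochhammer[n]{b}$; the latter are exactly what makes the prefactor $\big(\prod_{k}a_k\big)\big/\big(\prod_l b_l\big)$ sitting in front of the right-hand ${}_pF_q$ reconcile the two sides once the summation index is moved from $n-1$ (on the right) to $n$ (on the left).

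First I would establish \eqref{contiguous relation 1} and \eqref{contiguous relation 3}. For \eqref{contiguous relation 1}, the coefficient of $t^n$ on the left-hand side equals
\[
\frac{\prod_{k\neq i}\pochhammer[n]{a_k}}{n!\,\prod_l\pochhammer[n]{b_l}}\Big(\pochhammer[n]{a_i}-\pochhammer[n]{a_i-1}\Big)
=\frac{\prod_{k\neq i}\pochhammer[n]{a_k}\;\pochhammer[n-1]{a_i}}{(n-1)!\,\prod_l\pochhammer[n]{b_l}},
\]
and the shift relations identify this with the coefficient of $t^n$ produced by the right-hand side (whose ${}_pF_q$ contributes its $t^{n-1}$-term). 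The computation for \eqref{contiguous relation 3} is the same, with the second of the two displayed identities generating the factor $-n/(b_j-1)$. Alternatively, \eqref{contiguous relation 1} and \eqref{contiguous relation 3} are immediate consequences of the operator identities $\big(t\,\mathrm{d}/\mathrm{d}t+a\big)\,{}_pF_q[\cdots,a,\cdots]=a\,{}_pF_q[\cdots,a+1,\cdots]$ and $\big(t\,\mathrm{d}/\mathrm{d}t+b-1\big)\,{}_pF_q[\cdots;b,\cdots]=(b-1)\,{}_pF_q[\cdots;b-1,\cdots]$ — each a one-line term-by-term check — combined with the derivative formula \eqref{derivative of a generalised hypergeometric series}.

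I would then deduce \eqref{contiguous relation 2} by telescoping. Writing $F(a_i,b_j)$ for the ${}_pF_q$ on the left of \eqref{contiguous relation 2} and $F(a_i-1,b_j)$, $F(a_i-1,b_j-1)$ for the versions with $a_i$, resp.\ with $a_i$ and $b_j$, lowered by $1$, we have
\[
F(a_i,b_j)-F(a_i-1,b_j-1)=\big(F(a_i,b_j)-F(a_i-1,b_j)\big)+\big(F(a_i-1,b_j)-F(a_i-1,b_j-1)\big).
\]
Applying \eqref{contiguous relation 1} to the first bracket and \eqref{contiguous relation 3} to the second (the latter now with top parameters $a_1,\cdots,a_{i-1},a_i-1,a_{i+1},\cdots,a_p$), the crucial observation is that both outputs are the \emph{same} hypergeometric series, namely the one with $a_i$ kept fixed, every other top parameter raised by $1$, and every bottom parameter raised by $1$. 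Hence the two contributions add, and the scalar prefactor collapses as
\[
\frac{\prod_{k\neq i}a_k}{\prod_l b_l}-\frac{(a_i-1)\prod_{k\neq i}a_k}{(b_j-1)\prod_l b_l}
=\frac{(b_j-a_i)\prod_{k\neq i}a_k}{(b_j-1)\prod_l b_l},
\]
which is precisely the prefactor in \eqref{contiguous relation 2}.

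There is no genuine obstacle in this plan; the only points demanding attention are the bookkeeping of which parameters get raised and which are held fixed at each step, and the $n=0$ term, which must be checked separately — there both sides vanish, since the right-hand side carries an explicit factor $t$ while the left-hand side is a difference of two series each having constant term $1$.
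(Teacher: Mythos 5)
Your proof is correct. Note that the paper itself gives no proof of this lemma --- it is quoted verbatim from \cite[Lemma~14.1]{AlanSokalM.PetroelleB.Zhu-LPandBCF1}, where it is likewise verified by comparing coefficients of $t^n$, so your argument for \eqref{contiguous relation 1} and \eqref{contiguous relation 3} is essentially the standard one; your derivation of \eqref{contiguous relation 2} by telescoping through $F(a_i-1,b_j)$ and observing that both brackets produce the same shifted ${}_pF_q$ (so only the scalar prefactors need combining) is a clean alternative to the direct Pochhammer computation, which would otherwise require the slightly messier factorisation $(a)_n(b-1)_n-(a-1)_n(b)_n=n(b-a)(a)_{n-1}(b)_{n-1}$.
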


As in \cite{AlanSokalM.PetroelleB.Zhu-LPandBCF1}, we start from the case $r=s=m$. 
In that case, our branched continued fraction coincides with the one introduced in \cite[Th.~14.2]{AlanSokalM.PetroelleB.Zhu-LPandBCF1} for the first ratio in \eqref{ratios of hypergeometric series in PSZ}, which we revisit in the following result.
\begin{theorem}
\label{BCF for ratios of m+1Fm PSZ}
(cf. \cite[Th.~14.2]{AlanSokalM.PetroelleB.Zhu-LPandBCF1})
For $m\geq 1$, let
\begin{equation}
\label{ratios of m+1Fm PSZ}
g_k(t)=\Hypergeometric[t]{m+1}{m}{a_1^{(k)},\cdots,a_{m+1}^{(k)}\vspace*{0,1 cm}}{b_1^{(k)},\cdots,b_m^{(k)}}
\quad\text{for any   }k\geq -1,
\end{equation}
with
\begin{equation}
\label{BCF parameters m+1Fm}
a_i^{(k)}=a_i+\ceil{\frac{k+1-i}{m+1}}
\text{   and   }
b_j^{(k)}=b_j+\ceil{\frac{k+1-j}{m}}.
\end{equation}
Then, the ratios of contiguous hypergeometric series $\seq[k\in\N]{f_k(t)=\dfrac{g_k(t)}{g_{k-1}(t)}}$ admit the $m$-branched-continued-fraction representation \eqref{m-branched continued fraction} with coefficients
\begin{equation}
\label{BCF coeff m+1Fm PSZ}
\alpha_{k+m}=\frac{\left(b'_k-a'_k\right)\prod\limits_{i=1,\,i\neq[k]_{m+1}}^{m+1}a_i^{(k)}}{\left(b'_k-1\right)\prod\limits_{i=1}^{m}b_i^{(k)}}
\quad\text{for any }k\in\N,
\end{equation}
where
\begin{equation}
\label{BCF parameters m+1Fm a'k, b'k}
a'_k=a_{[k]_{m+1}}^{(k)}=a_{[k]_{m+1}}+\ceil{\frac{k}{m+1}}
\quad\text{and}\quad
b'_k=b_{[k]_m}^{(k)}=b_{[k]_m}+\ceil{\frac{k}{m}}.
\end{equation}
\end{theorem}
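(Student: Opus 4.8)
The plan is to verify the hypotheses of Lemma \ref{Euler-Gauss method for m-S-fractions} for the sequence $\seq[k\geq -1]{g_k(t)}$ defined in \eqref{ratios of m+1Fm PSZ}–\eqref{BCF parameters m+1Fm}, i.e. to show that these hypergeometric series satisfy the recurrence relation \eqref{recurrence relation for the g_k general case}, namely
\begin{equation}
g_k(t)-g_{k-1}(t)=\alpha_{k+m}\,t\,g_{k+m}(t)
\qquad\text{for all }k\in\N,
\end{equation}
with $\alpha_{k+m}$ as in \eqref{BCF coeff m+1Fm PSZ}. Once this is established, Lemma \ref{Euler-Gauss method for m-S-fractions} immediately yields that $f_k(t)=g_k(t)/g_{k-1}(t)$ admits the $m$-branched-continued-fraction representation \eqref{m-branched continued fraction} with those coefficients, which is exactly the assertion of the theorem. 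So the whole proof reduces to a contiguous-relation bookkeeping computation.

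First I would record how the parameters shift when $k$ increases by $1$. From $a_i^{(k)}=a_i+\ceil{(k+1-i)/(m+1)}$ one sees that $a_i^{(k)}=a_i^{(k-1)}$ for all $i$ except $i=[k]_{m+1}$, where $a_{[k]_{m+1}}^{(k)}=a_{[k]_{m+1}}^{(k-1)}+1$; similarly $b_j^{(k)}=b_j^{(k-1)}$ for all $j$ except $j=[k]_m$, where $b_{[k]_m}^{(k)}=b_{[k]_m}^{(k-1)}+1$. Thus $g_{k-1}(t)$ is obtained from $g_k(t)$ by lowering exactly one numerator parameter (the one indexed $[k]_{m+1}$) and exactly one denominator parameter (the one indexed $[k]_m$), each by $1$. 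This is precisely the situation covered by the contiguous relation \eqref{contiguous relation 2} of Lemma \ref{contiguous relations for hypergeometric functions}, applied with $p=m+1$, $q=m$, with $i=[k]_{m+1}$ (acting on the $a$'s of $g_k$) and $j=[k]_m$ (acting on the $b$'s of $g_k$). Writing $a_i=a_i^{(k)}$ and $b_j=b_j^{(k)}$ in that lemma and using $a'_k=a_{[k]_{m+1}}^{(k)}$, $b'_k=b_{[k]_m}^{(k)}$ in the notation of \eqref{BCF parameters m+1Fm a'k, b'k}, the relation reads
\begin{equation}
g_k(t)-g_{k-1}(t)=\frac{\bigl(b'_k-a'_k\bigr)\,\prod\limits_{i=1,\,i\neq[k]_{m+1}}^{m+1}a_i^{(k)}}{\bigl(b'_k-1\bigr)\,\prod\limits_{l=1}^{m}b_l^{(k)}}\;t\;\Hypergeometric[t]{m+1}{m}{a_1^{(k)}+1,\cdots\,(\text{all but }i\text{ shifted up}),\cdots}{b_1^{(k)}+1,\cdots,b_m^{(k)}+1}.
\end{equation}
The coefficient here is exactly $\alpha_{k+m}$ of \eqref{BCF coeff m+1Fm PSZ}, so it remains only to identify the ${}_{m+1}F_m$ on the right-hand side with $g_{k+m}(t)$.

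The identification of the shifted hypergeometric series with $g_{k+m}(t)$ is the one slightly delicate step, and I expect it to be the main (though still routine) obstacle. The series appearing on the right of \eqref{contiguous relation 2} has numerator parameters $a_i^{(k)}+1$ for $i\neq[k]_{m+1}$ and $a_{[k]_{m+1}}^{(k)}$ unchanged, and all denominator parameters $b_l^{(k)}+1$. I would check that this matches $g_{k+m}(t)$ term by term: since $k+m\equiv k-1\pmod{m+1}$, we have $[k+m]_{m+1}=[k]_{m+1}$, and $a_i^{(k+m)}=a_i+\ceil{(k+m+1-i)/(m+1)}$ equals $a_i^{(k)}+1$ for $i\neq[k]_{m+1}$ while $a_{[k]_{m+1}}^{(k+m)}=a_{[k]_{m+1}}^{(k)}$ (because moving the argument of the ceiling up by $m$ increases the value by $1$ except across the unique residue class where the numerator was already ``ahead''); likewise $k+m\equiv k\pmod m$ forces every $b_l^{(k+m)}=b_l^{(k)}+1$. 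Carrying out this congruence-and-ceiling check — which is purely arithmetic — completes the proof. (A cleaner way to package it: verify the single identity $\ceil{(n+m)/(m+1)}=\ceil{n/(m+1)}+1$ for all $n$ not congruent to $0$ modulo $m+1$ in an appropriate range, and $\ceil{(n+m)/m}=\ceil{n/m}+1$ for all $n$, then apply it index by index.)
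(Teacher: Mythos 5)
Your proposal is correct and takes essentially the same approach as the paper: the paper's (second) proof of the general Theorem \ref{BCF for ratios of r+1Fs - theorem} — of which the present statement is the case $r=s=m$ — proceeds by exactly this Euler--Gauss argument, applying the contiguous relation \eqref{contiguous relation 2} with $i=[k]_{m+1}$ and $j=[k]_m$ to establish $g_k(t)-g_{k-1}(t)=\alpha_{k+m}\,t\,g_{k+m}(t)$ and then invoking Lemma \ref{Euler-Gauss method for m-S-fractions}. One harmless slip: since $k+m\equiv k-1\pmod{m+1}$ you have $[k+m]_{m+1}=[k-1]_{m+1}$, not $[k]_{m+1}$, but the substantive claims that follow (namely $a_i^{(k+m)}=a_i^{(k)}+1$ for $i\neq[k]_{m+1}$, $a_{[k]_{m+1}}^{(k+m)}=a_{[k]_{m+1}}^{(k)}$, and $b_j^{(k+m)}=b_j^{(k)}+1$ for all $j$) are exactly right, so the identification of the shifted series with $g_{k+m}(t)$ goes through.
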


Note that, 
for any $k\in\N$, $1\leq i\leq m+1$, and $1\leq j\leq m$, the coefficients defined in \eqref{BCF parameters m+1Fm a'k, b'k} satisfy
\begin{equation}
a'_{k+1-i}=a_{[k+1-i]_{m+1}}^{(k-i)}+\ceil{\frac{k+1-i}{m+1}}=a_{[k+1-i]_{m+1}}^{(k)}
\quad\text{ and }\quad
b'_{k+1-j}=b_{[k+1-j]_m}^{(k+1-j)}+\ceil{\frac{k+1-j}{m}}=b_{[k+1-j]_m}^{(k)}.
\end{equation}
Therefore,
\begin{equation}
\label{a,b}
\left\{a_1^{(k)},\cdots,a_{m+1}^{(k)}\right\}=\left\{a'_k,\cdots,a'_{k-m}\right\}
\quad\text{ and }\quad
\left\{b_1^{(k)},\cdots,b_m^{(k)}\right\}=\left\{b'_k,\cdots,b'_{k-m+1}\right\}.
\end{equation}
Moreover, $b'_k-1=b'_{k-m}$ for all $k\in\N$.
%
As a result, we can simplify \eqref{ratios of m+1Fm PSZ} and \eqref{BCF coeff m+1Fm PSZ} to obtain the following alternative version of \cite[Th.~14.2]{AlanSokalM.PetroelleB.Zhu-LPandBCF1}.
\begin{corollary}
\label{BCF for ratios of m+1Fm modified coeff}
For $m\geq 1$, let 
\begin{equation}
\label{ratios of m+1Fm}
g_k(t)=\Hypergeometric[t]{m+1}{m}{a'_k,\cdots,a'_{k-m}\vspace*{0,1 cm}}{b'_k,\cdots,b'_{k-m+1}}
\quad\text{for any   }k\geq -1,
\end{equation}
with $a'_k$ and $b'_k$ defined by \eqref{BCF parameters m+1Fm a'k, b'k}.
Then, the ratios of contiguous hypergeometric series $\seq[k\in\N]{f_k(t)=\dfrac{g_k(t)}{g_{k-1}(t)}}$ admit the $m$-branched-continued-fraction representation \eqref{m-branched continued fraction} with coefficients
\begin{equation}
\label{BCF coeff m+1Fm}
\alpha_{k+m}=\frac{\left(b'_k-a'_k\right)\prod\limits_{i=1}^{m}a'_{k-i}}{\prod\limits_{i=0}^{m}b'_{k-i}}
\quad\text{for any }k\in\N.
\end{equation}
\end{corollary}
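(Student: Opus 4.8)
The plan is to deduce Corollary \ref{BCF for ratios of m+1Fm modified coeff} from Theorem \ref{BCF for ratios of m+1Fm PSZ} purely by rewriting, using the identities \eqref{a,b} together with the relation $b'_k-1=b'_{k-m}$ noted just before the statement. No new combinatorial or analytic content is needed; the entire task is bookkeeping on the parameter indices.

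First I would recall that Theorem \ref{BCF for ratios of m+1Fm PSZ} asserts that with $g_k(t)$ given by \eqref{ratios of m+1Fm PSZ} and the upper/lower parameters $a_i^{(k)},b_j^{(k)}$ as in \eqref{BCF parameters m+1Fm}, the ratios $f_k = g_k/g_{k-1}$ have the $m$-branched-continued-fraction representation \eqref{m-branched continued fraction} with coefficients \eqref{BCF coeff m+1Fm PSZ}. The key observation, already displayed in \eqref{a,b}, is that as \emph{multisets}
\begin{equation*}
\left\{a_1^{(k)},\cdots,a_{m+1}^{(k)}\right\}=\left\{a'_k,\cdots,a'_{k-m}\right\},
\qquad
\left\{b_1^{(k)},\cdots,b_m^{(k)}\right\}=\left\{b'_k,\cdots,b'_{k-m+1}\right\}.
\end{equation*}
Since a hypergeometric series ${}_{p}F_{q}$ is symmetric in its upper parameters and in its lower parameters, the right-hand side of \eqref{ratios of m+1Fm PSZ} is literally unchanged if we replace the list $(a_1^{(k)},\dots,a_{m+1}^{(k)})$ by $(a'_k,\dots,a'_{k-m})$ and $(b_1^{(k)},\dots,b_m^{(k)})$ by $(b'_k,\dots,b'_{k-m+1})$. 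Hence the $g_k$ of \eqref{ratios of m+1Fm} coincides with the $g_k$ of \eqref{ratios of m+1Fm PSZ}, and therefore so do the ratios $f_k$; in particular the $m$-branched-continued-fraction representation \eqref{m-branched continued fraction} is inherited.

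It remains to check that the coefficient formula \eqref{BCF coeff m+1Fm PSZ} becomes \eqref{BCF coeff m+1Fm} after this relabelling. By \eqref{BCF parameters m+1Fm a'k, b'k}, $a'_k = a_{[k]_{m+1}}^{(k)}$ is exactly the factor removed from the numerator product in \eqref{BCF coeff m+1Fm PSZ}, so $\prod_{i=1,\,i\neq[k]_{m+1}}^{m+1}a_i^{(k)}$ equals the product of the remaining elements of $\{a'_k,\dots,a'_{k-m}\}$, namely $\prod_{i=1}^{m}a'_{k-i}$. Likewise $b'_k = b_{[k]_m}^{(k)}$, and $\prod_{i=1}^{m}b_i^{(k)} = \prod_{i=0}^{m-1}b'_{k-i}$ by \eqref{a,b}; combining this with the identity $b'_k-1 = b'_{k-m}$ turns the denominator $(b'_k-1)\prod_{i=1}^{m}b_i^{(k)}$ into $\prod_{i=0}^{m}b'_{k-i}$. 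The numerator factor $b'_k - a'_k$ is already in the desired form. Putting the pieces together yields \eqref{BCF coeff m+1Fm}.

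There is essentially no obstacle here; the only point requiring a little care is verifying the index identity $b'_k - 1 = b'_{k-m}$, which follows from $b'_k = b_{[k]_m} + \ceil{k/m}$: shifting $k \mapsto k-m$ leaves $[k]_m$ unchanged and decreases $\ceil{k/m}$ by exactly $1$. I would state this one-line computation explicitly and then conclude that Corollary \ref{BCF for ratios of m+1Fm modified coeff} is just Theorem \ref{BCF for ratios of m+1Fm PSZ} rewritten with the symmetrised parameter lists.
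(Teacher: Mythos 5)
Your proposal is correct and follows exactly the route the paper takes: it derives the corollary from Theorem \ref{BCF for ratios of m+1Fm PSZ} by the multiset identities \eqref{a,b} (using the symmetry of ${}_{m+1}F_m$ in its upper and in its lower parameters) together with the relation $b'_k-1=b'_{k-m}$, which is precisely the simplification the paper records in the remarks immediately preceding the statement. No gaps; the index bookkeeping, including the one-line verification of $b'_k-1=b'_{k-m}$, is accurate.
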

Alternatively, we could set $\alpha_m=\dfrac{a_1\cdots a_m}{b_1\cdots b_m}$ and let $\alpha_{k+m}$ be defined by \eqref{BCF coeff m+1Fm PSZ} or \eqref{BCF coeff m+1Fm} for any $k\geq 1$.
This choice of coefficients $\seq[k\in\N]{\alpha_{k+m}}$ gives the branched-continued-fraction for the second ratio of contiguous ${}_{m+1}F_m$-hypergeometric series obtained in \cite[Th.~14.5]{AlanSokalM.PetroelleB.Zhu-LPandBCF1}. 
This is a particular instance of a more generic observation: 
if $\seq[k\geq -1]{g_k(t)}$ is a sequence of functions satisfying the recurrence relation \eqref{recurrence relation for the g_k general case}, then changing the value of $\alpha_m$ changes $g_{-1}(t)$, without changing $g_k(t)$ for $k\in\N$. 
More generally, changing the values of $\alpha_m,\cdots,\alpha_{m+n}$ with $n\in\N$ changes $g_{-1}(t),\cdots,g_{n-1}(t)$, without changing $g_k(t)$ for $k\geq n$.

We focus now on the cases with $r\neq s$.
When $r>s$, the branched continued fractions in \cite{AlanSokalM.PetroelleB.Zhu-LPandBCF1} are obtained from the case $r=s$ replacing $t$ by $b_1\cdots b_{r-s}\,t$, taking $b_1,\cdots,b_{r-s}\to\infty$, and relabelling $b_i\to b_{i-(r-s)}$; when $s>r$ they are obtained from the case $r=s$ replacing $t$ by $\left(a_1\cdots a_{s-r}\right)^{-1}\,t$, taking $a_1,\cdots,a_{s-r}\to\infty$, and relabelling $a_i\to a_{i-(s-r)}$.
Here we generalise this process, also starting from the case $r=s$, but considering that the indeterminates which we take to infinity do not need to be $b_1,\cdots,b_{r-s}$ (if $r>s$) or $a_1,\cdots,a_{s-r}$ (if $s>r$), but can instead be any $r-s$ indeterminates among $b_1,\cdots,b_s$ (if $r>s$) or any $s-r$ indeterminates among $a_1,\cdots,a_r$ (if $s>r$).
For this purpose, when $r>s$, we choose $1\leq\lambda_1<\cdots<\lambda_s\leq r$, 
define $B=\prod\limits_{j\in\mathrm{J}}b_j$ with $\mathrm{J}:=\{1,\cdots,r\}\backslash\{\lambda_1,\cdots,\lambda_s\}\neq\emptyset$,
and construct new branched continued fractions by replacing $t$ by $B\,t$, 
taking $b_j\to\infty$ for all $j\in J$, 
and relabelling $b_{\lambda_j}\to b_j$ for $1\leq j\leq s$.
Similarly, when $r<s$, 
we choose $1\leq\sigma_1<\cdots<\sigma_r\leq s$,
define $A=\prod\limits_{i\in\mathrm{I}}a_i$ with $\mathrm{I}:=\{1,\cdots,s\}\backslash\{\sigma_1,\cdots,\sigma_r\}\neq\emptyset$,
and construct new branched continued fractions by replacing $t$ by $A^{-1}\,t$, 
taking $a_i\to\infty$ for all $i\in I$, 
and relabelling $a_{\sigma_i}\to\hat{a}_i$ for $1\leq i\leq r$.

Taking these limits in the branched continued fractions from Theorem \ref{BCF for ratios of m+1Fm PSZ} and Corollary \ref{BCF for ratios of m+1Fm modified coeff}, we obtain the following result.
\begin{theorem}
\label{BCF for ratios of r+1Fs - theorem}
	
For $r,s\in\N$ such that $m=\max(r,s)\geq 1$, let $1\leq\lambda_1<\cdots<\lambda_s\leq r$ and $\Lambda=\{\lambda_1,\cdots,\lambda_s\}$ if $r\geq s$ or let $1\leq\sigma_1<\cdots<\sigma_r\leq s$, $\sigma_{r+1}=s+1$, and $\Sigma=\{\sigma_1,\cdots,\sigma_r,\,s+1\}$ if $r\leq s$, and define
\begin{equation}
\label{g_k as a r+1Fs hypergeometric function}
g_k(t)=\Hypergeometric[t]{r+1}{s}{a_1^{(k)},\cdots,a_{r+1}^{(k)}\vspace*{0,1 cm}}{b_1^{(k)},\cdots,b_s^{(k)}}
\quad\text{for any}\quad k\geq -1,
\end{equation}
where
\begin{equation}
\label{BCF parameters r+1Fs, r>=s}
a_i^{(k)}=a_i+\ceil{\frac{k+1-i}{r+1}}
\quad\text{and}\quad
b_j^{(k)}=b_j+\ceil{\frac{k+1-\lambda_j}{r}}
\quad\text{if   }r\geq s,
\end{equation}
or
\begin{equation}
\label{BCF parameters r+1Fs, r<=s}
a_i^{(k)}=a_i+\ceil{\frac{k+1-\sigma_i}{s+1}}
\quad\text{and}\quad
b_j^{(k)}=b_j+\ceil{\frac{k+1-j}{s}}
\quad\text{if   }r\leq s.
\end{equation}
Then, the ratios of contiguous hypergeometric series $\seq[k\in\N]{f_k(t)=\dfrac{g_k(t)}{g_{k-1}(t)}}$ \vspace*{0,1 cm}
admit a $m$-branched-continued-fraction representation of the form \eqref{m-branched continued fraction} with coefficients $\seq[k\in\N]{\alpha_{k+m}}$ defined as follows:
\begin{itemize}
\item
if $r\geq s$,
\begin{equation}
\label{BCF coeff r+1Fs, r>=s}
		\alpha_{k+r}=
		\begin{cases}
			\dfrac{\prod\limits_{i=1,\,i\neq[k]_{r+1}}^{r+1}a_i^{(k)}}{\prod\limits_{j=1}^{s}b_j^{(k)}}
			=\dfrac{\prod\limits_{i=1}^{r}a'_{k-i}}{\prod\limits_{\substack{i=1\\ [k-i]_r\in\Lambda}}^{r-1}b'_{k-i}} 
			& \text{if } [k]_r\not\in\Lambda, \vspace*{0,2 cm} \\
			\dfrac{\left(b_\ell^{(k)}-a_{[k]_{r+1}}^{(k)}\right)\prod\limits_{i=1,\,i\neq[k]_{r+1}}^{r+1}a_i^{(k)}}{\left(b_\ell^{(k)}-1\right)\prod\limits_{j=1}^{s}b_j^{(k)}}
			=\dfrac{\left(b'_k-a'_k\right)\prod\limits_{i=1}^{r}a'_{k-i}}{\prod\limits_{\substack{i=0\\ [k-i]_r\in\Lambda}}^{r}b'_{k-i}}
			& \text{if }\; [k]_r=\lambda_\ell\in\Lambda, 
		\end{cases}
	\end{equation}
with	
	\begin{equation}
		\label{BCF parameters r+1Fs, r>=s*}
		a'_k=a_{[k]_{r+1}}^{(k)}=a_{[k]_{r+1}}+\ceil{\frac{k}{r+1}}
		\quad\text{and}\quad
		b'_k=b_l^{(k)}=b_l+\ceil{\frac{k}{r}}
		\text{   if   }
		[k]_r=\lambda_l;
	\end{equation}
	
\item
if $r\leq s$,
	\begin{equation}
		\label{BCF coeff r+1Fs, r<=s}
		\alpha_{k+s}=
		\begin{cases}
			-\dfrac{\prod\limits_{i=1}^{r+1}a_i^{(k)}}{\left(b_{[k]_s}^{(k)}-1\right)\prod\limits_{j=1}^{s}b_j^{(k)}}
			=-\dfrac{\prod\limits_{\substack{i=1 \\ [k-i]_{s+1}\in\Sigma}}^{s}a'_{k-i}}{\prod\limits_{i=0}^{s}b'_{k-i}}  
			& \text{if } [k]_{s+1}\not\in\Sigma, \vspace*{0,2 cm} \\
			\dfrac{\left(b_{[k]_s}^{(k)}-a_\ell^{(k)}\right)\prod\limits_{i=1,\,i\neq\ell}^{r+1}a_i^{(k)}}{\left(b_{[k]_s}^{(k)}-1\right)\prod\limits_{j=1}^{s}b_j^{(k)}}
			=\dfrac{\left(b'_k-a'_k\right)\prod\limits_{\substack{i=1 \\ [k-i]_{s+1}\in\Sigma}}^{s}a'_{k-i}}{\prod\limits_{i=0}^{s}b'_{k-i}}
			& \text{if }\; [k]_{s+1}=\sigma_\ell\in\Sigma, 
		\end{cases}
	\end{equation}
with 
	\begin{equation}
		\label{BCF parameters r+1Fs, r<=s*}
		a'_k=a_l^{(k)}=a_l+\ceil{\frac{k}{s+1}}
		\text{   if   }
		[k]_{s+1}=\sigma_l
		\quad\text{and}\quad
		b'_k=b_{[k]_s}^{(k)}=b_{[k]_s}+\ceil{\frac{k}{s}}.		
	\end{equation}
\end{itemize}
\end{theorem}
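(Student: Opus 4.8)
The plan is to deduce the general statement from the case $r=s=\max(r,s)$, which is exactly Theorem~\ref{BCF for ratios of m+1Fm PSZ} (reformulated in Corollary~\ref{BCF for ratios of m+1Fm modified coeff}), by a confluence limit: when $r>s$ I would send $r-s$ of the lower parameters to infinity, and when $r<s$ send $s-r$ of the upper parameters to infinity. When $r=s$ the index set $\Lambda$ already exhausts $\{1,\dots,r\}$ (respectively $\Sigma\setminus\{s+1\}$ exhausts $\{1,\dots,s\}$), no parameter is sent to infinity, the formulas \eqref{g_k as a r+1Fs hypergeometric function}--\eqref{BCF coeff r+1Fs, r<=s} collapse into \eqref{ratios of m+1Fm PSZ}--\eqref{BCF coeff m+1Fm PSZ}, and nothing is left to prove; so from now on I assume $r\ne s$ and write $m=\max(r,s)$.

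Consider first $r>s$, so $m=r$. I would start from Theorem~\ref{BCF for ratios of m+1Fm PSZ} with upper parameters $a_1,\dots,a_{r+1}$ and lower parameters $\beta_1,\dots,\beta_r$: there the ${}_{r+1}F_r$ functions $g_k(t)$ of \eqref{ratios of m+1Fm PSZ} satisfy \eqref{recurrence relation for the g_k general case} with coefficients \eqref{BCF coeff m+1Fm PSZ}. Set $\mathrm{J}=\{1,\dots,r\}\setminus\{\lambda_1,\dots,\lambda_s\}$ and $B=\prod_{j\in\mathrm{J}}\beta_j$; the substitution $t\mapsto Bt$ turns \eqref{recurrence relation for the g_k general case} into $g_k(Bt)-g_{k-1}(Bt)=(B\alpha_{k+r})\,t\,g_{k+r}(Bt)$, and I would then let $\beta_j\to\infty$ for every $j\in\mathrm{J}$, reading all identities coefficient-wise in $t$ (each coefficient of $t^n$ is a rational function of the $\beta_j$ whose value at infinity is well defined). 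Two facts must be checked. First, termwise $g_k(Bt)$ tends to the ${}_{r+1}F_s$ of \eqref{g_k as a r+1Fs hypergeometric function} with the parameters \eqref{BCF parameters r+1Fs, r>=s}: this is the lower-parameter companion of \eqref{confluent relations for generalised hypergeometric series} (in which a lower parameter $\beta$ is sent to infinity after $z\mapsto\beta z$, and which follows just as \eqref{confluent relations for generalised hypergeometric series} does from $\beta^{\,n}/\pochhammer[n]{\beta}\to1$ for each fixed $n$), applied to the $|\mathrm{J}|=r-s$ lower parameters indexed by $\mathrm{J}$, each of which tends to infinity with $\beta_j$; after relabelling $\beta_{\lambda_j}\mapsto b_j$ the surviving lower shift $\ceil{(k+1-\lambda_j)/r}$ is exactly the one in \eqref{BCF parameters r+1Fs, r>=s} while the upper shifts are untouched. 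Second, $B\alpha_{k+r}$ tends to the coefficient in \eqref{BCF coeff r+1Fs, r>=s}: in \eqref{BCF coeff m+1Fm PSZ} the denominator factor $\prod_{i=1}^{r}\beta_i^{(k)}$ contains the sub-product $\prod_{j\in\mathrm{J}}\beta_j^{(k)}$, which is asymptotic to $B$ and cancels it, leaving $\prod_{j=1}^{s}b_j^{(k)}$, whereas the factor $(b'_k-a'_k)/(b'_k-1)$ tends to $1$ if $[k]_r\in\mathrm{J}$ (then $b'_k\to\infty$, and since $b'_k-1=b'_{k-r}$ so does $b'_k-1$) and survives unchanged if $[k]_r=\lambda_\ell\in\Lambda$; this yields the two cases of \eqref{BCF coeff r+1Fs, r>=s} expressed through $a_i^{(k)},b_j^{(k)}$, and the equivalent form through $a'_k,b'_k$ then follows from the set identities $\{a_1^{(k)},\dots,a_{r+1}^{(k)}\}=\{a'_k,\dots,a'_{k-r}\}$ and $\{b_j^{(k)}:1\le j\le s\}=\{b'_{k-i}:0\le i\le r-1,\ [k-i]_r\in\Lambda\}$, proved exactly as \eqref{a,b} and the relation $b'_k-1=b'_{k-m}$ were in the case $r=s$. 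Passing to the limit in $g_k(Bt)-g_{k-1}(Bt)=(B\alpha_{k+r})\,t\,g_{k+r}(Bt)$ then shows that the functions \eqref{g_k as a r+1Fs hypergeometric function} satisfy \eqref{recurrence relation for the g_k general case} with $m=r$ unchanged (so the indices $k-1,k,k+m$ do not drift) and with the coefficients \eqref{BCF coeff r+1Fs, r>=s}, and Lemma~\ref{Euler-Gauss method for m-S-fractions} delivers the $m$-branched-continued-fraction representation \eqref{m-branched continued fraction} for $f_k=g_k/g_{k-1}$.

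The case $r<s$, so $m=s$, I would treat symmetrically: start from Theorem~\ref{BCF for ratios of m+1Fm PSZ} with $m=s$, upper parameters $a_1,\dots,a_{s+1}$ and lower parameters $b_1,\dots,b_s$; put $\mathrm{I}=\{1,\dots,s\}\setminus\{\sigma_1,\dots,\sigma_r\}$, $A=\prod_{i\in\mathrm{I}}a_i$ and $\sigma_{r+1}=s+1$; substitute $t\mapsto A^{-1}t$; let $a_i\to\infty$ for $i\in\mathrm{I}$; and relabel $a_{\sigma_i}\mapsto a_i$ for $1\le i\le r+1$. Now it is the upper-parameter confluence \eqref{confluent relations for generalised hypergeometric series} itself that applies to the $|\mathrm{I}|=s-r$ numerator parameters indexed by $\mathrm{I}$, giving the limit of $g_k(A^{-1}t)$ with the shifts \eqref{BCF parameters r+1Fs, r<=s}; for the coefficient one inspects \eqref{BCF coeff m+1Fm PSZ} with $m=s$: when $[k]_{s+1}\in\mathrm{I}$ (equivalently $[k]_{s+1}\notin\Sigma$) one has $a'_k=a_{[k]_{s+1}}^{(k)}\to\infty$, so $b'_k-a'_k\sim-a'_k$ and the numerator $\prod_{i\ne[k]_{s+1}}a_i^{(k)}$ carries the remaining $|\mathrm{I}|-1$ large factors, whence multiplication by $A^{-1}$ normalises the $|\mathrm{I}|$ large factors to $1$ and produces the minus sign of the first line of \eqref{BCF coeff r+1Fs, r<=s}; when $[k]_{s+1}=\sigma_\ell\in\Sigma$ the factor $a'_k$ stays finite, $\prod_{i\ne[k]_{s+1}}a_i^{(k)}$ contains all of $\mathrm{I}$, and $\times A^{-1}$ gives the second line of \eqref{BCF coeff r+1Fs, r<=s}; the two equivalent forms match once more by the \eqref{a,b}-type bookkeeping. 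Passing to the limit and invoking Lemma~\ref{Euler-Gauss method for m-S-fractions} completes this case.

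The step I expect to be the main obstacle is the combinatorial bookkeeping hidden in the coefficient computation: keeping the ceiling-function shifts aligned through the relabellings, pinning down the exact asymptotic order of the product formula \eqref{BCF coeff m+1Fm PSZ} as the chosen parameters tend to infinity (in particular which cases produce the overall minus sign and which factor, $b'_k-1$ or $a'_k$, survives), and verifying that the two displayed forms of each coefficient genuinely coincide; none of this is deep but it is where all the care lies. If handling the limits turns out to be awkward, an equivalent route bypasses Theorem~\ref{BCF for ratios of m+1Fm PSZ} altogether: verify directly, via the contiguous relations \eqref{contiguous relation 1}--\eqref{contiguous relation 3} of Lemma~\ref{contiguous relations for hypergeometric functions} applied according to whether $[k]_m$ lies in $\Lambda$ (resp.\ $\Sigma$) or not, that the functions \eqref{g_k as a r+1Fs hypergeometric function} with the stated parameters satisfy \eqref{recurrence relation for the g_k general case} with the coefficients \eqref{BCF coeff r+1Fs, r>=s}/\eqref{BCF coeff r+1Fs, r<=s}, and then apply Lemma~\ref{Euler-Gauss method for m-S-fractions}; this trades the confluence analysis for a direct arithmetic-of-ceilings check.
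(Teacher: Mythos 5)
Your proposal is correct and matches the paper's own treatment: the paper obtains the theorem exactly by the confluence limits you describe (rescaling $t$ by $B$ or $A^{-1}$ and sending the chosen $b_j$'s or $a_i$'s to infinity in the $r=s=m$ case of Theorem \ref{BCF for ratios of m+1Fm PSZ} and Corollary \ref{BCF for ratios of m+1Fm modified coeff}), and your fallback route is precisely the paper's explicitly written ``second proof'' via the contiguous relations of Lemma \ref{contiguous relations for hypergeometric functions} combined with Lemma \ref{Euler-Gauss method for m-S-fractions}.
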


When $m=1$, that is, when $(r,s)\in\{(1,1),(1,0),(0,1)\}$, Theorem \ref{BCF for ratios of r+1Fs - theorem} gives well-known classical-continued-fraction representations of the form \eqref{S-fraction} for the ratios of contiguous hypergeometric series
\begin{equation}
	\frac{\Hypergeometric[t]{2}{1}{a_1+\ceil{\frac{k}{2}},a_2+\ceil{\frac{k-1}{2}}}{b+k}}{\Hypergeometric[t]{2}{1}{a_1+\ceil{\frac{k-1}{2}},a_2+\ceil{\frac{k-2}{2}}}{b+k-1}},
	\quad
	\frac{\Hypergeometric[t]{2}{0}{a_1+\ceil{\frac{k}{2}},a_2+\ceil{\frac{k-1}{2}}}{-}}{\Hypergeometric[t]{2}{0}{a_1+\ceil{\frac{k-1}{2}},a_2+\ceil{\frac{k-2}{2}}}{-}},
	\quad\text{and}\quad
	\frac{\Hypergeometric[t]{1}{1}{a+\ceil{\frac{k-1}{2}}}{b+k}}{\Hypergeometric[t]{1}{1}{a+\ceil{\frac{k-2}{2}}}{b+k-1}},
\end{equation}
where the continued-fraction coefficients are, respectively,
\begin{equation}
	\alpha_{2k+1}=\frac{\left(a_1+k\right)\left(b-a_2+k\right)}{\left(b+2k-1\right)\left(b+2k\right)}
	\quad\text{and}\quad
	\alpha_{2k+2}=\frac{\left(a_2+k\right)\left(b-a_1+k\right)}{\left(b+2k\right)\left(b+2k+1\right)}
	\quad\text{for  }k\in\N,
\end{equation}
\begin{equation}
	\alpha_{2k+j}=a_j+k
	\quad\text{for  }k\in\N\text{  and  }j\in\{1,2\},
\end{equation}
and
\begin{equation}
	\alpha_{2k+1}=\frac{(b-a+k)}{(b+2k-1)(b+2k)}
	\quad\text{and}\quad
	\alpha_{2k+2}=-\frac{(a+k)}{(b+2k)(b+2k+1)}
	\quad\text{for  }k\in\N.
\end{equation}

For any $r,s\in\N$ not both equal to $0$, Theorem \ref{BCF for ratios of r+1Fs - theorem} gives $\binom{\max(r,s)}{\min(r,s)}$ distinct branched continued fractions, including as particular cases the representations obtained in \cite[\S 14]{AlanSokalM.PetroelleB.Zhu-LPandBCF1} for the first and second ratios of contiguous ${}_{r+1}F_s$-hypergeometric series in \eqref{ratios of hypergeometric series in PSZ}.
In Section \ref{Type II MOP w.r.t. linear functionals}, we investigate multiple orthogonal polynomials corresponding to each of these branched continued fractions (with $a_{r+1}=1$).

When $r\geq s$, setting $\lambda_j=r-s+j$ for all $1\leq j\leq s$ in Theorem \ref{BCF for ratios of r+1Fs - theorem} gives the branched-continued-fraction for the first ratio of contiguous ${}_{r+1}F_s$-hypergeometric series obtained in \cite[Th.~14.3]{AlanSokalM.PetroelleB.Zhu-LPandBCF1} if $\lambda_s=r$ or the branched-continued-fraction for the first ratio of contiguous ${}_{r+1}F_s$-hypergeometric series obtained in \cite[Th.~14.6]{AlanSokalM.PetroelleB.Zhu-LPandBCF1} if $\lambda_s\leq r$. 
When $r\leq s$, setting $\sigma_i=s-r+i$ for all $1\leq i\leq r$ in Theorem \ref{BCF for ratios of r+1Fs - theorem} gives the branched-continued-fraction for the first ratio of contiguous ${}_{r+1}F_s$-hypergeometric series obtained in \cite[Th.~14.3]{AlanSokalM.PetroelleB.Zhu-LPandBCF1}.

Furthermore, we can always choose if we get a branched-continued-fraction for the first or second ratio of contiguous ${}_{r+1}F_s$-hypergeometric series by keeping the values of $\alpha_{k+m}$ for $k\geq 1$ defined in Theorem \ref{BCF for ratios of r+1Fs - theorem} and changing $\alpha_m$ and, consequently, changing $g_{-1}(t)$ without changing $g_k(t)$ for any $k\in\N$.
Precisely, we get the first and second ratios in \eqref{ratios of hypergeometric series in PSZ} if we set, respectively, 
\begin{equation}
	\alpha_m=\dfrac{a_1\cdots a_r\left(b_s-a_{r+1}\right)}{b_1\cdots b_s\left(b_s-1\right)} 
	\quad\text{or}\quad 
	\alpha_m=\dfrac{a_1\cdots a_r}{b_1\cdots b_s}.
\end{equation}
The expressions on the left- and right-hand sides of \eqref{BCF coeff r+1Fs, r>=s} and \eqref{BCF coeff r+1Fs, r<=s} are obtained by taking the limits described before the statement of Theorem \ref{BCF for ratios of r+1Fs - theorem} in \eqref{BCF coeff m+1Fm PSZ} or in \eqref{BCF coeff m+1Fm}, respectively.
Alternatively, the expressions on the right-hand side can be obtained from the expressions on left-hand side analogously to how we derived \eqref{BCF coeff m+1Fm} from \eqref{BCF coeff m+1Fm PSZ}.
We now give an alternative proof of Theorem \ref{BCF for ratios of r+1Fs - theorem}, using the relations in Lemma \ref{contiguous relations for hypergeometric functions}.
\begin{proof}[Second proof of Theorem \ref{BCF for ratios of r+1Fs - theorem}]
It is sufficient to show that the sequence $\seq[k\geq -1]{g_k(t)}$ defined in \eqref{g_k as a r+1Fs hypergeometric function} satisfies the recurrence relation
\begin{equation}
\label{rec rel g_k}
g_k(t)-g_{k-1}(t)=\alpha_{k+m}\,t\,g_{k+m}(t)
\quad\text{for all }k\in\N,
\end{equation}
involving the coefficients $\seq[k\in\N]{\alpha_{k+m}}$ defined by \eqref{BCF coeff r+1Fs, r>=s} if $r\geq s$ or by \eqref{BCF coeff r+1Fs, r<=s} if $r\leq s$.
Then, using Lemma \ref{Euler-Gauss method for m-S-fractions}, $f_k(t)={g_k(t)}\slash{g_{k-1}(t)}$ is the generating function of the $m$-Dyck paths at height $k$ for any $k\in\N$ and admits the $m$-branched-continued-fraction representation \eqref{m-branched continued fraction}.
	
To prove that \eqref{rec rel g_k} holds, we use the relations in Lemma \ref{contiguous relations for hypergeometric functions}.
We start by considering the case $r\geq s$.
Then, for any $k\in\N$,
\begin{itemize}
\item
$a_i^{(k)}=a_i^{(k-1)}+1$ if $i=[k]_{r+1}$ and $a_i^{(k)}=a_i^{(k-1)}$ if $i\neq[k]_{r+1}$,
\vspace*{0,1 cm}
\item
$b_j^{(k)}=b_j^{(k-1)}+1$ if $\lambda_j=[k]_r$ and $b_j^{(k)}=b_j^{(k-1)}$ if $\lambda_j\neq[k]_r$.
\end{itemize}
If $[k]_r\not\in\Lambda=\{\lambda_1,\cdots,\lambda_s\}$, we use \eqref{contiguous relation 1} to find that  \vspace*{-0,2 cm}
\begin{equation}
\begin{aligned}
g_k(t)-g_{k-1}(t) 
=&\,
\Hypergeometric[t]{r+1}{s}{a_1^{(k)},\cdots,a_{r+1}^{(k)}\vspace*{0,1 cm}} {b_1^{(k)},\cdots,b_s^{(k)}}
-\Hypergeometric[t]{r+1}{s}{a_1^{(k)},\cdots,a_{[k]_{r+1}-1}^{(k)},a_{[k]_{r+1}}^{(k)}-1,a_{[k]_{r+1}+1}^{(k)},\cdots,a_{r+1}^{(k)}\vspace*{0,1 cm}} {b_1^{(k)},\cdots,b_s^{(k)}}
\\=&\,
\dfrac{\prod\limits_{i=1,\,i\neq[k]_{r+1}}^{r}a_i^{(k)}}{\prod\limits_{j=1}^{s}b_j^{(k)}} \, \Hypergeometric[t]{r+1}{s}{a_1^{(k)}+1,\cdots,a_{[k]_{r+1}-1}^{(k)}+1,a_{[k]_{r+1}}^{(k)},a_{[k]_{r+1}+1}^{(k)}+1,\cdots,a_{r+1}^{(k)}+1\vspace*{0,1 cm}} {b_1^{(k)}+1,\cdots,b_s^{(k)}+1}
\\=&\,
\alpha_{k+r}\,t\,g_{k+r}(t).
\end{aligned}
\end{equation}
Otherwise, $[k]_r=\lambda_\ell\in\Lambda$ with $\ell\in\{1,\cdots,s\}$, and, using \eqref{contiguous relation 2}, we get \vspace*{-0,2 cm}
\begin{equation}
\begin{aligned}
&\,g_k(t)-g_{k-1}(t) 
=\Hypergeometric[t]{r+1}{s}{a_1^{(k)},\cdots,a_{r+1}^{(k)}\vspace*{0,1 cm}}{b_1^{(k)},\cdots,b_s^{(k)}}
-\Hypergeometric[t]{r+1}{s}{a_1^{(k)},\cdots,a_{[k]_{r+1}-1}^{(k)},a_{[k]_{r+1}}^{(k)}-1,a_{[k]_{r+1}+1}^{(k)},\cdots,a_{r+1}^{(k)}\vspace*{0,1 cm}} {b_1^{(k)},\cdots,b_{\ell-1}^{(k)},b_\ell^{(k)}-1,b_{\ell+1}^{(k)},\cdots,b_s^{(k)}}
\\=&\,
\dfrac{\left(b_\ell^{(k)}-a_{[k]_{r+1}}^{(k)}\right)\prod\limits_{i=1,\,i\neq[k]_{r+1}}^{r}a_i^{(k)}}{\left(b_\ell^{(k)}-1\right)\prod\limits_{j=1}^{s}b_j^{(k)}} \, \Hypergeometric[t]{r+1}{s}{a_1^{(k)}+1,\cdots,a_{[k]_{r+1}-1}^{(k)}+1,a_{[k]_{r+1}}^{(k)},a_{[k]_{r+1}+1}^{(k)}+1,\cdots,a_r^{(k)}+1 \vspace*{0,1 cm}} {b_1^{(k)}+1,\cdots,b_s^{(k)}+1}
\\=&\,
\alpha_{k+r}\,t\,g_{k+r}(t).
\end{aligned}
\end{equation}
	
Next, we consider the case $r\leq s$.
Then, for any $k\in\N$,
\begin{itemize}
\item
$a_i^{(k)}=a_i^{(k-1)}+1$ if $\sigma_i=[k]_{s+1}$ and $a_i^{(k)}=a_i^{(k-1)}$ if $\sigma_i\neq[k]_{s+1}$,
\vspace*{0,1 cm}
\item
$b_j^{(k)}=b_j^{(k-1)}+1$ if $j=[k]_s$ and $b_j^{(k)}=b_j^{(k-1)}$ if $j\neq[k]_s$.
\end{itemize}
If $[k]_{s+1}\not\in\Sigma=\{\sigma_1,\cdots,\sigma_r,\,s+1\}$, we use \eqref{contiguous relation 3} to find that 
\begin{equation}
\begin{aligned}
g_k(t)-g_{k-1}(t) 
=&\,
\Hypergeometric[t]{r+1}{s}{a_1^{(k)},\cdots,a_{r+1}^{(k)}\vspace*{0,1 cm}} {b_1^{(k)},\cdots,b_s^{(k)}}
-\Hypergeometric[t]{r+1}{s}{a_1^{(k)},\cdots,a_{r+1}^{(k)}\vspace*{0,1 cm}} {b_1^{(k)},\cdots,b_{[k]_s-1}^{(k)},b_{[k]_s}^{(k)}-1,b_{[k]_s+1}^{(k)},\cdots,b_s^{(k)}}
\\=&\,
-\dfrac{\prod\limits_{i=1}^{r+1}a_i^{(k)}}{\left(b_{[k]_s}-1\right)\prod\limits_{j=1}^{s}b_j^{(k)}} \,
\Hypergeometric[t]{r+1}{s}{a_1^{(k)}+1,\cdots,a_r^{(k)}+1\vspace*{0,1 cm}}{b_1^{(k)}+1,\cdots,b_s^{(k)}+1}
\\=&\,
\alpha_{k+s}\,t\,g_{k+s}(t).
\end{aligned}
\end{equation}
Otherwise, $[k]_{s+1}=\sigma_\ell\in\Sigma$ with $\ell\in\{1,\cdots,r+1\}$ ($\sigma_{r+1}=s+1$), and, using \eqref{contiguous relation 2}, we get 
\begin{equation}
\begin{aligned}
g_k(t)-g_{k-1}(t) 
=&\,
\Hypergeometric[t]{r+1}{s}{a_1^{(k)},\cdots,a_{r+1}^{(k)}\vspace*{0,1 cm}}{b_1^{(k)},\cdots,b_s^{(k)}}
-\Hypergeometric[t]{r+1}{s}{a_1^{(k)},\cdots,a_{\ell-1}^{(k)},a_{\ell}^{(k)}-1,a_{\ell+1}^{(k)},\cdots,a_{r+1}^{(k)}\vspace*{0,1 cm}} {b_1^{(k)},\cdots,b_{[k]_s-1}^{(k)},b_{[k]_s}^{(k)}-1,b_{[k]_s+1}^{(k)},\cdots,b_s^{(k)}}
\\=&\,
\dfrac{\left(b_{[k]_s}^{(k)}-a_\ell^{(k)}\right)\prod\limits_{i=1,\,i\neq\ell}^{r}a_i^{(k)}}{\left(b_{[k]_s}^{(k)}-1\right)\prod\limits_{j=1}^{s}b_j^{(k)}} \, \Hypergeometric[t]{r+1}{s}{a_1^{(k)}+1,\cdots,,a_{\ell-1}^{(k)}+1,a_{\ell}^{(k)},a_{\ell+1}^{(k)}+1,\cdots,a_r^{(k)}+1 \vspace*{0,1 cm}} {b_1^{(k)}+1,\cdots,b_s^{(k)}+1}
\\=&\,
\alpha_{k+s}\,t\,g_{k+s}(t).
\end{aligned}
\end{equation}
\end{proof}

\subsection{Conditions for positivity of the coefficients}
The following result gives necessary and sufficient conditions for the non-negativity and for positivity of all coefficients of the branched continued fractions introduced in Theorem \ref{BCF for ratios of r+1Fs - theorem}. 
In Section \ref{Type II MOP w.r.t. Meijer G-functions}, we revisit in more detail these conditions when $a_{r+1}=1$.
\begin{proposition}
	\label{conditions for non-negativity of the coefficients of the BCF for a ratio of r+1Fs, r>=s}
	For $r,s\in\N$ such that $s\leq r\neq 0$, let $1\leq\lambda_1<\cdots<\lambda_s\leq r$ and $a_1,\cdots,a_{r+1},b_1,\cdots,b_s\in\R^+$.
	Then, the coefficients in the sequence $\seq[k\in\N]{\alpha_{k+r}}$ defined by \eqref{BCF coeff r+1Fs, r>=s} are all nonnegative if and only if 
	\begin{equation}
		\label{BCF conditions for non-negativity of the coefficients 1}
		b_j\geq a_i-\ceil{\frac{i-\lambda_j}{r}}=
		\begin{cases}
			a_i &\text{if }i\leq\lambda_j \\
			a_i-1 &\text{if }i\geq\lambda_j+1 
		\end{cases}
		\quad\text{for all }1\leq i\leq r+1\text{ and }1\leq j\leq s,
	\end{equation}
	and
	\begin{equation}
		\label{BCF conditions for non-negativity of the coefficients 2}
		\frac{b_s-a_{r+1}}{b_s-1}\geq 0
		\quad\text{when}\quad
		\lambda_s=r.
	\end{equation}
	Furthermore, the coefficients in $\seq[k\in\N]{\alpha_{k+r}}$ are all positive if and only if all the inequalities above are strict.
\end{proposition}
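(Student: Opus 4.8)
The plan is to analyse each of the two cases in the formula \eqref{BCF coeff r+1Fs, r>=s} for the coefficients $\alpha_{k+r}$ directly, tracking which Pochhammer-symbol-like factors appear. Since we already know from \eqref{a,b} that $\{a_1^{(k)},\dots,a_{r+1}^{(k)}\}=\{a'_k,\dots,a'_{k-r}\}$ and $\{b_1^{(k)},\dots,b_s^{(k)}\}$ is the corresponding set of shifted $b$'s, the first step is to rewrite $\alpha_{k+r}$ entirely in terms of $a_i+\lceil (k+1-i)/(r+1)\rceil$ and $b_j+\lceil (k+1-\lambda_j)/r\rceil$, and to observe that as $k$ ranges over $\N$ each ceiling takes every nonnegative integer value. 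The key point is that for each fixed pair $(i,j)$ with $1\le i\le r+1$, $1\le j\le s$, the factor $b_j^{(k)}-a_i^{(k)}$ (which appears in the numerator exactly when $[k]_r=\lambda_j$ and $i\ne [k]_{r+1}$, i.e. for appropriate residues of $k$) equals $b_j-a_i + \lceil (k+1-\lambda_j)/r\rceil - \lceil (k+1-i)/(r+1)\rceil$, and over the relevant arithmetic progression of $k$'s the integer offset $\lceil (k+1-\lambda_j)/r\rceil - \lceil (k+1-i)/(r+1)\rceil$ takes its minimum value $-\lceil (i-\lambda_j)/r\rceil$ and then increases without bound. Hence the infimum over all such $k$ of that factor is $b_j - a_i + \lceil (i-\lambda_j)/r\rceil$, and this is precisely $b_j - (a_i - \lceil (i-\lambda_j)/r\rceil)$, matching \eqref{BCF conditions for non-negativity of the coefficients 1} once one checks the case split $i\le\lambda_j$ versus $i\ge\lambda_j+1$ for the value of $\lceil (i-\lambda_j)/r\rceil$ (here $1\le i\le r+1$ and $1\le\lambda_j\le r$ force $\lceil(i-\lambda_j)/r\rceil\in\{0,1\}$).

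Next I would handle the remaining factors. The pure numerator factors $a_i^{(k)}=a_i+\lceil(k+1-i)/(r+1)\rceil$ are always $\ge a_i>0$, so they never cause a sign change and never vanish; similarly each denominator factor $b_j^{(k)}-1 = b_j-1+\lceil(k+1-\lambda_j)/r\rceil$ — but one must be careful, since $b_j-1$ could be negative. The sign of $\alpha_{k+r}$ is governed by the product of the "difference" factors $b_\ell^{(k)}-a_{[k]_{r+1}}^{(k)}$ together with the $(b_\ell^{(k)}-1)$ in the denominator. Using the identity $b'_k - 1 = b'_{k-r}$ recorded after \eqref{a,b} (i.e. $b_j^{(k)}-1$ is itself one of the shifted $b$'s appearing elsewhere), the factors $(b_\ell^{(k)}-1)$ occurring in denominators telescope with $b$-factors occurring in numerators of neighbouring coefficients, so over the whole sequence the only genuinely sign-sensitive quantities are the $b_j^{(k)}-a_i^{(k)}$ and, in the single exceptional residue class where $\lambda_s=r$, the ratio $(b_s^{(k)}-a_{r+1}^{(k)})/(b_s^{(k)}-1)$ — whose minimal instance over $k$ is $(b_s-a_{r+1})/(b_s-1)$, giving \eqref{BCF conditions for non-negativity of the coefficients 2}. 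I would make this telescoping precise by working with the right-hand expressions in \eqref{BCF coeff r+1Fs, r>=s}, which are already written so that each $b'_{k-i}$ appears with the correct bookkeeping.

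With these observations, the equivalence follows: if all the inequalities \eqref{BCF conditions for non-negativity of the coefficients 1}–\eqref{BCF conditions for non-negativity of the coefficients 2} hold then every factor in every $\alpha_{k+r}$ is $\ge 0$ (and the denominators are $>0$), so all coefficients are nonnegative; conversely, if some inequality in \eqref{BCF conditions for non-negativity of the coefficients 1} fails for a pair $(i,j)$, I exhibit the specific $k$ realising the minimal offset, for which $\alpha_{k+r}$ has exactly one negative difference-factor and hence is $<0$ — here one must confirm that the accompanying denominator factor $b_j^{(k)}-1$ is strictly positive at that same $k$ so the sign is not cancelled, which is automatic because $b_j^{(k)}-1 = b_j-1+\lceil(k+1-\lambda_j)/r\rceil$ and at the critical $k$ this ceiling is large enough. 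The positivity statement is the same argument with "$\ge$" replaced by "$>$": strictness of all inequalities makes every factor strictly positive, and failure of strictness produces a vanishing $\alpha_{k+r}$. I expect the main obstacle to be the careful bookkeeping of exactly which $(i,j)$ pairs contribute a difference-factor to $\alpha_{k+r}$ for a given residue of $k$ modulo $\mathrm{lcm}(r,r+1)$, and ensuring that the "minimising $k$" constructed in the converse direction simultaneously puts the offending factor at its minimum while keeping all other factors strictly positive; the edge cases $i=\lambda_j$, $i=\lambda_j+1$, and the special role of $\lambda_s=r$ (which is the only place the numerator factor $a_{r+1}^{(k)}$ can be "cancelled" against a $b$) will need individual attention.
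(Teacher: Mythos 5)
Your proposal follows essentially the same route as the paper's proof: coefficients with $[k]_r\notin\{\lambda_1,\dots,\lambda_s\}$ are automatically positive, the case $k=0$ with $\lambda_s=r$ is isolated to give \eqref{BCF conditions for non-negativity of the coefficients 2}, and for $k\geq 1$ with $[k]_r=\lambda_j$ the sign is governed by $b_j^{(k)}-a_{[k]_{r+1}}^{(k)}$, whose value over each relevant residue class of $k$ modulo $r(r+1)$ increases by $1$ per period so that its minimum, computed at the first term, yields exactly \eqref{BCF conditions for non-negativity of the coefficients 1}. Two small repairs: the difference factor $b_j^{(k)}-a_i^{(k)}$ occurs precisely when $[k]_r=\lambda_j$ \emph{and} $i=[k]_{r+1}$ (not $i\neq[k]_{r+1}$ as you wrote), and the ``telescoping'' of the $(b_\ell^{(k)}-1)$ denominators across neighbouring coefficients is neither meaningful for a sign analysis of each individual $\alpha_{k+r}$ nor needed — the direct observation you also make, that $b_j^{(k)}-1=b_j-1+\ceil{\frac{k+1-\lambda_j}{r}}\geq b_j>0$ for $k\geq 1$, is all the paper uses.
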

When $r<s$, we cannot have non-negativity of all coefficients in $\seq[k\in\N]{\alpha_{k+s}}$ due to the minus sign in \eqref{BCF coeff r+1Fs, r<=s}.

\begin{proof}
	For any $k\in\N$, $a_i^{(k)}\geq a_i$ for $i\in\{1,\cdots,r+1\}$ and $b_j^{(k)}\geq b_i$ for $j\in\{1,\cdots,r\}$, so $\alpha_{k+r}>0$ whenever $[k]_r\not\in\{\lambda_1,\cdots,\lambda_s\}$.
	Otherwise, we consider separately the cases $k=0$ and $k\geq 1$.
	If $k=0$, $[k]_r\in\{\lambda_1,\cdots,\lambda_s\}$ implies that $[k]_r=r=\lambda_s$, so $\alpha_k\geq 0$ if and only if \eqref{BCF conditions for non-negativity of the coefficients 2} holds.
	If $k\geq 1$ and $[k]_r=\lambda_j$ for some $j\in\{1,\cdots,s\}$, then $\alpha_{k+r}\geq 0$ if and only if $b_j^{(k)}\geq a_{[k]_{r+1}}^{(k)}$, because $b_j^{(k)}\geq b_j+1$ implies $b_j^{(k)}-1\in\R^+$ for any $k\geq 1$. 
	We will show now that $b_j^{(k)}\geq a_{[k]_{r+1}}^{(k)}$ for all $k\geq 1$ if and only if \eqref{BCF conditions for non-negativity of the coefficients 1} holds.
	
	If $k=r(r+1)+\ell$, with $\ell\geq 1$, and $[k]_r=[\ell]_r=\lambda_j$, we have
	\begin{equation}
		b_j^{(k)}-a_{[k]_{r+1}}^{(k)}
		=\left(b_j^{(\ell)}+(r+1)\right)-\left(a_{[k]_{r+1}}^{(\ell)}+r\right)
		=b_j^{(\ell)}-a_{[\ell]_{r+1}}^{(\ell)}+1.
	\end{equation}
	
	Hence, $b_j^{(k)}-a_{[k]_{r+1}}^{(k)}\geq 0$ holds for all $k\geq 1$ such that $[k]_r=\lambda_j$ for some $j\in\{1,\cdots,s\}$ if and only if that holds for all $1\leq k\leq r(r+1)$. 
	Note that each $k\in\{1,\cdots,r(r+1)\}$ can be uniquely written in the form
	\begin{equation}
		\label{nu from 1 to r(r+1)}
		k=qr+p=q(r+1)+(p-q)
		\quad\text{with}\quad
		q\in\{0,\cdots,r\} 
		\quad\text{and}\quad 
		p\in\{1,\cdots,r\}.
	\end{equation}
	
	Therefore, $\alpha_{k+r}\geq 0$ for all $k\geq 1$ if and only if $b_j^{(k)}-a_{[k]_{r+1}}^{(k)}\geq 0$ for all $k\in\{1,\cdots,r(r+1)\}$ of the form \eqref{nu from 1 to r(r+1)} with $p=[k]_r=\lambda_j$ with $j\in\{1,\cdots,s\}$.
	For $k$ of the form in \eqref{nu from 1 to r(r+1)}, we get
	\begin{equation}
		b_j^{(k)}=b_j+q+1
		\quad\text{and}\quad
		a_{[k]_{r+1}}^{(k)}=
		\begin{cases}
			a_{\lambda_j-q}+(q+1) & \text{if } 0\leq q\leq\lambda_j-1, \\
			a_{\lambda_j-q+r+1}+q & \text{if } \lambda_j\leq q\leq r.
		\end{cases}
	\end{equation}
	If $0\leq q\leq\lambda_j-1$, we set $i=\lambda_j-q\in\{1,\cdots,\lambda_j\}$ and we have $b_j^{(k)}-a_{[k]_{r+1}}^{(k)}\geq 0$ if and only if $b_j\geq a_i$. 
	Otherwise, $\lambda_j\leq q\leq r$, we set $i=\lambda_j-q+r+1\in\{\lambda_j+1,\cdots,r+1\}$, and we get $b_j^{(k)}-a_{[k]_{r+1}}^{(k)}\geq 0$ if and only if $b_j\geq a_i-1$.
	Therefore, we conclude that $b_j^{(k)}\geq a_{[k]_{r+1}}^{(k)}$ for all $k\geq 1$ if and only if \eqref{BCF conditions for non-negativity of the coefficients 1} holds.
\end{proof}

\subsection{Modified $m$-Stieltjes-Rogers-polynomials when $a_{r+1}=1$}
The following result focus on the particular case $a_{r+1}=1$ of Theorem \ref{BCF for ratios of r+1Fs - theorem}.
In that case, we can find explicit expressions for the modified $m$-Stieltjes-Rogers-polynomials $\seq[n,k\in\N]{\modifiedStieltjesRogersPoly{n}{k}{\alpha}}$ and they reduce to ratios of products of Pochhammer symbols when $k\leq m$, which correspond to the moments of the linear functionals of orthogonality for the multiple orthogonal polynomials studied in Section \ref{Type II MOP w.r.t. linear functionals}.
\begin{corollary}
\label{BCF for ratios of r+1Fs - corollary a_(r+1)=1}
For $r,s\in\N$ with $m=\max(r,s)\geq 1$, let $\seq[k\in\N]{\alpha_{k+m}}$ be defined, as in Theorem \ref{BCF for ratios of r+1Fs - theorem}, by \eqref{BCF coeff r+1Fs, r>=s} if $r\geq s$ or by \eqref{BCF coeff r+1Fs, r<=s} if $r\leq s$, and suppose that $a_{r+1}=1$. 
Then, the generating function of the modified $m$-Stieltjes-Rogers polynomials of type $k$, $\seq{\modifiedStieltjesRogersPoly{n}{k}{\alpha}}$, with $k\in\N$, is $g_k(t)$ defined in \eqref{g_k as a r+1Fs hypergeometric function}, and
\begin{equation}
\label{modified m-S.-R. poly a_(r+1)=1}
\modifiedStieltjesRogersPoly{n}{k}{\alpha}
=\binom{n+\ceil{\frac{k-m}{m+1}}}{n}\,\frac{\pochhammer{a_1^{(k)}}\cdots\pochhammer{a_r^{(k)}}}{\pochhammer{b_1^{(k)}}\cdots\pochhammer{b_s^{(k)}}}
\quad\text{for any   }n,k\in\N.
\end{equation}
In particular,
\begin{equation}
\label{modified m-S.-R. poly a_(r+1)=1 k<=r}
\modifiedStieltjesRogersPoly{n}{k}{\alpha}=\frac{\pochhammer{a_1^{(k)}}\cdots\pochhammer{a_r^{(k)}}}{\pochhammer{b_1^{(k)}}\cdots\pochhammer{b_s^{(k)}}}
\quad\text{for   }n\in\N\text{   and   }0\leq k\leq m.
\end{equation}

\end{corollary}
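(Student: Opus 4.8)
The plan is to combine the Euler--Gauss recurrence method (Lemma \ref{Euler-Gauss method for m-S-fractions}) with the power-series expansion \eqref{generalised hypergeometric series} of a hypergeometric function. The proof of Theorem \ref{BCF for ratios of r+1Fs - theorem} already shows that the sequence $\seq[k\geq -1]{g_k(t)}$ from \eqref{g_k as a r+1Fs hypergeometric function} satisfies the recurrence \eqref{recurrence relation for the g_k general case} with the coefficients $\seq[k\in\N]{\alpha_{k+m}}$ of that theorem; hence Lemma \ref{Euler-Gauss method for m-S-fractions} tells us that the generating function of $\modifiedStieltjesRogersPoly{n}{k}{\alpha}$ is $g_k(t)\slash g_{-1}(t)$. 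So the first task is to identify $g_{-1}(t)$ when $a_{r+1}=1$. I would compute $a_{r+1}^{(-1)}$ from \eqref{BCF parameters r+1Fs, r>=s} in the case $r\geq s$ (getting $a_{r+1}^{(-1)}=a_{r+1}+\ceil{-(r+1)/(r+1)}=a_{r+1}-1$) and from \eqref{BCF parameters r+1Fs, r<=s} together with $\sigma_{r+1}=s+1$ in the case $r\leq s$ (getting $a_{r+1}^{(-1)}=a_{r+1}+\ceil{-(s+1)/(s+1)}=a_{r+1}-1$). Thus $a_{r+1}=1$ forces $a_{r+1}^{(-1)}=0$, and since $\pochhammer[n]{0}=0$ for all $n\geq 1$, the hypergeometric series defining $g_{-1}(t)$ reduces to its constant term, so $g_{-1}(t)=1$. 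Therefore $\sum_{n\geq 0}\modifiedStieltjesRogersPoly{n}{k}{\alpha}\,t^n=g_k(t)$, which is the first assertion of the corollary.

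Next I would extract the coefficient of $t^n$ in $g_k(t)$ using \eqref{generalised hypergeometric series}. For this I need $a_{r+1}^{(k)}$: when $r\geq s$ it equals $a_{r+1}+\ceil{(k-r)/(r+1)}$ and when $r\leq s$ it equals $a_{r+1}+\ceil{(k-s)/(s+1)}$, so in both cases $a_{r+1}^{(k)}=1+\ceil{(k-m)/(m+1)}$ once $a_{r+1}=1$, with $m=\max(r,s)$. Writing $M=\ceil{(k-m)/(m+1)}$ — which satisfies $M\geq 0$ because $k\geq 0$ and $m\geq 1$ give $(k-m)/(m+1)>-1$ — the elementary identity $\pochhammer[n]{M+1}\slash n!=\binom{M+n}{n}$ turns the $n$-th coefficient of $g_k(t)$ into $\binom{n+M}{n}\,\pochhammer{a_1^{(k)}}\cdots\pochhammer{a_r^{(k)}}\big\slash\bigl(\pochhammer{b_1^{(k)}}\cdots\pochhammer{b_s^{(k)}}\bigr)$, which is exactly \eqref{modified m-S.-R. poly a_(r+1)=1}. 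Finally, for $0\leq k\leq m$ one has $-1<(k-m)/(m+1)\leq 0$, hence $M=0$ and $\binom{n}{n}=1$, which yields \eqref{modified m-S.-R. poly a_(r+1)=1 k<=r}.

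The argument is essentially bookkeeping; the only point requiring care is tracking the ceilings in the two regimes $r\geq s$ and $r\leq s$ simultaneously, in particular verifying that the shift of $a_{r+1}$ is $\ceil{(k-m)/(m+1)}$ in both and that it is nonnegative so that the binomial-coefficient rewriting is legitimate and the factor $\pochhammer{a_{r+1}^{(k)}}$ is genuinely absorbed. I do not expect any substantive obstacle beyond this.
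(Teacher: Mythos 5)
Your proposal is correct and follows essentially the same route as the paper's own proof: identify $g_{-1}(t)=1$ from $a_{r+1}^{(-1)}=a_{r+1}-1=0$, invoke Lemma \ref{Euler-Gauss method for m-S-fractions} to get $g_k(t)$ as the generating function, and then convert $\pochhammer{a_{r+1}^{(k)}}\slash n!$ into the binomial coefficient $\binom{n+\ceil{(k-m)/(m+1)}}{n}$. Your extra bookkeeping (checking both regimes $r\geq s$ and $r\leq s$ and the nonnegativity of the ceiling) is a slightly more explicit version of what the paper leaves implicit, not a different argument.
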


\begin{proof}
If we take $a_{r+1}=1$ in \eqref{g_k as a r+1Fs hypergeometric function}, then $a_{r+1}^{(-1)}=a_{r+1}-1=0$ and, as a result, $g_{-1}(t)=1$.
Therefore, applying Lemma \ref{Euler-Gauss method for m-S-fractions} to the recurrence relation \eqref{rec rel g_k}, we find that the generating function of the modified $m$-Stieltjes-Rogers polynomials of type $k$ is $g_k(t)=f_0(t)\cdots f_k(t)$.
Furthermore,
\begin{equation}
a_{r+1}^{(k)}=1+\ceil{\frac{k-m}{m+1}}
\implies
\frac{\pochhammer{a_{r+1}^{(k)}}}{n!}=\frac{\pochhammer{1+\ceil{\frac{k-m}{m+1}}}}{n!}=\binom{n+\ceil{\frac{k-m}{m+1}}}{n}.
\end{equation}
Therefore,
\begin{equation}
\sum_{n=0}^{\infty}\modifiedStieltjesRogersPoly{n}{k}{\alpha}\,t^n
=\Hypergeometric[t]{r+1}{s}{a_1^{(k)},\cdots,a_r^{(k)},1+\ceil{\frac{k-m}{m+1}} \vspace*{0,1 cm}}{b_1^{(k)},\cdots,b_s^{(k)}}
=\sum_{n=0}^{\infty}
	\binom{n+\ceil{\frac{k-m}{m+1}}}{n}\,\frac{\pochhammer{a_1^{(k)}}\cdots\pochhammer{a_r^{(k)}}}{\pochhammer{b_1^{(k)}}\cdots\pochhammer{b_s^{(k)}}}\,t^n.
\end{equation}
As a result, we obtain \eqref{modified m-S.-R. poly a_(r+1)=1}.
When $0\leq k\leq m$, $\ceil{\dfrac{k-m}{m+1}}=0$ and \eqref{modified m-S.-R. poly a_(r+1)=1} reduces to \eqref{modified m-S.-R. poly a_(r+1)=1 k<=r}.
\end{proof}	

\subsection{A limiting type of ratios}
The limiting case $a_{r+1}\to\infty$ of the branched continued fractions introduced in Theorem \ref{BCF for ratios of r+1Fs - theorem} gives new branched-continued-fraction representations for the third ratio of hypergeometric series ${}_rF_s$ in \eqref{ratios of hypergeometric series in PSZ}.
Let $\seq[k\geq -1]{g_k(t)}$ and $\seq[k\in\N]{\alpha_{k+m}}$ be defined by \eqref{g_k as a r+1Fs hypergeometric function} and \eqref{BCF coeff r+1Fs, r>=s}-\eqref{BCF coeff r+1Fs, r<=s}, respectively, as in Theorem \ref{BCF for ratios of r+1Fs - theorem}, and define
\begin{equation}
\hat{g}_k(t)=\lim_{a_{r+1}\to\infty}g_k\left(\frac{t}{a_{r+1}}\right)
=\Hypergeometric[t]{r}{s}{a_1^{(k)},\cdots,a_r^{(k)}\vspace*{0,1 cm}}{b_1^{(k)},\cdots,b_s^{(k)}}.
\end{equation}
Then, the ratios of contiguous hypergeometric series $\seq[k\in\N]{\hat{f}_k(t)=\dfrac{\hat{g}_k(t)}{\hat{g}_{k-1}(t)}}$ \vspace*{0,1 cm} 
admit a $m$-branched-continued-fraction representation with coefficients $\seq[k\in\N]{\hat{\alpha}_{k+m}}$ defined by
\begin{equation}
\label{BCF coefficients third ratio as limits}
\hat{\alpha}_{k+m}=\lim_{a_{r+1}\to\infty}\frac{\alpha_{k+m}}{a_{r+1}}.
\end{equation}

This construction generalises the branched continued fractions obtained in \cite[\S 14]{AlanSokalM.PetroelleB.Zhu-LPandBCF1} for the third ratio of contiguous hypergeometric series in \eqref{ratios of hypergeometric series in PSZ}, analogously to how Theorem \ref{BCF for ratios of r+1Fs - theorem} generalises the branched continued fractions obtained in \cite[\S 14]{AlanSokalM.PetroelleB.Zhu-LPandBCF1} for the first and second ratios of contiguous hypergeometric series ${}_{r+1}F_s$.
In fact, this construction reduces to parts (a) and (b) of \cite[Th.~14.12]{AlanSokalM.PetroelleB.Zhu-LPandBCF1} when $\lambda_j=r-s+j$ for all $1\leq j\leq s$ with $r\geq s$ and when $\sigma_i=s-r+i$ for all $1\leq i\leq r$ with $r\leq s$, respectively.

However, we do not explicitly write the coefficients of these branched continued fractions here, because they are very similar to the coefficients in Theorem \ref{BCF for ratios of r+1Fs - theorem}.
Moreover, Theorem \ref{BCF for ratios of r+1Fs - theorem} already gives branched-continued-fraction representations for ratios of contiguous hypergeometric series ${}_rF_s$ when $r\geq 1$.
To find branched-continued-fraction representations for ratios of contiguous ${}_0F_s$ with $s\geq 1$, see \cite[Th.~14.8]{AlanSokalM.PetroelleB.Zhu-LPandBCF1}.

The functions $g_0$ and $g_{-1}$ in Theorem \ref{BCF for ratios of r+1Fs - theorem}, and consequently their ratio $f_0$, are invariant under permutations of $\left(a_1,\cdots,a_r\right)$ (but not $a_{r+1}$) and $\left(b_1,\cdots,b_{s-1}\right)$ (but not necessarily $b_s$).
The branched-continued-fraction coefficients $\seq[k\in\N]{\alpha_{k+m}}$ defined by \eqref{BCF coeff r+1Fs, r>=s}-\eqref{BCF coeff r+1Fs, r<=s} are not invariant under these permutations.
However, when $a_{r+1}=1$, their production matrix is invariant under permutations of $\left(a_1,\cdots,a_r\right)$, but not of any of the $b_j$, as well as under different choices of $\left(\sigma_1,\cdots,\sigma_r\right)$ when $r<s$, but not of $\left(\lambda_1,\cdots,\lambda_s\right)$ when $r>s$.
We will prove this statement at the end of the next section, as a consequence of the corresponding multiple orthogonal polynomials having the same symmetries.

\section{A general class of hypergeometric multiple orthogonal polynomials}
\label{Type II MOP w.r.t. linear functionals}

In this section, we investigate the type II multiple orthogonal polynomials on the step-line with respect to the linear functionals $\left(u_0,\cdots,u_{m-1}\right)$, with $m\in\Z^+$, whose moments are the modified $m$-Stieltjes-Rogers polynomials in \eqref{modified m-S.-R. poly a_(r+1)=1 k<=r}, equal to ratios of products of Pochhammer symbols.
These multiple orthogonal polynomials have coefficients in the ring $R=\Q\left(b_1,\cdots,b_s\right)\left[a_1,\cdots,a_r\right]$ of polynomials in the indeterminates $a_1,\cdots,a_r$ whose coefficients are rational functions in the indeterminates $b_1,\cdots,b_s$.

Firstly, we find explicit expressions as terminating hypergeometric series for the multiple orthogonal polynomials under study (Theorem \ref{explicit formula as hypergeometric polynomials for the r-OP - theorem}).
Then, we derive differential properties satisfied by the polynomials (Propositions \ref{differential equation for type II MOP prop.} and \ref{derivative of the type II MOP as a shift on the parameters prop.}). 
Finally, we use the connection of these multiple orthogonal polynomials with the branched continued fractions introduced in Theorem \ref{BCF for ratios of r+1Fs - theorem} to obtain expressions for the recurrence coefficients of the polynomials as combinations of the branched-continued-fraction coefficients (Theorem \ref{recurrence coefficients for the MOP and link to the BCF for ratios of hypergeometric series}).

\subsection{Explicit expressions as terminating hypergeometric series}
The main goal of this section is to prove the following result.
\begin{theorem}
\label{explicit formula as hypergeometric polynomials for the r-OP - theorem}

For $r,s\in\N$ with $m=\max(r,s)\geq 1$, let: \vspace*{-0,3 cm}
\begin{itemize}[leftmargin=*]
\item
$1\leq\lambda_1<\cdots<\lambda_s\leq r$ when $r\geq s$ and $1\leq\sigma_1<\cdots<\sigma_r\leq s$ when $r\leq s$;
\item
$R=\Q\left(b_1,\cdots,b_s\right)\left[a_1,\cdots,a_r\right]$ for $a_1,\cdots,a_r,b_1,\cdots,b_s$ such that, for all $1\leq i\leq r$, $1\leq j\leq s$ and $n\in\N$,
\begin{itemize}
\item
$a_i+n,b_j+n\neq 0$,
\item
$b_j-a_i+n\neq 0$ if $i\leq\lambda_j$ and $b_j-a_i+1+n\neq 0$ if $i\geq\lambda_j+1$ when $r\geq s$,
\item
$b_j-a_i+n\neq 0$ if $\sigma_i\leq j$ and $b_j-a_i+1+n\neq 0$ if $\sigma_i>j+1$ when $r\leq s$;
\end{itemize}
\item $\left(v_0,\cdots,v_{m-1}\right)$ the vector of linear functionals acting on $R[x]$ with moments
\begin{equation}
\label{definition of the orthogonality functionals}
\Functional{v_k}{x^n}=\frac{\pochhammer{a_1^{(k)}}\cdots\pochhammer{a_r^{(k)}}}{\pochhammer{b_1^{(k)}}\cdots\pochhammer{b_s^{(k)}}}
\quad\text{for } n\in\N \text{  and  }0\leq k\leq m-1,
\end{equation}
where the $a_i^{(k)}$ and $b_j^{(k)}$ are defined by \eqref{BCF parameters r+1Fs, r>=s}-\eqref{BCF parameters r+1Fs, r<=s}, which means that, for any $0\leq k\leq m-1$,
\begin{equation}
\label{parameters of the orthogonality functionals r>=s}
a_i^{(k)}=
\begin{cases}
a_i+1 & \text{if }1\leq i\leq k \\
	a_i & \text{if }k+1\leq i\leq r
\end{cases}
\quad\text{and}\quad
b_j^{(k)}=
\begin{cases}
b_j+1 & \text{if }1\leq\lambda_j\leq k \\
	b_j & \text{if }k+1\leq\lambda_j\leq r
\end{cases}
\quad\text{when   }r\geq s,
\end{equation}
and
\begin{equation}
\label{parameters of the orthogonality functionals r<=s}
a_i^{(k)}=
\begin{cases}
a_i+1 & \text{if }1\leq\sigma_i\leq k \\
	a_i & \text{if }k+1\leq\sigma_i\leq s
\end{cases}
\quad\text{and}\quad
b_j^{(k)}=
\begin{cases}
b_j+1 & \text{if }1\leq j\leq k \\
	b_j & \text{if }k+1\leq j\leq s
\end{cases}
\quad\text{when   }r\leq s.
\end{equation}
\end{itemize}
Then, the $m$-orthogonal polynomial sequence $\seq{P_n(x)}$ with respect to $\left(u_0,\cdots,u_{m-1}\right)$ is given by
\begin{equation}
\label{hypergeometric type II MOP - explicit formula as a s+1Fr}
P_n(x) 
=\frac{(-1)^n\pochhammer{a_1}\cdots\pochhammer{a_r}}{\pochhammer{b_1^{(n-1)}}\cdots\pochhammer{b_s^{(n-1)}}} \,\Hypergeometric{s+1}{r}{-n,b_1^{(n-1)},\cdots,b_s^{(n-1)}}{a_1,\cdots,a_r},
\end{equation}
with 
\begin{equation}
\label{b_j parameters in the type II MOP}
b_j^{(n-1)}=b_j+\ceil{\dfrac{n-\lambda_j}{r}}
\text{  when  }r\geq s
\quad\text{or}\quad
b_j^{(n-1)}=b_j+\ceil{\dfrac{n-j}{s}}
\text{  when  }\;r\leq s.
\end{equation}

\end{theorem}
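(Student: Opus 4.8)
The plan is to verify directly that the polynomial on the right-hand side of \eqref{hypergeometric type II MOP - explicit formula as a s+1Fr} is monic of degree $n$ and satisfies the $m$ orthogonality conditions \eqref{type II orthogonality conditions} with respect to $\left(v_0,\cdots,v_{m-1}\right)$; since the $m$-orthogonal polynomial on the step-line is unique (it exists because, by Corollary \ref{BCF for ratios of r+1Fs - corollary a_(r+1)=1}, the moments are modified $m$-Stieltjes-Rogers polynomials and Proposition \ref{MOP for generalised and modified m-S.R. poly general result} applies, the $\alpha$-sequence having no zeros or zero-divisors in $R$ under the stated non-vanishing hypotheses), this identification suffices. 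First I would write $P_n(x)$ out termwise: expanding the terminating ${}_{s+1}F_r$ and using $\pochhammer[k]{-n}=(-1)^k \frac{n!}{(n-k)!}$, the top-degree coefficient comes from $k=n$ and equals $\frac{(-1)^n\pochhammer{a_1}\cdots\pochhammer{a_r}}{\pochhammer{b_1^{(n-1)}}\cdots\pochhammer{b_s^{(n-1)}}}\cdot\frac{(-1)^n\pochhammer[n]{b_1^{(n-1)}}\cdots\pochhammer[n]{b_s^{(n-1)}}}{\pochhammer[n]{a_1}\cdots\pochhammer[n]{a_r}}=1$ after cancelling Pochhammer symbols, so the normalisation is monic.

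The core computation is evaluating $\Functional{v_j}{x^k P_n}$ for $0\le j\le m-1$ and $0\le k\le n-1$. Using linearity and the moment formula \eqref{definition of the orthogonality functionals}, $\Functional{v_j}{x^{k+\ell}}$ is a ratio of products of Pochhammer symbols in the shifted parameters $a_i^{(j)}, b_i^{(j)}$; substituting into the expansion of $x^k P_n(x)$ turns $\Functional{v_j}{x^k P_n}$ into a terminating hypergeometric sum in the summation index $\ell$ of $P_n$. The key is that the parameters $b_i^{(n-1)}$ appearing inside $P_n$ are chosen precisely so that, when $n$ exceeds the relevant threshold, one of the numerator parameters of the resulting ${}_{s+1}F_r$ (or a combination giving a Saalschütz/Vandermonde-type closed form) produces a vanishing factor. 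Concretely, I expect the inner sum to collapse — via the Chu–Vandermonde identity ${}_2F_1(-n,b;c;1)=\pochhammer[n]{c-b}/\pochhammer[n]{c}$, or more generally a Pfaff–Saalschütz evaluation — to something proportional to $\pochhammer[n]{\,\cdot\,}$ with an argument that is a non-positive integer when $n\ge \text{(threshold depending on }j,k)$, hence zero. The bookkeeping distinguishing the cases $r\ge s$ and $r\le s$ mirrors the two displays \eqref{parameters of the orthogonality functionals r>=s}–\eqref{parameters of the orthogonality functionals r<=s} and the ceiling functions in \eqref{b_j parameters in the type II MOP}; this is where the choice of $\Lambda=\{\lambda_j\}$ (resp. $\Sigma=\{\sigma_i\}$) enters, matching which parameters get shifted by $1$.

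An alternative — and possibly cleaner — route is to use the connection already established: by Corollary \ref{BCF for ratios of r+1Fs - corollary a_(r+1)=1} the generating function $f_0(t)\cdots f_k(t)=g_k(t)$ is an explicit ${}_{r+1}F_s$, so by Proposition \ref{PolySeqAndHessMatrixProp} the coefficient matrix of $\seq{P_n(x)}$ is the inverse of the output (moment) matrix $\hat{\mathrm S}^{(m)}$; one then checks \eqref{hypergeometric type II MOP - explicit formula as a s+1Fr} by verifying the biorthogonality $\Functional{v_k}{P_n}=\delta_{k,n}$ against the dual functionals, again reducing to a terminating-hypergeometric identity. I would set up the direct orthogonality check as the main line and invoke the generating-function picture only to pin down normalisations. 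The main obstacle I anticipate is the case analysis in the hypergeometric summation: getting the shifted parameters to line up so the terminating sum telescopes to a Pochhammer symbol with the correct non-positive-integer argument for \emph{every} pair $(j,k)$ in the required range, uniformly across the $r\ge s$ and $r\le s$ regimes. Handling the boundary indices (small $n$, or $k$ close to $n-1$, and the wrap-around in the $[k]_r$, $[k]_{s+1}$ notation) carefully is the delicate part; once the single clean identity is isolated, the two cases should follow by the same argument with the roles of the $a$'s and $b$'s and of $\lambda$ and $\sigma$ interchanged.
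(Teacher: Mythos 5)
Your overall strategy is the one the paper follows: check monicity from the $k=n$ term, expand $\Functional{v_\ell}{x^kP_n}$ termwise so that it becomes a single terminating hypergeometric series evaluated at $1$ (this is exactly Lemma \ref{Integrals of hypergeometric polynomials}), and then kill that series for $n\geq mk+\ell+1$ while showing it is nonzero for $n=mk+\ell$. Your existence/uniqueness framing via Proposition \ref{MOP for generalised and modified m-S.R. poly general result} is also fine, although the paper simply verifies the full conditions \eqref{orthogonality conditions w.r.t. linear functionals}, including the non-vanishing case, directly.

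The gap is in the summation step, which is the heart of the proof. The series that arises is a ${}_{r+s+1}F_{r+s}$ at argument $1$ whose numerator parameters exceed the denominator parameters by \emph{nonnegative integers}; neither Chu--Vandermonde nor Pfaff--Saalsch\"utz applies to it (the former is a ${}_2F_1$ statement, the latter requires a balanced ${}_3F_2$), and the vanishing mechanism is not ``a Pochhammer symbol with a non-positive integer argument.'' What is actually needed is the Karlsson--Minton-type summation of Lemma \ref{explicit formulas for hypergeometric series}: writing the series as $\sum_k(-1)^k\binom{n}{k}Q(k)$ with $Q$ a polynomial in $k$ of degree equal to the total integer excess $m=\sum_i m_i$, one gets $0$ whenever $m\leq n-1$ (an $n$-th finite difference of a low-degree polynomial), which is formula \eqref{hypergeometric formula 1}; the bookkeeping you describe with the ceilings is precisely the verification that this total excess is at most $n-1$ when $n\geq mk+\ell+1$. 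For the non-vanishing case $n=mk+\ell$ you additionally need the boundary case $m=n$ of \eqref{hypergeometric formula 1} and, when $[\ell]_m\in\{\lambda_1,\cdots,\lambda_s\}$, Minton's formula \eqref{Minton's formula} for the contiguous pair $(d,d+1)$ in the parameters; without these two evaluations you cannot conclude $N_n\neq 0$, which is needed either to close the direct argument or to justify that the step-line index is normal. So the plan is salvageable, but you must replace the proposed classical ${}_2F_1/{}_3F_2$ identities by the Karlsson--Minton summations before the argument goes through.
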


%

When $r=s=m$, we have
\begin{equation}
\label{explicit formula for the r-OP with s=r}
P_n(x)=\frac{(-1)^n\pochhammer{a_1}\cdots\pochhammer{a_m}}{\pochhammer{b_1^{(n-1)}}\cdots\pochhammer{b_m^{(n-1)}}}
\,\Hypergeometric{m+1}{m}{-n,b_1^{(n-1)},\cdots,b_m^{(n-1)}}{a_1,\cdots,a_m}
\quad\text{with } b_j^{(n-1)}=b_j+\ceil{\dfrac{n-j}{m}}.
\end{equation}
The polynomials in \eqref{hypergeometric type II MOP - explicit formula as a s+1Fr} with $r\neq s$ can be obtained as limiting cases of \eqref{explicit formula for the r-OP with s=r}. 

When we want to highlight the parameters in the polynomial sequence $\seq{P_n(x)}$ defined by \eqref{hypergeometric type II MOP - explicit formula as a s+1Fr}, we denote $P_n(x)$ by $P_n^{\left[(r,s),\left(\lambda_1,\cdots,\lambda_s\right)\right]}\left(x\left|\begin{matrix}a_1,\cdots,a_r \\ b_1,\cdots,b_s\end{matrix}\right.\right)$ if $r\geq s$ or by $P_n^{[(r,s)]}\left(x\left|\begin{matrix}a_1,\cdots,a_r \\ b_1,\cdots,b_s\end{matrix}\right.\right)$ if $r\leq s$.
Note that $\seq{P_n(x)}$ defined by \eqref{hypergeometric type II MOP - explicit formula as a s+1Fr} with $r\leq s$ does not depend on the choice of $\left(\sigma_1,\cdots,\sigma_r\right)$, although the same is not true for the corresponding linear functionals.

If $s<r$, let $\mathrm{J}:=\{1,\cdots,r\}\backslash\{\lambda_1,\cdots,\lambda_s\}\neq\emptyset$, $B=\dis\prod\limits_{j\in\mathrm{J}}b_j$, and $\hat{b}_j=b_{\lambda_j}$ for $1\leq j\leq s$.
Then,
\begin{equation}
\label{limiting relation MOP (r,r) to (r,s), s<r}
P_n^{\left[(r,s),\left(\lambda_1,\cdots,\lambda_s\right)\right]}
\left(x \left|\begin{matrix} a_1,\cdots,a_r \\ \hat{b}_1,\cdots,\hat{b}_s \end{matrix}\right.\right)
=\lim\limits_{\substack{b_j\to\infty\\ j\in\mathrm{J}}}\,B^n\,P_n^{\left[(r,r)\right]}\left(\frac{x}{B} \left|\begin{matrix} a_1,\cdots,a_r \\ b_1,\cdots,b_r \end{matrix}\right.\right).
\end{equation}
Analogously, if $r<s$, let $\mathrm{I}:=\{1,\cdots,s\}\backslash\{\sigma_1,\cdots,\sigma_r\}\neq\emptyset$, $A=\dis\prod\limits_{i\in\mathrm{I}}a_i$, and $\hat{a}_i=a_{\sigma_i}$ for $1\leq i\leq r$.
Then,
\begin{equation}
\label{limiting relation MOP (s,s) to (r,s), s>r}
P_n^{\left[(r,s)\right]}\left(x \left|\begin{matrix} \hat{a}_1,\cdots,\hat{a}_r \\ b_1,\cdots,b_s \end{matrix}\right.\right)
=\lim\limits_{\substack{a_i\to\infty\\ i\in\mathrm{I}}}\,A^{-n}\,P_n^{\left[(s,s)\right]}\left(A\,x\left|\begin{matrix} a_1,\cdots,a_s \\ b_1,\cdots,b_s \end{matrix}\right.\right).
\end{equation}
Similar limiting relations hold for the corresponding orthogonality functionals.

By definition of the hypergeometric series, the $m$-orthogonal polynomials $\seq{P_n(x)}$ can be written as
\begin{equation}
\label{hypergeometric type II MOP - explicit expression as a combination of powers of x}
P_n(x) =\sum_{k=0}^{n}(-1)^k\binom{n}{k}\frac{\pochhammer[k]{a_1+n-k}\cdots\pochhammer[k]{a_r+n-k}} {\pochhammer[k]{b_1^{(n-1)}+n-k}\cdots\pochhammer[k]{b_s^{(n-1)}+n-k}}\,x^{n-k}.
\end{equation}
Therefore, the reciprocal polynomials $\seq{x^n\,P_n\left(\frac{1}{x}\right)}$, which are the polynomials with the same coefficients as $\seq{P_n(x)}$ in reverse order, are
\begin{equation}
x^n\,P_n\left(\frac{1}{x}\right)
=\sum_{k=0}^{n}(-1)^k\binom{n}{k}\frac{\pochhammer[k]{a_1+n-k}\cdots\pochhammer[k]{a_r+n-k}} {\pochhammer[k]{b_1^{(n-1)}+n-k}\cdots\pochhammer[k]{b_s^{(n-1)}+n-k}}\,x^k.
\end{equation}
Using again the definition of the hypergeometric series, the latter is equivalent to
\begin{equation}
x^n\,P_n\left(\frac{1}{x}\right)=\Hypergeometric[(-1)^{r+s}\,x]{r+1}{s}{-n,-n-a_1+1,\cdots,-n-a_r+1}{-n-b_1^{(n-1)}+1,\cdots,-n-b_s^{(n-1)}+1}.
\end{equation}
The latter formula can also be obtained as a particular case of the more general formula for the reciprocal of a hypergeometric polynomial:
\begin{equation}
x^n\,\Hypergeometric[\frac{1}{x}]{p+1}{q}{-n,c_1,\cdots,c_p}{d_1,\cdots,d_q}
=\frac{(-1)^n\pochhammer{c_1}\cdots\pochhammer{c_p}}{\pochhammer{d_1}\cdots\pochhammer{d_q}}\, \Hypergeometric[(-1)^{p+q}\,x]{q+1}{p}{-n,-n-d_1+1,\cdots,-n-d_q+1}{-n-c_1+1,\cdots,-n-c_p+1}.
\end{equation}
The latter result can be found in \cite{FasenmyerHypergeomPoly}.
Therein it is assumed that $c_1=n+1$ and $d_1=\frac{1}{2}$, but the proof is the same in the general case.

Recalling the orthogonality conditions in \eqref{d-orthogonality conditions}, Theorem \ref{explicit formula as hypergeometric polynomials for the r-OP - theorem} holds if and only if $\seq{P_n(x)}$ defined by \eqref{hypergeometric type II MOP - explicit formula as a s+1Fr} satisfies the orthogonality conditions
\begin{equation}
\label{orthogonality conditions w.r.t. linear functionals}
v_{\ell}\left[x^kP_n\right]
=\begin{cases}
N_n\neq 0 &\text{ if } n=mk+\ell, \\
			   0 &\text{ if } n\geq mk+\ell+1,
\end{cases}
\quad\text{for all }k,n\in\N \text{ and } \ell\in\{0,\cdots,m-1\}.
\end{equation}

To show that \eqref{orthogonality conditions w.r.t. linear functionals} holds, we use the following Lemmas \ref{Integrals of hypergeometric polynomials} and \ref{explicit formulas for hypergeometric series}.
\begin{lemma}
\label{Integrals of hypergeometric polynomials}	
For $p,q,r,s\in\N$, let:
\begin{itemize}
\item $R:=\Q\left(b_1,\cdots,b_s,c_1,\cdots,c_p\right)\left[a_1,\cdots,a_r,d_1,\cdots,d_q\right]$,

\item $v:R[x]\to R$ the linear functional defined by 
\begin{equation}
\label{linear functional moments equal to ratio of Pochhammers}
\Functional{v}{x^n}
=\frac{\pochhammer{a_1}\cdots\pochhammer{a_r}}{\pochhammer{b_1}\cdots\pochhammer{b_s}}
\quad\text{for any }n\in\N,
\end{equation}

\item $\seq{P_n(x)}$ the polynomial sequence in $R[x]$ whose elements are
\begin{equation}
P_n(x)=\frac{(-1)^n\,\pochhammer{d_1}\cdots\pochhammer{d_q}}{\pochhammer{c_1}\cdots\pochhammer{c_p}}
\,\Hypergeometric{p+1}{q}{-n,c_1,\cdots,c_p}{d_1,\cdots,d_q}.
\end{equation}
\end{itemize}

Then, for any $k,n\in\N$,
\begin{equation}
\label{hypergeometric integral}
\Functional{v}{x^kP_n}
=\frac{(-1)^n\,\pochhammer{d_1}\cdots\pochhammer{d_q}\,\pochhammer[k]{a_1}\cdots\pochhammer[k]{a_r}} {\pochhammer{c_1}\cdots\pochhammer{c_p}\,\pochhammer[k]{b_1}\cdots\pochhammer[k]{b_s}}
\Hypergeometric[1]{p+r+1}{q+s}{-n,c_1,\cdots,c_p,a_1+k,\cdots,a_r+k\vspace*{0,1 cm}}{d_1,\cdots,d_q,b_1+k,\cdots,b_s+k}. 
\end{equation}
\end{lemma}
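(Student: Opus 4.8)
The plan is to compute $\Functional{v}{x^k P_n}$ by expanding $P_n$ as a terminating hypergeometric series, applying $v$ term by term using linearity, and then recognizing the resulting double-indexed sum as a single hypergeometric series. First I would write
\begin{equation}
P_n(x)=\frac{(-1)^n\,\pochhammer{d_1}\cdots\pochhammer{d_q}}{\pochhammer{c_1}\cdots\pochhammer{c_p}}\sum_{j=0}^{n}\frac{\pochhammer[j]{-n}\pochhammer[j]{c_1}\cdots\pochhammer[j]{c_p}}{\pochhammer[j]{d_1}\cdots\pochhammer[j]{d_q}}\frac{x^j}{j!},
\end{equation}
so that, by linearity of $v$ and the defining moments \eqref{linear functional moments equal to ratio of Pochhammers},
\begin{equation}
\Functional{v}{x^k P_n}=\frac{(-1)^n\,\pochhammer{d_1}\cdots\pochhammer{d_q}}{\pochhammer{c_1}\cdots\pochhammer{c_p}}\sum_{j=0}^{n}\frac{\pochhammer[j]{-n}\pochhammer[j]{c_1}\cdots\pochhammer[j]{c_p}}{\pochhammer[j]{d_1}\cdots\pochhammer[j]{d_q}\,j!}\cdot\frac{\pochhammer[j+k]{a_1}\cdots\pochhammer[j+k]{a_r}}{\pochhammer[j+k]{b_1}\cdots\pochhammer[j+k]{b_s}}.
\end{equation}

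The key algebraic step is the Pochhammer splitting identity $\pochhammer[j+k]{a}=\pochhammer[k]{a}\,\pochhammer[j]{a+k}$, applied to each $a_i$ and each $b_j$. This pulls the factors $\pochhammer[k]{a_i}$ and $1/\pochhammer[k]{b_j}$ outside the sum over $j$, leaving the sum
\begin{equation}
\sum_{j=0}^{n}\frac{\pochhammer[j]{-n}\pochhammer[j]{c_1}\cdots\pochhammer[j]{c_p}\pochhammer[j]{a_1+k}\cdots\pochhammer[j]{a_r+k}}{\pochhammer[j]{d_1}\cdots\pochhammer[j]{d_q}\pochhammer[j]{b_1+k}\cdots\pochhammer[j]{b_s+k}\,j!},
\end{equation}
which is exactly the terminating series $\Hypergeometric[1]{p+r+1}{q+s}{-n,c_1,\cdots,c_p,a_1+k,\cdots,a_r+k}{d_1,\cdots,d_q,b_1+k,\cdots,b_s+k}$ evaluated at $1$. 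Collecting the prefactors $\pochhammer[k]{a_i}$ and $\pochhammer[k]{b_j}$ with the overall constant gives precisely \eqref{hypergeometric integral}.

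I do not anticipate a genuine obstacle here: the argument is a direct term-by-term computation combined with the standard Pochhammer shift identity, and everything is legitimate at the level of formal power series in $t$ (the convergence of non-terminating hypergeometric series plays no role, and $P_n$ is a polynomial so all sums are finite). The only points requiring care are bookkeeping — keeping track of the $(-1)^n$ sign, the $j!=\pochhammer[j]{1}$ normalization implicit in the hypergeometric notation, and confirming that the denominators $\pochhammer{c_i}$, $\pochhammer{b_j}$, etc. are invertible in $R$, which is guaranteed by the field of rational functions appearing in the definition of $R$. Once Lemma \ref{Integrals of hypergeometric polynomials} is established, it will be the workhorse for verifying the orthogonality conditions \eqref{orthogonality conditions w.r.t. linear functionals}, where one specializes $c_i\to b_i^{(n-1)}$, $d_i\to a_i$, and shows the resulting ${}_{p+r+1}F_{q+s}$ terminates early or has a vanishing leading behaviour in the required range of $k$.
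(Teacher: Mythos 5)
Your proposal is correct and follows essentially the same route as the paper's proof: expand $P_n$ as a terminating series, apply $v$ term by term, use the splitting identity $\pochhammer[j+k]{a}=\pochhammer[k]{a}\pochhammer[j]{a+k}$, and reassemble the sum as the stated ${}_{p+r+1}F_{q+s}$ at $1$. No gaps.
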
	

\begin{proof}
By definition of $P_n$ and linearity of $v$, we have
\begin{equation}
\Functional{v}{x^kP_n}
=\frac{(-1)^n\,\pochhammer{d_1}\cdots\pochhammer{d_q}}{\pochhammer{c_1}\cdots\pochhammer{c_p}}
\,\mathlarger{\sum}_{j=0}^{n}\,
\frac{\pochhammer[j]{c_1}\cdots\pochhammer[j]{c_p}}{j!\pochhammer[j]{d_1}\cdots\pochhammer[j]{d_q}}\,v\left[x^{k+j}\right].
\end{equation}

Applying \eqref{linear functional moments equal to ratio of Pochhammers} to the latter, we get
\begin{equation}
\Functional{v}{x^kP_n}
=\frac{(-1)^n\,\pochhammer{d_1}\cdots\pochhammer{d_q}}{\pochhammer{c_1}\cdots\pochhammer{c_p}}
\,\mathlarger{\sum}_{j=0}^{n}\,
\frac{\pochhammer[j]{c_1}\cdots\pochhammer[j]{c_p}\pochhammer[k+j]{a_1}\cdots\pochhammer[k+j]{a_r}} {j!\,\pochhammer[j]{d_1}\cdots\pochhammer[j]{d_q}\pochhammer[k+j]{b_1}\cdots\pochhammer[k+j]{b_s}},
\end{equation}
which is equivalent to
\begin{equation}
\Functional{v}{x^kP_n}
=\frac{(-1)^n\,\pochhammer{d_1}\cdots\pochhammer{d_q}\,\pochhammer[k]{a_1}\cdots\pochhammer[k]{a_r}} {\pochhammer{c_1}\cdots\pochhammer{c_p}\,\pochhammer[k]{b_1}\cdots\pochhammer[k]{b_s}}
\,\mathlarger{\sum}_{j=0}^{n}\,
\frac{\pochhammer[j]{c_1}\cdots\pochhammer[j]{c_p}\,\pochhammer[j]{a_1+k}\cdots\pochhammer[j]{a_r+k}} {j!\,\pochhammer[j]{d_1}\cdots\pochhammer[j]{d_q}\,\pochhammer[j]{b_1+k}\cdots\pochhammer[j]{b_s+k}}.
\end{equation}
Therefore, by definition of the generalised hypergeometric series, \eqref{hypergeometric integral} holds.
\end{proof}

\begin{lemma}
\label{explicit formulas for hypergeometric series}
Let $p,n,m_1,\cdots,m_p\in\N$ such that $\dis m:=\sum_{i=1}^{p}m_i\leq n$ and let $R=\Q\left(c_1,\cdots,c_p,d\right)$.
Then,
\begin{equation}
\label{hypergeometric formula 1}
\Hypergeometric[1]{p+1}{p}{-n,c_1+m_1,\cdots,c_p+m_p}{c_1,\cdots,c_p}
=\begin{cases}
0 & \text{ if } m\leq n-1,\\
\dis\frac{(-1)^n\,n!}{\dis\prod_{i=1}^{p}\pochhammer[m_i]{c_i}} & \text{ if } m=n;
\end{cases}
\end{equation}
and
\begin{equation}
\label{Minton's formula}
\Hypergeometric[1]{p+2}{p+1}{-n,d,c_1+m_1,\cdots,c_p+m_p}{d+1,c_1,\cdots,c_p}
=\frac{n!\dis\prod_{i=1}^{p}\pochhammer[m_i]{c_i-d}}{\pochhammer{d+1}\dis\prod_{i=1}^{p}\pochhammer[m_i]{c_i}}.
\end{equation}
	
\end{lemma}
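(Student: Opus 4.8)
The plan is to prove both identities \eqref{hypergeometric formula 1} and \eqref{Minton's formula} by first establishing the latter (Minton's formula) and then deriving the former as a limiting case $d\to\infty$. For \eqref{Minton's formula}, I would proceed by induction on $p$, the number of ``shifted'' parameters $c_i+m_i$. The base case $p=0$ reads $\HypergeometricOneLine[1]{2}{1}{-n,d}{d+1}=\frac{n!}{\pochhammer{d+1}}$, which is a direct evaluation: by the Chu--Vandermonde identity (or equivalently Gauss's ${}_2F_1(1)$ summation), $\HypergeometricOneLine[1]{2}{1}{-n,d}{d+1}=\frac{\pochhammer{1}}{\pochhammer{d+1}}=\frac{n!}{\pochhammer{d+1}}$, since $\pochhammer{1}=n!$. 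For the inductive step, I would use the contiguous relation that expresses the difference between a ${}_{p+2}F_{p+1}$ with parameter pair $(c_p+m_p, c_p)$ and the one with $(c_p+m_p-1, c_p)$; concretely, writing $\pochhammer[j]{c_p+m_p} = \pochhammer[j]{c_p+m_p-1}\cdot\frac{c_p+m_p-1+j}{c_p+m_p-1}$ and splitting $c_p+m_p-1+j = (c_p-d-1+j)+(d+m_p)$ reduces the sum with shift $m_p$ to a combination of sums with shift $m_p-1$ (one of which has the running index effectively shifted, absorbing a factor and decrementing $n$). Iterating this down to $m_p=0$ collapses one shifted parameter, and the induction hypothesis on $p$ (now with one fewer shifted parameter, possibly with a modified $d$) finishes the computation, the product $\prod\pochhammer[m_i]{c_i-d}$ being assembled factor by factor. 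This is the telescoping mechanism underlying Minton's and Karlsson's hypergeometric summation theorems, so I would cite \cite{FasenmyerHypergeomPoly} or standard references alongside the self-contained induction.

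For \eqref{hypergeometric formula 1}, I would take the limit $d\to\infty$ in \eqref{Minton's formula} after dividing appropriately: note that $\HypergeometricOneLine[1]{p+2}{p+1}{-n,d,c_1+m_1,\cdots}{d+1,c_1,\cdots}$ has $j$-th term carrying the factor $\frac{\pochhammer[j]{d}}{\pochhammer[j]{d+1}}=\frac{d}{d+j}\to 1$ as $d\to\infty$, so the left side tends to $\HypergeometricOneLine[1]{p+1}{p}{-n,c_1+m_1,\cdots}{c_1,\cdots}$. On the right side, $\frac{\prod_i\pochhammer[m_i]{c_i-d}}{\pochhammer{d+1}}$ behaves like $\frac{(-d)^m\prod(\text{lower order})}{d^n\,n!\,(\text{lower order})}$ as $d\to\infty$, where $m=\sum m_i$; this tends to $0$ when $m\le n-1$ and to $\frac{(-1)^n n!}{\prod_i \pochhammer[m_i]{c_i}}$ when $m=n$, after tracking the leading coefficients carefully (the $(-1)^n$ arising from $(-d)^m/d^n$ with $m=n$, and the surviving constant being $n!\big/\big(n!\prod\pochhammer[m_i]{c_i}\big)\cdot n! $ — I would verify the bookkeeping at this step). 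Alternatively, \eqref{hypergeometric formula 1} admits a direct proof: the ${}_{p+1}F_p(1)$ with shifted-by-$m_i$ upper parameters is, up to normalization, the evaluation at $1$ of a polynomial of degree $n$ whose coefficients telescope, and the vanishing for $m<n$ reflects that a certain finite difference operator of order $n$ annihilates a polynomial of degree $m<n$.

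The main obstacle I anticipate is the careful management of the telescoping in the inductive step for Minton's formula: keeping track of how decrementing a shift $m_i$ both produces a factor $(c_i-d-1)$-type term and shifts the summation index (hence decrements $n$ and alters which Pochhammer symbols appear), so that the products $\prod_i\pochhammer[m_i]{c_i-d}$ and $\prod_i\pochhammer[m_i]{c_i}$ reassemble correctly at the end. A cleaner route that sidesteps some of this is to invoke the known closed form directly: these are precisely the summation theorems of Karlsson--Minton type, and \cite{FasenmyerHypergeomPoly} together with standard hypergeometric references (e.g. via the Thomae/Whipple transformations or Sister Celine's technique) supply \eqref{Minton's formula}; I would then present \eqref{hypergeometric formula 1} as the confluent limit with the bookkeeping spelled out, since that limit is the part actually used in the sequel (to establish the orthogonality conditions \eqref{orthogonality conditions w.r.t. linear functionals} via Lemma \ref{Integrals of hypergeometric polynomials}).
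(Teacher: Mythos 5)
Your proposal matches the paper's route: the paper simply cites Minton's original paper for \eqref{Minton's formula} and obtains \eqref{hypergeometric formula 1} by the same limit $d\to+\infty$ you describe (and your asymptotic bookkeeping $\prod_i\pochhammer[m_i]{c_i-d}\sim(-d)^m$, $\pochhammer{d+1}\sim d^n$ does give $0$ for $m\leq n-1$ and $(-1)^n n!\big/\prod_i\pochhammer[m_i]{c_i}$ for $m=n$, exactly as claimed). The only difference is that you additionally sketch a self-contained induction for Minton's formula where the paper relies on the citation; your inductive step is left somewhat schematic, but since you also fall back on the standard Karlsson--Minton references, the argument is sound.
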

Formula \eqref{Minton's formula} was deduced in \cite{Minton} and \eqref{hypergeometric formula 1} can be obtained by taking the limit $d\to +\infty$ in \eqref{Minton's formula}.


\begin{proof}[Proof of Theorem \ref{explicit formula as hypergeometric polynomials for the r-OP - theorem}.]
Our aim is to prove that \eqref{orthogonality conditions w.r.t. linear functionals} holds.
Using Lemma \ref{Integrals of hypergeometric polynomials}, we have
\begin{equation}
\label{orthogonality conditions r=s=m - functional to compute}
\Functional{v_{\ell}}{x^kP_n}
=\frac{\dis(-1)^n\prod\limits_{i=1}^{r}\pochhammer{a_i}\,\prod\limits_{i=1}^{r}\pochhammer[k]{a_i^{(\ell)}}} {\dis\prod\limits_{j=1}^{s}\pochhammer{b_j^{(n-1)}}\,\prod\limits_{j=1}^{s}\pochhammer[k]{b_j^{(\ell)}}}
\,\Hypergeometric[1]{r+s+1}{r+s}{-n,a_1^{(\ell)}+k,\cdots,a_r^{(\ell)}+k,b_1^{(n-1)},\cdots,b_s^{(n-1)}\vspace*{0,1 cm}} {a_1,\cdots,a_r,b_1^{(\ell)}+k,\cdots,b_s^{(\ell)}+k}.
\end{equation}
for any $k,n\in\N$ and $\ell\in\{0,\cdots,m-1\}$.
To shorten the notation throughout this proof, we define
\begin{equation}
\label{orthogonality conditions r=s=m - hypergeometric series involved}
F_{\ell}(n,k)=\Hypergeometric[1]{r+s+1}{r+s}{-n,a_1^{(\ell)}+k,\cdots,a_r^{(\ell)}+k,b_1^{(n-1)},\cdots,b_s^{(n-1)}\vspace*{0,1 cm}} {a_1,\cdots,a_r,b_1^{(\ell)}+k,\cdots,b_s^{(\ell)}+k}.
\end{equation}
Suppose that $n\geq mk+\ell+1$. 
Then,
\begin{equation}
b_j^{(n-1)}=b_j+\ceil{\frac{n-\lambda_j}{m}}\geq b_j+k+\ceil{\frac{\ell+1-\lambda_j}{m}}=b_j^{(\ell)}+k
\quad\text{for each }1\leq j\leq m.
\end{equation}
Therefore, $a_i^{(\ell)}+k-a_i$ and $b_j^{(n-1)}-b_j^{(\ell)}-k$ are nonnegative integers for any $1\leq i\leq r$ and $1\leq j\leq s$.
Furthermore,
\begin{equation}
\sum_{i=1}^{r}\left(a_i^{(\ell)}+k-a_i\right)\leq rk+\ell\leq mk+\ell
\end{equation}
and 
\begin{equation}
\begin{aligned}
\sum_{j=1}^{s}\left(b_j^{(n-1)}-b_j^{(\ell)}-k\right)
&
=\sum_{j=1}^{s}\left(\ceil{\frac{n-\lambda_j}{m}}-\ceil{\dfrac{\ell+1-\lambda_j}{m}}-k\right)
\\&
\leq\sum_{i=1}^{m}\left(\ceil{\frac{n-i}{m}}-\ceil{\dfrac{\ell+1-i}{m}}-k\right)
=n-1-mk-\ell.
\end{aligned}
\end{equation}
As a result,
\begin{equation}
\sum_{i=1}^{r}\left(a_i^{(\ell)}+k-a_i\right)+\sum_{j=1}^{s}\left(b_j^{(n-1)}-b_j^{(\ell)}-k\right)\leq n-1.
\end{equation}
Hence, using \eqref{hypergeometric formula 1}, we find that $F_{\ell}(n,k)=0$ and, consequently, $\Functional{v_{\ell}}{x^k\,P_n}=0$ whenever $n\geq mk+\ell+1$.

Suppose now that $n=mk+\ell$. 
Then,
\begin{equation}
b_j^{(n-1)}=b_j+\ceil{\dfrac{n-\lambda_j}{m}}=b_j+k+\ceil{\dfrac{\ell-\lambda_j}{m}}=
\begin{cases}
b_j+k+1 & \text{if } 1\leq\lambda_j\leq\ell-1 \\
    b_j+k & \text{if } \ell\leq\lambda_j\leq m
\end{cases}
\quad\text{for }1\leq\ell\leq m-1,
\end{equation}
and
\begin{equation}
b_j^{(mk-1)}=b_j+\ceil{\dfrac{mk-j}{m}}=
\begin{cases}
  b_j+k & \text{if } 1\leq \lambda_j\leq m-1 \\
b_j+k-1 & \text{if } \lambda_j=m
\end{cases}
\quad\text{for }\ell=0.
\end{equation}

Therefore, there are two cases to consider separately: either $[\ell]_m\not\in\{\lambda_1,\cdots,\lambda_s\}$ or $[\ell]_m\in\{\lambda_1,\cdots,\lambda_s\}$.

We define $\eta(\ell)=\max\{0\leq j\leq s\left|\lambda_j\leq\ell\right.\}$ and $\zeta(\ell)=\max\{0\leq i\leq r\left|\sigma_i\leq\ell\right.\}$ with $\lambda_0=\sigma_0=0$.
Note that $\eta(\ell)\leq\ell$ with $\eta(\ell)=\ell$ when $r\leq s$, and $\zeta(\ell)\leq\ell$ with $\zeta(\ell)=\ell$ when $r\geq s$.

If $[\ell]_m\not\in\{\lambda_1,\cdots,\lambda_s\}$, then, recalling \eqref{hypergeometric formula 1},
\begin{equation}
F_{\ell}(n,k)
=\Hypergeometric[1]{r+1}{r}{-n,a_1^{(\ell)}+k,\cdots,a_r^{(\ell)}+k \vspace*{0,1 cm}} {a_1,\cdots,a_r}
=\frac{(-1)^n\,n!}{\dis\prod\limits_{i=1}^{\zeta(\ell)}\pochhammer[k+1]{a_i}\prod\limits_{i=\zeta(\ell)+1}^{r}\pochhammer[k]{a_i}}.
\end{equation}
Therefore,
\begin{equation}
\label{Th. 6.1 - nonzero orthogonality condition 1}
\Functional{v_{\ell}}{x^k\,P_n}
=\frac{\dis n!\,\prod\limits_{i=1}^{\zeta(\ell)}\pochhammer[n-1]{a_i+1}\prod\limits_{i=\zeta(\ell)+1}^{r}\pochhammer{a_i}} {\dis\prod\limits_{j=1}^{s}\pochhammer{b_j^{(n-1)}}\,\prod\limits_{j=1}^{s}\pochhammer[k]{b_j^{(\ell)}}}\neq 0.
\end{equation}

If $[\ell]_m\in\{\lambda_1,\cdots,\lambda_s\}$, then $[\ell]_m=\lambda_{\eta(\ell)}$. 
As a result, using \eqref{Minton's formula} and defining $b_0=b_m-1$, we have
\begin{equation}
F_{\ell}(n,k)
=\Hypergeometric[1]{r+2}{r+1}{-n,a_1^{(\ell)},\cdots,a_r^{(\ell)},b_{\eta(\ell)}+k\vspace*{0,1 cm}} {a_1,\cdots,a_r,b_{\eta(\ell)}+k+1}
=\frac{\dis n!\,\prod\limits_{i=1}^{\zeta(\ell)}\pochhammer[k+1]{a_i-b_{\eta(\ell)}-k}\,\prod\limits_{i=\zeta(\ell)+1}^{r}\pochhammer[k]{a_i-b_{\eta(\ell)}-k}} {\dis\pochhammer{b_{\eta(\ell)}+k+1}\;\prod\limits_{i=1}^{\zeta(\ell)}\pochhammer[k+1]{a_i}\,\prod\limits_{i=\zeta(\ell)+1}^{r}\pochhammer[k]{a_i}},
\end{equation}
which means that 
\begin{equation}
F_{\ell}(n,k)
=\frac{\dis(-1)^{rk+\zeta(\ell)}\,n!\,\prod\limits_{i=1}^{\zeta(\ell)}\pochhammer[k+1]{b_{\eta(\ell)}-a_i}\, \prod\limits_{i=\zeta(\ell)+1}^{r}\pochhammer[k]{b_{\eta(\ell)}-a_i+1}} {\dis\pochhammer{b_{\eta(\ell)}+k+1}\;\prod\limits_{i=1}^{\zeta(\ell)}\pochhammer[k+1]{a_i}\,\prod\limits_{i=\zeta(\ell)+1}^{r}\pochhammer[k]{a_i}}.
\end{equation}
As a result,
\begin{equation}
\label{Th. 6.1 - nonzero orthogonality condition 2}
\Functional{v_{\ell}}{x^k\,P_n}=
\frac{\dis(-1)^{(m-r)k+\ell-\zeta(\ell)}\,n!\,\prod\limits_{i=1}^{r}\pochhammer{a_i}\, \prod\limits_{i=1}^{\zeta(\ell)}\pochhammer[k+1]{b_{\eta(\ell)}-a_i}\,\prod\limits_{i=\zeta(\ell)+1}^{r}\pochhammer[k]{b_{\eta(\ell)}-a_i+1}} {\dis\pochhammer{b_{\eta(\ell)}+k}\;\prod\limits_{j=1}^{\zeta(\ell)}\pochhammer[n+k]{b_j+1}\,\prod\limits_{j=\zeta(\ell)+1}^{s}\pochhammer[n+k]{b_j}}\neq 0.
\end{equation}
Hence, we have proved that \eqref{orthogonality conditions w.r.t. linear functionals} always holds.
\end{proof}

\subsection{Differential properties}
In the following result, we write the $m$-orthogonal polynomials defined by \eqref{hypergeometric type II MOP - explicit formula as a s+1Fr} as solutions to an ordinary differential equation, as a consequence of their explicit representation as hypergeometric series.
\begin{proposition}
\label{differential equation for type II MOP prop.}
For $r,s\in\N$ and $m=\max(r,s)\geq 1$, let $\seq{P_n(x)}$ be the $m$-orthogonal polynomial sequence defined by \eqref{hypergeometric type II MOP - explicit formula as a s+1Fr}. 
Then, $P_n(x)$ satisfies the $(m+1)$-order differential equation
\begin{equation}
\label{differential equation for generalised hypergeometric polynomials}
\left[\prod_{i=1}^{r}\left(x\,\DiffOp+a_i\right)\right]\DiffOp\,P_n(x)
=\left[\prod_{j=1}^{s}\left(x\,\DiffOp+b_j^{(n-1)}\right)\right]\left(x\,\DiffOp-n\right)P_n(x),
\end{equation}
which can be written in the form
\begin{equation}
\label{differential equation for generalised hypergeometric polynomials expanded}
n\prod\limits_{j=1}^{s}b_j^{(n-1)}\,P_n(x)+\sum_{k=0}^{m}x^k\,\psi_n^{[k]}(x)\,\DiffOpHigherOrder{k+1}\left(P_n(x)\right)=0,
\end{equation}
for some polynomials $\psi_n^{[k]}$, $0\leq k\leq m$, of degree not greater than $1$.
In particular, the coefficient of the highest-order derivative, $x^m\,\psi_n^{[m]}$, is equal to $x^m$ when $s\neq r$ and to $x^m(1-x)$ when $s=r$.
\end{proposition}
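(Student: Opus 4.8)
The plan is to derive the differential equation directly from the hypergeometric representation \eqref{hypergeometric type II MOP - explicit formula as a s+1Fr} by invoking the generalised hypergeometric differential equation \eqref{generalised hypergeometric differential equation}. Write $P_n(x) = C_n\, F(x)$ where $C_n$ is the prefactor $\tfrac{(-1)^n\pochhammer{a_1}\cdots\pochhammer{a_r}}{\pochhammer{b_1^{(n-1)}}\cdots\pochhammer{b_s^{(n-1)}}}$ and $F(x) = \Hypergeometric{s+1}{r}{-n,b_1^{(n-1)},\cdots,b_s^{(n-1)}}{a_1,\cdots,a_r}$. Applying \eqref{generalised hypergeometric differential equation} with $p = s+1$, $q = r$, upper parameters $\{-n, b_1^{(n-1)}, \ldots, b_s^{(n-1)}\}$ and lower parameters $\{a_1, \ldots, a_r\}$ gives
\begin{equation*}
\left[\prod_{i=1}^{r}\left(x\,\DiffOp+a_i\right)\DiffOp - \left(x\,\DiffOp-n\right)\prod_{j=1}^{s}\left(x\,\DiffOp+b_j^{(n-1)}\right)\right]F(x)=0,
\end{equation*}
and since $C_n$ is a nonzero constant this is exactly \eqref{differential equation for generalised hypergeometric polynomials}. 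This first step is essentially a citation; the only care needed is matching the orders and the placement of the factor $(x\DiffOp - n)$ coming from the upper parameter $-n$.

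Next I would expand the operator identity into the form \eqref{differential equation for generalised hypergeometric polynomials expanded}. The operator $\theta := x\DiffOp$ satisfies $\theta^k = \sum_{j} S(k,j)\, x^j \DiffOpHigherOrder{j}$ (Stirling numbers of the second kind), and more usefully $\prod_{i=1}^{r}(\theta + a_i)\theta$ and $\prod_{j=1}^{s}(\theta + b_j^{(n-1)})(\theta - n)$ are each polynomials in $\theta$ of degree $m+1$ with no constant term on the left-hand side (because of the extra bare $\theta = x\DiffOp$) and constant term $(-n)\prod_j b_j^{(n-1)}$ on the right-hand side. Subtracting, the $\theta^{m+1}$ terms have coefficients $1$ (left) and $1$ (right) when $r = s$, so they cancel, leaving top order $m$; when $r \neq s$ the two products have different degrees ($m+1$ versus $m$ or lower), so no cancellation at top order. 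Converting each $\theta^k$ to $x$-monomials times ordinary derivatives and collecting by derivative order yields $\sum_{k=0}^{m} x^k \psi_n^{[k]}(x)\DiffOpHigherOrder{k+1}(P_n)$ plus the zeroth-order term $n\prod_j b_j^{(n-1)}\,P_n$; each $\psi_n^{[k]}$ is at most linear in $x$ because the only $x$-dependence comes from at most one factor of the form $(x\DiffOp + c)$ acting on the outside, i.e. $\theta^{m+1}$ contributes an $x^{m+1}\DiffOpHigherOrder{m+1}$ while $\theta^m$ contributes at most $x^m \DiffOpHigherOrder{m}$, etc.

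For the final assertion about the top coefficient, I would isolate the coefficient of $\DiffOpHigherOrder{m+1}(P_n)$. On the left, $\prod_{i=1}^r(\theta+a_i)\theta$ has leading term $\theta^{r+1}$, contributing $x^{r+1}\DiffOpHigherOrder{r+1}$; on the right, $\prod_{j=1}^s(\theta + b_j^{(n-1)})(\theta - n)$ has leading term $\theta^{s+1}$, contributing $x^{s+1}\DiffOpHigherOrder{s+1}$. When $s < r$ we have $m = r$, the right side contributes nothing at order $r+1 = m+1$, so the coefficient of $\DiffOpHigherOrder{m+1}$ is $x^{m+1}$ — wait, this has to be reconciled with the claimed $x^m$: here $\psi_n^{[m]}$ absorbs one power, so $x^m\psi_n^{[m]} = x^{m+1}$ gives $\psi_n^{[m]}(x) = x$, which indeed has degree $\leq 1$; the statement ``$x^m\psi_n^{[m]}$ equals $x^m$'' should be read with $\psi_n^{[m]}$ a constant, so I would present it as: for $s \neq r$ the coefficient of $\DiffOpHigherOrder{m+1}(P_n)$ is $x^m$ after the normalization implicit in writing it as $x^m\psi_n^{[m]}$ with $\psi_n^{[m]}$ chosen appropriately (the precise bookkeeping of which power of $x$ is pulled into $\psi_n^{[m]}$ is the one genuinely fiddly point). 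When $s = r = m$ the two leading terms $x^{m+1}\DiffOpHigherOrder{m+1}$ cancel between left and right, but the next correction from $(\theta + b_j)(\theta - n)$ versus $(\theta + a_i)\theta$ at order $\theta^{m+1}$ after using $\theta^{m+1} = x^{m+1}\DiffOpHigherOrder{m+1} + \binom{m+1}{2}x^m\DiffOpHigherOrder{m} + \cdots$ leaves a residual $-x\cdot x^m\DiffOpHigherOrder{m+1}$ coming from the $\theta\cdot\theta^m$ expansion mismatch, giving $x^m(1-x)$ as the coefficient of $\DiffOpHigherOrder{m+1}(P_n)$. The main obstacle is precisely this last step: tracking how the $\theta \to x\DiffOp$ conversion redistributes powers of $x$ across derivative orders so as to pin down $\psi_n^{[m]}$ exactly, particularly the sign and the $1-x$ in the equal-parameter case; everything else is a direct appeal to \eqref{generalised hypergeometric differential equation} plus routine operator algebra.
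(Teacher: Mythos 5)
Your overall route is the paper's: apply \eqref{generalised hypergeometric differential equation} to the representation \eqref{hypergeometric type II MOP - explicit formula as a s+1Fr} to obtain \eqref{differential equation for generalised hypergeometric polynomials}, then expand both sides in the basis $x^k\,\DiffOpHigherOrder{k+1}$. The first step is correct. The expansion step, however, contains a concrete error which is exactly the source of the confusion you flag at the end: you replace the bare $\DiffOp$ on the left-hand side of \eqref{differential equation for generalised hypergeometric polynomials} by $\theta=x\DiffOp$, treating the operator as $\prod_{i}(\theta+a_i)\,\theta$ with leading term $\theta^{r+1}\mapsto x^{r+1}\DiffOpHigherOrder{r+1}$. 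The operator is $\prod_i(\theta+a_i)\,\DiffOp$, which is \emph{not} a polynomial in $\theta$; using $(\theta+a)\,x^k\DiffOpHigherOrder{k+1}=(k+a)\,x^k\DiffOpHigherOrder{k+1}+x^{k+1}\DiffOpHigherOrder{k+2}$ starting from $\DiffOp=x^0\DiffOpHigherOrder{1}$, it expands as $\sum_{k=0}^{r}\eta^{[k]}x^{k}\DiffOpHigherOrder{k+1}$ with $\eta^{[r]}=1$, so its top term is $x^{r}\DiffOpHigherOrder{r+1}$, not $x^{r+1}\DiffOpHigherOrder{r+1}$. This is why your attempted reconciliation ($\psi_n^{[m]}=x$, and the worry about ``which power of $x$ is pulled into $\psi_n^{[m]}$'') goes astray, and why your account of the $s=r$ case (two $x^{m+1}\DiffOpHigherOrder{m+1}$ terms cancelling between the two sides, leaving a residual $-x\cdot x^m$) is not a proof: no such cancellation occurs.

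The correct bookkeeping is short. The right-hand operator $q(\theta)=\prod_j\bigl(\theta+b_j^{(n-1)}\bigr)(\theta-n)$ \emph{is} a polynomial in $\theta$, of degree $s+1$ with $q(0)=-n\prod_j b_j^{(n-1)}$; writing $q(\theta)=q(0)+\tilde q(\theta)\,\theta=q(0)+x\,\tilde q(\theta+1)\,\DiffOp$ and expanding as above gives $q(\theta)=q(0)+\sum_{k=0}^{s}\xi_n^{[k]}x^{k+1}\DiffOpHigherOrder{k+1}$ with $\xi_n^{[s]}=1$. Subtracting the two expansions yields \eqref{differential equation for generalised hypergeometric polynomials expanded} with $\psi_n^{[k]}=\eta^{[k]}-\xi_n^{[k]}x$ (where $\eta^{[k]}=0$ for $k>r$ and $\xi_n^{[k]}=0$ for $k>s$), which is automatically of degree at most $1$, and the top coefficient is $x^m\bigl(\eta^{[m]}-\xi_n^{[m]}x\bigr)$: this is $x^m$ when $s<r$ and $x^m(1-x)$ when $s=r$, with nothing delicate left to track. (When $s>r$ the same computation gives $-x^{m+1}$, so the stated ``$x^m$ when $s\neq r$'' really concerns the case $s<r$; that is a quirk of the statement, not a defect you need to repair.)
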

\begin{proof}
Applying the differential equation \eqref{generalised hypergeometric differential equation} to \eqref{hypergeometric type II MOP - explicit formula as a s+1Fr}, we derive \eqref{differential equation for generalised hypergeometric polynomials}.
Expanding both sides of \eqref{differential equation for generalised hypergeometric polynomials},
\begin{equation}
\left[\prod_{i=1}^{r}\left(x\,\DiffOp+a_i\right)\right]\DiffOp\,P_n(x)
=\sum_{k=0}^{r}\eta^{[k]}x^{k}\DiffOpHigherOrder{k+1}\,P_n(x)
\end{equation}
and
\begin{equation}
\left[\prod_{j=1}^{s}\left(x\,\DiffOp+b_j^{(n-1)}\right)\right]\left(x\,\DiffOp-n\right)P_n(x)
=-n\prod\limits_{j=1}^{s}b_j^{(n-1)}\,P_n(x)+\sum_{k=0}^{s}\xi_n^{[k]}x^{k+1}\DiffOpHigherOrder{k+1}\,P_n(x),
\end{equation}
with $\eta^{[r]}=1=\xi_n^{[s]}$.
Therefore, defining $\psi_n^{[k]}=\eta^{[k]}-\xi_n^{[k]}\,x$ for $0\leq k\leq m$, we obtain \eqref{differential equation for generalised hypergeometric polynomials expanded}.
\end{proof}

Furthermore, applying the differentiation formula \eqref{derivative of a generalised hypergeometric series} to the $m$-orthogonal polynomials defined by \eqref{hypergeometric type II MOP - explicit formula as a s+1Fr}, we find that the differentiation operator acts on them as a shift on the parameters.
Therefore, these polynomials satisfy Hahn's property, that is, their sequence of derivatives is also $m$-orthogonal.
\begin{proposition}
\label{derivative of the type II MOP as a shift on the parameters prop.}
For $r,s\in\N$ and $m=\max(r,s)\geq 1$, let $\seq{P_n(x)}$ be the $m$-orthogonal polynomial sequence defined by \eqref{hypergeometric type II MOP - explicit formula as a s+1Fr}. Then, when $r\geq s$, we have
\begin{equation}
\label{differentiation as a shift on the parameters r>=s}
\DiffOp\,P_n^{\left[(r,s),\left(\lambda_1,\cdots,\lambda_s\right)\right]}\left(x\left|\begin{matrix}a_1,\cdots,a_r \\ b_1,\cdots,b_s\end{matrix}\right.\right)
=\begin{cases}
n\,P_{n-1}^{\left[(r,s),\left(\lambda_2-1,\cdots,\lambda_s-1,r\right)\right]} \left(x\left|\begin{matrix}a_1+1,\cdots,a_r+1 \\ b_2+1,\cdots,b_s+1,b_1+2 \end{matrix}\right.\right)
& \text{if }\lambda_1=1, 
\\[0,5 cm]
n\,P_{n-1}^{\left[(r,s),\left(\lambda_1-1,\cdots,\lambda_s-1\right)\right]} \left(x\left|\begin{matrix}a_1+1,\cdots,a_r+1 \\ b_1+1,\cdots,b_s+1\end{matrix}\right.\right)
& \text{if }\lambda_1\geq 2,
\end{cases}
\end{equation}
and, when $r\leq s$,
\begin{equation}
\label{differentiation as a shift on the parameters r<=s}
\DiffOp\,P_n^{[(r,s)]}\left(x\left|\begin{matrix}a_1,\cdots,a_r \\ b_1,\cdots,b_s \end{matrix}\right.\right)
=n\,P_{n-1}^{[(r,s)]}\left(x\left|\begin{matrix} a_1+1,\cdots,a_r+1 \vspace*{0,1 cm}\\ b_2+1,\cdots,b_s+1,b_1+2 \end{matrix}\right.\right).
\end{equation}
\end{proposition}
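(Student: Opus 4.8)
The plan is to derive both \eqref{differentiation as a shift on the parameters r>=s} and \eqref{differentiation as a shift on the parameters r<=s} directly from the closed form \eqref{hypergeometric type II MOP - explicit formula as a s+1Fr}, reading $P_n(x)$ as a ${}_{s+1}F_r$ in the variable $x$ and applying the differentiation rule \eqref{derivative of a generalised hypergeometric series}. That rule raises every parameter by one and introduces a scalar factor: the top parameters $-n,b_1^{(n-1)},\dots,b_s^{(n-1)}$ become $-(n-1),b_1^{(n-1)}+1,\dots,b_s^{(n-1)}+1$, the bottom parameters $a_1,\dots,a_r$ become $a_1+1,\dots,a_r+1$, and a factor $(-n)\,b_1^{(n-1)}\cdots b_s^{(n-1)}\big/(a_1\cdots a_r)$ appears in front.

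First I would reconcile the scalar prefactors. Using $(a_i)_n=a_i\,(a_i+1)_{n-1}$, $\bigl(b_j^{(n-1)}\bigr)_n=b_j^{(n-1)}\,\bigl(b_j^{(n-1)}+1\bigr)_{n-1}$ and $(-1)^n(-n)=n\,(-1)^{n-1}$, the constant in \eqref{hypergeometric type II MOP - explicit formula as a s+1Fr}, multiplied by the factor just described, collapses to $n$ times $(-1)^{n-1}\prod_{i}(a_i+1)_{n-1}\big/\prod_{j}\bigl(b_j^{(n-1)}+1\bigr)_{n-1}$, which is exactly the prefactor that \eqref{hypergeometric type II MOP - explicit formula as a s+1Fr} assigns to the degree-$(n-1)$ polynomial of the family whose $a$-parameters are $a_i+1$.

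The heart of the argument is then the matching of parameters. Because a ${}_{s+1}F_r$ is symmetric in its top parameters and in its bottom parameters, it suffices to check that, as multisets, the top $b$-parameters $\{b_j^{(n-1)}+1\}_{j=1}^{s}$ of $\DiffOp\,P_n$ coincide with the parameters $\{\widetilde b_j^{\,(n-2)}\}_{j=1}^{s}$ that the recipe \eqref{b_j parameters in the type II MOP} (with $m=\max(r,s)$ in the denominator of the ceiling) attaches to the shifted family at degree $n-1$. When $r\ge s$ and $\lambda_1\ge 2$, one takes the shifted data $a_i\mapsto a_i+1$, $b_j\mapsto b_j+1$, $\lambda_j\mapsto\lambda_j-1$ (still a strictly increasing tuple inside $\{1,\dots,r\}$), and a one-line computation with the ceiling function gives $\widetilde b_j^{\,(n-2)}=(b_j+1)+\ceil{\tfrac{(n-1)-(\lambda_j-1)}{m}}=b_j^{(n-1)}+1$. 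When $\lambda_1=1$ — and likewise in the case $r\le s$, which behaves as if $\lambda_j=j$ — the index $\lambda_1-1$ is no longer admissible, so one cyclically relabels: the $b$-tuple becomes $(b_2+1,\dots,b_s+1,b_1+2)$ and the $\lambda$-tuple becomes $(\lambda_2-1,\dots,\lambda_s-1,r)$. The first $s-1$ entries match exactly as before, and for the last entry the identity $\ceil{\tfrac{(n-1)-m}{m}}=\ceil{\tfrac{n-1}{m}}-1$ shows that incrementing $b_1$ by $2$ (instead of $1$) and moving it to the end produces $\widetilde b_s^{\,(n-2)}=b_1^{(n-1)}+1$; one also checks that the new $\lambda$-tuple is again strictly increasing with values in $\{1,\dots,r\}$.

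I expect the only delicate point to be this last piece of bookkeeping: seeing why $b_1$ has to be incremented by $2$ and placed at the end of the list, which is precisely the off-by-one jump of $\ceil{\cdot}$ when its argument crosses a multiple of $m$. Everything else is routine — the single application of \eqref{derivative of a generalised hypergeometric series} and the Pochhammer simplifications. Once \eqref{differentiation as a shift on the parameters r>=s} and \eqref{differentiation as a shift on the parameters r<=s} are established at the level of the closed formulas, the fact that $\seq{\DiffOp\,P_n}$ is again an $m$-orthogonal polynomial sequence (Hahn's property) follows immediately from Theorem \ref{explicit formula as hypergeometric polynomials for the r-OP - theorem} applied to the shifted parameters.
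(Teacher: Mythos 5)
Your proposal is correct and is precisely the argument the paper intends: the paper's own justification is the one-line remark that applying the differentiation formula \eqref{derivative of a generalised hypergeometric series} to \eqref{hypergeometric type II MOP - explicit formula as a s+1Fr} produces a shift in the parameters, and your write-up supplies exactly the Pochhammer cancellations and the ceiling-function bookkeeping (including the correct reading of \eqref{b_j parameters in the type II MOP} with $m=\max(r,s)$ in the denominator, and the identification of the shifted multiset of $b$-parameters that forces $b_1\mapsto b_1+2$ with $\lambda\mapsto r$ when $\lambda_1=1$) that the paper leaves implicit. No gaps.
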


\subsection{Recurrence relation}
We end this section showing explicit expressions for the coefficients of the recurrence relation satisfied by the $m$-orthogonal polynomials defined by \eqref{hypergeometric type II MOP - explicit formula as a s+1Fr}.
Recall that the parameters $a_i^{(k)}$ and $b_j^{(k)}$ in \eqref{definition of the orthogonality functionals} are defined by \eqref{BCF parameters r+1Fs, r>=s}-\eqref{BCF parameters r+1Fs, r<=s} with $a_{r+1}=1$.
Therefore, based on Theorem \ref{recurrence relation for MOP associated with a BCF}, we obtain the following result.
\begin{theorem}
\label{recurrence coefficients for the MOP and link to the BCF for ratios of hypergeometric series}
For $r,s\in\N$ and $m=\max(r,s)\geq 1$, let $\seq{P_n(x)}$ be the $m$-orthogonal polynomial sequence in $R=\Q\left(b_1,\cdots,b_s\right)\left[a_1,\cdots,a_r\right]$ defined by \eqref{hypergeometric type II MOP - explicit formula as a s+1Fr} and let $\mathrm{H}^{(m)}=\seq[n,k\in\N]{h_{n,k}}$ be the production matrix of the generalised $m$-Stieltjes-Rogers polynomials $\generalisedStieltjesRogersPoly{n}{k}{\mathbf{\alpha}}$, where $\alpha=\seq[k\in\N]{\alpha_{k+m}}$ is defined by \eqref{BCF coeff r+1Fs, r>=s}-\eqref{BCF coeff r+1Fs, r<=s} with $a_{r+1}=1$.
Then, $\seq{P_n(x)}$ satisfies the $(m+1)$-order recurrence relation
\begin{equation}
\label{recurrence relation for our m-OPS}
P_{n+1}(x)=x\,P_n(x)-\sum_{k=0}^{m}\gamma_{n-k}^{[k]}\,P_{n-k}(x),
\end{equation}
where, setting $\alpha_j=0$ for $0\leq j\leq m-1$, 
\begin{equation}
\label{recurrence coefficients for our m-OPS}
\gamma_n^{[k]}=h_{n+k,n}
=\sum_{m\geq\ell_0>\cdots>\ell_k\geq 0}\,\prod_{j=0}^{k}\alpha_{(m+1)(n+j)+\ell_j}
\quad\text{for all } n\in\N \text{ and } 0\leq k\leq m.
\end{equation}
\end{theorem}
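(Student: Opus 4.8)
The plan is to reduce Theorem \ref{recurrence coefficients for the MOP and link to the BCF for ratios of hypergeometric series} to results already established in the excerpt, so that the proof is essentially an assembly of Theorem \ref{recurrence relation for MOP associated with a BCF}, Corollary \ref{BCF for ratios of r+1Fs - corollary a_(r+1)=1}, and Theorem \ref{explicit formula as hypergeometric polynomials for the r-OP - theorem}. First I would recall that, by Corollary \ref{BCF for ratios of r+1Fs - corollary a_(r+1)=1} applied with $a_{r+1}=1$, the modified $m$-Stieltjes-Rogers polynomials $\modifiedStieltjesRogersPoly{n}{k}{\alpha}$ with the coefficient sequence $\alpha=\seq[k\in\N]{\alpha_{k+m}}$ defined by \eqref{BCF coeff r+1Fs, r>=s}--\eqref{BCF coeff r+1Fs, r<=s} satisfy, for $0\le k\le m$,
\[
\modifiedStieltjesRogersPoly{n}{k}{\alpha}=\frac{\pochhammer{a_1^{(k)}}\cdots\pochhammer{a_r^{(k)}}}{\pochhammer{b_1^{(k)}}\cdots\pochhammer{b_s^{(k)}}},
\]
which is precisely the moment sequence $\Functional{v_k}{x^n}$ of the orthogonality functionals in \eqref{definition of the orthogonality functionals}. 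Hence the linear functionals $\left(v_0,\cdots,v_{m-1}\right)$ in the present statement coincide with the functionals whose moments are modified $m$-Stieltjes-Rogers polynomials appearing in \eqref{moments as generalised and modified m-S.-R. poly}.

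Next I would invoke the uniqueness of the monic $m$-orthogonal polynomial sequence on the step-line with respect to a fixed vector of functionals, so that the sequence $\seq{P_n(x)}$ defined explicitly by \eqref{hypergeometric type II MOP - explicit formula as a s+1Fr} (which Theorem \ref{explicit formula as hypergeometric polynomials for the r-OP - theorem} identifies as the $m$-orthogonal polynomial sequence with respect to $\left(u_0,\cdots,u_{m-1}\right)$, equivalently $\left(v_0,\cdots,v_{m-1}\right)$ by Proposition \ref{MOP for generalised and modified m-S.R. poly general result}) is the very sequence to which Theorem \ref{recurrence relation for MOP associated with a BCF} applies. One should check that the sequence $\seq{\alpha_{k+m}}$ has no zeroes or divisors of zero in $R=\Q\left(b_1,\cdots,b_s\right)\left[a_1,\cdots,a_r\right]$ under the genericity hypotheses on the parameters carried over from Theorem \ref{explicit formula as hypergeometric polynomials for the r-OP - theorem} — this follows because each $\alpha_{k+m}$ is, by \eqref{BCF coeff r+1Fs, r>=s}--\eqref{BCF coeff r+1Fs, r<=s}, a nonzero rational expression in the $a_i,b_j$ whose numerator and denominator factors ($a'_{k-i}$, $b'_{k-i}$, $b'_k-a'_k$) are nonzero in $R$ precisely by the non-vanishing conditions assumed; being a product of units in the fraction field intersected with $R$, it is a non-zero-divisor. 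Then Theorem \ref{recurrence relation for MOP associated with a BCF} yields directly the recurrence \eqref{recurrence relation for our m-OPS} and the formula $\gamma_n^{[k]}=\sum_{m\ge\ell_0>\cdots>\ell_k\ge 0}\prod_{j=0}^{k}\alpha_{(m+1)(n+j)+\ell_j}$, with $\alpha_j=0$ for $0\le j\le m-1$, which is exactly \eqref{recurrence coefficients for our m-OPS}. The identity $\gamma_n^{[k]}=h_{n+k,n}$ is then just the statement that $\mathrm{H}^{(m)}$ is the production matrix of $\generalisedStieltjesRogersPoly{n}{k}{\alpha}$, combined with Proposition \ref{formula for the entries of a production matrix prop.} which computes those entries.

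The main obstacle, such as it is, is bookkeeping rather than substance: one must make sure the coefficient sequence $\alpha=\seq[k\in\N]{\alpha_{k+m}}$ appearing in Corollary \ref{BCF for ratios of r+1Fs - corollary a_(r+1)=1}, in Theorem \ref{recurrence relation for MOP associated with a BCF}, and in the definition of $\mathrm{H}^{(m)}$ is literally the same sequence with the same indexing convention $\alpha_j=0$ for $0\le j\le m-1$, and that the convention $a_{r+1}=1$ is consistently substituted everywhere. I would also note explicitly that the number of summands in \eqref{recurrence coefficients for our m-OPS} is $\binom{m+1}{k+1}$ and that $\gamma_n^{[m]}=\prod_{j=0}^{m}\alpha_{(m+1)n+m(j+1)}\ne 0$, so the recurrence is genuinely of order $m+1$, which is what certifies $m$-orthogonality via \cite[Th.~2.1]{MaroniOrthogonalite}. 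With these identifications in place the proof is a one-line appeal to the cited theorems, and I would write it as such.
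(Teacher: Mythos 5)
Your proposal is correct and follows essentially the same route as the paper, which derives this theorem as an immediate consequence of Theorem \ref{recurrence relation for MOP associated with a BCF} after identifying, via Corollary \ref{BCF for ratios of r+1Fs - corollary a_(r+1)=1}, the moments \eqref{definition of the orthogonality functionals} with the modified $m$-Stieltjes-Rogers polynomials for the coefficient sequence \eqref{BCF coeff r+1Fs, r>=s}--\eqref{BCF coeff r+1Fs, r<=s} with $a_{r+1}=1$. Your additional checks (uniqueness of the monic $m$-orthogonal sequence, non-vanishing of the $\alpha_{k+m}$ in $R$, and the identification $\gamma_n^{[k]}=h_{n+k,n}$ via Proposition \ref{formula for the entries of a production matrix prop.}) are exactly the bookkeeping the paper leaves implicit.
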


A consequence of this theorem is that the production matrix $\mathrm{H}^{(m)}$ of the generalised $m$-Stieltjes-Rogers polynomials $\generalisedStieltjesRogersPoly{n}{k}{\mathbf{\alpha}}$, where $\alpha=\seq[k\in\N]{\alpha_{k+m}}$ is defined by \eqref{BCF coeff r+1Fs, r>=s}-\eqref{BCF coeff r+1Fs, r<=s} with $a_{r+1}=1$, is determined by the polynomial sequence $\seq{P_n(x)}$ defined in Theorem \ref{explicit formula as hypergeometric polynomials for the r-OP - theorem}.
Recalling \eqref{hypergeometric type II MOP - explicit formula as a s+1Fr}, $\seq{P_n(x)}$ is invariant under permutations of $\left(a_1,\cdots,a_r\right)$ as well as under different choices of $\left(\sigma_1,\cdots,\sigma_r\right)$ when $r<s$.
However,  $\seq{P_n(x)}$ is not invariant under permutations of $\left(b_1,\cdots,b_s\right)$ or under different choices of $\left(\lambda_1,\cdots,\lambda_s\right)$ when $r>s$.

Therefore, as mentioned at the end of Section \ref{BCF for ratios of hypergeometric series}, the production matrix $\mathrm{H}^{(m)}$ is invariant under permutations of $\left(a_1,\cdots,a_r\right)$ (but not of $\left(b_1,\cdots,b_s\right)$) and under different choices of $\left(\sigma_1,\cdots,\sigma_r\right)$ (but not of $\left(\lambda_1,\cdots,\lambda_s\right)$), in spite of these permutations and choices corresponding to different branched-continued-fraction coefficients.
As a result, permutations of $\left(a_1,\cdots,a_r\right)$ and different choices of $\left(\sigma_1,\cdots,\sigma_r\right)$ lead to different decompositions in bidiagonal matrices of the same production matrix $\mathrm{H}^{(m)}$. 
However, taking $k=(m+1)n$, with $n\in\N$, in \eqref{BCF coeff r+1Fs, r>=s}-\eqref{BCF coeff r+1Fs, r<=s}, we check that $\alpha_{(m+1)n+m}$ is invariant under these permutations and choices, so the $\mathrm{L}\mathrm{U}$-factorisation of $\mathrm{H}^{(m)}$ given by its different decompositions in bidiagonal matrices is the same (which had to be the case because a matrix $\mathrm{L}\mathrm{U}$-factorisation with $\mathrm{L}$ unit-lower-triangular is unique).

\section{Generalised $m$-Stieltjes-Rogers polynomials}
\label{Generalised m-S.-R. poly}
In this section, we present an explicit formula for the generalised $m$-Stieltjes-Rogers polynomials corresponding to the branched continued fractions introduced in Theorem \ref{BCF for ratios of r+1Fs - theorem} with $a_{r+1}=1$, which are the moments of the dual sequence of the $m$-orthogonal polynomials defined in Theorem \ref{explicit formula as hypergeometric polynomials for the r-OP - theorem}.
\begin{theorem}
\label{generalised m-Stieltjes-Rogers poly th.}
For $r,s\in\N$ and $m=\max(r,s)\geq 1$, let $\seq[k\in\N]{\alpha_{k+m}}$ in $R=\Q\left(b_1,\cdots,b_s\right)\left[a_1,\cdots,a_r\right]$ defined by \eqref{BCF coeff r+1Fs, r>=s}-\eqref{BCF coeff r+1Fs, r<=s} with $a_{r+1}=1$. 
The generalised $m$-Stieltjes-Rogers polynomials with weights $\seq[k\in\N]{\alpha_{k+m}}$ are
\begin{equation}	
\label{generalised m-Stieltjes-Rogers poly formula general (r,s)}
\generalisedStieltjesRogersPoly{n}{k}{\mathbf{\alpha}}
=\binom{n}{k}\frac{\prod\limits_{i=1}^{r}\pochhammer[n-k]{a_i+k}}{\prod\limits_{j=1}^{s}\pochhammer[n-k]{b_j^{(k)}+k}}
\quad\text{with}\quad b_j^{(k)}=b_j+\ceil{\frac{k+1-\lambda_j}{m}}.
\end{equation}
\end{theorem}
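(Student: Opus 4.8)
The plan is to reduce the generalised $m$-Stieltjes-Rogers polynomials to modified $m$-Stieltjes-Rogers polynomials of higher type — whose explicit form was already obtained in Corollary~\ref{BCF for ratios of r+1Fs - corollary a_(r+1)=1} — and then to simplify the resulting hypergeometric parameters with a short computation involving the ceiling function.

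First I would use the trivial relation $\generalisedStieltjesRogersPolyTypeJ{n}{k}{j+\ell(m+1)}{\mathbf{\alpha}}=\generalisedStieltjesRogersPolyTypeJ{n+\ell}{k+\ell}{j}{\mathbf{\alpha}}$ recalled in Section~\ref{Background}, with $j=0$ and $\ell=k$: it gives $\generalisedStieltjesRogersPoly{n}{k}{\mathbf{\alpha}}=\generalisedStieltjesRogersPolyTypeJ{n-k}{0}{(m+1)k}{\mathbf{\alpha}}=\modifiedStieltjesRogersPoly{n-k}{(m+1)k}{\mathbf{\alpha}}$ for $n\geq k$ (and both sides vanish when $n<k$). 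Combinatorially this is the observation that a partial $m$-Dyck path from $(0,0)$ to $((m+1)n,(m+1)k)$, after peeling off an initial staircase, is exactly a partial $m$-Dyck path of the type counted by a modified $m$-Stieltjes-Rogers polynomial of type $(m+1)k$. I then apply formula~\eqref{modified m-S.-R. poly a_(r+1)=1} of Corollary~\ref{BCF for ratios of r+1Fs - corollary a_(r+1)=1} with the type index equal to $(m+1)k$ and the path-length index equal to $n-k$. Since $m\geq1$ gives $\ceil{\frac{(m+1)k-m}{m+1}}=\ceil{k-\frac{m}{m+1}}=k$, the binomial prefactor there collapses to $\binom{(n-k)+k}{n-k}=\binom{n}{k}$, so that $\generalisedStieltjesRogersPoly{n}{k}{\mathbf{\alpha}}=\binom{n}{k}\,\frac{\prod_{i=1}^{r}\pochhammer[n-k]{a_i^{((m+1)k)}}}{\prod_{j=1}^{s}\pochhammer[n-k]{b_j^{((m+1)k)}}}$.

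It then remains to read off $a_i^{((m+1)k)}$ and $b_j^{((m+1)k)}$. I would treat the case $r\geq s$ in detail (the case $r\leq s$ being handled the same way, using~\eqref{BCF parameters r+1Fs, r<=s} in place of~\eqref{BCF parameters r+1Fs, r>=s}). From~\eqref{BCF parameters r+1Fs, r>=s} with $m=r$ one gets $a_i^{((m+1)k)}=a_i+\ceil{\frac{(r+1)k+1-i}{r+1}}=a_i+k+\ceil{\frac{1-i}{r+1}}=a_i+k$ for every $1\leq i\leq r$, because $0\geq1-i\geq-r$ forces the last ceiling to vanish; similarly $b_j^{((m+1)k)}=b_j+\ceil{\frac{(r+1)k+1-\lambda_j}{r}}=b_j+k+\ceil{\frac{k+1-\lambda_j}{r}}=b_j^{(k)}+k$ with $b_j^{(k)}=b_j+\ceil{\frac{k+1-\lambda_j}{m}}$ as in the statement. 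Substituting these into the display of the previous paragraph yields exactly~\eqref{generalised m-Stieltjes-Rogers poly formula general (r,s)}.

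I do not expect a genuinely hard step here: the one conceptual point is recognising the reduction to a modified $m$-Stieltjes-Rogers polynomial of type $(m+1)k$ (equivalently, the staircase decomposition of the underlying lattice paths), after which the argument is routine bookkeeping with Pochhammer symbols and ceiling functions. The places that need care are checking the ceiling simplifications — in particular that the binomial prefactor of Corollary~\ref{BCF for ratios of r+1Fs - corollary a_(r+1)=1} collapses to $\binom{n}{k}$, which is precisely where the factor $\pochhammer[n-k]{a_{r+1}^{((m+1)k)}}=\pochhammer[n-k]{k+1}$ is absorbed — and consistently matching the two parameter conventions for $r\geq s$ and $r\leq s$.
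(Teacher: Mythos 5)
Your proof is correct, and it takes a genuinely different route from the paper's. You identify $\generalisedStieltjesRogersPoly{n}{k}{\mathbf{\alpha}}$ with the modified polynomial $\modifiedStieltjesRogersPoly{n-k}{(m+1)k}{\mathbf{\alpha}}$ — both are generating polynomials over exactly the same set of partial $m$-Dyck paths, namely those from $(0,0)$ to $((m+1)n,(m+1)k)$ — and then read the answer off Corollary \ref{BCF for ratios of r+1Fs - corollary a_(r+1)=1}; the remaining ceiling arithmetic ($a_i^{((m+1)k)}=a_i+k$, $b_j^{((m+1)k)}=b_j^{(k)}+k$, and the collapse of the binomial prefactor to $\binom{n}{k}$, which is indeed where $\pochhammer[n-k]{a_{r+1}^{((m+1)k)}}=\pochhammer[n-k]{k+1}$ is absorbed) is carried out correctly in both parameter conventions, and there is no circularity since Corollary \ref{BCF for ratios of r+1Fs - corollary a_(r+1)=1} rests only on Lemma \ref{Euler-Gauss method for m-S-fractions} and Theorem \ref{BCF for ratios of r+1Fs - theorem}. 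The paper proves the theorem quite differently: it exhibits the matrix $\mathrm{S}$ with the claimed entries, shows that the coefficient matrix $\mathrm{T}$ of the polynomials of Theorem \ref{explicit formula as hypergeometric polynomials for the r-OP - theorem} satisfies $\mathrm{T}\mathrm{S}=\mathrm{I}$ via the terminating hypergeometric evaluation \eqref{hypergeometric formula 1}, and invokes Proposition \ref{PolySeqAndHessMatrixProp} to identify $\mathrm{T}^{-1}$ with the moment matrix $\mathrm{S}^{(m)}$ of the dual sequence. Your argument is shorter, avoids the summation identity entirely, and makes the shift-by-$k$ structure of the answer transparent; the paper's argument has the independent benefit of displaying the explicit inverse pair $\left(\mathrm{T},\mathrm{S}^{(m)}\right)$ and of not relying on the modified-polynomial formula. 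One cosmetic remark: the phrase about ``peeling off an initial staircase'' is unnecessary and slightly misleading — no decomposition of the path is involved, since the two quantities are defined over literally the same paths; the algebraic form of this observation is precisely the trivial relation $\generalisedStieltjesRogersPolyTypeJ{n}{k}{j+\ell(m+1)}{\mathbf{\alpha}}=\generalisedStieltjesRogersPolyTypeJ{n+\ell}{k+\ell}{j}{\mathbf{\alpha}}$ that you invoke.
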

The case $s=0$ and $m=r=2$ of this theorem corresponds to \cite[Prop.~7.1]{AlanSokalMOPd-opProdMatBCF}.

\begin{proof}
Let $\mathrm{S}=\seq[n,k\in\N]{s_{n,k}}$ and $\mathrm{T}=\seq[n,k\in\N]{t_{n,k}}$ be the unit-lower-triangular matrices with entries
\begin{equation}
s_{n,k}=\binom{n}{k}\frac{\prod\limits_{i=1}^{r}\pochhammer[n-k]{a_i+k}}{\prod\limits_{j=1}^{s}\pochhammer[n-k]{b_j^{(k)}+k}}
\quad\text{and}\quad
t_{n,k}=(-1)^{n-k}\binom{n}{k}\frac{\prod\limits_{i=1}^{r}\pochhammer[n-k]{a_i+k}}{\prod\limits_{j=1}^{s}\pochhammer[n-k]{b_j^{(n-1)}+k}}
\quad\text{when }n\geq k.
\end{equation}
Reverting the order of summation in \eqref{hypergeometric type II MOP - explicit expression as a combination of powers of x}, we rewrite the polynomials defined in Theorem \ref{explicit formula as hypergeometric polynomials for the r-OP - theorem} as
\begin{equation}
P_n(x)=\sum_{k=0}^{n}t_{n,k}\,x^k
\quad\text{for all }n\in\N.
\end{equation}
Hence, $\mathrm{T}$ is the coefficient matrix of $\seq{P_n(x)}$. 
Moreover, based on Proposition \ref{PolySeqAndHessMatrixProp}, $\mathrm{S}^{(m)}=\seq[n,k\in\N]{\generalisedStieltjesRogersPoly{n}{k}{\mathbf{\alpha}}}$ is the moment matrix of the dual sequence of $\seq{P_n(x)}$.
Therefore, $\mathrm{S}^{(m)}$ is the inverse matrix of $\mathrm{T}$, so $\mathrm{S}^{(m)}=\mathrm{S}$ if and only if
\begin{equation}
\label{entry of the product TS formula intended}
\left(\mathrm{TS}\right)_{n,k}
=\sum_{l=k}^{n}t_{n,l}\,s_{l,k}
=\begin{cases}
1 & \text{if } n=k \\
0 & \text{if } n\neq k.
\end{cases}
\end{equation}
It is clear that $\left(\mathrm{TS}\right)_{n,k}=0$ whenever $n<k$ and $\left(\mathrm{TS}\right)_{n,n}=t_{n,n}\,s_{n,n}=1$ for any $n\in\N$.
For $n>k$,
\begin{equation}
\label{entry of the product TS formula 1}
\sum_{l=k}^{n}t_{n,l}\,s_{l,k}
=\sum_{l=k}^{n}(-1)^{n-l}\binom{n}{l}\binom{l}{k}\frac{\prod\limits_{i=1}^{r}\pochhammer[l-k]{a_i+k}\pochhammer[n-l]{a_i+l}} {\prod\limits_{j=1}^{s}\pochhammer[l-k]{b_j^{(k)}+k}\pochhammer[n-l]{b_j^{(n-1)}+l}}.
\end{equation}

Furthermore, for any $k<l<n$, we have
\begin{equation}
\binom{n}{l}\binom{l}{k}=\binom{n}{k}\binom{n-k}{l-k},
\;
\pochhammer[l-k]{a_i+k}\pochhammer[n-l]{a_i+l}=\pochhammer[n-k]{a_i+k},
\;\text{and}\;
\pochhammer[n-l]{b_j^{(n-1)}+l}=\frac{\pochhammer[n-k]{b_j^{(n-1)}+k}}{\pochhammer[l-k]{b_j^{(n-1)}+k}}.
\end{equation}

Applying the formulas above to \eqref{entry of the product TS formula 1} and making the change of variable $\ell=l-k$, we find that
\begin{equation}
\sum_{l=k}^{n}t_{n,l}\,s_{l,k}
=(-1)^{n-k}\binom{n}{k}\,\frac{\prod\limits_{i=1}^{r}\pochhammer[n-k]{a_i+k}}{\prod\limits_{j=1}^{s}\pochhammer[n-k]{b_j^{(n-1)}+k}}
\sum\limits_{\ell=0}^{n-k}(-1)^{\ell}\binom{n-k}{\ell}\prod\limits_{j=1}^{s}\frac{\pochhammer[\ell]{b_j^{(n-1)}+k}}{\pochhammer[\ell]{b_j^{(k)}+k}},
\end{equation}
which is equivalent to
\begin{equation}
\sum_{l=k}^{n}t_{n,l}\,s_{l,k}
=(-1)^{n-k}\binom{n}{k}\,\frac{\prod\limits_{i=1}^{r}\pochhammer[n-k]{a_i+k}}{\prod\limits_{j=1}^{s}\pochhammer[n-k]{b_j^{(n-1)}+k}}\,
\Hypergeometric[1]{s+1}{s}{-(n-k),b_1^{(n-1)}+k,\cdots,b_s^{(n-1)}+k \vspace*{0,1 cm}}{b_1^{(k)}+k,\cdots,b_s^{(k)}+k}.
\end{equation}
When $n>k$, $b_j^{(n-1)}-b_j^{(k)}$ is a nonnegative integer for any $1\leq j\leq s$ and
\begin{equation}
\sum_{j=1}^{s}\left(b_j^{(n-1)}-b_j^{(k)}\right)
=\sum_{j=1}^{s}\left(\ceil{\frac{n-\lambda_j}{m}}-\ceil{\dfrac{k+1-\lambda_j}{m}}\right)
\leq\sum_{i=1}^{m}\left(\ceil{\frac{n-i}{m}}-\ceil{\dfrac{k+1-i}{m}}\right)
=n-k-1.
\end{equation}
Therefore, recalling \eqref{hypergeometric formula 1}, we have
\begin{equation}
\Hypergeometric[1]{s+1}{s}{-(n-k),b_1^{(n-1)}+k,\cdots,b_s^{(n-1)}+k \vspace*{0,1 cm}}{b_1^{(k)}+k,\cdots,b_s^{(k)}+k}=0
\quad\text{for  }n>k.
\end{equation}
As a result, we conclude that \eqref{entry of the product TS formula intended} and, consequently, \eqref{generalised m-Stieltjes-Rogers poly formula general (r,s)} hold.
\end{proof}

The row-generating polynomials of the matrix $\mathrm{S}^{(m)}=\seq[n,k\in\N]{\generalisedStieltjesRogersPoly{n}{k}{\mathbf{\alpha}}}$ determined by \eqref{generalised m-Stieltjes-Rogers poly formula general (r,s)} are 
\begin{equation}
\label{row-generating poly generalised m-S.R. poly}
\sum_{k=0}^{n}\generalisedStieltjesRogersPoly{n}{k}{\mathbf{\alpha}}\,x^k
=\sum_{k=0}^{n}\binom{n}{k}\frac{\prod\limits_{i=1}^{r}\pochhammer[n-k]{a_i+k}}{\prod\limits_{j=1}^{s}\pochhammer[n-k]{b_j^{(k)}+k}}\,x^k.
\end{equation} 
When $s=0$, $\mathrm{S}^{(m)}$ is its own unsigned inverse, which means that the inverse of $\mathrm{S}^{(m)}$ is $\seq[n,k\in\N]{(-1)^{n-k}\,\generalisedStieltjesRogersPoly{n}{k}{\mathbf{\alpha}}}$, and the row-generating polynomials of $\mathrm{S}^{(m)}$ are
\begin{equation}
\label{hypergeometric type II MOP as row-generating poly}
\sum_{k=0}^{n}\binom{n}{k}\prod_{i=1}^{m}\pochhammer[n-k]{a_i+k}\,x^k=(-1)^n\,P_n(-x).
\end{equation}
That is not the case when $s\geq 1$, because then the entries of $\mathrm{S}^{(m)}$ have terms $b_j^{(k)}$, while the entries of its inverse $\mathrm{T}=\seq[n,k\in\N]{t_{n,k}}$ have terms $b_j^{(n-1)}$ instead.
In that case, it is not clear what are the row-generating polynomials \eqref{row-generating poly generalised m-S.R. poly}, but we can determine the column-generating series of $\mathrm{S}^{(m)}$:
\begin{equation}
\sum_{n=k}^{\infty}\generalisedStieltjesRogersPoly{n}{k}{\mathbf{\alpha}}\,x^{n-k}
=\sum_{n=k}^{\infty}\binom{n}{k}\frac{\prod\limits_{i=1}^{r}\pochhammer[n-k]{a_i+k}}{\prod\limits_{j=1}^{s}\pochhammer[n-k]{b_j^{(k)}+k}}\,x^{n-k}
=\Hypergeometric{r+1}{s}{a_1+k,\cdots,a_r+k,1+k \vspace*{0,1 cm}}{b_1^{(k)}+k,\cdots,b_s^{(k)}+k}.
\end{equation}

\section{Multiple orthogonal polynomials with respect to Meijer G-functions}
\label{Type II MOP w.r.t. Meijer G-functions}
In this section, we focus the analysis of the multiple orthogonal polynomials introduced in Section \ref{Type II MOP w.r.t. linear functionals} to the cases where the branched-continued-fraction coefficients defined by \eqref{BCF coeff r+1Fs, r>=s} with $a_{r+1}=1$ are all positive.
The positivity of the branched-continued-fraction coefficients implies that the zeros of the corresponding multiple orthogonal polynomials are all simple, real, and positive, the zeros of consecutive polynomials interlace, and the recurrence coefficients are all positive.
Furthermore, it is a sufficient (but far from necessary) condition for the linear functionals of orthogonality in Theorem \ref{explicit formula as hypergeometric polynomials for the r-OP - theorem} to be induced by measures on the positive real line whose densities are Meijer G-functions (see Theorem \ref{MOP wrt Meijer G-functions th.}).

We find the asymptotic behaviour of the recurrence coefficients (Proposition \ref{asymptotic behaviour of the recurrence coefficients r-OPS}) and we present a Mehler-Heine-type asymptotic formula near the origin (Proposition \ref{Mehler-Heine formula r-OPS prop.}).
Then, we use these two results to find an upper bound for the largest zero as well as the asymptotic behaviour of the zeros near the origin (Theorem \ref{results about the zeros of the r-OP}).
Then, we focus our analysis on two special cases with $s=r$: a $r$-orthogonal polynomial sequence with constant recurrence coefficients and particular instances of the Jacobi-Pi\~neiro polynomials (see Subsections \ref{r-OP w/ constant rec coeff} and \ref{Jacobi-Pineiro poly}, respectively). 
Finally, we use the connection with the Jacobi-Pi\~neiro polynomials to find the asymptotic zero distribution of the polynomials under analysis here when $s=r$ (Theorem \ref{asymptotic zero distribution, s=r, th.}).

\subsection{Positivity of branched-continued-fraction coefficients and Meijer G-functions}
When $a_{r+1}=1$, the condition \eqref{BCF conditions for non-negativity of the coefficients 2} is trivial and the same is true for the condition \eqref{BCF conditions for non-negativity of the coefficients 1} with $i=r+1$, because it reduces to $b_j\geq 0$ for all $1\leq j\leq s$.
Therefore, the coefficients in $\seq[k\in\N]{\alpha_{k+r}}$ defined by \eqref{BCF coeff r+1Fs, r>=s}, with $a_{r+1}=1$ and $a_1,\cdots,a_r,b_1,\cdots,b_s\in\R^+$, are all positive if and only if 
\begin{equation}
\label{BCF conditions for positivity of the coefficients, a_(r+1)=1}
b_j>a_i-\ceil{\frac{i-\lambda_j}{r}}=
\begin{cases}
a_i &\text{if }i\leq\lambda_j \\
a_i-1 &\text{if }i\geq\lambda_j+1 
\end{cases}
\quad\text{for all }1\leq i\leq r\text{ and }1\leq j\leq s.
\end{equation}

Recalling Remark \ref{modified m-S.R. poly as moments of positive measures}, these conditions imply that the sequence of modified $r$-Stieltjes-Rogers polynomials $\seq{\modifiedStieltjesRogersPoly[r]{n}{k}{\mathbf{\alpha}}}$, which accordingly to Corollary \ref{BCF for ratios of r+1Fs - corollary a_(r+1)=1} is the moment sequence in \eqref{definition of the orthogonality functionals}, is a Stieltjes moment sequence for any $0\leq k\leq r$.
Furthermore, these conditions imply that $b_j>a_{\lambda_j}$ and, consequently, $b_j^{(k)}>a_{\lambda_j}^{(k)}$ for any $1\leq j\leq s$ and $0\leq k\leq r-1$.
Therefore, the moment sequence in \eqref{definition of the orthogonality functionals} with $r\geq s$ and $a_1,\cdots,a_r,b_1,\cdots,b_s\in\R^+$ satisfying \eqref{BCF conditions for positivity of the coefficients, a_(r+1)=1} is the entrywise product of $r$ Stieltjes moment sequences 
\begin{equation}
m_n^{(k)}=\frac{\pochhammer{a_1^{(k)}}\cdots\pochhammer{a_r^{(k)}}}{\pochhammer{b_1^{(k)}}\cdots\pochhammer{b_s^{(k)}}}
=\prod_{j=1}^{s}\frac{\pochhammer{a_{\lambda_j}^{(k)}}}{\pochhammer{b_j^{(k)}}}\prod_{i\in\mathrm{I}}\pochhammer{a_i^{(k)}}
\quad\text{with   }\mathrm{I}=\{1,\cdots,r\}\backslash\{\lambda_1,\cdots,\lambda_s\}.
\end{equation}
This decomposition gives an alternative proof that the conditions in \eqref{BCF conditions for positivity of the coefficients, a_(r+1)=1} are sufficient for $\seq{m_n^{(k)}}$ to be a Stieltjes moment sequence for any $0\leq k\leq r-1$, because an entrywise product of Stieltjes moment sequences is also a Stieltjes moment sequence.
However, these conditions are not necessary; see \cite[\S 2]{KarpPrilepkina2016} for more information about sharper sufficient conditions for $m_n^{(k)}$ to be a Stieltjes moment sequence.

The Meijer G-function $G^{\,r,0}_{s,r}$ (see \cite{LukeSpecialFunctionsVolI, DLMF} for more details) is defined by the Mellin-Barnes type integral
\begin{equation}
\label{Meijer G-function definition}
\MeijerG{r,0}{s,r}{b_1,\cdots,b_s}{a_1,\cdots,a_r}
=\frac{1}{2\pi i}\int\limits_{c-i\infty}^{c+i\infty} \frac{\Gamma\left(a_1+u\right)\cdots\Gamma\left(a_r+u\right)}{\Gamma\left(b_1+u\right)\cdots\Gamma\left(b_s+u\right)}\,x^{-u}\mathrm{d}u,
\quad c>-\min\limits_{1\leq i\leq r}\{\Real(a_i)\}.
\end{equation}

%
When it exists, the Mellin transform of the Meijer G-function $G(x)$ is equal to the ratio of gamma functions in the integrand on the right-hand side of \eqref{Meijer G-function definition}.
For $r,s\in\N$ with $r\geq 1$ and $r\geq s$, let $a_1,\cdots,a_r,b_1,\cdots,b_s\in\C$.
Then, based on \cite[Eq.~2.24.2.1]{PrudnikovEtAlVol3},
\begin{equation}
\label{Mellin transform of a Meijer G-function}
\int\limits_{0}^{\infty}\MeijerG{r,0}{s,r}{b_1,\cdots,b_s}{a_1,\cdots,a_r}x^{z-1}\,\mathrm{d}x =\frac{\Gamma\left(a_1+z\right)\cdots\Gamma\left(a_r+z\right)}{\Gamma\left(b_1+z\right)\cdots\Gamma\left(b_s+z\right)},
\end{equation}
for any $z\in\C$ such that $\Real(z)>-\min\limits_{1\leq i\leq r}\{\Real\left(a_i\right)\}$.
In particular, if $\Real(a_i)>0$ for all $1\leq i\leq r$, we have
\begin{equation}
\label{moment sequence ratio of Pochhammers Meijer G-function}
\frac{\Gamma(b_1)\cdots\Gamma(b_s)}{\Gamma(a_1)\cdots\Gamma(a_r)}\,\int\limits_{0}^{\infty}\MeijerG{r,0}{s,r}{b_1,\cdots,b_s}{a_1,\cdots,a_r}x^{n-1}\,\mathrm{d}x
=\frac{\pochhammer{a_1}\cdots\pochhammer{a_r}}{\pochhammer{b_1}\cdots\pochhammer{b_s}}
\quad \text{for all } n\in\N.
\end{equation}

We assume now that $s=r$ and $a_1,\cdots,a_r,b_1,\cdots,b_r\in\R^+$ satisfy \eqref{BCF conditions for positivity of the coefficients, a_(r+1)=1}.
In particular, $b_i>a_i$ for all $1\leq i\leq r$, which implies that $\sum\limits_{i=1}^{r}\left(b_i-a_i\right)>0$. 
Therefore, based on \cite[Lemma~1]{KarpPrilepkinaHypergeometricFunctions},
\begin{equation}
\label{MeijerGfunctionEqualTo0Forx>1}
\MeijerG{r,0}{r,r}{b_1,\cdots,b_r}{a_1,\cdots,a_r}=0
\quad\text{for all  }x>1.
\end{equation}
As a result, the integration in \eqref{moment sequence ratio of Pochhammers Meijer G-function} is over the interval $(0,1)$ instead of the whole positive real line. 

Considering \eqref{moment sequence ratio of Pochhammers Meijer G-function}-\eqref{MeijerGfunctionEqualTo0Forx>1}, the following result is the special case of Theorem \ref{explicit formula as hypergeometric polynomials for the r-OP - theorem} obtained when the parameters satisfy the conditions for positivity of the corresponding branched-continued-fraction coefficients.
\begin{theorem}
\label{MOP wrt Meijer G-functions th.}
For $r,s\in\N$ such that $s\leq r\neq 0$, let $1\leq\lambda_1<\cdots<\lambda_s\leq r$, $a_1,\cdots,a_r,b_1,\cdots,b_s\in\R^+$ satisfying \eqref{BCF conditions for positivity of the coefficients, a_(r+1)=1}, and $\left(\mu_0,\cdots,\mu_{r-1}\right)$ the vector of measures supported on the whole positive real line if $s<r$ or on the interval $(0,1)$ if $s=r$ with densities 
\begin{equation}
\label{orthogonality measures involving Meijer G-functions}
\mathrm{d}\mu_k(x)=
\MeijerG{r,0}{s,r}{b_1^{(k)},\cdots,b_s^{(k)}\vspace*{0,1 cm}}{a_1^{(k)},\cdots,a_r^{(k)}}\,\frac{\mathrm{d}x}{x}
=\MeijerG{r,0}{s,r}{b_1^{(k)}-1,\cdots,b_s^{(k)}-1\vspace*{0,1 cm}}{a_1^{(k)}-1,\cdots,a_r^{(k)}-1}\mathrm{d}x
\quad\text{for   }
k\in\{0,\cdots,r-1\},
\end{equation}
where
\begin{equation}
\label{parameters of the orthogonality measures involving Meijer G-functions}
a_i^{(k)}=
\begin{cases}
a_i+1 & \text{if }1\leq i\leq k \\
a_i & \text{if }k+1\leq i\leq r
\end{cases}
\quad\text{and}\quad
b_j^{(k)}=
\begin{cases}
b_j+1 & \text{if }1\leq\lambda_j\leq k \\
b_j & \text{if }k+1\leq\lambda_j\leq r.
\end{cases}
\end{equation}

Then, the $r$-orthogonal polynomial sequence $\seq{P_n(x)}$ with respect to $\left(\mu_0,\cdots,\mu_{r-1}\right)$ is given by
\begin{equation}
\label{hypergeometric r-OP - explicit formula as a s+1Fr, r>=s}
P_n(x)
=\frac{(-1)^n\pochhammer{a_1}\cdots\pochhammer{a_r}}{\pochhammer{b_1^{(n-1)}}\cdots\pochhammer{b_s^{(n-1)}}} \,\Hypergeometric{s+1}{r}{-n,b_1^{(n-1)},\cdots,b_s^{(n-1)}}{a_1,\cdots,a_r}
\quad\text{with}\quad
b_j^{(n-1)}=b_j+\ceil{\dfrac{n-\lambda_j}{r}}.
\end{equation}
\end{theorem}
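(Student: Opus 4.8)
The plan is to obtain this statement directly from Theorem~\ref{explicit formula as hypergeometric polynomials for the r-OP - theorem}, so the only thing to do is to verify that the measures $\mu_0,\cdots,\mu_{r-1}$ built from the Meijer G-densities in \eqref{orthogonality measures involving Meijer G-functions} are genuine positive measures, supported as claimed, whose moment sequences coincide---up to positive multiplicative constants, which are immaterial for the orthogonality conditions \eqref{d-orthogonality conditions}---with the sequences of ratios of Pochhammer symbols in \eqref{definition of the orthogonality functionals}. Granting this, the linear functionals induced by $\mu_0,\cdots,\mu_{r-1}$ are positive multiples of the functionals $v_0,\cdots,v_{r-1}$ of Theorem~\ref{explicit formula as hypergeometric polynomials for the r-OP - theorem}, and since $m=\max(r,s)=r$ here, that theorem identifies the $r$-orthogonal polynomial sequence with respect to $\left(\mu_0,\cdots,\mu_{r-1}\right)$ with the sequence $\seq{P_n(x)}$ of \eqref{hypergeometric type II MOP - explicit formula as a s+1Fr}, which is precisely the sequence displayed in \eqref{hypergeometric r-OP - explicit formula as a s+1Fr, r>=s}.

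First I would settle the moment identity. Because $a_i^{(k)}\geq a_i>0$ for all $1\leq i\leq r$ and $0\leq k\leq r-1$, the Mellin-transform formula \eqref{Mellin transform of a Meijer G-function} applies with $z=n$ for every $n\in\N$, yielding
\[
\int_0^\infty x^n\,\dd\mu_k(x)
=\frac{\Gamma\left(a_1^{(k)}+n\right)\cdots\Gamma\left(a_r^{(k)}+n\right)}{\Gamma\left(b_1^{(k)}+n\right)\cdots\Gamma\left(b_s^{(k)}+n\right)}
=C_k\,\frac{\pochhammer{a_1^{(k)}}\cdots\pochhammer{a_r^{(k)}}}{\pochhammer{b_1^{(k)}}\cdots\pochhammer{b_s^{(k)}}},
\]
with $C_k=\Gamma\left(a_1^{(k)}\right)\cdots\Gamma\left(a_r^{(k)}\right)\big/\left(\Gamma\left(b_1^{(k)}\right)\cdots\Gamma\left(b_s^{(k)}\right)\right)>0$ independent of $n$; this is just \eqref{moment sequence ratio of Pochhammers Meijer G-function}. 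For the support: when $s<r$ the density is finite on all of $\R^+$, and when $s=r$ the positivity hypothesis \eqref{BCF conditions for positivity of the coefficients, a_(r+1)=1} forces $b_j>a_j$ for each $j$, hence $b_j^{(k)}>a_j^{(k)}$ by \eqref{parameters of the orthogonality measures involving Meijer G-functions} and $\sum_{j=1}^r\left(b_j^{(k)}-a_j^{(k)}\right)>0$, so by \eqref{MeijerGfunctionEqualTo0Forx>1} the density vanishes for $x>1$ and $\mu_k$ is supported on $(0,1)$.

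The remaining, and main, point is the positivity of the densities. Under \eqref{BCF conditions for positivity of the coefficients, a_(r+1)=1} we have $b_j>a_{\lambda_j}$, hence $b_j^{(k)}>a_{\lambda_j}^{(k)}$ for all $j$ and $0\leq k\leq r-1$; as explained before the statement of the theorem, the moment sequence then factorises as the entrywise product of the $s$ ``beta-type'' sequences $\seq{\pochhammer{a_{\lambda_j}^{(k)}}\big/\pochhammer{b_j^{(k)}}}$ and the $r-s$ ``gamma-type'' sequences $\seq{\pochhammer{a_i^{(k)}}}$, $i\in\mathrm{I}$. Each beta-type sequence (with $b>a>0$) is the moment sequence of the nonnegative density $\tfrac{\Gamma(b)}{\Gamma(a)\Gamma(b-a)}\,x^{a-1}(1-x)^{b-a-1}$ on $(0,1)$, and each gamma-type sequence (with $a>0$) is the moment sequence of the nonnegative density $\tfrac{1}{\Gamma(a)}\,x^{a-1}\e^{-x}$ on $\R^+$. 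I would then invoke the standard multiplicative-convolution representation of $G_{s,r}^{r,0}$ (see \cite{LukeSpecialFunctionsVolI,DLMF}): the density in \eqref{orthogonality measures involving Meijer G-functions} is, up to a positive constant, the multiplicative convolution of these $r$ elementary nonnegative densities---its Mellin transform, read off from \eqref{Meijer G-function definition}, being the product of theirs---and a multiplicative convolution of nonnegative functions is nonnegative. Hence every $\mu_k$ is a positive measure, which completes the verification and, via Theorem~\ref{explicit formula as hypergeometric polynomials for the r-OP - theorem}, proves the theorem.

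The step I expect to be the real obstacle is this last one: matching the Meijer G-density $G_{s,r}^{r,0}$ with the multiplicative convolution of the elementary beta- and gamma-type densities---that is, identifying the inverse Mellin transform in \eqref{Meijer G-function definition} with the convolution product while keeping track of the shift by $1$ between ``$G_{s,r}^{r,0}(\cdots)\,\dd x/x$'' and the elementary densities. The rest---the moment computation from \eqref{Mellin transform of a Meijer G-function}, the support statement from \eqref{MeijerGfunctionEqualTo0Forx>1}, and the final appeal to Theorem~\ref{explicit formula as hypergeometric polynomials for the r-OP - theorem}---is routine bookkeeping. As an alternative route to positivity that sidesteps the convolution identity, one could combine Corollary~\ref{modified m-S.R. poly as moments of positive measures}, which already provides \emph{some} positive measure on $\R^+$ with moments $\modifiedStieltjesRogersPoly{n}{k}{\alpha}=\dfrac{\pochhammer{a_1^{(k)}}\cdots\pochhammer{a_r^{(k)}}}{\pochhammer{b_1^{(k)}}\cdots\pochhammer{b_s^{(k)}}}$, with a determinacy argument for the associated Stieltjes moment problem; I would nevertheless prefer the convolution argument, since it exhibits the density explicitly as the claimed Meijer G-function and requires no determinacy.
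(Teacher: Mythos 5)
Your proposal is correct and follows essentially the same route as the paper: the theorem is presented there as an immediate consequence of the Mellin-transform moment identity \eqref{moment sequence ratio of Pochhammers Meijer G-function}, the support statement \eqref{MeijerGfunctionEqualTo0Forx>1}, and Theorem \ref{explicit formula as hypergeometric polynomials for the r-OP - theorem}, which is exactly your plan. The only addition is your explicit multiplicative-convolution argument for nonnegativity of the Meijer G-density, a point the paper leaves implicit (it only records the entrywise-product decomposition of the moment sequences into Stieltjes moment sequences before the theorem), and which is a correct and standard supplement.
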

See \cite[Eq.~5.4.4]{LukeSpecialFunctionsVolI} for the equality of the two densities involving Meijer G-functions in \eqref{orthogonality measures involving Meijer G-functions}. 

Recall that when $a_1,\cdots,a_r,b_1,\cdots,b_s\in\R^+$ satisfy the conditions \eqref{BCF conditions for positivity of the coefficients, a_(r+1)=1}, the coefficients $\mathbf{\alpha}=\seq[k\in\N]{\alpha_{k+m}}$ defined by \eqref{BCF coeff r+1Fs, r>=s} with $a_{r+1}=1$ are all positive.
Therefore, based on Theorem \ref{location and interlacing of the zeros, general case with positive BCF coeff}, the zeros of the corresponding $r$-orthogonal polynomials $\seq{P_n(x)}$ determined by \eqref{hypergeometric r-OP - explicit formula as a s+1Fr, r>=s} are all simple, real, and positive, and the zeros of consecutive polynomials interlace.
Furthermore, when $s=r$, the orthogonality measures are all supported on the interval $(0,1)$, so it is natural to conjecture that the zeros of $\seq{P_n(x)}$ are all located on that interval.
This conjecture is clearly true when $r=1$ and $\seq{P_n(x)}$ are the Jacobi polynomials orthogonal with respect to the positive measure $\mu$ supported on the interval $(0,1)$ with density $\mathrm{d}\mu(x)=x^{a-1}(1-x)^{b-a-1}\,\mathrm{d}x$ for $a,b\in\R^+$ such that $a<b$; it is also true for $r=2$, because the orthogonality measures form a Nikishin system on the interval $(0,1)$ (see \cite[Th.~1]{PaperHypergeometricWeights}).
In fact, we show that this conjecture is true for any positive integer $r$, as we give the corresponding asymptotic zero distribution in Theorem \ref{asymptotic zero distribution, s=r, th.}.

When $r=1$ and $s\leq r$ (that is, $s\in\{0,1\}$), the conditions \eqref{BCF conditions for positivity of the coefficients, a_(r+1)=1} for positivity of the branched-continued-fraction coefficients defined by \eqref{BCF coeff r+1Fs, r>=s} with $a_{r+1}=1$ correspond to the necessary and sufficient conditions for positivity of the orthogonality measure of the corresponding Laguerre and Jacobi polynomials, respectively.
Moreover, when $r=2$ and $s\leq r$ (that is, $s\in\{0,1,2\}$), the conditions in \eqref{BCF conditions for positivity of the coefficients, a_(r+1)=1} correspond to the conditions for the orthogonality measures defined by \eqref{orthogonality measures involving Meijer G-functions}-\eqref{parameters of the orthogonality measures involving Meijer G-functions} to form a Nikishin system.
It would be interesting to find out whether the conditions in \eqref{BCF conditions for positivity of the coefficients, a_(r+1)=1} also imply that the orthogonality measures defined by \eqref{orthogonality measures involving Meijer G-functions}-\eqref{parameters of the orthogonality measures involving Meijer G-functions} form a Nikishin system when $r\geq 3$.

\subsection{Recurrence coefficients and location of the zeros}
Throughout the rest of this section, we use the notation $[k]_r$, with $k,r\in\Z$ and $r\geq 1$, for the unique element of $\{1,\cdots,r\}$ congruent with $k\hspace*{-0,2 cm}\mod r$, as we have done in Section \ref{BCF for ratios of hypergeometric series}. 
When $r\geq s$ and $a_1,\cdots,a_r,b_1,\cdots,b_s\in\R^+$ satisfy \eqref{BCF conditions for positivity of the coefficients, a_(r+1)=1}, the coefficients defined by \eqref{BCF coeff r+1Fs, r>=s} have asymptotic behaviour
\begin{equation}
	\label{BCF coeff first m+1Fs asymptotic behaviour}
	\alpha_{k+r}
	\sim
	\begin{cases}
		\dfrac{r^s}{(r+1)^r}\,k^{r-s} & \text{if } [k]_r\not\in\{\lambda_1,\cdots,\lambda_s\}, \vspace*{0,2 cm} \\
		\dfrac{r^s}{(r+1)^{r+1}}\,k^{r-s} & \text{if } [k]_r\in\{\lambda_1,\cdots,\lambda_s\},
	\end{cases}
	\quad\text{as }k\to\infty,
\end{equation}
which implies that
\begin{equation}
	\label{BCF coeff first m+1Fs asymptotic behaviour*}
	\alpha_{(r+1)(n+j)+\ell_j}
	\sim
	\begin{cases}
		\left(\dfrac{r}{r+1}\right)^s\,n^{r-s} & \text{if } [n+j+\ell_j]_r\not\in\{\lambda_1,\cdots,\lambda_s\}, \vspace*{0,2 cm} \\
		\dfrac{r^s}{(r+1)^{s+1}}\,n^{r-s} & \text{if } [n+j+\ell_j]_r\in\{\lambda_1,\cdots,\lambda_s\},
	\end{cases}
	\quad\text{as }n\to\infty.
\end{equation}

Therefore, recalling Theorem \ref{recurrence coefficients for the MOP and link to the BCF for ratios of hypergeometric series}, we obtain the following result.
\begin{proposition}
\label{asymptotic behaviour of the recurrence coefficients r-OPS}
For $r,s\in\N$ such that $s\leq r\neq 0$, $1\leq\lambda_1<\cdots<\lambda_s\leq r$, and $a_1,\cdots,a_r,b_1,\cdots,b_s\in\R^+$ satisfying \eqref{BCF conditions for positivity of the coefficients, a_(r+1)=1}, let $\seq{P_n(x)}$ be the $r$-orthogonal polynomial sequence defined by \eqref{hypergeometric r-OP - explicit formula as a s+1Fr, r>=s}.
Then, $\seq{P_n(x)}$ satisfies the recurrence relation
\begin{equation}
\label{recurrence relation for a r-OPS}
P_{n+1}(x)=x\,P_n(x)-\sum_{k=0}^{r}\gamma_{n-k}^{\,[k]}\,P_{n-k}(x),
\end{equation}
where the recurrence coefficients are given by \eqref{recurrence coefficients for our m-OPS}, are all positive, and have asymptotic behaviour
\begin{equation}
\label{recurrence coefficients r-OPS asymptotic behaviour, s<=r}
\gamma_n^{[k]}\sim C_{[n]_r}^{[k]}\,n^{(r-s)(k+1)}
\quad\text{as }n\to\infty,
\end{equation}
where, for any $1\leq j\leq r$,
\begin{equation}
C_j^{[k]}=\left(\frac{r}{r+1}\right)^{s(k+1)}\,\mathlarger{\sum}_{r\geq\ell_0>\cdots>\ell_k\geq 0}(r+1)^{-f_j\left(\ell_0,\cdots,\ell_k\right)},
\end{equation}
with 
\begin{equation}
f_j\left(\ell_0,\cdots,\ell_k\right)=\big|\left\{0\leq i\leq k\left|\left[j+\ell_i+i\right]_r\in\{\lambda_1,\cdots,\lambda_s\}\right.\right\}\big|.
\end{equation}
\end{proposition}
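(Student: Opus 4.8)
The plan is to reduce Proposition~\ref{asymptotic behaviour of the recurrence coefficients r-OPS} to the already-established Theorem~\ref{recurrence coefficients for the MOP and link to the BCF for ratios of hypergeometric series}, which gives the exact formula
\[
\gamma_n^{[k]}=\sum_{r\geq\ell_0>\cdots>\ell_k\geq 0}\;\prod_{j=0}^{k}\alpha_{(r+1)(n+j)+\ell_j},
\]
together with the asymptotic estimate \eqref{BCF coeff first m+1Fs asymptotic behaviour*} for the individual branched-continued-fraction coefficients. The positivity of the $\gamma_n^{[k]}$ is immediate: Theorem~\ref{location and interlacing of the zeros, general case with positive BCF coeff} (via the conditions \eqref{BCF conditions for positivity of the coefficients, a_(r+1)=1}) already gives $\alpha_{k+r}>0$ for all $k$, and a sum of products of positive terms is positive. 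So the substance is the asymptotic formula \eqref{recurrence coefficients r-OPS asymptotic behaviour, s<=r}.

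First I would justify \eqref{BCF coeff first m+1Fs asymptotic behaviour} and then \eqref{BCF coeff first m+1Fs asymptotic behaviour*}: from the explicit expression \eqref{BCF coeff r+1Fs, r>=s} one sees that $\alpha_{k+r}$ is a ratio where the numerator has $r$ factors $a_i^{(k)}\sim k/(r+1)$ and, in the $[k]_r\in\Lambda$ case, an extra factor $b_\ell^{(k)}-a^{(k)}\sim k/(r+1)$, while the denominator has $s$ factors $b_j^{(k)}\sim k/r$ and, in that same case, a factor $b_\ell^{(k)}-1\sim k/r$; counting powers of $k$ gives $k^{r-s}$ with the stated constants $r^s/(r+1)^r$ or $r^s/(r+1)^{r+1}$. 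Rewriting $k=(r+1)(n+j)+\ell_j$, so $k\sim(r+1)n$ and $[k]_r=[(r+1)(n+j)+\ell_j]_r=[n+j+\ell_j]_r$ (since $r+1\equiv1\bmod r$), converts this into \eqref{BCF coeff first m+1Fs asymptotic behaviour*}. Substituting into the product formula for $\gamma_n^{[k]}$ term by term: each summand $\prod_{j=0}^{k}\alpha_{(r+1)(n+j)+\ell_j}$ is asymptotic to $n^{(r-s)(k+1)}$ times $(r/(r+1))^{s(k+1)}\,(r+1)^{-f(\ell_0,\dots,\ell_k)}$, where $f$ counts how many of the indices $j+\ell_j+j\equiv[j+\ell_j+j]_r$... — more precisely $f_{[n]_r}(\ell_0,\dots,\ell_k)=|\{0\le i\le k:[\,[n]_r+\ell_i+i\,]_r\in\Lambda\}|$, which is exactly the $f_j$ in the statement with $j=[n]_r$. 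Summing over the $\binom{r+1}{k+1}$ tuples $r\ge\ell_0>\cdots>\ell_k\ge0$ and factoring out the common power $n^{(r-s)(k+1)}$ gives $\gamma_n^{[k]}\sim C_{[n]_r}^{[k]}\,n^{(r-s)(k+1)}$ with $C_j^{[k]}$ as defined.

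One point that needs care: the index $[n+j+\ell_j]_r$ inside \eqref{BCF coeff first m+1Fs asymptotic behaviour*} depends on $n$ only through $[n]_r$, so the leading coefficient of $\gamma_n^{[k]}$ genuinely oscillates with period $r$ in $n$, which is why the statement records $C_{[n]_r}^{[k]}$ rather than a single constant — I would make sure the bookkeeping $[(r+1)(n+j)+\ell_j]_r=[n+j+\ell_j]_r$ is spelled out so the reader sees where the dependence on $[n]_r$ enters. The main obstacle, such as it is, is purely organizational rather than conceptual: I need to be careful that the error terms from the finitely many $o(\cdot)$-estimates combine correctly. Since $\gamma_n^{[k]}$ is a finite sum (of $\binom{r+1}{k+1}$ terms) of finite products (of $k+1$ factors), and each factor is $\Theta(n^{r-s})$ with a relative error $o(1)$, each product is $n^{(r-s)(k+1)}$ times a constant plus $o(n^{(r-s)(k+1)})$, and a finite sum of such is again of that form; I would note that all products have the \emph{same} leading order $n^{(r-s)(k+1)}$ (independent of the tuple) so no term dominates and the constants simply add. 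With that remark in place the proof is a direct substitution, so I would keep it short.
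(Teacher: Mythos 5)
Your proposal is correct and follows exactly the paper's route: the paper likewise derives the asymptotics \eqref{BCF coeff first m+1Fs asymptotic behaviour}--\eqref{BCF coeff first m+1Fs asymptotic behaviour*} of the branched-continued-fraction coefficients from \eqref{BCF coeff r+1Fs, r>=s} and then substitutes them into the finite sum of products \eqref{recurrence coefficients for our m-OPS} from Theorem \ref{recurrence coefficients for the MOP and link to the BCF for ratios of hypergeometric series}, with positivity already supplied by Theorem \ref{location and interlacing of the zeros, general case with positive BCF coeff}. Your bookkeeping $[(r+1)(n+j)+\ell_j]_r=[n+j+\ell_j]_r$ and the remark that all $\binom{r+1}{k+1}$ summands share the same order $n^{(r-s)(k+1)}$ with positive leading constants are exactly the points the paper leaves implicit.
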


When $s=0$, $\{\lambda_1,\cdots,\lambda_s\}=\emptyset$ and we have (cf. \cite[Lemma~4.3]{KuijlaarsZhang14})
\begin{equation}
\label{recurrence coefficients r-OPS asymptotic behaviour, s=0}
\gamma_n^{[k]}\sim\binom{r+1}{k+1}\,n^{r(k+1)}
\quad\text{as }n\to\infty,
\quad\text{for any }0\leq k\leq r.
\end{equation}
When $s=r$, $\{\lambda_1,\cdots,\lambda_s\}=\{1,\cdots,r\}$ and, as a result, we find that
\begin{equation}
\label{recurrence coefficients r-OPS asymptotic behaviour, s=r}
\gamma_n^{[k]}\to\binom{r+1}{k+1}\left(\frac{r^r}{(r+1)^{r+1}}\right)^{k+1}
\quad\text{as }n\to\infty,
\quad\text{for any }0\leq k\leq r.
\end{equation}
When $0<s<r$, the recurrence coefficients have an asymptotic behaviour of period $r$.
The simplest example of this periodic asymptotic behaviour is the case $(r,s)=(2,1)$, for which the recurrence coefficients have an asymptotic behaviour of period $2$ given in \cite[Th.~3.4]{PaperTricomiWeights}.

Combining the periodic asymptotic behaviour of the recurrence coefficients in \eqref{recurrence coefficients r-OPS asymptotic behaviour, s<=r} with Theorem \ref{asymptotic behaviour of the largest zero - general theorem for d-OPS}, we obtain an upper bound for the largest zeros of the $r$-orthogonal polynomials $\seq{P_n(x)}$ defined by \eqref{hypergeometric r-OP - explicit formula as a s+1Fr, r>=s}.
Moreover, we can relate the asymptotic behaviour of the zeros near the origin with the location of the zeros of the hypergeometric function $\dis\HypergeometricOneLine[-z]{0}{r}{-}{a_1,\cdots,a_r}$, which are in infinite number and are all real and positive 
(see \cite[\S 4]{WalterMehlerHeineAsymptoticsForMOPs}), as a consequence of the following Mehler-Heine-type asymptotic formula. 
\begin{proposition}
\label{Mehler-Heine formula r-OPS prop.}
For $r,s\in\N$ such that $s\leq r\neq 0$, $1\leq\lambda_1<\cdots<\lambda_s\leq r$, and $a_1,\cdots,a_r,b_1,\cdots,b_s\in\R^+$ satisfying \eqref{BCF conditions for positivity of the coefficients, a_(r+1)=1}, let $\seq{P_n(x)}$ be the $r$-orthogonal polynomial sequence defined by \eqref{hypergeometric r-OP - explicit formula as a s+1Fr, r>=s}.
Then, 
\begin{equation}
\label{Mehler-Heine formula r-OPS}
\lim_{n\to\infty}\frac{(-1)^n\pochhammer{b_1^{(n-1)}}\cdots\pochhammer{b_s^{(n-1)}}}{\pochhammer{a_1}\cdots\pochhammer{a_r}}\,P_n\left(\frac{z}{n^{s+1}}\right)
=\Hypergeometric[-\frac{z}{r^s}]{0}{r}{-}{a_1,\cdots,a_r}.
\end{equation}
uniformly on compact subsets of $\C$.
\end{proposition}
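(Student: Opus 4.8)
The plan is to work directly with the terminating hypergeometric representation of $P_n$ and to pass to the limit term by term, justifying the interchange of limit and summation by a dominated-convergence (Tannery) argument. By Theorem \ref{explicit formula as hypergeometric polynomials for the r-OP - theorem} (equivalently Theorem \ref{MOP wrt Meijer G-functions th.}) the quantity on the left of \eqref{Mehler-Heine formula r-OPS} equals $\Hypergeometric[z/n^{s+1}]{s+1}{r}{-n,b_1^{(n-1)},\cdots,b_s^{(n-1)}}{a_1,\cdots,a_r}$, since the two factors $(-1)^n$ cancel. Expanding this series and splitting $n^{(s+1)k}=n^k\cdot(n^k)^s$ among the $1+s$ rising factorials $\pochhammer[k]{-n},\pochhammer[k]{b_1^{(n-1)}},\dots,\pochhammer[k]{b_s^{(n-1)}}$ in the numerator, one writes it as $\sum_{k\geq 0}T_{n,k}(z)$ with
\begin{equation*}
T_{n,k}(z)=\frac{\pochhammer[k]{-n}}{n^k}\left(\prod_{j=1}^{s}\frac{\pochhammer[k]{b_j^{(n-1)}}}{n^k}\right)\frac{z^k}{\pochhammer[k]{a_1}\cdots\pochhammer[k]{a_r}\,k!},
\end{equation*}
where $T_{n,k}(z)=0$ for $k>n$. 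For each fixed $k$, since $b_j^{(n-1)}=b_j+\ceil{(n-\lambda_j)/r}=\tfrac{n}{r}+O(1)$ we get $\pochhammer[k]{b_j^{(n-1)}}/n^k\to r^{-k}$, and $\pochhammer[k]{-n}/n^k=\prod_{\ell=0}^{k-1}\bigl(\tfrac{\ell}{n}-1\bigr)\to(-1)^k$; hence $T_{n,k}(z)\to T_{\infty,k}(z):=(-z/r^s)^k/\bigl(\pochhammer[k]{a_1}\cdots\pochhammer[k]{a_r}\,k!\bigr)$, which is the $k$-th Maclaurin coefficient of $\Hypergeometric[-z/r^s]{0}{r}{-}{a_1,\cdots,a_r}$, an entire function of $z$ because $a_1,\dots,a_r>0$.

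The key step is to exhibit an $n$-uniform summable majorant. For $0\leq k\leq n$ one has $|\pochhammer[k]{-n}|=n(n-1)\cdots(n-k+1)\leq n^k$; and since $b_j^{(n-1)}\leq 2n$ once $n$ exceeds some $n_0$ depending only on $b_j$ and $r$, every factor of $\pochhammer[k]{b_j^{(n-1)}}$ is at most $3n$ when $k\leq n$, so $\pochhammer[k]{b_j^{(n-1)}}\leq(3n)^k$. Consequently, for $|z|\leq R$ and $n\geq n_0$,
\begin{equation*}
|T_{n,k}(z)|\leq\frac{n^k\,(3n)^{sk}\,R^k}{n^{(s+1)k}\,\pochhammer[k]{a_1}\cdots\pochhammer[k]{a_r}\,k!}=\frac{(3^s R)^k}{\pochhammer[k]{a_1}\cdots\pochhammer[k]{a_r}\,k!}=:M_k,
\end{equation*}
and $\sum_{k\geq 0}M_k=\Hypergeometric[3^s R]{0}{r}{-}{a_1,\cdots,a_r}<\infty$ because a ${}_0F_r$ with positive lower parameters converges on all of $\C$. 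Tannery's theorem then gives $\sum_k T_{n,k}(z)\to\sum_k T_{\infty,k}(z)$, i.e.\ the pointwise form of \eqref{Mehler-Heine formula r-OPS}.

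To upgrade to uniform convergence on $\{|z|\leq R\}$ I would use the standard tail estimate: given $\varepsilon>0$ choose $K$ with $\sum_{k>K}2M_k<\varepsilon$, so that $\sum_{k>K}|T_{n,k}(z)-T_{\infty,k}(z)|<\varepsilon$ for all $n\geq n_0$ and all $|z|\leq R$ (the limiting coefficients also satisfy $|T_{\infty,k}(z)|\leq M_k$), while the finite head $\sum_{k\leq K}|T_{n,k}(z)-T_{\infty,k}(z)|$ is a polynomial in $z$ whose coefficients tend to $0$, hence tends to $0$ uniformly on $|z|\leq R$. As $R$ is arbitrary, this yields uniform convergence on compact subsets of $\C$.

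The only genuinely delicate point is the construction of the $n$-independent dominating sequence $M_k$; once the crude bounds $|\pochhammer[k]{-n}|\leq n^k$ and $b_j^{(n-1)}=O(n)$ are recorded, everything reduces to the elementary fact that ${}_0F_r$ with positive parameters is entire, so the remaining steps are routine bookkeeping.
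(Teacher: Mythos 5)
Your proof is correct and follows essentially the same route as the paper: both reduce the left-hand side to $\Hypergeometric[z/n^{s+1}]{s+1}{r}{-n,b_1^{(n-1)},\cdots,b_s^{(n-1)}}{a_1,\cdots,a_r}$ and then let the upper parameters $-n$ and $b_j^{(n-1)}\sim n/r$ go to infinity, which the paper phrases as successive applications of the confluent relation \eqref{confluent relations for generalised hypergeometric series}. Your term-by-term limit with the Tannery/dominated-convergence majorant in fact supplies the analytic justification (and the uniformity on compact sets) that the paper's one-line appeal to the formal confluent limit leaves implicit.
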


\begin{proof}
Recalling \eqref{hypergeometric r-OP - explicit formula as a s+1Fr, r>=s},
\begin{equation}
\label{Mehler-Heine formula LHS}
\frac{(-1)^n\pochhammer{b_1^{(n-1)}}\cdots\pochhammer{b_s^{(n-1)}}}{\pochhammer{a_1}\cdots\pochhammer{a_r}}\,P_n\left(\frac{z}{n^{s+1}}\right)
=\Hypergeometric[\frac{z}{n^{s+1}}]{s+1}{r}{-n,b_1^{(n-1)},\cdots,b_s^{(n-1)}}{a_1,\cdots,a_r}.
\end{equation}
Furthermore, observe that $b_j^{(n-1)}\sim\dfrac{n}{r}$ for each $1\leq j\leq s$.
Hence, successively applying the first confluent relation in \eqref{confluent relations for generalised hypergeometric series} to the formula above, we obtain \eqref{Mehler-Heine formula r-OPS}.
\end{proof}

Our results on the location of the zeros of $\seq{P_n(x)}$ are summarised in the following theorem.
\begin{theorem}
\label{results about the zeros of the r-OP}
For $r,s\in\N$ such that $s\leq r\neq 0$, $1\leq\lambda_1<\cdots<\lambda_s\leq r$, and $a_1,\cdots,a_r,b_1,\cdots,b_s\in\R^+$ satisfying \eqref{BCF conditions for positivity of the coefficients, a_(r+1)=1}, let $\seq{P_n(x)}$ be the $r$-orthogonal polynomial sequence defined by \eqref{hypergeometric r-OP - explicit formula as a s+1Fr, r>=s}.
Then:
\begin{itemize}[leftmargin=*]
\item
all the zeros of $P_n(x)$ are simple, real, and positive, and the zeros of consecutive polynomials interlace;

\item
if we denote the zeros of $P_n(x)$ and $\dis\HypergeometricOneLine[-z]{0}{r}{-}{a_1,\cdots,a_r}$ in increasing order, by $\left(x_k^{(n)}\right)_{k=1}^{n}$ and $\left(f_k\right)_{k=1}^{\infty}$, respectively, we have
\begin{equation}
\label{behaviour of the zeros near the origin}
\lim_{n\to\infty}n^{s+1}\,x_k^{(n)}=r^s\,f_k
\quad\text{for all } k\geq 1;
\end{equation}
 
\item
there exists a constant $\mathrm{K}(r,s)\in\R^+$ such that the largest zero of $P_n(x)$ satisfies
\begin{equation}
\label{upper bound largest zero r-OP}
x_n^{(n)}<\mathrm{K}(r,s)\,n^{r-s}+o\left(n^{r-s}\right)
\quad\text{as }n\to\infty.
\end{equation}
\end{itemize}
\end{theorem}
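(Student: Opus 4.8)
The proof of Theorem \ref{results about the zeros of the r-OP} will assemble the three bulleted assertions from results already established in the excerpt, so the plan is essentially to cite the right combination of earlier statements and then do a short computation for the explicit constant in the last bullet. For the first bullet, the key observation is that the hypotheses \eqref{BCF conditions for positivity of the coefficients, a_(r+1)=1} are, by the discussion following Proposition \ref{conditions for non-negativity of the coefficients of the BCF for a ratio of r+1Fs, r>=s}, exactly the conditions making all the branched-continued-fraction coefficients $\seq[k\in\N]{\alpha_{k+r}}$ defined by \eqref{BCF coeff r+1Fs, r>=s} with $a_{r+1}=1$ strictly positive. Then Theorem \ref{location and interlacing of the zeros, general case with positive BCF coeff} applies verbatim (with $m=r$), since $\seq{P_n(x)}$ is precisely the $r$-orthogonal polynomial sequence associated with these coefficients — this identification is the content of Theorem \ref{explicit formula as hypergeometric polynomials for the r-OP - theorem} together with Theorem \ref{recurrence coefficients for the MOP and link to the BCF for ratios of hypergeometric series}. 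So the first bullet is immediate.

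For the second bullet, I would invoke Proposition \ref{Mehler-Heine formula r-OPS prop.}: the rescaled polynomials $\frac{(-1)^n\pochhammer{b_1^{(n-1)}}\cdots\pochhammer{b_s^{(n-1)}}}{\pochhammer{a_1}\cdots\pochhammer{a_r}}P_n\!\left(\frac{z}{n^{s+1}}\right)$ converge uniformly on compact subsets of $\C$ to $\HypergeometricOneLine[-z/r^s]{0}{r}{-}{a_1,\cdots,a_r}$, which is an entire function of order $1/r<1$, hence not identically zero and with only isolated zeros. A standard Hurwitz-type argument then transfers zeros: each simple zero $w$ of the limit function attracts exactly one zero of the rescaled $P_n$, and conversely no zeros of $P_n$ in a bounded region can escape to a non-zero of the limit. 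Since the zeros of $\HypergeometricOneLine[-z]{0}{r}{-}{a_1,\cdots,a_r}$ are real and positive (cited from \cite{WalterMehlerHeineAsymptoticsForMOPs}) and the substitution $z\mapsto z/r^s$ rescales them by $r^s$, the $k$-th smallest positive zero of the limit is $r^s f_k$, giving $\lim_{n\to\infty} n^{s+1} x_k^{(n)} = r^s f_k$. I should make sure to note that the zeros $x_k^{(n)}$ being positive (first bullet) is what guarantees we are tracking the right ordered family.

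For the third bullet, I would apply Theorem \ref{asymptotic behaviour of the largest zero - general theorem for d-OPS} with $m=r$, using the asymptotic behaviour of the recurrence coefficients from Proposition \ref{asymptotic behaviour of the recurrence coefficients r-OPS}. By \eqref{recurrence coefficients r-OPS asymptotic behaviour, s<=r}, $\gamma_n^{[k]} \sim C_{[n]_r}^{[k]} n^{(r-s)(k+1)}$, so with $f_n = n^{r-s}$ (a non-decreasing unbounded positive sequence, using $r>s$; the degenerate case $r=s$ gives $f_n$ constant, which needs a separate trivial remark since then the zeros lie in $(0,1)$ and are bounded) the bound \eqref{asymptotic behaviour recurrence coefficients r-ops} holds with $\gamma^{[k]} = \max_{1\le j\le r} C_j^{[k]}$. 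Theorem \ref{asymptotic behaviour of the largest zero - general theorem for d-OPS} then yields $x_n^{(n)} \le \big(\min_{t\in\R^+}(t + \sum_{k=0}^{r} \gamma^{[k]}/t^k)\big) n^{r-s} + o(n^{r-s})$, so one sets $\mathrm{K}(r,s) = \min_{t\in\R^+}(t + \sum_{k=0}^{r} \gamma^{[k]} t^{-k})$, a finite positive constant; the strict inequality for each fixed $n$ comes from the first bullet (the largest zero is a real eigenvalue strictly below the spectral-radius-type bound is not automatic, so more honestly I would state the strict ``$<$'' in the asymptotic sense, matching the theorem's ``$\le \cdots + o$'' — I'd phrase the bullet so that ``$<\mathrm K(r,s)n^{r-s}+o(n^{r-s})$'' is literally the $\le$ from the theorem with the $o$-term absorbing the slack).

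The only genuinely non-routine point is the Hurwitz argument in the second bullet — one must be careful that the limit function has no spurious accumulation of zeros and that the finitely many zeros of $P_n$ staying in a fixed compact set after rescaling are matched bijectively with zeros of the limit; this is where the entire-function-of-order-$<1$ structure and the reality/simplicity of both zero sets do the work. Everything else is bookkeeping: matching the parameter $m$ to $r$, checking that the hypotheses \eqref{BCF conditions for positivity of the coefficients, a_(r+1)=1} feed correctly into Theorems \ref{location and interlacing of the zeros, general case with positive BCF coeff} and \ref{asymptotic behaviour of the largest zero - general theorem for d-OPS}, and extracting the constant $\mathrm{K}(r,s)$ from Proposition \ref{asymptotic behaviour of the recurrence coefficients r-OPS}. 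I would also remark that the case $s=r$ is consistent with (and in fact sharpened by) the later Theorem \ref{asymptotic zero distribution, s=r, th.}, where the support collapses to $(0,1)$.
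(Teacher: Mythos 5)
Your assembly of the theorem is exactly the paper's: the first bullet is Theorem \ref{location and interlacing of the zeros, general case with positive BCF coeff} applied with $m=r$ (the hypotheses \eqref{BCF conditions for positivity of the coefficients, a_(r+1)=1} being precisely the positivity conditions for the coefficients \eqref{BCF coeff r+1Fs, r>=s} with $a_{r+1}=1$), the second bullet is the Mehler--Heine formula of Proposition \ref{Mehler-Heine formula r-OPS prop.} plus a Hurwitz-type zero-transfer argument using the reality and positivity of the zeros of $\HypergeometricOneLine[-z]{0}{r}{-}{a_1,\cdots,a_r}$, and the third bullet is Theorem \ref{asymptotic behaviour of the largest zero - general theorem for d-OPS} fed with the asymptotics \eqref{recurrence coefficients r-OPS asymptotic behaviour, s<=r} and $f_n=n^{r-s}$, $\gamma^{[k]}=\max_j C_j^{[k]}$. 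This is the intended proof, and you correctly flag the two delicate points.

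The one place your write-up falls short is the sub-case $s=r$ of the third bullet. You rightly observe that $f_n\equiv 1$ violates the unboundedness hypothesis of Theorem \ref{asymptotic behaviour of the largest zero - general theorem for d-OPS}, but your fallback --- ``the zeros lie in $(0,1)$ and are bounded'' --- is an assertion, not an argument; indeed the paper itself treats the containment of the zeros in $(0,1)$ as a nontrivial fact only established later via Theorem \ref{asymptotic zero distribution, s=r, th.}. What is actually needed here is only boundedness of the zeros, and the paper obtains it by noting that the recurrence coefficients converge to constants (Equation \eqref{recurrence coefficients r-OPS asymptotic behaviour, s=r}) and invoking \cite[Th.~1.1]{AptKalLagoRochaLimitBehaviour}, which derives boundedness of the zeros from boundedness of the recurrence coefficients. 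Replacing your assertion with that citation (or with a direct spectral-radius bound on the finite Hessenberg truncations, which have uniformly bounded entries and bandwidth) closes the gap; everything else in your proposal is correct.
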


When $s=r$, \eqref{upper bound largest zero r-OP} is equivalent to say that the zeros of $\seq{P_n(x)}$ are all located on a bounded interval $\left(0,\mathrm{K}(r,r)\right)$.
Based on \cite[Th.~1.1]{AptKalLagoRochaLimitBehaviour}, this is a corollary of the boundedness of the recurrence coefficients in \eqref{recurrence coefficients r-OPS asymptotic behaviour, s=r}.
In fact, we show in Theorem \ref{asymptotic zero distribution, s=r, th.} that this bounded interval is $(0,1)$.

When $s=0$, we can find a simple expression for the constant $\mathrm{K}(r,0)$ in \eqref{upper bound largest zero r-OP}.
Based on Theorem \ref{asymptotic behaviour of the largest zero - general theorem for d-OPS} and recalling the asymptotic behaviour of the recurrence coefficients given in \eqref{recurrence coefficients r-OPS asymptotic behaviour, s=0}, we have
\begin{equation}
x_n^{(n)}
<\min_{t\in\R^+}\left(t+\sum_{k=0}^{r}\binom{r+1}{k+1}t^{-k}\right)n^r+o\left(n^r\right)
=\min_{t\in\R^+}\left(\frac{(t+1)^{r+1}}{t^r}\right)n^r+o\left(n^r\right).
\end{equation}
The minimum appearing in the latter formula is obtained when $t=r$.
Therefore, we find that, when $s=0$, the largest zero of $P_n(x)$ defined by \eqref{hypergeometric r-OP - explicit formula as a s+1Fr, r>=s} satisfies
\begin{equation}
x_n^{(n)}<\frac{(r+1)^{r+1}}{r^r}\,n^r+o\left(n^r\right)
\quad\text{as }n\to\infty.
\end{equation}
The asymptotic zero distribution of $P_n\left(n^r\,x\right)$ on the interval $\left(0,\frac{(r+1)^{r+1}}{r^r}\right)$ is given in \cite[Th.~3.2]{Neuschel2014}.

When $0<s<r$, the asymptotic behaviour of the recurrence coefficients is more convoluted and, as a consequence, the constant $\mathrm{K}(r,s)$ in the upper bound for the largest zero becomes more complicated to compute and less sharp.
For instance, see \cite[Cor.~3.6]{PaperTricomiWeights} for an upper bound for the largest zero of the $2$-orthogonal polynomials corresponding to the case $(r,s)=(2,1)$. 

\subsection{A $r$-orthogonal polynomial sequence with constant recurrence coefficients}
\label{r-OP w/ constant rec coeff}
Here we prove that, for $s=r$ and a particular choice of parameters $a_1,\cdots,a_r,b_1,\cdots,b_r\in\R^+$, the $r$-orthogonal polynomials given by \eqref{hypergeometric r-OP - explicit formula as a s+1Fr, r>=s} satisfy a recurrence relation with constant coefficients as follows. 
This is equivalent to say that each diagonal of the corresponding unit-lower-Hessenberg matrix is constant, i.e. that matrix is Toeplitz.
We give explicit formulas for these $r$-orthogonal polynomials and for densities and moments of their orthogonality measures.
\begin{theorem}
\label{r-OP w/ constant rec coeff Th.}
For $r\in\Z^+$, let $\seq{P_n(x)}$ be the polynomial sequence defined by \eqref{hypergeometric r-OP - explicit formula as a s+1Fr, r>=s} with $s=r$,
\begin{equation}
\label{parameters for r-OP with constant recurrence coefficients}
a_i=1+\frac{i}{r+1}=\frac{r+1+i}{r+1},
\quad\text{and}\quad
b_i=1+\frac{i+1}{r}=\frac{r+1+i}{r}
\quad\text{for each }1\leq i\leq r.
\end{equation}
Then:
\begin{enumerate}[label=(\alph*),leftmargin=*]
\item $\seq{P_n(x)}$ satisfies the recurrence relation with constant coefficients
\begin{equation}
\label{recurrence relation with constant coefficients}
P_{n+1}(x)=x\,P_n(x)-\sum_{k=0}^{\min(n,r)}\binom{r+1}{k+1}\left(\frac{r^r}{(r+1)^{r+1}}\right)^{k+1}\,P_{n-k}(x).
\end{equation}

\item $\seq{P_n(x)}$ can be explicitly written by
\begin{equation}
\label{r-OP with constant recurrence coefficients}
P_n(x)=\binom{n+r}{r}\left(\frac{-r^r}{(r+1)^{r+1}}\right)^n\,
\Hypergeometric{r+1}{r}{-n,\frac{n+r+1}{r},\cdots,\frac{n+2r}{r}\vspace*{0,1 cm}}{\frac{r+2}{r+1},\cdots,\frac{2r+1}{r+1}}
\quad\text{for any  }n\in\N.
\end{equation}

\item $\seq{P_n(x)}$ is $r$-orthogonal with respect to the vector of measures $\left(\mu_0,\cdots,\mu_{r-1}\right)$ supported on $(0,1)$ with densities
\begin{equation}
\label{density orthogonality measures r-OP with constant recurrence coefficients}
\mathrm{d}\mu_k(x)
=\MeijerG{r,0}{r,r}{\left.\frac{k+i+1}{r}\right|_{i=1}^{r}\vspace*{0,15 cm}}{\left.\frac{k+i}{r+1}\right|_{\begin{subarray}{l}i=1\\i\neq r+1-k\end{subarray} }^{r+1}}\mathrm{d}x
\quad\text{for  }0\leq k\leq r-1
\end{equation}
and moments
\begin{equation}
\label{moments of the orthogonality measures for r-OP with constant recurrence coefficients}
\int_{0}^{1}x^n\mathrm{d}\mu_k(x)=\left(\frac{r^r}{(r+1)^{r+1}}\right)^n\,\frac{1}{n+1}\,\binom{(r+1)(n+1)+k}{n}
\quad\text{for any  }0\leq k\leq r-1\text{  and  }n\in\N.
\end{equation}
\end{enumerate}
\end{theorem}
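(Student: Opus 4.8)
The plan is to deduce the three parts from results already established, all specialised to $s=r$, to $\lambda_j=j$ (the only admissible choice when $s=r$), and to the parameters in \eqref{parameters for r-OP with constant recurrence coefficients}; write $\beta=\frac{r^r}{(r+1)^{r+1}}$ throughout. For part~(b) I would start from \eqref{explicit formula for the r-OP with s=r} and first identify the parameter sets. With $b_i=\frac{r+1+i}{r}$ and $\lambda_j=j$ one computes $r\,b_j^{(n-1)}=r+1+j+r\ceil{\frac{n-j}{r}}=n+r+1+\big((j-n)\bmod r\big)$, so that $\{b_1^{(n-1)},\cdots,b_r^{(n-1)}\}=\{\tfrac{n+r+1}{r},\cdots,\tfrac{n+2r}{r}\}$, while $\{a_1,\cdots,a_r\}=\{\tfrac{r+2}{r+1},\cdots,\tfrac{2r+1}{r+1}\}$; since ${}_{r+1}F_r$ is symmetric in its numerator parameters and in its denominator parameters, \eqref{explicit formula for the r-OP with s=r} becomes the hypergeometric series in \eqref{r-OP with constant recurrence coefficients}. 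It then remains to check that the scalar prefactor $(-1)^n\,\pochhammer{a_1}\cdots\pochhammer{a_r}/(\pochhammer{b_1^{(n-1)}}\cdots\pochhammer{b_r^{(n-1)}})$ equals $\binom{n+r}{r}\left(\tfrac{-r^r}{(r+1)^{r+1}}\right)^n$: writing each Pochhammer symbol as a product of an arithmetic progression, the products over $i$ collapse into ratios of factorials, and the claimed equality follows after cancellation, using $r!(r+1)=(r+1)!$. (Alternatively one simply checks that both sides of \eqref{r-OP with constant recurrence coefficients} are monic of degree~$n$.)

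For part~(a), Theorem~\ref{recurrence coefficients for the MOP and link to the BCF for ratios of hypergeometric series} gives the recurrence \eqref{recurrence relation for our m-OPS} with $\gamma_n^{[k]}=\sum_{r\geq\ell_0>\cdots>\ell_k\geq 0}\prod_{i=0}^{k}\alpha_{(r+1)(n+i)+\ell_i}$, where $\seq[k\in\N]{\alpha_{k+r}}$ is given by \eqref{BCF coeff r+1Fs, r>=s} with $a_{r+1}=1$ and $\lambda_j=j$ (so $[k]_r\in\Lambda$ for every $k$ and only the second branch of \eqref{BCF coeff r+1Fs, r>=s} is used). The decisive step is to evaluate these coefficients: from \eqref{BCF parameters r+1Fs, r>=s*} one gets $b'_k=\frac{k+2r+1}{r}$ for all $k$, while $a'_k=\frac{k}{r+1}+2$ if $(r+1)\nmid k$ and $a'_k=\frac{k}{r+1}+1$ if $(r+1)\mid k$; substituting into \eqref{BCF coeff r+1Fs, r>=s} and simplifying the resulting factorials yields
\[
\alpha_{(r+1)(n+i)+\ell_i}=\beta\,\frac{n+i}{n+i+1}\quad(\ell_i<r),\qquad \alpha_{(r+1)(n+i)+r}=\beta\,\frac{n+i+r+1}{n+i+1},
\]
the key simplification being that the unique multiple of $r+1$ occurring in the numerator always equals $(r+1)(n+i+1)$, independently of $\ell_i$. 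Since only $\ell_0$ can be equal to $r$, the product of fractions in each summand telescopes to $\frac{n+r+1}{n+k+1}$ when $\ell_0=r$ (which occurs for $\binom{r}{k}$ summands) and to $\frac{n}{n+k+1}$ otherwise (for $\binom{r}{k+1}$ summands), so that
\[
\gamma_n^{[k]}=\beta^{k+1}\,\frac{\binom{r}{k}(n+r+1)+\binom{r}{k+1}n}{n+k+1}=\binom{r+1}{k+1}\beta^{k+1},
\]
by $\binom{r}{k}+\binom{r}{k+1}=\binom{r+1}{k+1}$ and $(r+1)\binom{r}{k}=(k+1)\binom{r+1}{k+1}$; this is \eqref{recurrence relation with constant coefficients}, valid for small $n$ as well since $\gamma_{n-k}^{[k]}$ is only needed for $n-k\geq 0$, where the ``$\frac{n}{n+k+1}$''-type factors cause no trouble. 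This closed-form evaluation of the $\alpha_{k+r}$ — especially the collapse of the multiple of $r+1$ to $(r+1)(n+i+1)$ — together with spotting the telescoping and the two binomial identities is the step I expect to be the main obstacle; by contrast, parts~(b) and~(c) reduce to routine manipulations of Pochhammer symbols once the parameter sets are identified.

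For part~(c), I would first check the hypothesis \eqref{BCF conditions for positivity of the coefficients, a_(r+1)=1} of Theorem~\ref{MOP wrt Meijer G-functions th.}: from \eqref{parameters for r-OP with constant recurrence coefficients} one has $b_j-a_i=\frac{(r+1)(1+j)-ri}{r(r+1)}$, which is positive for $i\leq j$ and larger than $-1$ for $i>j$ (because $ri\leq r^2<(r+1)(r+2)$). Hence, by Theorem~\ref{MOP wrt Meijer G-functions th.}, $\seq{P_n(x)}$ is $r$-orthogonal with respect to the measures on $(0,1)$ whose densities are of the form \eqref{orthogonality measures involving Meijer G-functions} with the parameters \eqref{parameters of the orthogonality measures involving Meijer G-functions}, i.e.\ $G^{r,0}_{r,r}$-functions with top parameters $b_j^{(k)}-1$ and bottom parameters $a_i^{(k)}-1$; a direct check from \eqref{parameters of the orthogonality measures involving Meijer G-functions} shows $\{b_j^{(k)}-1:1\leq j\leq r\}=\{\tfrac{k+i+1}{r}:1\leq i\leq r\}$ and $\{a_i^{(k)}-1:1\leq i\leq r\}=\{\tfrac{k+i}{r+1}:1\leq i\leq r+1,\ i\neq r+1-k\}$, which identifies these densities with those in \eqref{density orthogonality measures r-OP with constant recurrence coefficients}. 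For the moments, Corollary~\ref{BCF for ratios of r+1Fs - corollary a_(r+1)=1}, equation \eqref{modified m-S.-R. poly a_(r+1)=1 k<=r}, gives $\int_0^1 x^n\,\dd\mu_k=\frac{\pochhammer{a_1^{(k)}}\cdots\pochhammer{a_r^{(k)}}}{\pochhammer{b_1^{(k)}}\cdots\pochhammer{b_r^{(k)}}}$, and the same arithmetic-progression bookkeeping as in part~(b) rewrites this quotient as $\beta^n\,\tfrac{1}{n+1}\binom{(r+1)(n+1)+k}{n}$, which is \eqref{moments of the orthogonality measures for r-OP with constant recurrence coefficients}.
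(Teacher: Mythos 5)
Your proposal is correct and follows essentially the same route as the paper's proof: part (a) via the closed-form evaluation of the coefficients $\alpha_{(r+1)n+j}$, the telescoping products, and the count of $\binom{r}{k}$ summands with $\ell_0=r$ versus $\binom{r}{k+1}$ with $\ell_0<r$; part (b) via identifying the parameter multisets and a Pochhammer-to-factorial computation of the prefactor; part (c) via Theorem \ref{MOP wrt Meijer G-functions th.} and Corollary \ref{BCF for ratios of r+1Fs - corollary a_(r+1)=1}. The only (welcome) addition is your explicit verification of the positivity conditions \eqref{BCF conditions for positivity of the coefficients, a_(r+1)=1}, which the paper records only as a remark after the theorem.
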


Observe that: 
\begin{itemize}[leftmargin=*]
\item
the parameters in \eqref{parameters for r-OP with constant recurrence coefficients} satisfy the conditions in \eqref{BCF conditions for positivity of the coefficients, a_(r+1)=1} for positivity of the branched-continued-fraction coefficients,

\item
the hypergeometric function in \eqref{r-OP with constant recurrence coefficients} is $\left(-\frac{1}{2}\right)$-balanced because
\begin{equation}
\sum_{i=1}^{r}\left(1+\frac{i}{r+1}\right)=r+\frac{1}{r+1}\sum_{i=1}^{r}i=\frac{3r}{2}
\quad\text{and}\quad
-n+\sum_{i=1}^{r}\left(1+\frac{n+i}{r}\right)=r+\frac{1}{r}\sum_{i=1}^{r}i=\frac{3r+1}{2},
\end{equation}

\item
the moments in \eqref{moments of the orthogonality measures for r-OP with constant recurrence coefficients} are related to the Fuss-Catalan numbers.
\end{itemize}

For $r=1$, the polynomials defined in Theorem \ref{r-OP w/ constant rec coeff Th.} are orthogonal with respect to the measure $\mu_0$ on $(0,1)$ with density $\sqrt{x(1-x)}\mathrm{d}x$ and correspond to the Chebyshev polynomials of the second kind $\seq{U_n(x)}$, up to a linear transformation of the variable (see \cite[Eqs.~4.5.15,~4.5.22]{IsmailBook}):
\begin{equation}
U_n(x)=(n+1)\,\Hypergeometric[\frac{1-x}{2}]{2}{1}{-n,n+2}{\frac{3}{2}}=(-4)^n\,P_n\left(\frac{1-x}{2}\right).
\end{equation}
For $r=2$, the polynomials defined in Theorem \ref{r-OP w/ constant rec coeff Th.} reduce to the $2$-orthogonal polynomial sequence with constant recurrence coefficients introduced in \cite[\S 4.5]{PaperHypergeometricWeights}.
The densities of the corresponding orthogonality measures $\left(\mu_0,\mu_1\right)$ are rational functions given in \cite[Eq.~126]{PaperHypergeometricWeights}.
It is natural to ask whether the Meijer G-functions in \eqref{density orthogonality measures r-OP with constant recurrence coefficients} can also be expressed as rational functions for $r\geq 3$.

\begin{proof}
Firstly, we prove (a). 
Recalling Theorem \ref{recurrence coefficients for the MOP and link to the BCF for ratios of hypergeometric series}, $\seq{P_n(x)}$ satisfies the recurrence relation
\begin{equation}
\label{recurrence relation for a r-OPS 2}
P_{n+1}(x)=x\,P_n(x)-\sum_{k=0}^{r}\gamma_{n-k}^{\,[k]}\,P_{n-k}(x),
\end{equation}
with coefficients
\begin{equation}
\label{recurrence coefficients for our r-OPS}
\gamma_n^{[k]}=\sum_{r\geq\ell_0>\cdots>\ell_k\geq 0}\,\prod_{j=0}^{k}\alpha_{(r+1)(n+j)+\ell_j}
\quad\text{for all } n\in\N \text{ and } 0\leq k\leq r,
\end{equation}
where $\alpha_j=0$ for $0\leq j\leq r-1$ and, recalling \eqref{BCF coeff m+1Fm},
\begin{equation}
\label{BCF coeff r+1Fr simplified}
\alpha_{k+r}=\frac{\left(b'_k-a'_k\right)\prod\limits_{i=1}^{r}a'_{k-i}}{\prod\limits_{i=0}^{r}b'_{k-i}}
\quad\text{   for any   }k\in\N,
\end{equation}
with
\begin{equation}
\label{parameters a_k and b_k, BCF r+1Fr}
a'_k=a_{[k]_{r+1}}+\ceil{\frac{k}{r+1}}
\quad\text{and}\quad
b'_k=b_{[k]_r}+\ceil{\frac{k}{r}}.
\end{equation}
	
To prove (a) is equivalent to show that
\begin{equation}
\label{constant rec coeff}
\gamma_n^{[k]}=\binom{r+1}{k+1}\left(\frac{r^r}{(r+1)^{r+1}}\right)^{k+1}
\quad\text{for all } n\in\N \text{ and } 0\leq k\leq r.
\end{equation}
	
Taking the parameters in \eqref{parameters for r-OP with constant recurrence coefficients}, we have
	\begin{subequations}
		\begin{itemize}
			\item
			$\dis b'_k=b_{[k]_r}+\ceil{\frac{k}{r}}=1+\frac{[k]_r+1}{r}+\frac{k-[k]_r}{r}+1=2+\frac{k+1}{r}=\frac{k+2r+1}{r}$ for any $k\in\N$,
			\vspace*{0,1 cm}
			\hfill\refstepcounter{equation}\textup{(\theequation)}\label{b'_n constant rec coeff parameters}
			
			\item
			$\dis a'_k=a_{[k]_{r+1}}+\ceil{\frac{k}{r+1}}=1+\frac{[k]_{r+1}}{r+1}+\frac{k-[k]_{r+1}}{r+1}+1=2+\frac{k}{r+1}=\frac{k+2r+2}{r+1}$ if $(r+1)\nmid k$,
			\vspace*{0,1 cm}
			\hfill\refstepcounter{equation}\textup{(\theequation)}\label{a'_n constant rec coeff parameters n not multiple of r+1}
			
			\item
			$\dis a'_{(r+1)n}=a_{r+1}+\ceil{\frac{(r+1)n}{r+1}}=n+1$ for any $n\in\N$.
			\hfill\refstepcounter{equation}\textup{(\theequation)}\label{a'_n constant rec coeff parameters n multiple of r+1}
		\end{itemize}
	\end{subequations}
	
	We can now compute the values of $\alpha_{k+r}$ for $k\in\N$.
	
	Firstly, we take $k=(r+1)n$ with $n\in\N$ to compute $\alpha_{(r+1)n+r}$.
	Then,
	%
\begin{subequations}
	\begin{itemize}
		\item
		$\dis b'_{(r+1)n}-a'_{(r+1)n}=\frac{(r+1)n+2r+1}{r}-(n+1)=\frac{n+r+1}{r}$,
		\vspace*{0,1 cm}
		\hfill\refstepcounter{equation}\textup{(\theequation)}		
		
		\item
		$\dis\prod\limits_{i=0}^{r}b'_{(r+1)n-i}
		=\prod\limits_{j=0}^{r}b'_{(r+1)(n-1)+j+1}
		=\prod\limits_{j=0}^{r}\frac{(r+1)(n+1)+j}{r}
		=\frac{\pochhammer[r+1]{(r+1)(n+1)}}{r^{r+1}}$,
		\vspace*{0,1 cm}
		\hfill\refstepcounter{equation}\textup{(\theequation)}		
		
		\item
		$\dis\prod\limits_{i=1}^{r}a'_{(r+1)n-i}
		=\prod\limits_{j=1}^{r}a'_{(r+1)(n-1)+j}
		=\prod\limits_{j=1}^{r}\frac{(r+1)(n+1)+j}{r+1}
		=\frac{\pochhammer[r]{(r+1)(n+1)+1}}{(r+1)^r}$.
		\hfill\refstepcounter{equation}\textup{(\theequation)}		
	\end{itemize}
\end{subequations}	

	Therefore, recalling \eqref{BCF coeff r+1Fr simplified}, we have
	\begin{equation}
		\label{alpha_(k+r), k multiple of r+1}
		\alpha_{(r+1)n+r}
		=\frac{\left(b'_{(r+1)n}-a'_{(r+1)n}\right)\prod\limits_{i=1}^{r}a'_{(r+1)n-i}}{\prod\limits_{i=0}^{r}b'_{(r+1)n-i}}
		=\frac{r^r(n+r+1)}{(r+1)^{r+1}(n+1)}
		\quad\text{for all  }n\in\N.
	\end{equation}
	Next, we compute $\alpha_{(r+1)n+j}$, with $n\geq 1$ and $0\leq j\leq r-1$. 
	We take $k=(r+1)(n-1)+(j+1)$, so that $k+r=(r+1)n+j$.
	Note that $1\leq j+1\leq r$, so $(r+1)\nmid k$.
	Therefore,
\begin{subequations}
	\begin{itemize}
		\item
		$\dis b'_k-a'_k=\frac{k+1}{r}-\frac{k}{r+1}=\frac{k+r+1}{r(r+1)}=\frac{(r+1)n+(j+1)}{r(r+1)}$,
		\hfill\refstepcounter{equation}\textup{(\theequation)}
		\vspace*{0,1 cm}
		
		\item
		$\dis\prod\limits_{i=0}^{r}b'_{k-i}
		=\prod\limits_{i=0}^{r}\frac{k-i+2r+1}{r}
		=\prod\limits_{i=0}^{r}\frac{k+r+1+i}{r}
		=\frac{\pochhammer[r+1]{k+r+1}}{r^{r+1}}
		=\frac{\pochhammer[r+1]{(r+1)n+(j+1)}}{r^{r+1}}$,
		\hfill\refstepcounter{equation}\textup{(\theequation)}
		\vspace*{0,1 cm}
		
		\item
		$\dis\prod\limits_{i=1}^{r}a'_{k-i}
		=\prod\limits_{i=0}^{r-1}a'_{(r+1)(n-1)+(j-i)}
		=n\prod\limits_{i=0,\,i\neq j}^{r-1}\frac{(r+1)(n+1)+(j-i)}{r+1}
		=\frac{n\pochhammer[r]{(r+1)n+j+2}}{(r+1)^r(n+1)}$.
		\hfill\refstepcounter{equation}\textup{(\theequation)}
	\end{itemize}
\end{subequations}	
	
	Hence, recalling again \eqref{BCF coeff r+1Fr simplified}, we find that
	\begin{equation}
		\label{alpha_(k+r), k not multiple of r+1}
		\alpha_{(r+1)n+j}=\frac{r^r\,n}{(r+1)^{r+1}(n+1)}
		\quad\text{for all  }n\geq 1\text{  and  }0\leq j\leq r-1.
	\end{equation}
This formula is also valid for $n=0$, because then it reduces to $\alpha_j=0$ for all $0\leq j\leq r-1$.
	
	Now we can compute $\gamma_n^{[k]}$ for any $n\in\N$ and $0\leq k\leq r$ by inputting \eqref{alpha_(k+r), k multiple of r+1} and \eqref{alpha_(k+r), k not multiple of r+1} in \eqref{recurrence coefficients for our r-OPS}.
	If $\ell_0=r$, 
	\begin{equation}
		\prod_{j=0}^{k}\alpha_{(r+1)(n+j)+\ell_j}
		=\frac{r^r(n+r+1)}{(r+1)^{r+1}(n+1)}\prod_{j=1}^{k}\frac{r^r(n+j)}{(r+1)^{r+1}(n+j+1)}
		=\left(\frac{r^r}{(r+1)^{r+1}}\right)^{k+1}\frac{n+r+1}{n+k+1},
	\end{equation}
	and, if $\ell_0\leq r-1$, 
	\begin{equation}
		\prod_{j=0}^{k}\alpha_{(r+1)(n+j)+\ell_j}
		=\prod_{j=0}^{k}\frac{r^r(n+j)}{(r+1)^{r+1}(n+j+1)}
		=\left(\frac{r^r}{(r+1)^{r+1}}\right)^{k+1}\frac{n}{n+k+1}.
	\end{equation}
	In \eqref{recurrence coefficients for our r-OPS}, there are $\binom{r}{k}$ summands with $\ell_0=r$ and $\binom{r}{k+1}$ summands with $\ell_0\leq r-1$, corresponding, respectively, to choosing $k$ elements of $\{0,\cdots,r-1\}$ to be $\ell_1,\cdots,\ell_{r-1}$ and to choosing $k+1$ elements of $\{0,\cdots,r-1\}$ to be $\ell_0,\cdots,\ell_{r-1}$.
	Therefore, we have
	\begin{equation}
		\gamma_n^{[k]}
		=\left(\frac{r^r}{(r+1)^{r+1}}\right)^{k+1}\left(\binom{r}{k}\frac{n+r+1}{n+k+1}+\binom{r}{k+1}\frac{n}{n+k+1}\right)
		=\left(\frac{r^r}{(r+1)^{r+1}}\right)^{k+1}\binom{r+1}{k+1},
	\end{equation}
	which means that \eqref{constant rec coeff} and, consequently, \eqref{recurrence relation with constant coefficients} hold.
	
Now we prove (b). 
	Combining \eqref{hypergeometric r-OP - explicit formula as a s+1Fr, r>=s} with $s=r$ and \eqref{a,b} with $m=r$, we get
	\begin{equation}
		P_n(x)=\frac{(-1)^n\pochhammer{a_1}\cdots\pochhammer{a_r}}{\pochhammer{b'_{n-r}}\cdots\pochhammer{b'_{n-1}}}
		\,\Hypergeometric{r+1}{r}{-n,b'_{n-r},\cdots,b'_{n-1}}{a_1,\cdots,a_r}.
	\end{equation}
	Therefore, recalling \eqref{parameters for r-OP with constant recurrence coefficients} and \eqref{b'_n constant rec coeff parameters}, we have
	\begin{equation}
		\label{r-OP with constant recurrence coefficients*}
		P_n(x)=\frac{(-1)^n\pochhammer{\frac{r+2}{r+1}}\cdots\pochhammer{\frac{2r+1}{r+1}}\vspace*{0,1 cm}}{\pochhammer{\frac{n+r+1}{r}}\cdots\pochhammer{\frac{n+2r}{r}}}
		\,\Hypergeometric{r+1}{r}{-n,\frac{n+r+1}{r},\cdots,\frac{n+2r}{r}}{\frac{r+2}{r+1},\cdots,\frac{2r+1}{r+1}}.
	\end{equation}
	Furthermore,
	\begin{equation}
	\label{prod of pochhammers 1}
	\prod_{i=1}^{r}\pochhammer{\frac{n+r+i}{r}}
		=\prod_{i=1}^{r}\prod_{j=1}^{n}\frac{n+rj+i}{r}
		=r^{-rn}\prod_{k=r+1}^{r(n+1)}(n+k)
		=\frac{\pochhammer[rn]{n+r+1}}{r^{rn}},
	\end{equation}
	and
	\begin{equation}
	\label{prod of pochhammers 2}
		\prod_{i=1}^{r}\pochhammer{\frac{r+i+1}{r+1}}
		=\prod_{i=1}^{r}\prod_{j=1}^{n}\frac{(r+1)j+i}{r+1}
		=(r+1)^{-rn}\prod_{\substack{k=r+2\\(r+1)\nmid k}}^{(r+1)n+r}k
		=\frac{\left((r+1)n+r\right)!}{(r+1)^{rn+n-1}\,n!\,(r+1)!}
		=\frac{\pochhammer[r(n+1)]{n+1}}{r!(r+1)^{(r+1)n}}.
	\end{equation}
	Hence,
	\begin{equation}
		\frac{(-1)^n\pochhammer{\frac{r+2}{r+1}}\cdots\pochhammer{\frac{2r+1}{r+1}}\vspace*{0,1 cm}}{\pochhammer{\frac{n+r+1}{r}}\cdots\pochhammer{\frac{n+2r}{r}}}
		=\frac{(-1)^n\,r^{rn}\pochhammer[r(n+1)]{n+1}}{r!(r+1)^{(r+1)n}\pochhammer[rn]{n+r+1}}
		=\binom{n+r}{r}\left(\frac{-r^r}{(r+1)^{r+1}}\right)^n,
	\end{equation}
	and \eqref{r-OP with constant recurrence coefficients*} implies \eqref{r-OP with constant recurrence coefficients}.
	
Finally, we prove (c). 
Considering \eqref{orthogonality measures involving Meijer G-functions}-\eqref{parameters of the orthogonality measures involving Meijer G-functions} with the parameters in \eqref{parameters for r-OP with constant recurrence coefficients}, we obtain \eqref{density orthogonality measures r-OP with constant recurrence coefficients} and we find that, for any $0\leq k\leq r-1$ and $n\in\N$,
\begin{equation}
\int_{0}^{1}x^n\mathrm{d}\mu_k(x)
=\frac{\prod\limits_{i=1}^{r+1}\pochhammer{\frac{r+1+k+i}{r+1}}}{(n+1)!\prod\limits_{i=1}^{r}\pochhammer{\frac{r+1+k+i}{r}}}.
\end{equation}
Analogously to \eqref{prod of pochhammers 1}-\eqref{prod of pochhammers 2}, we have
\begin{equation}
\prod_{i=1}^{r}\pochhammer{\frac{r+1+k+i}{r}}=\frac{\pochhammer[rn]{r+k+2}}{r^{rn}},
\quad\text{and}\quad
\prod_{i=1}^{r+1}\pochhammer{\frac{r+1+i+k}{r+1}}=\frac{\pochhammer[(r+1)n]{r+k+2}}{(r+1)^{(r+1)n}}.
\end{equation}
Therefore,
\begin{equation}
\int_{0}^{1}x^n\mathrm{d}\mu_k(x)=\left(\frac{r^r}{(r+1)^{r+1}}\right)^n\frac{\pochhammer{r(n+1)+k+2}}{(n+1)!},
\end{equation}
which is equivalent to \eqref{moments of the orthogonality measures for r-OP with constant recurrence coefficients}.
\end{proof}

\subsection{Connection with Jacobi-Pi\~neiro polynomials and asymptotic zero distribution}
\label{Jacobi-Pineiro poly}
Here we show that, for $s=r$ and parameters $a_1,\cdots,a_r,b_1,\cdots,b_r\in\R^+$ satisfying a certain set of relations, the $r$-orthogonal polynomials given by \eqref{hypergeometric r-OP - explicit formula as a s+1Fr, r>=s} correspond to Jacobi-Pi\~neiro polynomials on the step-line.

For $a_1,\cdots,a_r\in\R^+$, let $b_r=a_1+1$ and $b_i=a_{i+1}$ for $1\leq i\leq r-1$.
Note that these parameters satisfy the conditions in \eqref{BCF conditions for positivity of the coefficients, a_(r+1)=1} if and only if $a_1<\cdots<a_r<a_1+1$.
Then, for any $1\leq j\leq r-1$:
\begin{subequations}
\begin{itemize}
\item
$b_i^{(j)}=b_i+1=a_{i+1}+1=a_{i+1}^{(j)}$ for $1\leq i\leq j-1$,
\hfill\refstepcounter{equation}\textup{(\theequation)}\label{parameters for J.-P. poly 1}\vspace*{0,1 cm}

\item
$b_j^{(j)}=b_j+1=a_{j+1}+1=a_{j+1}^{(j)}+1$,
\hfill\refstepcounter{equation}\textup{(\theequation)}\label{parameters for J.-P. poly 2}\vspace*{0,1 cm}

\item
$b_i^{(j)}=b_i=a_{i+1}=a_{i+1}^{(j)}$ for $j+1\leq i\leq r-1$, and
\hfill\refstepcounter{equation}\textup{(\theequation)}\label{parameters for J.-P. poly 3}\vspace*{0,1 cm}

\item
$b_r^{(j)}=b_r=a_1+1=a_1^{(j)}$.
\hfill\refstepcounter{equation}\textup{(\theequation)}\label{parameters for J.-P. poly 4}
\end{itemize}
\end{subequations}

Hence, the orthogonality measures in \eqref{orthogonality measures involving Meijer G-functions}, with $s=r$ and the choice of parameters above, reduce to
\begin{equation}
\label{Jacobi Pineiro orthogonality measures}
\mathrm{d}\mu_j(x)
=\frac{\Gamma\left(a_{j+1}+1\right)}{\Gamma\left(a_{j+1}\right)}\,\frac{1}{x}\,\MeijerG{1,0}{1,1}{a_{j+1}+1}{a_{j+1}}\mathrm{d}x
=a_{j+1}\,x^{a_{j+1}-1}\mathrm{d}x
\quad\text{for all }0\leq j\leq r-1.
\end{equation}

The multiple orthogonal polynomials with respect to $\left(\mu_0,\cdots,\mu_{r-1}\right)$ determined by \eqref{Jacobi Pineiro orthogonality measures} are a particular case of the Jacobi-Pi\~neiro polynomials originally introduced by Pi\~neiro in \cite{Pineiro}. 
In fact, the formula \eqref{hypergeometric r-OP - explicit formula as a s+1Fr, r>=s} with $b_r=a_1+1$ and $b_i=a_{i+1}$ for $1\leq i\leq r-1$ reduces to the explicit formula for the Jacobi-Pi\~neiro polynomials given in \cite[Eq.~23.3.5]{IsmailBook}, with $\beta=0$ and the multi-index $\vec{n}=\left(n_1,\cdots,n_r\right)$ on the step-line.
Furthermore, it is clear from \cite[Eq.~23.3.5]{IsmailBook} that the Jacobi-Pi\~neiro polynomials with $\beta\neq 0$ cannot be a particular case of the polynomials given by \eqref{hypergeometric r-OP - explicit formula as a s+1Fr, r>=s}.

This connection with the Jacobi-Pi\~neiro polynomials suggests that our polynomials share the asymptotic behaviour of the recurrence coefficients, the ratio asymptotics, and the asymptotic zero distribution with the Jacobi-Pi\~neiro polynomials. 
We show that this is true and, as a result, that we can obtain the asymptotic zero distribution of our polynomials from \cite[Th.~1.1]{WalterVAandT.Neuschel-AsymptoticsJ.-P.}.

The asymptotic zero distribution $\nu$ of $\seq{P_n(x)}$ is the limit (if it exists) for the normalised zero counting measure of $P_n(x)$ in the sense of the weak convergence of measures, that is,
\begin{equation}
\label{asymptotic zero distribution definition}
\int f\mathrm{d}\nu(t)
=\lim_{n\to\infty}\frac{1}{n}\sum_{k=1}^{n}f\left(x_k^{(n)}\right),
\quad\text{where  } x_1^{(n)},\cdots,x_n^{(n)} \text{  are the zeros of  }P_n(x),
\end{equation} 
for all bounded and continuous functions $f$ on $(0,1)$.

If the zeros of $\seq{P_n(x)}$ are all real and simple and the zeros of consecutive polynomials interlace and the limit of the ratio of two consecutive polynomials,
which we refer to as the ratio asymptotics of $\seq{P_n(x)}$, exists and converges uniformly on compact subsets of $\C\backslash(0,1)$, the Stieltjes transform of the asymptotic zero distribution $\nu$ is (see \cite[\S 4]{WalterVAandT.Neuschel-AsymptoticsJ.-P.} for more details)
\begin{equation}
\label{asymptotic zero distribution and ratio asymptotics}
\int\frac{\dd\nu(t)}{x-t}
=\lim_{n\to\infty}\frac{1}{n}\sum_{k=1}^{n}\frac{1}{x-x_k^{(n)}}
=\lim_{n\to\infty}\frac{P_n^{\,\prime}(x)}{nP_n(x)}
=\frac{F'(z)}{F(z)}
\quad\text{with  }F(z)=\lim_{n\to\infty}\frac{P_{n+1}(z)}{P_n(z)}.
\end{equation}

Let $a_1,\cdots,a_r,b_1,\cdots,b_r\in\R^+$ satisfying \eqref{BCF conditions for positivity of the coefficients, a_(r+1)=1} with $s=r$ and $\lambda_j=j$ for all $1\leq j\leq r$ and $\seq{P_n(x)}$ be the $r$-orthogonal polynomials defined by \eqref{hypergeometric r-OP - explicit formula as a s+1Fr, r>=s}.
Then, the asymptotic behaviour of the recurrence coefficients of $\seq{P_n(x)}$ is given by \eqref{recurrence coefficients r-OPS asymptotic behaviour, s=r} and it does not depend on the choice of parameters.
As a result, the ratio asymptotics and the asymptotic zero distribution do not depend on the parameters either, because the ratio asymptotics is determined by the asymptotic behaviour of the recurrence coefficients (see \cite[Lemma~3.2]{AptKalLagoRochaLimitBehaviour}) and, recalling \eqref{asymptotic zero distribution and ratio asymptotics}, the asymptotic zero distribution is determined by the ratio asymptotics. 

On the other hand, the ratio asymptotics and the asymptotic zero distribution of the Jacobi-Pi\~neiro polynomials on the step-line were obtained in \cite[Th.~1.1]{WalterVAandT.Neuschel-AsymptoticsJ.-P.}.
In particular, the results in \cite[Th.~1.1]{WalterVAandT.Neuschel-AsymptoticsJ.-P.} hold for our polynomials $\seq{P_n(x)}$ when $b_r=a_1+1$ and $b_i=a_{i+1}$ for $1\leq i\leq r-1$, because we have seen that these polynomials are a particular case of the Jacobi-Pi\~neiro polynomials on the step-line. 
Consequently, those results also hold for any $a_1,\cdots,a_r,b_1,\cdots,b_r\in\R^+$ satisfying \eqref{BCF conditions for positivity of the coefficients, a_(r+1)=1}.
Therefore, we will show that the ratio asymptotics and the asymptotic zero distribution of our polynomials are the same as for the Jacobi-Pi\~neiro polynomials for multi-indices near the diagonal. 

Using \cite[Remark~3.1]{AptKalLagoRochaLimitBehaviour} (with $F_i(x)=F(x)$ for all $i\in\N$) and the asymptotic behaviour \eqref{recurrence coefficients r-OPS asymptotic behaviour, s=r} of the recurrence coefficients, we find that the ratio asymptotics $F(z)$ defined in \eqref{asymptotic zero distribution and ratio asymptotics} satisfies the algebraic equation
\begin{equation}
F(x)=x-\sum_{k=0}^{r}\binom{r+1}{k+1}\left(\frac{r^r}{(r+1)^{r+1}}\right)^{k+1}\left(F(x)\right)^{-k}
\Leftrightarrow
\left(F(x)+\frac{r^r}{(r+1)^{r+1}}\right)^{r+1}=x\left(F(x)\right)^{r}.
\end{equation}
This algebraic equation is equivalent to \cite[Eq.~2.11]{WalterVAandT.Neuschel-AsymptoticsJ.-P.}, with $F(x)=z-p$ and $p=\left(\frac{r}{r+1}\right)^r$.
Therefore, we can replicate the method in \cite[\S 3-4]{WalterVAandT.Neuschel-AsymptoticsJ.-P.} to show that the ratio asymptotics and the asymptotic zero distribution of $\seq{P_n(x)}$ are the same as for the Jacobi-Pi\~neiro polynomials for multi-indices near the diagonal. 
The asymptotic zero distribution is presented in the following result.
\begin{theorem}
\label{asymptotic zero distribution, s=r, th.}
(cf. \cite[Th.~1.1]{WalterVAandT.Neuschel-AsymptoticsJ.-P.})
For $r\in\Z^+$, let $\seq{P_n(x)}$ be the $r$-orthogonal polynomial sequence defined by \eqref{hypergeometric r-OP - explicit formula as a s+1Fr, r>=s} with $s=r$ and $a_1,\cdots,a_r,b_1,\cdots,b_r\in\R^+$ satisfying \eqref{BCF conditions for positivity of the coefficients, a_(r+1)=1} and denote by $x_1^{(n)},\cdots,x_n^{(n)}$ the zeros of $P_n(x)$ for $n\geq 1$.
Then, the asymptotic zero distribution $\nu_r$ of $P_n(x)$,
\begin{equation}
\label{asymptotic zero distribution r-OP}
\int_{0}^{1} f(t)\nu_r(t)\mathrm{d}t
=\lim_{n\to\infty}\frac{1}{n}\sum_{k=1}^{n}f\left(x_k^{(n)}\right),
\end{equation} 
is supported on $[0,1]$ with density
\begin{equation}
\nu_r(x)=\frac{(r+1)\sin\phi\,\sin(r\phi)\sin((r+1)\phi)}{\phi\,x\,\left((r+1)^2\sin^2(r\phi)-2r(r+1)\sin((r+1)\phi)\sin(r\phi)\cos\phi+r^2\sin^2((r+1)\phi)\right)},
\end{equation}
after the change of variables
\begin{equation}
x=\frac{r^r\,\sin^{r+1}\left((r+1)\phi\right)}{(r+1)^{r+1}\,\sin\phi\,\sin^r(r\phi)}
\quad\text{with }\phi\in\left(0,\frac{\pi}{r+1}\right).
\end{equation}
\end{theorem}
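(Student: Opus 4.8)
The plan is to derive the asymptotic zero distribution from the known one for the Jacobi-Pi\~neiro polynomials on the step-line, established in \cite[Th.~1.1]{WalterVAandT.Neuschel-AsymptoticsJ.-P.}, by showing that $\seq{P_n(x)}$ shares with those polynomials the limiting behaviour of its recurrence coefficients and, consequently, its ratio asymptotics and its asymptotic zero distribution. Concretely, I would (i) record that the recurrence coefficients converge to explicit constants, (ii) invoke the interlacing and reality of the zeros together with an Aptekarev--Kalyagin--L\'opez Lagomasino--Rocha-type result to get the ratio asymptotics $F$ as a branch of an explicit algebraic function, (iii) match that algebraic equation with the one governing the diagonal Jacobi-Pi\~neiro polynomials, and (iv) transport the parametrised density from \cite[\S 3--4]{WalterVAandT.Neuschel-AsymptoticsJ.-P.} back through the change of variables.

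First I would use Proposition \ref{asymptotic behaviour of the recurrence coefficients r-OPS} in the case $s=r$, that is \eqref{recurrence coefficients r-OPS asymptotic behaviour, s=r}, to note that $\gamma_n^{[k]}\to\binom{r+1}{k+1}\left(\frac{r^r}{(r+1)^{r+1}}\right)^{k+1}$ as $n\to\infty$ for every $0\leq k\leq r$, so the recurrence coefficients are bounded and convergent with limits independent of $a_1,\dots,a_r,b_1,\dots,b_r$. Next, by Theorem \ref{results about the zeros of the r-OP} (equivalently Theorem \ref{location and interlacing of the zeros, general case with positive BCF coeff}), the zeros of $\seq{P_n(x)}$ are real and simple and the zeros of consecutive polynomials interlace. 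These two facts place us in the setting of \cite[Lemma~3.2 \&~Remark~3.1]{AptKalLagoRochaLimitBehaviour}: the ratio asymptotics $F(z)=\lim_{n\to\infty}P_{n+1}(z)/P_n(z)$ exists, converges uniformly on compact subsets of $\C\setminus(0,1)$, and $F$ is determined by feeding the constant limits of the $\gamma_n^{[k]}$ into the recurrence relation, namely
\begin{equation}
F(x)=x-\sum_{k=0}^{r}\binom{r+1}{k+1}\left(\frac{r^r}{(r+1)^{r+1}}\right)^{k+1}F(x)^{-k},
\end{equation}
which, after clearing denominators, is equivalent to $\left(F(x)+\frac{r^r}{(r+1)^{r+1}}\right)^{r+1}=x\,F(x)^{r}$.

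Then I would match this with \cite[Eq.~2.11]{WalterVAandT.Neuschel-AsymptoticsJ.-P.}: the substitution $F(x)=z-p$ with $p=\left(\frac{r}{r+1}\right)^{r}$ turns the equation above into the one governing the ratio asymptotics of the diagonal Jacobi-Pi\~neiro polynomials. This identification of \emph{the correct branch} of $F$ is justified because, for the special parameters $b_r=a_1+1$ and $b_i=a_{i+1}$ ($1\leq i\leq r-1$) discussed before the statement, $\seq{P_n(x)}$ is literally a family of Jacobi-Pi\~neiro polynomials on the step-line, so \cite[Th.~1.1]{WalterVAandT.Neuschel-AsymptoticsJ.-P.} applies verbatim; since the limiting recurrence coefficients, and hence $F$, do not depend on the parameters, the same branch is selected for all $a_1,\dots,a_r,b_1,\dots,b_r\in\R^+$ satisfying \eqref{BCF conditions for positivity of the coefficients, a_(r+1)=1}. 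Using the Stieltjes-transform identity $\int\frac{\dd\nu_r(t)}{x-t}=\frac{F'(z)}{F(z)}$ recalled in \eqref{asymptotic zero distribution and ratio asymptotics}, I would carry the parametrisation $z=z(\phi)$ from \cite[\S 3--4]{WalterVAandT.Neuschel-AsymptoticsJ.-P.} back through $x=x(\phi)$ to obtain the stated density $\nu_r(x)$ and change of variables; the boundedness of the recurrence coefficients in \eqref{recurrence coefficients r-OPS asymptotic behaviour, s=r} together with \cite[Th.~1.1]{AptKalLagoRochaLimitBehaviour} then gives $\supp\nu_r=[0,1]$.

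The main obstacle I anticipate is not a single computation but the careful verification that the branch of the algebraic function $F$ produced by our polynomials coincides with the one analysed in \cite{WalterVAandT.Neuschel-AsymptoticsJ.-P.}; this is where the reduction of $\seq{P_n(x)}$ to Jacobi-Pi\~neiro polynomials for the special parameters, combined with the parameter-independence of the limiting recurrence coefficients, does the essential work and must be stated precisely. Once that is settled, the change of variables and the extraction of the explicit density are routine trigonometric manipulations, identical in form to those in \cite[\S 4]{WalterVAandT.Neuschel-AsymptoticsJ.-P.}.
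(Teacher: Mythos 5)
Your proposal is correct and follows essentially the same route as the paper: convergence of the recurrence coefficients to parameter-independent constants, the Aptekarev--Kalyagin--L\'opez Lagomasino--Rocha results to obtain the ratio asymptotics and its algebraic equation, the matching with the Jacobi-Pi\~neiro equation via $F(x)=z-p$ with $p=\left(\tfrac{r}{r+1}\right)^r$, and the anchoring of the correct branch through the special-parameter reduction to Jacobi-Pi\~neiro polynomials. The paper's argument is structured in exactly this way, so there is nothing substantive to add.
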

This asymptotic zero distribution is related to the Fuss-Catalan numbers (see \cite[\S 3]{WalterVAandT.Neuschel-AsymptoticsJ.-P.} for more details).
The asymptotic zero distribution of $\seq{P_n(x)}$ when $s=0$ presented in \cite[Th.~3.2]{Neuschel2014} is also related to the Fuss-Catalan numbers.
It would be of interest to find the asymptotic zero distribution of $\seq{P_n(x)}$ when $0<s<r$ and check if it is also related to the Fuss-Catalan numbers.

For $r=1$, Theorem \ref{asymptotic zero distribution, s=r, th.} recovers the arcsin density of the asymptotic zero distribution of the Jacobi polynomials on the interval $(0,1)$:
\begin{equation}
\label{asymptotic zero distribution Jacobi poly}
v_1(x)=\frac{1}{\pi\sqrt{x(1-x)}}
\quad\text{for }x\in(0,1).
\end{equation}
For $r=2$, Theorem \ref{asymptotic zero distribution, s=r, th.} reduces to \cite[Th.~10~(b)]{PaperHypergeometricWeights} and the density of the corresponding asymptotic zero distribution was found in \cite[Th.~2.1]{WalterCoussementX2AsymptoticZeroDistribution}:
\begin{equation}
\label{asymptotic zero distribution, s=r=2}
v_2(x)=\frac{\sqrt{3}}{4\pi}\,\frac{\left(1+\sqrt{1-x}\right)^{\frac{1}{3}}+\left(1-\sqrt{1-x}\right)^{\frac{1}{3}}}{x^{\frac{2}{3}}\sqrt{1-x}}
\quad\text{for }x\in(0,1).
\end{equation}

Reciprocally, if the ratio asymptotics $F(z)$ defined in \eqref{asymptotic zero distribution and ratio asymptotics} exists and converges uniformly for a $r$-orthogonal polynomial sequence $\seq{P_n(x)}$, then the asymptotic behaviour of the recurrence coefficients is uniquely determined by this limit (see the proof of \cite[Lemma~3.1]{AptKalLagoRochaLimitBehaviour} for an algorithm to compute the asymptotic behaviour of the recurrence coefficients from the ratio asymptotics).
Therefore, the asymptotic behaviour of the recurrence coefficients for the Jacobi-Pi\~neiro polynomials on the step-line is the same as for the $r$-orthogonal polynomial sequence $\seq{P_n(x)}$ in Theorem \ref{asymptotic zero distribution, s=r, th.} and it is given by \eqref{recurrence coefficients r-OPS asymptotic behaviour, s=r}.

\section*{Final remarks}
The investigation presented in this paper provides an excellent example on how the study of the connection between multiple orthogonal polynomials and branched continued fractions leads to considerable advances on both topics.
We will produce analogous investigations for other examples of this connection in future work, giving rise to new multiple orthogonal polynomials and branched continued fractions.

The focus in this paper was only on the type II multiple orthogonal polynomials.
In forthcoming work, we will investigate the corresponding type I multiple orthogonal polynomials and study the connection between type I multiple orthogonal polynomials and  branched continued fractions in a more general setting.

We are also interested in exploring the applications of the multiple orthogonal polynomials studied here to other fields of Mathematics.
Particular cases of these polynomials have direct applications in the analysis of singular values of products of random matrices (see \cite{KuijlaarsZhang14}), are known to be random walk polynomials (see \cite{HypergeometricMOPandRandomWalks}), and are connected to the study of rational solutions of Painlev\'e equations (see \cite{PeterCLizM2003}).
We expect that these applications can be extended for the general class of multiple orthogonal polynomials investigated in this paper and we intend to explore those extensions in future work.

\noindent\textbf{Acknowledgements:}
I am very grateful to Alan Sokal for many enlightening discussions about all the topics of the investigation presented here, especially about the topics concerning lattice paths, branched continued fractions, and total positivity, for numerous pertinent suggestions that considerably improved this paper, and for kindly sharing draft versions of \cite{AlanSokalMOPd-opProdMatBCF} and \cite{AlanSokalBishalDebM.PetroelleB.ZhuBCF3Laguerre},
to Ana Loureiro for illuminating discussions on several results presented here, particularly related to multiple orthogonal polynomials,
and to LMS and EPSRC for their support of this work through grants ECF-1920-18 and EP/W522454/1, respectively.

\bibliographystyle{plain}
\bibliography{References}

\end{document}